\documentclass{article}

\usepackage[latin1]{inputenc}
\usepackage[T1]{fontenc}
\usepackage{amssymb,amsmath,amsthm,amscd,mathrsfs}
\usepackage[all]{xy}
\usepackage{color}

\newtheorem{thm}{Theorem}[section]
\newtheorem{defi}[thm]{Definition}
\newtheorem{prop}[thm]{Proposition}
\newtheorem{lemme}[thm]{Lemma}
\newtheorem{cor}[thm]{Corollary}

\DeclareMathOperator{\Z}{\mathbb{Z}}
\DeclareMathOperator{\sing}{Sing}
\DeclareMathOperator{\codim}{Codim}
\DeclareMathOperator{\Hom}{Hom}
\DeclareMathOperator{\sym}{Sym}

\DeclareMathOperator{\rk}{rk}

\DeclareMathOperator{\Ext}{Ext}

\DeclareMathOperator{\discr}{discr}
\DeclareMathOperator{\Fix}{Fix}
\DeclareMathOperator{\Supp}{Supp}
\DeclareMathOperator{\End}{End}
\DeclareMathOperator{\Vect}{Vect}
\DeclareMathOperator{\tors}{torsion}
\DeclareMathOperator{\id}{id}
\DeclareMathOperator{\rktor}{rktor}

\newcommand{\eq}[1][r]
{\ar@<-3pt>@{-}[#1]
\ar@<-1pt>@{}[#1]|<{}="gauche"
\ar@<+0pt>@{}[#1]|-{}="milieu"
\ar@<+1pt>@{}[#1]|>{}="droite"
\ar@/^2pt/@{-}"gauche";"milieu"
\ar@/_2pt/@{-}"milieu";"droite"}

\newcommand{\incl}[1][r]
  {\ar@<-0.2pc>@{^(-}[#1] \ar@<+0.2pc>@{-}[#1]}

\begin{document}

%\title{\bf Cohomology of the quotient of a Hilbert scheme of two points on a K3 by a symplectic involution}
\title{\bf Beauville--Bogomolov lattice for a singular symplectic variety of
dimension 4}

\author{Grégoire \textsc{Menet}} 

\maketitle

\begin{abstract}
The Beauville--Bogomolov lattice is computed for a simplest singular symplectic manifold of dimension 4, obtained as a partial desingularization of the quotient $S^{[2]}/\iota$, where $S^{[2]}$ is the Hilbert square of a K3 surface $S$ and $\iota$ is a symplectic involution on it. This result applies, in particular, to the singular symplectic manifolds of dimension 4, constructed by Markushevich--Tikhomirov as compactifications of families of Prym varieties of a linear system of curves on a K3 surface with an anti-symplectic involution.
\end{abstract}

%We will begin with an easier problem which will give the main ideas.
\section*{Introduction}
The irreducible symplectic manifolds are important objects in algebraic geometry, in particular because of their role in the Bogomolov decomposition Theorem \cite{Bogo}. 
Irreducible symplectic manifolds are defined as compact holomorphically symplectic Kähler manifolds with trivial fundamental group, whose symplectic structure is unique up to proportionality.

For any irreducible symplectic manifold $Z$, the group $H^{2}(Z,\Z)$ can be endowed with a deformation invariant integral primitive bilinear form $B_{Z}$ which encodes an important topological information. This form is called the Beauville--Bogomolov form. Another invariant related to the Beauville--Bogomolov form is the Fujiki constant $C_{Z}$, which is a positive rational number.

However, very few deformation classes of manifolds of this type are known. For the moment, we just know the examples of Beauville (see\cite{Beauville}) and O'Grady (see \cite{Grady1} and \cite{Grady2}).
The Beauville--Bogomolov form and the Fujiki constant of these examples are calculated in \cite{Beauville} for Beauville's examples and in \cite{Rapa1} and \cite{Rapa2} for the O'Grady varieties.

Find new examples of such manifolds is a very hard problem. One idea to get round this difficulty is to turn back to the setting of Fujiki (see \cite{Fujiki}), who considered symplectic V-manifolds. A V-manifold is an algebraic variety with at worst finite quotient singularities. A V-manifold will be called symplectic if its nonsingular locus is endowed with an everywhere nondegenerate holomorphic 2-form. A symplectic V-manifold will be called irreducible if it is complete, simply connected, and if the holomorphic 2-form is unique up to $\mathbb{C}^*$. The examples of irreducible symplectic V-manifolds given by Fujiki are all, up to deformations of complex structure, partial resolutions of finite quotients of the products of two symplectic surfaces.
In \cite{Markou} Markushevich and Tikhomirov provide a new construction of an irreducible symplectic V-manifold $\mathcal{P}$ of dimension 4, 
obtained as a compactification of a family of Prym varieties of a linear system of curves on a K3 surface.

According to Namikawa in \cite{Nanikawa}, it is possible to endow the second cohomology group of some irreducible symplectic V-manifolds with a Beauville--Bogomolov form. It is a natural question to calculate the Beauville--Bogomolov form of the V-manifold $\mathcal{P}$ of Markushevich and Tikhomirov.

According to Corollary 5.7 of \cite{Markou}, this variety is related by a flop to a partial resolution $M'$ of the quotient of the Hilbert square $S^{[2]}$ of some K3 surface $S$ by a symplectic involution. 
Mongardi's result (Theorem 1.3 in \cite{Mongardi}) says that an irreducible symplectic manifold of $K3^{[2]}$-type endowed with a symplectic involution can be deformed to a couple $(S^{[2]}, i^{[2]})$ where $S$ is a K3 surface, $i$ is a symplectic involution on $S$ and $i^{[2]}$ is the involution induced by $i$ on $S^{[2]}$.
%Moreover Mongardi in  gives a theorem (Theorem 1.3) which allows a generalisation of the problem by deformation.
Thus, studying a partial resolution of a quotient $S^{[2]}/i^{[2]}$,
%With the help a partial resolution of an irreducible symplectic manifold of $K3^{[2]}$-type quotiented by a symplectic involution.
we will give the Beauville--Bogomolov form of a partial resolution of all irreducible symplectic manifolds of $K3^{[2]}$-type quotiented by a symplectic involution (Theorem \ref{theorem}). This allows us to solve the initial problem on the Beauville--Bogomolov form of $\mathcal{P}$.

The proof of Theorem \ref{theorem} needs a very good understanding of the second cohomology group of varieties $M'$. With this goal, we give examples of the use of relevant tools from lattice theory, equivariant cohomology and Smith theory for handling cohomology of quotient varieties. This could be useful for many other examples.

In the first section of this article, we will consider an easier problem as a testing ground for our techniques. We will study the second cohomology group of the quotient of a K3 surface by a Nikulin involution, endowed with the cup product. In the second section, we will prove Theorem \ref{theorem}. Finally we will apply this result to the variety of Markushevich and Tikhomirov in the third section.
\newline

\textbf{Acknowledgement.} I would like to thank Dimitri Markushevich for his help.
 
\section{\bf Cohomology of the quotient of a K3 surface by a Nikulin involution}
\begin{prop}\label{Nikuinvo}
Let $X$ be a K3 surface and $i$ a Nikulin involution (i.e. a symplectic involution) on $X$. We denote $\overline{Y}=X/i$. Then $H^{2}(\overline{Y},\Z)$ endowed with the cup product is isometric to $E_{8}(-1)\oplus U(2)^{3}$.
\end{prop}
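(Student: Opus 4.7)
The plan is to pass to the minimal resolution $\varepsilon\colon Y \to \overline{Y}$. Since the Nikulin involution $i$ has exactly $8$ isolated fixed points on $X$, the quotient $\overline{Y}$ has $8$ singular points, all of type $A_{1}$, and the exceptional divisor of $\varepsilon$ is a disjoint union of $8$ smooth $(-2)$-curves $E_{1}, \ldots, E_{8}$; by Nikulin's classical result, $Y$ is itself a K3 surface, which will let me use the K3 lattice as an ambient object.

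My first step would be to exploit the Leray spectral sequence for $\varepsilon$. Because each $E_{j} \simeq \mathbb{P}^{1}$ has vanishing $H^{1}$, one has $R^{1}\varepsilon_{*}\Z = 0$, while $R^{2}\varepsilon_{*}\Z$ is a skyscraper $\Z^{8}$ supported on $\sing \overline{Y}$. The resulting five-term exact sequence reads
\[
 0 \to H^{2}(\overline{Y},\Z) \xrightarrow{\varepsilon^{*}} H^{2}(Y,\Z) \xrightarrow{\rho} \Z^{8},
\]
with $\rho(\alpha) = \bigl((\alpha \cdot E_{j})\bigr)_{j=1}^{8}$. Hence $\varepsilon^{*}$ identifies $H^{2}(\overline{Y},\Z)$ isometrically with the orthogonal complement $\langle E_{1}, \ldots, E_{8} \rangle^{\perp}$ inside the K3 lattice $H^{2}(Y,\Z) \simeq U^{3} \oplus E_{8}(-1)^{2}$.

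The second step is to replace the $E_{j}$'s by the saturation $N$ of $\langle E_{1}, \ldots, E_{8}\rangle$ in $H^{2}(Y,\Z)$. The double cover $X \to \overline{Y}$ is branched along the $8$ nodes, forcing $\tfrac{1}{2}(E_{1}+\cdots+E_{8})$ to define an integral class in $H^{2}(Y,\Z)$, and one checks that $N$ is precisely the rank-$8$ overlattice generated by the $E_{j}$'s together with this half-class: it is the Nikulin lattice, negative-definite, even, of determinant $2^{6}$, with discriminant form $u(2)^{3}$. Since $\langle E_{j}\rangle^{\perp} = N^{\perp}$ and the K3 lattice is unimodular, $N^{\perp}$ has rank $14$, signature $(3,11)$, and discriminant form $u(2)^{3}$. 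The target lattice $E_{8}(-1) \oplus U(2)^{3}$ has exactly the same invariants, so Nikulin's uniqueness criterion for even indefinite lattices of sufficient rank (here $14 \geq \ell(u(2)^{3}) + 2 = 8$) provides the desired isometry.

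The delicate point, and what I expect to be the main obstacle, is the integral control in the second step: verifying that the half-class $\tfrac{1}{2}\sum E_{j}$ really lies in $H^{2}(Y,\Z)$, that no further saturation occurs, and that the Leray sequence captures the full free part of $H^{2}(\overline{Y},\Z)$ without being polluted by $2$-torsion coming from the $A_{1}$ singularities, whose local links are $\mathbb{R}\mathbb{P}^{3}$. This is precisely where the equivariant cohomology and Smith theory techniques advertised in the introduction should enter, providing a tight comparison between the integral cohomologies of $X$, $Y$, and $\overline{Y}$.
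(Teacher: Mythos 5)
Your argument is sound but takes a genuinely different route from the paper's. The paper never realizes $H^{2}(\overline{Y},\Z)$ as an orthogonal complement inside the K3 lattice of the resolution: it works throughout with the transfer maps $\pi^{*},\pi_{*}$ of the double cover $\pi\colon X\to\overline{Y}$, reduces the problem to deciding which classes $\pi_{*}(x)$, $x\in H^{2}(X,\Z)^{i}\cong U^{3}\oplus E_{8}(-2)$, are divisible by $2$, and uses the resolution $Y$ only to prove that $\pi_{*}(U^{3})\cong U(2)^{3}$ is primitive (Lemma \ref{U2} together with the diagram chase of Section \ref{proof}). That yields the explicit decomposition $H^{2}(\overline{Y},\Z)=\pi_{*}(U^{3})\oplus\frac{1}{2}\pi_{*}(E_{8}(-2))\cong U(2)^{3}\oplus E_{8}(-1)$ with no appeal to the classification of indefinite even lattices, and it is the form of the argument that generalizes to dimension $4$, where $H^{2}$ of the resolution is no longer unimodular. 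Your route --- Leray sequence for $\varepsilon$, identification of $H^{2}(\overline{Y},\Z)$ with $N^{\perp}$, then uniqueness in the genus --- is the classical Morrison/van Geemen--Sarti argument; it is shorter, at the price of a non-constructive last step and of needing the discriminant \emph{form} of $N$, not merely its order. Two comments on the points you flag as delicate. The torsion worry is vacuous: your exact sequence embeds $H^{2}(\overline{Y},\Z)$ into the torsion-free group $H^{2}(Y,\Z)$, so no $2$-torsion can occur in degree $2$ (it appears instead in $H^{3}(\overline{Y},\Z)\cong\Z/2\Z$, as remarked at the end of Section 1). The genuine gap is the saturation statement: integrality of the half-sum of the eight $(-2)$-curves follows from the double cover $\widetilde{X}\to Y$ branched along them, but the fact that no proper nonempty subset has integral half-sum --- equivalently that the saturation has index exactly $2$, so that $N$ is the Nikulin lattice with discriminant form $u(2)^{3}$ --- is exactly Lemma \ref{primitif}, which the paper cites from Morrison and then reproves in Section \ref{OtherP} by computing that the torsion of $H^{2}(U,\Z)$ is a single $\Z/2\Z$. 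Your proof is complete only once that lemma is supplied.
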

\subsection{Pullback and pushforward via the quotient map}
We know very well the lattices $H^{2}(X,\Z)$ and $H^{2}(X,\Z)^{i}$ (all the groups are endowed with the cup product).
We can find in \cite{Alessandra} the following Proposition (it is a consequence of the proof of Theorem 5.7 of \cite{Mo}).
\begin{prop}\label{Morri}
There is an isometry $H^{2}(X,\Z)\cong U^{3}\oplus E_{8}(-1)\oplus E_{8}(-1)$
such that $i^{*}$ acts as follows: $$i^{*}:H^{2}(X,\Z)\cong U^{3}\oplus E_{8}(-1)\oplus E_{8}(-1)\rightarrow H^{2}(X,\Z), (u,x,y)\longmapsto(u,y,x).\ \ \ \ (1)$$
This implies that 
the invariant sublattice is
$$H^{2}(X,\Z)^{i}\cong\left\{(u,x,x)\in U^{3}\oplus E_{8}(-1)\oplus E_{8}(-1)\right\}\cong U^{3}\oplus E_{8}(-2).\ \ \ \ \ \ \ (*)$$
The anti-invariant sublattice is the orthogonal complement to the invariant sublattice.
$$(H^{2}(X,\Z)^{i})^{\bot}\cong\left\{(0,x,-x)\in U^{3}\oplus E_{8}(-1)\oplus E_{8}(-1)\right\}\cong E_{8}(-2).$$
\end{prop}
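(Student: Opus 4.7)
The strategy is to identify $H^2(\overline{Y}, \Z)$ with an explicit sublattice of $H^2(X, \Z)^i$ via the pullback $\pi^*$ of the quotient map $\pi: X \to \overline{Y}$, using the description of the action of $i^*$ from Proposition \ref{Morri}. Since $\overline{Y}$ has only eight $A_1$ singularities (coming from the fixed points of $i$), it is a rational homology manifold, so $H^2(\overline{Y}, \Z)$ is torsion-free and $\pi^*$ is injective with image contained in $H^2(X, \Z)^i$. The projection formula for the degree-$2$ map $\pi$ gives $(\pi^* \alpha, \pi^* \beta)_X = 2(\alpha, \beta)_{\overline{Y}}$, so the cup product on $\overline{Y}$ is half the $X$-cup product restricted to the image of $\pi^*$.

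To locate this image inside $H^2(X, \Z)^i$, I would use the identity $\pi^* \circ \pi_* = \id + i^*$ on $H^2(X, \Z)$. With $i^*(u, x, y) = (u, y, x)$ from Proposition \ref{Morri}, one gets $(\id + i^*)(H^2(X, \Z)) = \{(2u, x+y, x+y) : u \in U^3, x, y \in E_8(-1)\}$, which under the identification $(*)$ coincides with the sublattice $2U^3 \oplus E_8(-2)$ of $H^2(X, \Z)^i \cong U^3 \oplus E_8(-2)$. Hence $\pi^*(H^2(\overline{Y}, \Z)) \supseteq 2U^3 \oplus E_8(-2)$. A direct rescaling shows that $2U^3 \oplus E_8(-2)$ endowed with half of the $X$-form is isometric to $U(2)^3 \oplus E_8(-1)$, so once the reverse inclusion is proven, the claim follows.

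The main obstacle is the surjectivity of $\pi_*: H^2(X, \Z) \to H^2(\overline{Y}, \Z)$, which yields the converse inclusion. I would establish it through the Cartan--Leray spectral sequence or a Smith-type exact sequence associated to the $\Z/2$-action of $i$, exploiting that the fixed locus consists of eight isolated points whose cohomology is concentrated in degree $0$ in order to control the cokernel in degree $2$. An alternative route proceeds through the minimal resolution $\rho: \widetilde{Y} \to \overline{Y}$: one identifies $\rho^* H^2(\overline{Y}, \Z)$ with the sublattice of $H^2(\widetilde{Y}, \Z) \cong U^3 \oplus E_8(-1)^2$ orthogonal to each exceptional curve $E_i$, and a standard computation of the orthogonal complement of the Nikulin lattice recovers $E_8(-1) \oplus U(2)^3$.
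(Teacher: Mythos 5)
There is a fundamental mismatch here: your proposal does not prove Proposition \ref{Morri} at all. It proves (or rather sketches a proof of) Proposition \ref{Nikuinvo}, namely that $H^{2}(\overline{Y},\Z)$ with the cup product is isometric to $E_{8}(-1)\oplus U(2)^{3}$. Indeed, you explicitly take ``the description of the action of $i^{*}$ from Proposition \ref{Morri}'' as an input and then study $\pi^{*}$, $\pi_{*}$ and the resolution $\widetilde{Y}\rightarrow\overline{Y}$ --- which is precisely the content of Sections 1.1--1.4 of the paper, not of the statement you were asked to prove. The statement in question is the upstream lattice-theoretic fact that the marking of $H^{2}(X,\Z)$ can be chosen so that the Nikulin involution $i^{*}$ fixes the $U^{3}$ summand and swaps the two $E_{8}(-1)$ summands, together with the resulting identification of the invariant and anti-invariant sublattices with $U^{3}\oplus E_{8}(-2)$ and $E_{8}(-2)$.

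Nothing in your argument establishes that. A proof of Proposition \ref{Morri} (the paper itself only cites van Geemen--Sarti, as a consequence of the proof of Theorem 5.7 of Morrison) requires genuinely different ingredients: one first shows, via the topological Lefschetz fixed point formula and Nikulin's result that $i$ has exactly eight fixed points, that the invariant lattice has rank $14$; one then identifies the anti-invariant lattice with $E_{8}(-2)$ by a discriminant-form computation; and finally one invokes the uniqueness of the primitive embedding of $E_{8}(-2)$ into the K3 lattice $U^{3}\oplus E_{8}(-1)^{2}$ to place it anti-diagonally in $E_{8}(-1)\oplus E_{8}(-1)$ and conclude that $i^{*}$ acts as the swap $(u,x,y)\mapsto(u,y,x)$. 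Your proposal contains none of these steps; moreover, using Proposition \ref{Morri} as a hypothesis while purporting to prove it would be circular. (As a separate matter, your sketch of Proposition \ref{Nikuinvo} also leaves the crucial divisibility question --- whether elements of $\pi_{*}(U^{3})$ are divisible by $2$ in $H^{2}(\overline{Y},\Z)$ --- to an unexecuted ``Smith-type'' or ``orthogonal complement of the Nikulin lattice'' computation, which is exactly where the real work of Lemmas \ref{primitif} and \ref{U2} lies.)
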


We denote by $\pi:X\rightarrow\overline{Y}$ the quotient map, and we will study the natural morphism:
$$\pi^{*}:H^{2}(\overline{Y},\Z)\rightarrow H^{2}(X,\Z)^{i}.$$
A fundamental tool for this study is given by the following proposition, which follows from \cite{Smith}.
\begin{prop}\label{Smithy}
Let $G$ be a finite group of order $d$ acting on a variety $X$ with the orbit map $\pi:X\rightarrow X/G$ which is a $d$-fold ramified covering. Then 
we can construct a homomorphism $\pi_{*}:H^{*}(X,\Z)\rightarrow H^{*}(X/G,\Z)$ such that
$$\pi_{*}\circ\pi^{*}=d\id_{H^{*}(X/G,\Z)}, \ \ \ \ \ \ \pi^{*}\circ\pi_{*}=\sum_{g\in G}{g^{*}}.$$
\end{prop}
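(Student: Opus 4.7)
The plan is to construct $\pi_{*}$ as the classical \emph{topological transfer} associated to the finite surjective map $\pi$, and then to verify both identities either on singular cochains or at the sheaf level. This is the viewpoint of Smith theory used in \cite{Smith}. The key point is that both the definition of $\pi_{*}$ and the verification of the two formulas reduce to a single bookkeeping principle: a small enough simplex in $X/G$ has exactly $d$ lifts in $X$ (counted with multiplicity at ramification points), and these lifts coincide with the $G$-translates of any one of them.

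First I would define $\pi_{*}$ at the cochain level. For a singular simplex $\sigma$ in $X/G$ contained in a trivializing neighborhood, one has lifts $\widetilde{\sigma}_{1},\ldots,\widetilde{\sigma}_{d}$ in $X$, and for a cochain $c\in C^{*}(X,\Z)$ one sets
\[
(\pi_{*}c)(\sigma)=\sum_{i=1}^{d}c(\widetilde{\sigma}_{i}).
\]
Passing to the subcomplex of sufficiently fine cochains on $X/G$ (which computes the same cohomology by subdivision), this is a map of cochain complexes and induces $\pi_{*}\colon H^{*}(X,\Z)\to H^{*}(X/G,\Z)$. Equivalently one can work sheaf-theoretically via a trace morphism $\mathrm{tr}\colon \pi_{*}\underline{\Z}_{X}\to\underline{\Z}_{X/G}$, using that $\pi$ is finite so that $R^{i}\pi_{*}=0$ for $i>0$ and hence $H^{*}(X/G,\pi_{*}\underline{\Z}_{X})=H^{*}(X,\Z)$.

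Granted this construction, both identities are direct. For the first, each lift satisfies $\pi\circ\widetilde{\sigma}_{i}=\sigma$, so
\[
(\pi_{*}\pi^{*}\alpha)(\sigma)=\sum_{i=1}^{d}\alpha(\pi\circ\widetilde{\sigma}_{i})=d\cdot\alpha(\sigma).
\]
For the second, the lifts of $\pi\circ\tau$ in $X$ are exactly the $G$-translates $\{g\circ\tau:g\in G\}$ (again with the right multiplicities at fixed points), hence
\[
(\pi^{*}\pi_{*}\beta)(\tau)=(\pi_{*}\beta)(\pi\circ\tau)=\sum_{g\in G}\beta(g\circ\tau)=\Bigl(\sum_{g\in G}g^{*}\beta\Bigr)(\tau).
\]

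The main technical obstacle is to make the cochain-level transfer (or, dually, the sheaf-level trace) behave correctly on the ramification locus, where orbits are smaller than $d$ and several lifts of a simplex collapse. The correct multiplicities must be built into the definition to keep the identities integral rather than merely rational and to ensure that $\pi_{*}$ is a well-defined chain map. This is precisely the content of Smith's theory as invoked in \cite{Smith}, after which the two identities follow from the elementary combinatorial count above.
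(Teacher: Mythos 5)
Your proposal takes essentially the same route as the paper, which offers no argument for this proposition beyond the citation of \cite{Smith}: you reconstruct the standard transfer for the ramified covering $\pi$, verify the two identities by the lift-counting computation (which is correct), and, exactly as the paper does, defer the one genuinely delicate point --- defining the transfer with the correct multiplicities along the ramification locus so that it is a chain map over $\Z$ --- to that same reference. Nothing further is needed.
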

It easily implies the corollary:
\begin{cor}\label{piso}
There is a morphism $\pi_{*}:H^{2}(X,\Z)\rightarrow H^{2}(\overline{Y},\Z)$ with 
$$\pi_{*}\circ\pi^{*}=2\id_{H^{2}(\overline{Y},\Z)}, \ \ \ \ \ \ \pi^{*}\circ\pi_{*}=\id_{H^{2}(X,\Z)}+i^{*}.\ \ \ \ \ (**)$$
Moreover,
$$\pi_{*|H^{2}(X,\Z)^{i}}\circ\pi^{*}=2\id_{H^{2}(\overline{Y},\Z)}, \ \ \ \ \ \ \pi^{*}\circ\pi_{*|H^{2}(X,\Z)^{i}}=2\id_{H^{2}(X,\Z)^{i}}.$$
\end{cor}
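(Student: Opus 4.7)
The plan is to specialize Proposition \ref{Smithy} to the cyclic group $G=\langle i\rangle$ of order $d=2$ acting on $X$, which yields at once a transfer map $\pi_{*}:H^{2}(X,\Z)\rightarrow H^{2}(\overline{Y},\Z)$ together with the identities
\[
\pi_{*}\circ\pi^{*}=2\id_{H^{2}(\overline{Y},\Z)},\qquad \pi^{*}\circ\pi_{*}=\sum_{g\in G}g^{*}=\id_{H^{2}(X,\Z)}+i^{*},
\]
since the only elements of $G$ are the identity and $i$. This gives the first line of ($**$) with no further work.

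For the restricted identities, I would first observe that $\pi^{*}$ automatically factors through the invariant sublattice: for any $\beta\in H^{2}(\overline{Y},\Z)$, the relation $\pi\circ i=\pi$ forces $i^{*}\pi^{*}\beta=\pi^{*}\beta$, so $\pi^{*}\beta\in H^{2}(X,\Z)^{i}$. Consequently $\pi_{*|H^{2}(X,\Z)^{i}}\circ\pi^{*}$ coincides with $\pi_{*}\circ\pi^{*}$, which was just shown to equal $2\id_{H^{2}(\overline{Y},\Z)}$.

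For the last identity, take $\alpha\in H^{2}(X,\Z)^{i}$. Then $i^{*}\alpha=\alpha$, and applying the second identity of ($**$) gives
\[
\pi^{*}\circ\pi_{*|H^{2}(X,\Z)^{i}}(\alpha)=(\id+i^{*})(\alpha)=\alpha+\alpha=2\alpha,
\]
which is exactly the desired $2\id_{H^{2}(X,\Z)^{i}}$.

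There is no serious obstacle here: the statement is a direct specialization of Proposition \ref{Smithy} combined with the elementary observation that $\pi^{*}$ takes values in, and $\id+i^{*}$ acts as multiplication by $2$ on, the $i$-invariant sublattice. The only point one might flag is the well-definedness of the restriction $\pi_{*|H^{2}(X,\Z)^{i}}$, but this requires no argument since one is merely restricting the domain of an already-constructed homomorphism.
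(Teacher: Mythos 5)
Your proposal is correct and is exactly the argument the paper intends: the paper simply states that Corollary \ref{piso} "easily" follows from Proposition \ref{Smithy}, and your specialization to $G=\{\id,i\}$, $d=2$, together with the observations that $\pi^{*}$ lands in the invariant sublattice (since $\pi\circ i=\pi$) and that $\id+i^{*}$ acts as multiplication by $2$ on invariants, is precisely the omitted verification.
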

So we want to deduce some information on $H^{2}(\overline{Y},\Z)$ from that on $H^{2}(X,$ $\Z)^{i}$.
If we tensor $\pi^{*}$ by $\mathbb{Q}$, then it is an isomorphism by Corollary \ref{piso}.
Moreover $\overline{Y}$ is simply connected by Lemma 1.2 of \cite{Fujiki}, so $\pi^{*}$ is an injection.
It remains to study the problem of the surjectivity.
The restriction  $\pi_{*|H^{2}(X,\Z)^{i}}$ is injective, and we need to understand the image of $H^{2}(X,\Z)^{i}$ by $\pi_{*}$. 
The main question is whether an element $\pi_{*}(x)$ with $x\in H^{2}(X,\Z)^{i}$ is divisible by $2$ in $H^{2}(\overline{Y},\Z)$ or not.

For simplicity, we will denote \textbf{the cup product by a dot}.
The following lemma is a consequence of Proposition \ref{Smithy}.
\begin{lemme}\label{dot}
Let $x$ and $y$ be in $H^{2}(X,\Z)^{i}$. Then $\pi_{*}(x)\cdot\pi_{*}(y)=2x\cdot y$.
\end{lemme}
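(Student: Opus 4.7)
The plan is to apply $\pi^{*}$ to the identity we want to prove and reduce everything to the formulas of Corollary~\ref{piso}. Since $\pi^{*}:H^{*}(\overline{Y},\Z)\to H^{*}(X,\Z)$ is a ring homomorphism, we get
\[
\pi^{*}\bigl(\pi_{*}(x)\cdot\pi_{*}(y)\bigr)=\pi^{*}\pi_{*}(x)\cdot\pi^{*}\pi_{*}(y).
\]
Now the second formula of Corollary~\ref{piso}, combined with the fact that $x$ and $y$ are $i^{*}$-invariant, gives $\pi^{*}\pi_{*}(x)=(1+i^{*})(x)=2x$ and similarly $\pi^{*}\pi_{*}(y)=2y$. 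Therefore
\[
\pi^{*}\bigl(\pi_{*}(x)\cdot\pi_{*}(y)\bigr)=4\,x\cdot y \in H^{4}(X,\Z).
\]

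To conclude, I would identify how $\pi^{*}$ behaves on top cohomology. Both $H^{4}(X,\Z)$ and $H^{4}(\overline{Y},\Z)$ are generated by fundamental cohomology classes, and since $\pi$ is a ramified $2$-to-$1$ cover, integration gives $\int_{X}\pi^{*}\alpha=2\int_{\overline{Y}}\alpha$, i.e.\ $\pi^{*}:H^{4}(\overline{Y},\Z)\to H^{4}(X,\Z)$ is multiplication by $2$ (on free parts). Applying this to $\pi_{*}(x)\cdot\pi_{*}(y)\in H^{4}(\overline{Y},\Z)$ yields $2\bigl(\pi_{*}(x)\cdot\pi_{*}(y)\bigr)=4\,x\cdot y$, and dividing by $2$ in the torsion-free group $H^{4}(X,\Z)$ gives the desired equality $\pi_{*}(x)\cdot\pi_{*}(y)=2\,x\cdot y$.

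The one delicate point is justifying that $\pi^{*}$ acts by $2$ on top cohomology, since $\overline{Y}$ is singular at the fixed points of $i$ and a priori $H^{4}(\overline{Y},\Z)$ could carry torsion. This can be handled either by working with $\mathbb{Q}$-coefficients (where Poincar� duality for the $V$-manifold $\overline{Y}$ is clean) and noting that the identity to be proved is between integers, or by combining the degree formula with the relation $\pi_{*}\pi^{*}=2\id$ of Proposition~\ref{Smithy} to pin down $\pi^{*}\eta_{\overline{Y}}=2\eta_{X}$ on the free parts directly. An entirely equivalent route avoids $\pi^{*}$ altogether and uses the projection formula $\pi_{*}(\pi^{*}(a)\cdot b)=a\cdot\pi_{*}(b)$ with $a=\pi_{*}(x)$, $b=y$, which gives $\pi_{*}(x)\cdot\pi_{*}(y)=2\pi_{*}(x\cdot y)$, and the same identification of $\pi_{*}$ on top cohomology then finishes the argument. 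Everything else in the proof is a one-line manipulation.
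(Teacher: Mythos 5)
Your proposal is correct and is essentially the paper's own argument: the paper's one-line proof reads $\pi_{*}(x)\cdot\pi_{*}(y)=\frac{1}{2}\pi^{*}(\pi_{*}(x)\cdot\pi_{*}(y))=\frac{1}{2}\pi^{*}(\pi_{*}(x))\cdot\pi^{*}(\pi_{*}(y))=2x\cdot y$, which is exactly your combination of the ring-homomorphism property of $\pi^{*}$, the relation $\pi^{*}\pi_{*}=1+i^{*}$ on invariant classes, and the degree-$2$ behaviour of $\pi^{*}$ on top cohomology. Your extra care about torsion in $H^{4}(\overline{Y},\Z)$ and the alternative via the projection formula are reasonable elaborations of the same route, not a different proof.
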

\begin{proof}
$\pi_{*}(x)\cdot\pi_{*}(y)=\frac{1}{2}\pi^{*}(\pi_{*}(x)\cdot\pi_{*}(y))=\frac{1}{2}\pi^{*}(\pi_{*}(x))\cdot\pi^{*}(\pi_{*}(y))=2x\cdot y$.
\end{proof}

Let $H^{2}(X,\Z)^{i}\cong U^{3}\oplus E_{8}(-2)$ be an isometry from Proposition \ref{Morri} (*), we have $\pi_{*}(H^{2}(X,\Z)^{i})\cong U^{3}(2)\oplus E_{8}(-4)$.
Moreover, the elements $\pi_{*}(x)$ with $x\in E_{8}(-2)\subset H^{2}(X,\Z)^{i}$ are divisible by $2$ in $H^{2}(\overline{Y},\Z)$.
Indeed, we can write $x=t+i^{*}(t)$ by (*), then $\pi_{*}(x)=2\pi_{*}(t)$.
But what can we say about the divisibility of the elements of $\pi_{*}(U^{3})\simeq U^{3}(2)\subset H^{2}(\overline{Y},\Z)$? This is the main difficulty.

To avoid confusion, we fix an isometry $H^{2}(X,\Z)^{i}\cong U^{3}\oplus E_{8}(-2)$ from Proposition \ref{Morri}, and we will \textbf{write $U^{3}\oplus E_{8}(-2)$ instead of $H^{2}(X,\Z)^{i}$} throughout this section. 
\subsection{Lifting to the resolution of singularities of the quotient}
The variety $\overline{Y}$ is singular, and we want to relate $H^{2}(\overline{Y},\Z)$ to the cohomology of the blowup in the singular locus.
By \cite{Ni1}, Section 5, we know that $i$ has exactly eight fixed points.
Let $\widetilde{\varphi}:\widetilde{X}\rightarrow X$ be the blow-up of $X$ in the eight fixed points of $i$. We denote by $\widetilde{i}$ the involution on $\widetilde{X}$ induced by $i$. Let $\varphi: Y\rightarrow\overline{Y}$ be the blow-up of $\overline{Y}$ in its eight singular points. It is a K3 surface. We have $Y\cong\widetilde{X}/\widetilde{i}$. We get the following commutative diagram:
$$\xymatrix{
 X\ar@(dl,ul)[]^{i} \ar[r]^{\pi}& \overline{Y} & & (2)\\
 \widetilde{X} \ar@(dl,ul)[]^{\widetilde{i}} \ar[u]^{\widetilde{\varphi}} \ar[r]^{\widetilde{\pi}} & Y. \ar[u]_{\varphi} & & }$$
The study of the lattices $H^{2}(Y,\Z)$ and $H^{2}(\widetilde{X},\Z)$ will give the required information on $\pi_{*}(U^{3})\subset H^{2}(\overline{Y},\Z)$. 
We have the following diagram of cohomology groups:

$$\xymatrix{
H^{2}(\overline{Y},\Z)\ar@/_/[r]_{\pi^{*}} \ar[d]_{\varphi^{*}}& H^{2}(X,\Z)\ar@/_/[l]^{\pi_{*}}\ar[d]^{\widetilde{\varphi}^{*}} & (3)\\
H^{2}(Y,\Z)\ar@/^/[r]^{\widetilde{\pi}^{*}}& \ar@/^/[l]_{\widetilde{\pi}_{*}} H^{2}(\widetilde{X},\Z). & 
}$$

Denote $E_{k}$, $k=1,...,8$ the exceptional divisors in $\widetilde{X}$ over the fixed points of $i$ in $X$ and $N_{k}=\widetilde{\pi}(E_{k})$ their images in $Y$; these are $(-2)$-curves.
Then we have:
$$H^{2}(\widetilde{X},\Z)\cong H^{2}(X,\Z)\oplus(\oplus_{i=1}^{8}\Z E_{i})\cong U^{3}\oplus E_{8}(-1)^{2}\oplus \left\langle -1\right\rangle^{8},$$

$$\widetilde{\varphi}^{*}:H^{2}(X,\Z)\rightarrow H^{2}(\widetilde{X},\Z)= H^{2}(X,\Z)\oplus(\oplus_{i=1}^{8}\Z E_{i}),\ \ \ \ \ x\longmapsto(x,0),$$
and $$\widetilde{\varphi}_{*}: H^{2}(\widetilde{X},\Z)\rightarrow H^{2}(X,\Z), \ \ \ \ \ (x,e)\rightarrow x.$$

Since Lemma \ref{dot} is also true for $\widetilde{\pi}_{*}$, the lattice $\widetilde{\pi}_{*}(\widetilde{\varphi}^{*}(U^{3}))\subset H^{2}(Y,\Z)$ is isometric to $U^{3}(2)$. We will show that this lattice is primitive in $H^{2}(Y,\Z)$ and then deduce that $\pi_{*}(U^{3})\subset H^{2}(\overline{Y},\Z)$ is also primitive. To this end, we will use the fact that the lattice $H^{2}(Y,\Z)$ is unimodular. 

We have $\widetilde{\varphi}\circ\widetilde{i}=i\circ\widetilde{\varphi}$, so:

$$\xymatrix@R=0pt{
\widetilde{i}^{*}: H^{2}(\widetilde{X},\Z)\eq[r] & U^{3}\oplus E_{8}(-1)\oplus E_{8}(-1)\oplus \left\langle -1\right\rangle^{8}\ar[r]& H^{2}(\widetilde{X},\Z)\\ 
 & (u,x,y,n)\ar[r]& (u,y,x,n).
}$$

Then we have $H^{2}(\widetilde{X},\Z)^{\widetilde{i}}\cong U^{3}\oplus E_{8}(-2)\oplus \left\langle -1\right\rangle^{8}$. Corollary \ref{piso} and Lemma \ref{dot} are also true for $Y$ and $\widetilde{X}$, hence $\widetilde{\pi}_{*}(H^{2}(\widetilde{X},\Z)^{\widetilde{i}})\cong U^{3}(2)\oplus E_{8}(-4)\oplus\left\langle -2\right\rangle^{8}$.
And as we remarked before, the elements $\widetilde{\pi}_{*}(x)$ with $x\in E_{8}(-2)\subset H^{2}(\widetilde{X},\Z)^{\widetilde{i}}$ are divisible by $2$ in $H^{2}(Y,\Z)$.
Then, we have in $H^{2}(Y,\Z)$ the sublattice $\widetilde{\pi}_{*}(\widetilde{\varphi}^{*}(U^{3}))\oplus\frac{1}{2}\widetilde{\pi}_{*}(\widetilde{\varphi}^{*}(E_{8}(-2)))\oplus \widetilde{\pi}_{*}(\left\langle -1\right\rangle^{8})\simeq U^{3}(2)\oplus E_{8}(-1)\oplus\left\langle -2\right\rangle^{8}$.
Now, we can state the following two lemmas.
\begin{lemme}\label{primitif}
The minimal primitive sublattice of $H^{2}(Y,\Z)$ containing the $N_{i}$ is 
$$N=\left\langle N_{1},...,N_{8},\hat{N}\right\rangle,$$
where $$\hat{N}:=\frac{N_{1}+...+N_{8}}{2}.$$
It is called the \textit{Nikulin lattice}.
\end{lemme}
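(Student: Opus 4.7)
Two things need to be shown: (a) that $\hat{N}\in H^{2}(Y,\Z)$, so that the sublattice $\langle N_{1},\dots,N_{8},\hat{N}\rangle$ is well-defined and contains all the $N_{i}$; and (b) that no further half-integral combinations of the $N_{i}$ are integral. For (a), I would invoke the general theory of cyclic covers: the map $\widetilde{\pi}\colon\widetilde{X}\to Y$ is a double cover ramified along $\bigsqcup E_{k}$, so its branch divisor $\sum N_{k}$ is $2$-divisible in $\Pic(Y)$. The associated line bundle $L$ with $L^{\otimes 2}\cong\mathcal{O}_{Y}(\sum N_{k})$ has $c_{1}(L)=\hat{N}$, giving $\hat{N}\in\Pic(Y)\subseteq H^{2}(Y,\Z)$.

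For (b), since $\langle N_{1},\dots,N_{8}\rangle\cong\langle -2\rangle^{8}$ has discriminant group $(\Z/2\Z)^{8}$, any element of the saturation $N$ can be represented modulo $\langle N_{1},\dots,N_{8}\rangle$ by a class of the form $v_{S}:=\tfrac{1}{2}\sum_{k\in S}N_{k}$ for some subset $S\subseteq\{1,\dots,8\}$; the task reduces to proving $S\in\{\emptyset,\{1,\dots,8\}\}$. A first constraint is free: since $H^{2}(Y,\Z)$ is even, $v_{S}^{2}=-|S|/2$ must lie in $2\Z$, whence $|S|\in\{0,4,8\}$. The main obstacle is to rule out the remaining case $|S|=4$.

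To exclude it, I would argue geometrically. The class $v_{S}$ is algebraic (a rational combination of curve classes on the K3 surface $Y$), hence $v_{S}\in\Pic(Y)$, with $v_{S}^{2}=-2$. Riemann--Roch on $Y$ gives $\chi(v_{S})=2+v_{S}^{2}/2=1$, so by Serre duality either $v_{S}$ or $-v_{S}$ is effective. Using $v_{S}\cdot N_{j}=-1$ for $j\in S$ and $v_{S}\cdot N_{j}=0$ for $j\notin S$, an effective representative $D=\sum a_{k}N_{k}+D'$ of $v_{S}$ (with $D'\geq 0$ not containing any $N_{k}$) satisfies $-2a_{j}+D'\cdot N_{j}=-1$ for $j\in S$, which forces $a_{j}\geq 1$ for all $j\in S$. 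Consequently $D-\sum_{j\in S}N_{j}\geq 0$ is an effective representative of $v_{S}-\sum_{j\in S}N_{j}=-v_{S}$. By symmetry, any effective representative of $-v_{S}$ yields one of $v_{S}$ by adding $\sum_{j\in S}N_{j}$. Thus both $v_{S}$ and $-v_{S}$ are effective, and since $h^{0}(\mathcal{O}_{Y})=1$ this forces $v_{S}=0$, contradicting $v_{S}^{2}=-2$. The delicate step, and the one I expect to require the most care, is the effectivity reduction: one has to verify that the combinatorics of the intersection numbers really force each $N_{j}$ ($j\in S$) to appear in $D$, and that the symmetric exchange between $v_{S}$ and $-v_{S}$ closes the argument.
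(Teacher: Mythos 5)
Your proof is correct, but it follows a genuinely different route from the one the paper actually develops. The paper disposes of this lemma twice: once by citing Morrison (\S 5 of \cite{Mo}), and once, in Section \ref{OtherP}, by a self-contained argument that computes $H^{2}(U,\Z)$ for $U=Y\setminus\cup N_{i}$ via equivariant cohomology of the free locus of the involution, finds that its torsion is exactly $\Z/2\Z$, identifies that torsion with $N/\oplus_{i}\Z N_{i}$ through the Thom--Gysin sequence of Proposition \ref{HU0}, and then pins down the unique nontrivial coset using the $2$-divisibility of $N_{1}+\dots+N_{8}$ coming from $\widetilde{\pi}_{*}\mathcal{O}_{\widetilde{X}}=\mathcal{O}_{Y}\oplus\mathcal{L}$. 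Your part (a) coincides with that last step, but your part (b) is the classical surface-theoretic argument: reduce to $v_{S}=\tfrac12\sum_{k\in S}N_{k}$, use evenness of the K3 lattice to force $|S|\in\{0,4,8\}$, and kill $|S|=4$ by Riemann--Roch and the effectivity bookkeeping with the disjoint $(-2)$-curves (each step of which checks out: $2a_{j}=1+D'\cdot N_{j}$ forces $a_{j}\geq 1$, so both $v_{S}$ and $-v_{S}$ become effective and must vanish). What each approach buys: yours is shorter, elementary, and needs nothing beyond Riemann--Roch on a K3; the paper's is heavier but is designed precisely so that it survives the passage to the fourfold $\widetilde{M}$ in Section 2, where no such Riemann--Roch/effectivity argument on divisors is available --- the paper states explicitly that the classical proof ``is not possible to generalize \dots to the case of the involution on the Hilbert scheme.'' So your argument is a valid proof of the lemma as stated, but it would not serve the paper's larger purpose.
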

\begin{proof}
See \cite{Mo} section 5. It is not possible to generalize this proof to the case of the involution on the Hilbert scheme. We will give another proof such a generalization in Section \ref{OtherP}.
\end{proof}
\begin{lemme}\label{U2}
The sublattice $\widetilde{\pi}_{*}(\widetilde{\varphi}^{*}(U^{3}))\subset H^{2}(Y,\Z)$ is primitive.
\end{lemme}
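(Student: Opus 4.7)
The plan is to identify $V:=\widetilde{\pi}_{*}(\widetilde{\varphi}^{*}(U^{3}))$ as a direct orthogonal summand of the orthogonal complement of a sublattice already known to be primitive, and then to invoke the unimodularity of $H^{2}(Y,\Z)$. Set $E_{8}':=\frac{1}{2}\widetilde{\pi}_{*}(\widetilde{\varphi}^{*}(E_{8}(-2)))\cong E_{8}(-1)$, and let $N$ be the Nikulin lattice from Lemma \ref{primitif}. The first step is to verify that $V$, $E_{8}'$ and $N$ are pairwise orthogonal in $H^{2}(Y,\Z)$. This is a direct consequence of Lemma \ref{dot} applied to $\widetilde{\pi}_{*}$, combined with three ingredients: the orthogonal decomposition $H^{2}(X,\Z)^{i}=U^{3}\oplus E_{8}(-2)$ from Proposition \ref{Morri}, the orthogonality between $\widetilde{\varphi}^{*}(H^{2}(X,\Z))$ and each exceptional divisor $E_{k}$ in $H^{2}(\widetilde{X},\Z)$, and the identity $N_{k}=\widetilde{\pi}_{*}(E_{k})$.

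The second step is a discriminant count. One computes $\discr(V)=\discr(U(2)^{3})=2^{6}$, $\discr(E_{8}')=1$, and $\discr(N)=2^{6}$, the last because $\left\langle N_{1},\ldots,N_{8}\right\rangle\cong\left\langle -2\right\rangle^{8}$ has discriminant $2^{8}$ and $N$ is an overlattice of index $2$ by Lemma \ref{primitif}. Since $Y$ is a K3 surface, $H^{2}(Y,\Z)$ is unimodular, and the primitivity of $N$ then forces $N^{\bot}$ to be primitive in $H^{2}(Y,\Z)$ with $|\discr(N^{\bot})|=2^{6}$.

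Now $V\oplus E_{8}'\subseteq N^{\bot}$, both sides have rank $14$, and the absolute values of the discriminants coincide, so $V\oplus E_{8}'=N^{\bot}$. As a direct orthogonal summand of the primitive sublattice $N^{\bot}\subset H^{2}(Y,\Z)$, the sublattice $V$ is itself primitive, which is the desired conclusion. The only substantive input beyond elementary bookkeeping is the primitivity of the Nikulin lattice $N$ (Lemma \ref{primitif}); once this is granted, everything else is a transparent consequence of the unimodularity of the K3 lattice $H^{2}(Y,\Z)$.
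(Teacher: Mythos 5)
Your proof is correct and follows essentially the same route as the paper: both arguments rest on the primitivity of the Nikulin lattice $N$ (Lemma \ref{primitif}), the unimodularity of $H^{2}(Y,\Z)$, and a discriminant count over the pairwise orthogonal pieces $N$, $E_{8}(-1)$ and $U(2)^{3}$. The only cosmetic difference is that the paper identifies $\bigl(N\oplus \frac{1}{2}\widetilde{\pi}_{*}(\widetilde{\varphi}^{*}(E_{8}(-2)))\bigr)^{\bot}$ directly with $\widetilde{\pi}_{*}(\widetilde{\varphi}^{*}(U^{3}))$, whereas you identify $N^{\bot}$ with $U(2)^{3}\oplus E_{8}(-1)$ and then extract the summand.
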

\begin{proof}
We denote $S=N\oplus \frac{1}{2}\widetilde{\pi}_{*}(\widetilde{\varphi}^{*}(E_{8}(-2)))\subset H^{2}(Y,\Z)$. Then $S^{\bot}$ is the minimal primitive overlattice of $\widetilde{\pi}_{*}(\widetilde{\varphi}^{*}(U^{3}))\simeq U^{3}(2)$.
Since $H^{2}(Y,\Z)$ is unimodular, we have $A_{S}\simeq A_{S^{\bot}}$. 
In particular, we have $|\discr S|=|\discr S^{\bot}|$.
But $|\discr S|=|\discr N|=\frac{2^{8}}{4}=2^{6}$ and $|\discr U^{3}(2)|=2^{6}$.
Therefore $S^{\bot}=\widetilde{\pi}_{*}(\widetilde{\varphi}^{*}(U^{3}))$, and $\widetilde{\pi}_{*}(\widetilde{\varphi}^{*}(U^{3}))$ is primitive in $H^{2}(Y,\Z)$.

See also Lemma 1.10 of \cite{Alessandra}.
\end{proof}

\subsection{Proof of Proposition \ref{Nikuinvo}}\label{proof}
Now it remains to show how we deduce the primitivity of $\pi_{*}(U^{3})$ in $H^{2}(\overline{Y},\Z)$ from the primitivity of 
$\widetilde{\pi}_{*}(\widetilde{\varphi}^{*}(U^{3}))$ in $H^{2}(Y,\Z)$.
%For avoiding all confusion about notation, in this section, the lattice $U^{3}$ will be only the lattice in $H^{2}(X,\Z)$
%with $H^{2}(X,\Z)\simeq U^{3}\oplus E_{8}(-1)\oplus E_{8}(-1)$ and $i^{*}$ which satisfies (1). 
We complete the diagram (3) as follows:

$$\xymatrix{S \incl[d] & U^{3} \incl[d]\\
H^{2}(\overline{Y},\Z)\ar@/_/[r]_{\pi^{*}} \ar[d]_{\varphi^{*}}& H^{2}(X,\Z)\ar@/_/[l]^{\pi_{*}}\ar[d]^{\widetilde{\varphi}^{*}}\\
H^{2}(Y,\Z)\ar@/^/[r]^{\widetilde{\pi}^{*}}& \ar@/^/[l]_{\widetilde{\pi}_{*}} H^{2}(\widetilde{X},\Z)\\
\widetilde{\pi}_{*}(\widetilde{\varphi}^{*}(U^{3}))\incl[u]& \widetilde{\varphi}^{*}(U^{3}).\incl[u]}$$
Here $\mathcal{S}$ is the minimal primitive overlattice of $\pi_{*}(U^{3})$ in $H^{2}(\overline{Y},\Z)$, and we know that $\widetilde{\pi}_{*}(\widetilde{\varphi}^{*}(U^{3}))$ is primitive in $H^{2}(Y,\Z)$.
We are going to show that $S=\pi_{*}(U^{3})$.

The proof use the three ingredients: 
\begin{itemize}
\item the primitivity of $\widetilde{\pi}_{*}(\widetilde{\varphi}^{*}(U^{3}))$,
\item the knowledge of the map:
$$\widetilde{\varphi}^{*}:H^{2}(X,\Z)\rightarrow H^{2}(\widetilde{X},\Z)= H^{2}(X,\Z)\oplus(\oplus_{i=1}^{8}\Z E_{i}),\ \ \ \ \ x\longmapsto(x,0),$$
\item the commutativity of the last diagram.
\end{itemize}
What follows is a formal proof.

Let $x\in S$, $\pi^{*}(x)\in U^{3}\subset H^{2}(X,\Z)$ and $\widetilde{\varphi}^{*}(\pi^{*}(x))\in \widetilde{\varphi}^{*}(U^{3})$.
Then by commutativity, $\widetilde{\pi}^{*}(\varphi^{*}(x))\in \widetilde{\varphi}^{*}(U^{3})$, so $2\varphi^{*}(x)\in\widetilde{\pi}_{*}(\widetilde{\varphi}^{*}(U^{3}))$. Since $\widetilde{\pi}_{*}(\widetilde{\varphi}^{*}(U^{3}))$ is primitive in  $H^{2}(Y,\Z)$, we have $\varphi^{*}(x)\in\widetilde{\pi}_{*}(\widetilde{\varphi}^{*}(U^{3}))$. It means that $\varphi^{*}(x)=\widetilde{\pi}_{*}(\widetilde{\varphi}^{*}(u))$, where $u\in U^{3}$.
Then we have $\widetilde{\pi}^{*}(\varphi^{*}(x))=2\widetilde{\varphi}^{*}(u)$, and by commutativity, we get $\widetilde{\varphi}^{*}(\pi^{*}(x))=2\widetilde{\varphi}^{*}(u)$. Therefore $\pi^{*}(x)=2u$ and $2x=\pi_{*}(\pi^{*}(x))=2\pi_{*}(u)$, so $x=\pi_{*}(u)\in \pi_{*}(U^{3})$.

\subsection{Another proof of Lemma \ref{primitif}}\label{OtherP}
In this section, we give a different proof of Lemma \ref{primitif}, which could be generalized to many other examples.
Let $U=Y\setminus \cup_{i=1}^{8}N_{i}$.  We will use the equivariant cohomology to compute $H^{2}(U,\Z)$.
Before this calculation, we give the link between $H^{2}(U,\Z)$ and $H^{2}(Y,\Z)$.
\subsubsection{The groups $H^{2}(U,\Z)$ and $H^{2}(Y,\Z)$}\label{UY}
\begin{prop}\label{HU0}
We have:
$$H^{2}(U,\Z)=H^{2}(Y,\Z)/\oplus_{i=1}^{8}\Z N_{i}.$$
\end{prop}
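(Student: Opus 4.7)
The plan is to deduce the proposition from the long exact sequence of the pair $(Y,U)$, after identifying the relative cohomology groups $H^{\ast}(Y,U)$ via excision and the Thom isomorphism applied to the normal bundles of the $N_{i}$'s.

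First I would set $N=\bigsqcup_{i=1}^{8}N_{i}$ and choose a tubular neighborhood $V$ of $N$ in $Y$, so that excision gives $H^{k}(Y,U)\cong H^{k}(V,V\setminus N)$. Since the $N_{i}$ are smooth, disjoint rational curves in the smooth surface $Y$, each connected component of $V$ is diffeomorphic to the total space of the normal bundle of some $N_{i}$, which has real rank $2$. The Thom isomorphism therefore yields
\[
H^{k}(Y,U)\ \cong\ \bigoplus_{i=1}^{8} H^{k-2}(N_{i},\Z).
\]
Because $N_{i}\cong\mathbb{P}^{1}$, this immediately gives $H^{1}(Y,U)=0$, $H^{2}(Y,U)\cong\Z^{8}$, and $H^{3}(Y,U)=0$.

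Next I would write down the relevant piece of the long exact sequence of the pair:
\[
0\longrightarrow H^{1}(Y,\Z)\longrightarrow H^{1}(U,\Z)\longrightarrow \Z^{8}\xrightarrow{\ \alpha\ } H^{2}(Y,\Z)\longrightarrow H^{2}(U,\Z)\longrightarrow 0.
\]
A standard identification of the Thom class with the Poincaré dual of the zero section shows that $\alpha$ sends the $i$-th generator of $\Z^{8}$ to the class $[N_{i}]\in H^{2}(Y,\Z)$. Since $Y$ is a K3 surface we have $H^{1}(Y,\Z)=0$, so it only remains to check that $\alpha$ is injective; granting this, the exact sequence collapses to $H^{2}(U,\Z)\cong H^{2}(Y,\Z)/\bigoplus_{i=1}^{8}\Z N_{i}$, which is the claim.

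The only non-formal point is the injectivity of $\alpha$, and it is easy: the $N_{i}$ are pairwise disjoint and each satisfies $N_{i}^{2}=-2$, so the Gram matrix of the classes $[N_{1}],\dots,[N_{8}]$ with respect to the cup product is $-2\,\mathrm{Id}_{8}$, which is non-degenerate. Hence the $[N_{i}]$ are $\mathbb{Q}$-linearly independent in $H^{2}(Y,\Z)\otimes\mathbb{Q}$, and since both source and target of $\alpha$ are free abelian, this forces $\alpha$ to be injective over $\Z$. The main thing to be careful about is the identification of the image of the Thom class under $\alpha$ with the fundamental class $[N_{i}]$, but this is standard and not really an obstacle. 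This completes the proof.
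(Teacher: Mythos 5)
Your argument is correct and follows the same skeleton as the paper's proof: the long exact sequence of the pair $(Y,U)$ combined with the Thom isomorphism $H^{k}(Y,U,\Z)\cong\oplus_{i=1}^{8}H^{k-2}(N_{i},\Z)$, giving $H^{2}(Y,U,\Z)\cong\Z^{8}$ and $H^{3}(Y,U,\Z)=0$. Where you genuinely diverge is in how the left end of the sequence is controlled. The paper shows that $H^{1}(U,\Z)=0$, by combining the universal coefficient theorem with Lemma 1.6 of Fujiki applied to $U\simeq\overline{Y}\setminus\sing\overline{Y}$; this forces the Gysin map $\Z^{8}\to H^{2}(Y,\Z)$ to be injective. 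You instead prove injectivity of that map directly from the intersection theory of the curves: the $N_{i}$ are disjoint $(-2)$-curves, so their Gram matrix is $-2\,\mathrm{Id}_{8}$, which is nondegenerate, whence the classes $[N_{i}]$ are linearly independent and the image of the Gysin map is genuinely $\oplus_{i=1}^{8}\Z N_{i}$. Your route is more elementary and self-contained (it uses only data already recorded in the paper, namely that the $N_{i}$ are disjoint $(-2)$-curves in the K3 surface $Y$, and avoids the appeal to Fujiki's lemma on the smooth locus of the quotient), while the paper's route yields the extra fact $H^{1}(U,\Z)=0$ and is phrased so as to generalize to the fourfold situation of Section 2, where the analogous divisors are not curves with an obviously nondegenerate intersection matrix. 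Both arguments are complete; the only point you rightly flag as needing care, the identification of the Thom class with the Poincar\'e dual of $[N_{i}]$, is also taken as standard in the paper.
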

\begin{proof}
Look at the following exact sequence:
$$\xymatrix@C=20pt{H^{1}(U,\Z)\ar[r]& H^{2}(Y,U,\Z)\ar[r] & H^{2}(Y,\Z)\ar[r] & H^{2}(U,\Z)\ar[r] & H^{3}(Y,U,\Z)}$$

Since $Y$ is smooth, by Thom's isomorphism (see Section 11.1.2 of \cite{Voisin}), $$H^{i}(Y,U,\Z)\cong H^{i-2}(\cup_{i=1}^{8}N_{i},\Z).$$
Then $H^{2}(Y,U,\Z)=\oplus_{i=1}^{8}H^{0}(N_{i},\Z)=\Z^{8}$ and $H^{3}(Y,U,\Z)=\oplus_{i=1}^{8}H^{3}(N_{i},\Z)=0$.
Moreover we have the universal coefficient theorem for a variety $V$.
$$\xymatrix{0\ar[r] & \Ext(H_{n-1}(V),\Z)\ar[r] & H^{n}(V,\Z)\ar[r] & \Hom(H_{n}(V),\Z)\ar[r]& 0.}$$
Since $U\simeq\overline{Y}\setminus \sing Y$, the last exact sequence and Lemma 1.6 of \cite{Fujiki} give $H^{1}(U,\Z)=0$.
Then we get $$H^{2}(U,\Z)=H^{2}(Y,\Z)/\oplus_{i=1}^{8}\Z N_{i}.$$
\end{proof}

Consequently we can deduce the behaviour of the $N_{i}$ in $H^{2}(Y,\Z)$ from the cohomology group $H^{2}(U,\Z)$.
%Moreover, it possible to calculate $H^{2}(U,\Z)$ by using equivariant cohomology. It is that we are going to do.
\subsubsection{Reminder on equivariant cohomology}

%We begin with some recalls about equivariant cohomology.
Let $M$ be a variety and $G$ a group acting on $M$. 
%Let $EG$ a universal $G$-bundle, we define
%$$M_{G}=EG\times_{G}M$$.
%The group $G$ acts on the right of $EG$ and on the left on $M$, the notation means that we identify $(pg,q)\sim (p,gq)$ for $p\in EG$, $q\in M$ and $g\in G$. 
Let $EG\rightarrow BG$ be a universal $G$-bundle in the category of CW-complexes. 
Denote by $M_{G}=EG\times_{G}M$ the orbit space for the diagonal action of $G$ on the product $EG\times M$ 
and $f: M_{G}\rightarrow BG$ the map induced by the projection onto the first factor. 
The map $f$ is a locally trivial fibre bundle with typical fibre $M$ and structure group $G$.
We define $H_{G}^{*}(M,\Z):=H^{*}(EG\times_{G}M,\Z)$ the G-equivariant cohomology groups of $M$.
We recall that when $G$ acts freely, we have an isomorphism:
$$H^{*}(M/G,\Z)\simeq H_{G}^{*}(M,\Z),$$
induced by the natural map $\sigma : EG\times_{G}M\rightarrow M/G$,
see for instance \cite{Bott}.
Moreover, the Leray-Serre spectral sequence associated to the map $f$ gives a spectral sequence converging to the equivariant cohomology:
$$E_{2}^{p,q}:=H^{p}(G;H^{q}(M,\Z))\Rightarrow H_{G}^{p+q}(M,\Z).$$

\subsubsection{Application}
\begin{prop}
We have:
$$H^{2}(U,\Z)\simeq H^{2}(X,\Z)^{i}\oplus \Z/2\Z.$$
\end{prop}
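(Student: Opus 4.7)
The plan is to identify $U$ with $V/\langle i\rangle$ for $V := X \setminus \Fix(i)$ and then apply the Leray--Serre spectral sequence of the previous subsection. Under the identification $U \simeq \overline{Y} \setminus \sing \overline{Y}$, and since $i$ has exactly $8$ fixed points on $X$ and acts freely elsewhere, we have $U \simeq V/\langle i\rangle$ with a free $\Z/2\Z$-action, so $H^{*}(U, \Z) \simeq H_{\Z/2\Z}^{*}(V, \Z)$.

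First I would compute the low-degree cohomology of $V$. Since $\Fix(i)$ is a finite set of points in the complex surface $X$, Thom's isomorphism gives $H^{k}(X, V, \Z) \simeq H^{k-4}(\Fix(i), \Z)$, which vanishes for $k \le 3$. The long exact sequence of the pair then yields $H^{0}(V, \Z) = \Z$, $H^{1}(V, \Z) = 0$, and $H^{2}(V, \Z) \simeq H^{2}(X, \Z)$, with the last isomorphism $i$-equivariant because the inclusion $V \hookrightarrow X$ is.

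Next I would feed this into the spectral sequence $E_{2}^{p,q} = H^{p}(\Z/2\Z; H^{q}(V, \Z)) \Rightarrow H^{p+q}_{\Z/2\Z}(V, \Z)$. The terms contributing to total degree $2$ are
$$E_{2}^{0,2} = H^{2}(X, \Z)^{i}, \qquad E_{2}^{1,1} = 0, \qquad E_{2}^{2,0} = H^{2}(\Z/2\Z; \Z) = \Z/2\Z.$$
A quick inspection of the differentials $d_{r} : E_{r}^{p,q} \to E_{r}^{p+r, q-r+1}$ shows that every differential into or out of $(0,2)$ and $(2,0)$ lands in a group already zero at $E_{2}$, using that $E_{2}^{2,1} = 0$ and that the odd-degree group cohomology of $\Z/2\Z$ with trivial $\Z$-coefficients vanishes. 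The spectral sequence therefore degenerates in the relevant range, producing an extension
$$0 \to \Z/2\Z \to H^{2}(U, \Z) \to H^{2}(X, \Z)^{i} \to 0,$$
which splits because $H^{2}(X, \Z)^{i}$ is torsion-free by Proposition \ref{Morri}, yielding the claimed isomorphism.

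The main technical point will be checking the equivariance of the restriction isomorphism $H^{2}(V, \Z) \simeq H^{2}(X, \Z)$ and the control of higher differentials; both are routine but must be handled cleanly. It is worth noting that the extra $\Z/2\Z$ summand does not come from any $2$-torsion in the cohomology of $X$ but rather from the group cohomology of $\Z/2\Z$ with constant integer coefficients, showing up in the corner $(p,q) = (2, 0)$ of the $E_{2}$-page.
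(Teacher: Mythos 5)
Your proposal is correct and follows essentially the same route as the paper: identify $H^{2}(U,\Z)$ with the equivariant cohomology $H^{2}_{G}(V,\Z)$ for $V=X\setminus\Fix i$, compute $H^{\leq 2}(V,\Z)$ from the pair $(X,V)$ via the Thom isomorphism, and read the answer off the Leray--Serre spectral sequence using the group cohomology of $\Z/2\Z$. The only (harmless) divergences are in bookkeeping: the paper disposes of a differential involving $H^{3}(V,\Z)$ by showing that group is torsion-free, whereas you note the relevant $E_{2}$-terms already vanish, and you make explicit the splitting of the resulting extension using the freeness of $H^{2}(X,\Z)^{i}$, which the paper leaves implicit.
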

\begin{proof}
In our case $G=I:=\left\{\id, i\right\}$ and $M=V:=X\setminus \Fix i$.
Then, we have $$H^{2}(U,\Z)\simeq H_{I}^{2}(V,\Z)$$
and
$$E_{2}^{p,q}:=H^{p}(I;H^{q}(V,\Z))\Rightarrow H_{I}^{p+q}(V,\Z).$$
By using the following resolution by free $\Z[I]$-modules
$$\xymatrix@C=10pt{...\ar[r]^{i-1}& \Z[I]\ar[r]^{i+1} & \Z[I]\ar[r]^{i-1} & \Z[I]\ar[r] & \Z},$$
we get $$H^{0}(I;H^{q}(V,\Z))=H^{q}(V,\Z)^{i},$$
\begin{align*}
&H^{2k+1}(I;H^{q}(V,\Z))\\
&=\left\{\left. m\in H^{q}(V,\Z)\right|i^{*}(m)=-m\right\}/\left\{\left. i^{*}(m)-m\right|m\in H^{q}(V,\Z) \right\},\\
&H^{2k+2}(I;H^{q}(V,\Z))\\
&=\left\{\left. m\in H^{q}(V,\Z)\right|i^{*}(m)=m\right\}/\left\{\left. i^{*}(m)+m\right|m\in H^{q}(V,\Z) \right\},
\end{align*}
for all $k\in\mathbb{N}$. Then by Proposition \ref{Morri} we have $H^{1}(I,H^{2}(V,\Z))=0$.
The unique difficulty remaining for the calculation is to see that the differential of the page $E_{3}$,
$d_{3}:H^{2}(I;H^{0}(V,\Z))\rightarrow H^{0}(I;H^{3}(V,\Z))$, is trivial. 
Consider the exact sequence:
$$\xymatrix@C=20pt{H^{3}(X,\Z)\ar[r]& H^{3}(V,\Z)\ar[r] & H^{4}(X,V,\Z)}.$$
Since $H^{3}(X,\Z)=0$ and $H^{4}(X,V,\Z)\simeq \Z^{8}$, the group $H^{3}(V,\Z)$ is torsion free.
It follows from $H^{2}(I;H^{0}(V,\Z))=\Z/2\Z$ and $H^{0}(I;H^{3}(V,\Z))=H^{3}(V,\Z)^{i}$ that $d_{3}=0$.

Hence the spectral sequence gives 
$$H_{I}^{2}(V,\Z)=H^{0}(I;H^{2}(V,\Z))\oplus H^{1}(I;H^{1}(V,\Z))\oplus H^{2}(I;H^{0}(V,\Z)).$$
It follows that
$$H_{I}^{2}(V,\Z)=H^{2}(V,\Z)^{i}\oplus \Z/2\Z.$$
Moreover $H^{2}(V,\Z)=H^{2}(X,\Z)$.
Indeed, we have the following exact sequence:
$$\xymatrix@C=20pt{ H^{2}(X,V,\Z)\ar[r] & H^{2}(X,\Z)\ar[r] & H^{2}(V,\Z)\ar[r] & H^{3}(X,V,\Z)},$$
with $H^{2}(X,V,\Z)=H^{3}(X,V,\Z)=0$.
It implies
$$H^{2}(U,\Z)\simeq H^{2}(X,\Z)^{i}\oplus \Z/2\Z.$$
\end{proof}
%On pourrait faire un théorème plus général sur des surfaces simplement connexe.

%As $\Z$-module, we have the isomorphism $H^{2}(Y,\Z)\simeq H^{2}(X,\Z)^{i}\oplus N$, where $N$ is the minimal primitive sublattice of $H^{2}(Y,\Z)$ containing the $N_{i}$.
%Then by (3) and (4), we get that $N=\left\langle N_{1},...,N_{8},\widetilde{N}\right\rangle$, where $\widetilde{N}=\frac{\epsilon_{1}N_{1}+...+\epsilon_{8}N_{8}}{2}$ with $\epsilon_{i}$ equal at $0$ or $1$.
\subsubsection{End of the proof}\label{end1}
We have seen that the torsion of $H^{2}(U,\Z)$ is equal to $\Z/2\Z$. Moreover, we know by Proposition \ref{HU0} that the torsion of $H^{2}(U,\Z)$ is isomorphic to $N/\oplus_{i=1}^{8}\Z N_{i}$, where $N$ is the minimal primitive overlattice of $H^{2}(Y,\Z)$ containing the $N_{i}$.
Then $N=\left\langle N_{1},...,N_{8},\widetilde{N}\right\rangle$, where $\widetilde{N}=\frac{\epsilon_{1}N_{1}+...+\epsilon_{8}N_{8}}{2}$ with $\epsilon_{i}$ equal to $0$ or $1$, not all equal to $0$.
That is to say, there is a unique non trivial element in $N/\oplus_{i=1}^{8}\Z N_{i}$ which corresponds to the torsion $\Z/2\Z$ of $H^{2}(U,\Z)$.
But, we still know that $N_{1}+...+N_{8}$ is divisible by $2$, indeed, we have $\widetilde{\pi}_{*}(\mathcal{O}_{\widetilde{X}})=\mathcal{O}_{Y}\oplus\mathcal{L}$, with $\mathcal{L}^{2}=\mathcal{O}_{Y}(-N_{1}-...-N_{8})$.
Therefore, we get Lemma \ref{primitif}.
\subsection{A general technique}
Proof of Lemma \ref{U2}, Section \ref{proof}, and Section \ref{OtherP} provide a general method to calculate second cohomology lattices of surfaces quotiented by a finite group.
For example, with exactly the same method, we can prove the following proposition.
\begin{prop}
Let $A$ be a complex torus of dimension 2. We denote $\overline{A}=A/-\id$. Then $H^{2}(\overline{A},\Z)$ endowed with the cup product is isometric to $U(2)^{3}$.
\end{prop}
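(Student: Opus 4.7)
The plan is to apply the machinery of this section verbatim, with $A$ in place of $X$, $-\id$ in place of $i$, and the Kummer K3 surface in place of the resolution $Y$ of $\overline{Y}$. The starting input is that $(-\id)^*$ acts trivially on $H^{2}(A,\Z)\cong U^3$ (pullback by the inverse map multiplies a degree-$k$ form by $(-1)^k$), so the invariant lattice is all of $U^3$ and there is no $E_8(-2)$ summand. Applying Corollary \ref{piso} and Lemma \ref{dot}, the sublattice $\pi_{*}(H^{2}(A,\Z))\subset H^{2}(\overline{A},\Z)$ carries the form of $U^3(2)$. Since $\overline{A}$ is simply connected (Lemma 1.2 of \cite{Fujiki}), $\pi^{*}$ is injective, and it is rationally surjective by Corollary \ref{piso}, so $H^{2}(\overline{A},\Z)$ has rank $6$. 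Hence it suffices to prove that $\pi_{*}(H^{2}(A,\Z))$ is primitive in $H^{2}(\overline{A},\Z)$; a rank comparison then forces $H^{2}(\overline{A},\Z)=\pi_{*}(H^{2}(A,\Z))\cong U(2)^3$.

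To establish primitivity I would lift to the resolution, reproducing diagram (3). The fixed locus of $-\id$ is the $2$-torsion subgroup $A[2]$, of cardinality $16$. Let $\widetilde{\varphi}:\widetilde{A}\to A$ be the blowup at these $16$ points and $\widetilde{i}$ the induced involution; then $\widetilde{i}$ acts freely and the quotient $Y:=\widetilde{A}/\widetilde{i}$ is the Kummer K3 surface, carrying $16$ exceptional $(-2)$-curves $N_{1},\dots,N_{16}$. One has $H^{2}(\widetilde{A},\Z)\cong U^3\oplus\langle -1\rangle^{16}$ and $\widetilde{i}^{*}$ acts as the identity on it. The formal commutative-diagram argument of Section \ref{proof}, applied to $(A,\widetilde{A},\overline{A},Y)$, reduces the primitivity of $\pi_{*}(H^{2}(A,\Z))$ in $H^{2}(\overline{A},\Z)$ to the primitivity of $\widetilde{\pi}_{*}(\widetilde{\varphi}^{*}(U^3))\cong U^3(2)$ in the unimodular lattice $H^{2}(Y,\Z)\cong U^3\oplus E_{8}(-1)^{2}$.

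For this latter primitivity I would imitate Lemma \ref{U2}. The analog of Lemma \ref{primitif} is the classical description of the \emph{Kummer lattice} $K$, the minimal primitive overlattice of $\langle N_{1},\dots,N_{16}\rangle$ in $H^{2}(Y,\Z)$, with additional generators of the form $\tfrac{1}{2}\sum_{i\in S}N_{i}$ as $S$ runs through the affine hyperplanes of $A[2]\cong(\Z/2\Z)^{4}$; it has rank $16$ and $|\discr K|=2^{6}$. Unimodularity of $H^{2}(Y,\Z)$ then yields $|\discr K^{\bot}|=|\discr K|=2^{6}=|\discr U^{3}(2)|$, and since $K^{\bot}$ has rank $6$ and contains $\widetilde{\pi}_{*}(\widetilde{\varphi}^{*}(U^3))$ of the same rank and discriminant, the two must coincide. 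The main obstacle is a self-contained justification of the Kummer-lattice description: if one wishes to mirror the equivariant-cohomology proof of Section \ref{OtherP} rather than invoke Nikulin's classical result, extra care is required because $H^{3}(A,\Z)\cong\Z^{4}$ is nonzero (whereas $H^{3}(X,\Z)=0$ in the K3 case), so one must both control the relevant differentials in the Leray--Serre spectral sequence for $H^{*}_{I}(A\setminus\Fix(-\id),\Z)$ and identify the torsion subgroup of $H^{2}(A\setminus\Fix(-\id),\Z)^{I}$ as $(\Z/2\Z)^{5}$ rather than $\Z/2\Z$.
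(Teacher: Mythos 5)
Your proposal is correct and is essentially the paper's intended argument: the paper offers no details beyond the remark that ``exactly the same method'' (Lemma \ref{U2}, Section \ref{proof}, Section \ref{OtherP}) applies, and your write-up instantiates that method faithfully, with the Kummer lattice (rank $16$, discriminant $2^{6}$) playing the role of the Nikulin lattice so that the unimodularity/discriminant comparison forces $\widetilde{\pi}_{*}(\widetilde{\varphi}^{*}(U^{3}))=K^{\bot}\cong U^{3}(2)$ to be primitive. You also correctly flag the one point where ``exactly the same'' needs adjustment, namely that $H^{1}(A,\Z)$ and $H^{3}(A,\Z)$ are nonzero, so the equivariant-cohomology computation of the torsion of $H^{2}(\overline{A}\setminus\sing\overline{A},\Z)$ yields $(\Z/2\Z)^{5}$ rather than $\Z/2\Z$, matching the index of $\oplus_{i}\Z N_{i}$ in the Kummer lattice.
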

\subsection{Smith theory}
A use of Smith theory will be necessary for the case of the involution on the Hilbert scheme.
So we will give here an example of its use in the case of the involution on a K3 surface.
This will give another proof of Lemma \ref{U2}.
\subsubsection{Reminder on the basic tools of Smith theory}
%First, we recall the basic tools of Smith theory.
Let $T$ be a topological space and let $G$ be a group of prime order $p$ acting on $T$. We fix a generator $g$ of $G$.
Let $\tau:=g-1\in \mathbb{F}_{p}[G]$ and $\sigma:=1+g+...+g^{p-1}\in \mathbb{F}_{p}[G]$. We consider the chain complex $C_{*}(T)$ of $T$ with coefficients in $\mathbb{F}_{p}$ and its subcomplexes $\tau^{i}C_{*}(T)$ for $1\leq i\leq p-1$ (we have $\sigma=\tau^{p-1}$). We denote also $X^{G}$ the fixed locus of the action of $G$ on $T$. We can find in  \cite{SmithTh}, Section 7 the following proposition.
\begin{prop}\label{SmithProp}
\begin{itemize}
\item \cite{Bredon}, Theorem 3.1. For $1\leq i\leq p-1$ there is an exact sequence of complexes:
$$\xymatrix@C=20pt{0\ar[r] &\tau^{i}C_{*}(T)\oplus C_{*}(T^{G})\ar[r]^{\ \ \ \ \ \ f}&C_{*}(T) \ar[r]^{\tau^{p-i}}&\tau^{p-i}C_{*}(T) \ar[r]&0
},$$ where $f$ denotes the sum of the inclusions.
\item \cite{Bredon}, p.125. For $1\leq i\leq p-1$ there is an exact sequence of complexes:
$$\xymatrix@C=20pt{0\ar[r] &\sigma C_{*}(T)\ar[r]^{f}&\tau^{i}C_{*}(T) \ar[r]^{\tau}&\tau^{i+1}C_{*}(T) \ar[r]&0
},$$ where $f$ denotes the inclusion.
\item \cite{Bredon}, (3.4) p.124. There is an isomorphism of complexes:
$$\sigma C_{*}(T)\simeq C_{*}(T/G,T^{G}),$$
where $T^{G}$ is identified with its image in $T/G$.
\end{itemize}
\end{prop}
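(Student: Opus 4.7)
The plan is to reduce all three statements to a direct summand-by-summand verification in the chain complex $C_{*}(T;\mathbb{F}_{p})$, exploiting the isomorphism of rings
$$\mathbb{F}_{p}[G]\cong \mathbb{F}_{p}[\tau]/(\tau^{p}),$$
which holds because $g^{p}-1=(g-1)^{p}=\tau^{p}$ in characteristic $p$. The first step is to fix a $G$-equivariant CW-structure on $T$ compatible with the fixed locus, so that cells of $T^{G}$ are fixed pointwise and the remaining cells are permuted freely by $G$. With such a structure, $C_{*}(T;\mathbb{F}_{p})$ decomposes in each degree as an $\mathbb{F}_{p}[G]$-module into a sum of \emph{free} summands $\mathbb{F}_{p}[G]$, one per free $G$-orbit of cells, together with \emph{trivial} summands $\mathbb{F}_{p}$, one per fixed cell; the trivial summands collectively recover the subcomplex $C_{*}(T^{G})$.

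For part (i), the verification is performed on each summand. On a free summand $\mathbb{F}_{p}[G]\cong \mathbb{F}_{p}[\tau]/(\tau^{p})$, multiplication by $\tau^{p-i}$ has image the principal ideal $(\tau^{p-i})$ and kernel $(\tau^{i})$, yielding the short exact sequence $0\to \tau^{i}\mathbb{F}_{p}[G]\to \mathbb{F}_{p}[G]\to \tau^{p-i}\mathbb{F}_{p}[G]\to 0$. On a trivial summand, $\tau$ acts by zero, so $\tau^{i}\mathbb{F}_{p}=0$ and the summand is absorbed into $C_{*}(T^{G})$. Directness of the sum on the left follows because only the free summands contribute to $\tau^{i}C_{*}(T)$ while only the trivial summands appear in $C_{*}(T^{G})$. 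Part (ii) is analogous: the annihilator of $\tau$ in $\mathbb{F}_{p}[\tau]/(\tau^{p})$ is $\tau^{p-1}\mathbb{F}_{p}[\tau]/(\tau^{p})=\sigma\mathbb{F}_{p}[\tau]/(\tau^{p})$, so on each free summand one obtains the exact sequence $0\to \sigma\mathbb{F}_{p}[G]\to \tau^{i}\mathbb{F}_{p}[G]\to \tau^{i+1}\mathbb{F}_{p}[G]\to 0$; trivial summands contribute only zero terms for $i\geq 1$.

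For part (iii), I would consider the norm map sending a cell $e$ of $T$ to $\sigma\cdot e = \sum_{k=0}^{p-1}g^{k}\cdot e$. On a free orbit this identifies $\sigma\mathbb{F}_{p}[G]$ with the free $\mathbb{F}_{p}$-module generated by the unique cell of $T/G$ lying under the orbit. On a trivial summand, $\sigma$ acts as multiplication by $p=0$, so fixed cells contribute nothing to $\sigma C_{*}(T)$. The image is therefore the submodule of $C_{*}(T/G;\mathbb{F}_{p})$ generated by cells of $T/G$ not lying in the image of $T^{G}$, which is exactly the relative chain complex $C_{*}(T/G,T^{G};\mathbb{F}_{p})$, giving the claimed isomorphism.

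The main obstacle is ensuring that the module-theoretic decomposition I invoke really exists at the level of chain complexes. This requires a genuine $G$-CW structure with the two kinds of cells described above, which is guaranteed for finite group actions on reasonable spaces (e.g.\ smooth manifolds) by equivariant triangulation, but needs some care for singular chains. Once this is granted, each of the three parts reduces to a purely algebraic calculation on cyclic $\mathbb{F}_{p}[\tau]/(\tau^{p})$-modules, which is precisely how the arguments are carried out in Bredon's treatise.
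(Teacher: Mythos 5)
Your proposal is correct, and it reconstructs exactly the argument that the paper does not spell out: Proposition \ref{SmithProp} is stated with bare citations to Bredon (Theorem 3.1, p.~124--125), so there is no in-paper proof to compare against beyond those references. Your summand-by-summand verification is the standard one underlying Bredon's proof: the identification $\mathbb{F}_{p}[G]\cong\mathbb{F}_{p}[\tau]/(\tau^{p})$, the computation that multiplication by $\tau^{p-i}$ on this local ring has kernel $(\tau^{i})$ and image $(\tau^{p-i})$, and the fact that $\sigma=\tau^{p-1}$ generates the socle are all right, and since $G$ has prime order every cell (or simplex) is either free or pointwise fixed, so the free-plus-trivial decomposition of the chain modules is available. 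The one genuine point of care is the one you flag yourself: parts (i) and (ii) go through verbatim even for singular chains, but part (iii) needs the identification of $\sigma C_{*}(T)$ with chains on $T/G$ relative to $T^{G}$, which requires either an equivariant cell/triangulation structure or the lifting argument Bredon supplies; for the applications in this paper ($p=2$, involutions on smooth projective varieties or their blow-ups) an equivariant triangulation exists, so the argument is complete.
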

\subsubsection{Applications}\label{ASmith}
Consider the exact sequence (4):
$$\xymatrix@C=10pt@R=0pt{0\ar[r] &H^{2}(Y,\cup_{k=1}^{8}N_{k},\mathbb{F}_{2})\ar[r]&H^{2}(Y,\mathbb{F}_{2}) \ar[r]& H^{2}(\cup_{k=1}^{8}N_{k},\mathbb{F}_{2})\\
\ar[r]&H^{3}(Y,\cup_{k=1}^{8}N_{k},\mathbb{F}_{2})\ar[r]&0. &
}$$
First, we will calculate the vectorial spaces $H^{2}(Y,\cup_{k=1}^{8}N_{k},\mathbb{F}_{2})$ and $H^{3}(Y,\cup_{k=1}^{8}N_{k},$ $\mathbb{F}_{2})$.
By 3) of Proposition \ref{SmithProp}, we have 
$$H^{*}(Y,\cup_{k=1}^{8}N_{k},\mathbb{F}_{2})\simeq H^{*}_{\sigma}(\widetilde{X}),$$
where $H^{*}_{\sigma}(\widetilde{X})$ is the cohomology group of the complex $\sigma C_{*}(\widetilde{X})$.
\begin{lemme}
We have:
$$h^{2}_{\sigma}(\widetilde{X})=15,\ \ \ \ \ h^{3}_{\sigma}(\widetilde{X})=1.$$
\end{lemme}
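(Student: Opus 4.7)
The plan is to exploit the identification $H^{*}_{\sigma}(\widetilde{X}) \simeq H^{*}(Y, \cup_{k=1}^{8} N_{k}, \mathbb{F}_{2})$ from part 3) of Proposition \ref{SmithProp}, already recorded just above the Lemma, and then read off the two unknown dimensions from the long exact sequence of the pair $(Y, \cup_{k=1}^{8} N_{k})$ with $\mathbb{F}_{2}$-coefficients.

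Since $Y$ is a K3 surface, its mod-2 Betti numbers are $1, 0, 22, 0, 1$; since $\cup_{k=1}^{8} N_{k}$ is a disjoint union of eight smooth rational $(-2)$-curves, its $\mathbb{F}_{2}$-cohomology is $\mathbb{F}_{2}^{8}, 0, \mathbb{F}_{2}^{8}$ in degrees $0, 1, 2$. Feeding these data into the long exact sequence of the pair, together with the standard vanishings coming from the first few terms, one extracts the short exact sequence
$$0 \to H^{2}(Y, \cup N_{k}, \mathbb{F}_{2}) \to H^{2}(Y, \mathbb{F}_{2}) \xrightarrow{r} H^{2}(\cup N_{k}, \mathbb{F}_{2}) \to H^{3}(Y, \cup N_{k}, \mathbb{F}_{2}) \to 0,$$
where $r$ sends a class $\alpha$ to the tuple of restrictions, i.e. $(\alpha \cdot N_{k} \bmod 2)_{k}$. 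The two dimensions we want are therefore $22 - \rk r$ and $8 - \rk r$, so the entire lemma reduces to showing $\rk r = 7$.

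To compute this rank I would use Poincaré duality on $Y$: the transpose of $r$ with respect to the cup-product pairing is the map $\mathbb{F}_{2}^{8} \to H^{2}(Y, \mathbb{F}_{2})$ sending $e_{k}$ to the class $[N_{k}] \bmod 2$. Hence $\rk r$ equals the $\mathbb{F}_{2}$-dimension of the subspace of $H^{2}(Y, \mathbb{F}_{2})$ spanned by $[N_{1}], \ldots, [N_{8}]$. By Lemma \ref{primitif}, the primitive envelope in $H^{2}(Y, \mathbb{Z})$ of $\bigoplus_{k} \mathbb{Z} N_{k}$ is obtained by adjoining only the single class $\hat{N} = (N_{1} + \cdots + N_{8})/2$; translating this divisibility information modulo $2$ shows that the unique $\mathbb{F}_{2}$-linear relation among the $[N_{k}]$ is $[N_{1}] + \cdots + [N_{8}] = 0$, yielding the desired rank $7$.

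The main obstacle is precisely this final rank computation: a priori $\rk r$ could be any integer between $1$ and $8$, and pinning it down to exactly $7$ requires the fact that, among the $2^{8}$ formal combinations $\sum \epsilon_{k} N_{k}$, the all-zero and all-one ones are the only configurations whose class becomes divisible by $2$ in $H^{2}(Y, \mathbb{Z})$. This is the content of the Nikulin-lattice description in Lemma \ref{primitif}; everything else is routine bookkeeping for a relative cohomology long exact sequence.
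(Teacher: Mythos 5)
Your argument is correct, but it reaches the second of the two needed equations by a genuinely different route than the paper. Both you and the paper start from the same short exact sequence of the pair $(Y,\cup_{k}N_{k})$ with $\mathbb{F}_{2}$-coefficients, which gives $h^{2}_{\sigma}(\widetilde{X})-h^{3}_{\sigma}(\widetilde{X})=22-8=14$. The paper then gets a second, independent linear relation, $29-2h^{2}_{\sigma}(\widetilde{X})+h^{3}_{\sigma}(\widetilde{X})=0$, purely from Smith theory: the exact sequence in part 2) of Proposition \ref{SmithProp} relating $H^{*}_{\sigma}(\widetilde{X})$ to $H^{*}(\widetilde{X},\mathbb{F}_{2})$, together with the identity $h^{1}_{\sigma}(\widetilde{X})=h^{0}(\cup_{k}E_{k},\mathbb{F}_{2})-1$ from Lemma 7.4 of the Smith-theory reference. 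You instead compute the rank of the restriction map $r$ directly, via Poincar\'e duality on $Y$ (identifying $\rk r$ with the $\mathbb{F}_{2}$-span of the classes $[N_{k}]$) and the Nikulin-lattice Lemma \ref{primitif} (which shows the all-ones combination is the unique mod-$2$ relation). Your reduction is sound: since $H^{2}(Y,\Z)$ is torsion free, a relation $\sum\epsilon_{k}[N_{k}]=0$ over $\mathbb{F}_{2}$ forces $\frac{1}{2}\sum\epsilon_{k}N_{k}$ to lie in the saturation $N$, and Lemma \ref{primitif} then forces all $\epsilon_{k}$ equal, so $\rk r=7$ and the two dimensions follow. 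Two remarks on what each approach buys. First, your route makes the lemma an immediate corollary of Lemma \ref{primitif}, but it also inverts the paper's logic: the paper uses this lemma precisely to \emph{produce} the equality $\dim_{\mathbb{F}_{2}}j^{*}(H^{2}(Y,\mathbb{F}_{2}))=7$ in the subsequent argument, whereas you compute that rank first and deduce the lemma from it; this is not circular, but it renders the Smith-theoretic detour redundant for the application at hand. Second, the paper's two-exact-sequence method is the one that transfers verbatim to the four-dimensional situation (the later lemma computing $h^{2}_{\sigma}(N_{2})=36$, $h^{3}_{\sigma}(N_{2})=43$), where the analogue of Lemma \ref{primitif} --- the full divisibility structure of $\langle\widetilde{\Sigma},D_{1},\dots,D_{28}\rangle$ --- is not available in advance and is in part what one is trying to establish; your method would not generalize there without further input.
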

\begin{proof}
The exact sequence (3) gives us the following equation:

$$h^{2}_{\sigma}(\widetilde{X})-h^{2}(Y,\mathbb{F}_{2})+h^{2}(\cup_{k=1}^{8}N_{k},\mathbb{F}_{2})-h^{3}_{\sigma}(\widetilde{X})=0.$$
As $h^{2}(Y,\mathbb{F}_{2})=22$, $h^{2}(\cup_{k=1}^{8}N_{k},\mathbb{F}_{2})=8$, we obtain
$$h^{2}_{\sigma}(\widetilde{X})-h^{3}_{\sigma}(\widetilde{X})=14,$$
where $h^{*}_{\sigma}(\widetilde{X})$ denote the dimension over $\mathbb{F}_{2}$ of $H^{*}_{\sigma}(\widetilde{X})$.
Moreover by 2) of Proposition \ref{SmithProp}, we have another exact sequence:
$$\xymatrix@C=10pt@R=0pt{0\ar[r] &H^{1}_{\sigma}(\widetilde{X})\ar[r]&H^{2}_{\sigma}(\widetilde{X}) \ar[r]&H^{2}(\widetilde{X},\mathbb{F}_{2}) \ar[r]&H^{2}_{\sigma}(\widetilde{X})\oplus H^{2}(\cup_{k=1}^{8}E_{k},\mathbb{F}_{2})\\
\ar[r]&H^{3}_{\sigma}(\widetilde{X})\ar[r]&0. & &
}$$
By Lemma 7.4 of \cite{SmithTh}, $h^{1}_{\sigma}(\widetilde{X})=h^{0}(\cup_{k=1}^{8}N_{k},\mathbb{F}_{2})-1$.
Then we get the following equation:
\begin{align*}
&h^{0}(\cup_{k=1}^{8}E_{k},\mathbb{F}_{2})-1-h^{2}_{\sigma}(\widetilde{X})+h^{2}(\widetilde{X},\mathbb{F}_{2})\\
&-h^{2}_{\sigma}(\widetilde{X})-h^{2}(\cup_{k=1}^{8}E_{k},\mathbb{F}_{2})+h^{3}_{\sigma}(\widetilde{X})=0,
\end{align*}
That is
$$29-2h^{2}_{\sigma}(\widetilde{X})+h^{3}_{\sigma}(\widetilde{X})=0.$$
Finally
$$h^{2}_{\sigma}(\widetilde{X})=15,\ \ \ \ \ h^{3}_{\sigma}(\widetilde{X})=1.$$
\end{proof}
Now, we come back to the exact sequence (3):
$$\xymatrix@C=10pt{0\ar[r] &H^{2}(Y,\cup_{k=1}^{8}N_{k},\mathbb{F}_{2})\ar[r]&H^{2}(Y,\mathbb{F}_{2}) \ar[r]^{j^{*}\ \ \ \ \ }& H^{2}(\cup_{k=1}^{8}N_{k},\mathbb{F}_{2})
},$$
where $j:\cup_{k=1}^{8}N_{k}\hookrightarrow Y$ is the inclusion.
The key point of the proof of Lemma \ref{U2} is to see that $\dim_{\mathbb{F}_{2}} j^{*}(H^{2}(Y,\mathbb{F}_{2}))=7$.
Since $h^{2}(Y,\cup_{k=1}^{8}N_{k},\mathbb{F}_{2})=h^{2}_{\sigma}(\widetilde{X})=15$, we have $\dim_{\mathbb{F}_{2}} j^{*}(H^{2}(Y,\mathbb{F}_{2}))=22-15=7$.
We can interpret this in terms of the integer cohomology. Consider the map
\begin{align*}
j^{*}_{\Z}:H^{2}(Y,\Z)&\rightarrow \oplus_{k=1}^{8} H^{2}(N_{k},\Z)\\
 u&\rightarrow (u\cdot N_{1},...,u\cdot N_{8}).
\end{align*}
Since there is not torsion, we have:
$$j^{*}=j^{*}_{\Z}\otimes id_{\mathbb{F}_{2}}: H^{2}(Y,\Z)\otimes\mathbb{F}_{2}\rightarrow \oplus_{k=1}^{8} H^{2}(N_{k},\Z)\otimes\mathbb{F}_{2}.$$
This means that we have 7 independent elements such that the intersections with the $N_{k}$, $k\in\left\{1,...,8\right\}$ are not all even.
But, $j^{*}(\widetilde{\pi}_{*}(\widetilde{\varphi}^{*}(U^{3}))\oplus\frac{1}{2}\widetilde{\pi}_{*}(\widetilde{\varphi}^{*}(E_{8}(-2)))\oplus \left\langle N_{1},...,N_{8}\right\rangle)=0$.
Hence, there are 7 more independent elements in $H^{2}(Y,\Z)$. Moreover, these elements must be of the form $\frac{u+n}{2}$
with $u\in\widetilde{\pi}_{*}(\widetilde{\varphi}^{*}(U^{3}))$ and $n\in \left\langle N_{1},...,N_{8}\right\rangle$.
What follows is a formal proof of this fact.

We know that $j^{*}_{\Z}(\frac{N_{1}+...+N_{8}}{2})=(-1,...,-1)\in \oplus_{k=1}^{8} H^{2}(N_{k},\Z)$,
Let $\overline{x_{1}}=(1,...,1)\otimes 1\in \oplus_{k=1}^{8} H^{2}(N_{k},\mathbb{F}_{2})$. We have $\overline{x_{1}}\in j^{*}(H^{2}(Y,\mathbb{F}_{2}))$.
We complete the singleton $(\overline{x_{1}})$ to a base $(\overline{x_{k}}:=x_{k}\otimes 1)_{1\leq k\leq7}$ of $j^{*}(H^{2}(Y,\mathbb{F}_{2}))$.
For all $1\leq k\leq7$, we have $$x_{k}=j^{*}_{\Z}\left(\frac{u_{k}+n_{k}}{2}\right),$$ 
with $u_{k}\in\widetilde{\pi}_{*}(\widetilde{\varphi}^{*}(U^{3}))$ and $n_{k}$ an integer combination of the $N_{l}$, $1\leq l\leq 8$. 
%$ a sum of the $N_{l}$, $1\leq l\leq 8$.
Indeed, if we have $x_{k}=j^{*}_{\Z}(\frac{e_{k}+u_{k}+n_{k}}{2})$ with $e_{k}\in \frac{1}{2}\widetilde{\pi}_{*}(\varphi^{*}(E_{8}(-1)))$,
by taking the image of $\frac{e_{k}+u_{k}+n_{k}}{2}$ by $\pi^{*}$, we see that $e_{k}$ is divisible by 2 and $j^{*}_{\Z}(\frac{e_{k}+u_{k}+n_{k}}{2})=j^{*}_{\Z}(\frac{u_{k}+n_{k}}{2})$.
We choose $u_{1}=0$ and $n_{1}=N_{1}+...+N_{8}$.

Now, let $\mathcal{N}$ be the vector subspace of $\Vect_{\mathbb{F}_{2}}(N_{1},...,N_{8})$ generated by the $n_{k}$, $1\leq k\leq 7$.
We have: $$\dim_{\mathbb{F}_{2}} \mathcal{N}= 7.$$ To show this, we just need to see that $(n_{k})_{1\leq k\leq7}$ is free.
If $\sum_{k=1}^{7}{ \overline{\epsilon_{k}}n_{k}}=0$, then $\sum_{k=1}^{7}{ \epsilon_{k} n_{k}}=2n$; where $n$ is an integer combination of the $N_{k}$ by definition of $\mathcal{N}$.
Then
\begin{align*} \sum_{k=1}^{7}{\epsilon_{k}x_{k}}&=\sum_{k=1}^{7}{\epsilon_{k}j^{*}_{\Z}\left(\frac{u_{k}+n_{k}}{2}\right)}\\
&=j^{*}_{\Z}\left(\frac{\sum_{k=1}^{7}\epsilon_{k}(u_{k}+n_{k})}{2}\right)\\
&=j^{*}_{\Z}\left(\frac{\sum_{k=1}^{7}\epsilon_{k}u_{k}}{2}\right)+j^{*}_{\Z}(n)\\
&=j^{*}_{\Z}(n).
\end{align*}
%We have $n=\frac{\sum_{k=1}^{7}{ \epsilon_{k} n_{k}}}{2}$, then by Lemma \ref{primitif}, there is  two cases:
%all the $\epsilon_{k}$ are even or $n=\frac{N_{1}+...+N_{8}}{2}$.
%If we are in the second case: we have $\sum_{k=1}^{7}{\epsilon_{k}x_{k}}=x_{1}$.
%Then $\overline{\epsilon_{k}}=0$ for $k\geq 2$ and $\overline{\epsilon_{k}}=1$.
%It is give  which is impossible.
This implies $\sum_{k=1}^{7}{\overline{\epsilon_{k}}\overline{x_{k}}}=0$, hence all the $\overline{\epsilon_{k}}=0$.

Let $(v_{l,m})_{1\leq l\leq3,1\leq m\leq2}$ be a base of $U^{3}$ and let $\mathcal{U}$ be subspace of $\Vect_{\mathbb{F}_{2}}((\widetilde{\pi}_{*}$ $(\widetilde{\varphi}^{*}(v_{l,m})))_{1\leq l\leq3,1\leq m\leq2})$ generated by the $u_{k}$, $2\leq k\leq 7$.
We will show that $\mathcal{U}=\Vect_{\mathbb{F}_{2}}((\widetilde{\pi}_{*}(\widetilde{\varphi}^{*}(v_{l,m})))_{1\leq l\leq3,1\leq m\leq2})$.
To do this, we just need to show that the family $(u_{k})_{2\leq k\leq 7}$ is free.
If $\sum_{k=2}^{7}{ \overline{\epsilon_{k}}u_{k}}$ $=0$, then $\sum_{k=2}^{7}{ \epsilon_{k} u_{k}}=2u$ with $u$ an integer combination of the $u_{l,m}$ by definition of $\mathcal{U}$.
Then $u=\frac{\sum_{k=2}^{7}{ \epsilon_{k} n_{k}}}{2}$ is integer.
By Lemma \ref{primitif}, there are just two possibilities:
all the $\epsilon_{k}$ are even or $u=\frac{N_{1}+...+N_{8}}{2}$.
In the second case, we get $\sum_{k=2}^{7}{ \overline{\epsilon_{k}} n_{k}}=n_{1}$, which is impossible.

Now, we are able to show that $\widetilde{\pi}_{*}(\widetilde{\varphi}^{*}(U^{3}))$ is primitive in $H^{2}(Y,\Z)$.
Let $\mathfrak{U}$ be the primitive overgroup of $\widetilde{\pi}_{*}(\widetilde{\varphi}^{*}(U^{3}))$ in $H^{2}(Y,\Z)$ and let $x\in \mathfrak{U}$.
Since $\mathcal{U}=\Vect_{\mathbb{F}_{2}}((\widetilde{\pi}_{*}(\widetilde{\varphi}^{*}(v_{l,m})))_{1\leq l\leq3,1\leq m\leq2})$, we can write $x=v+\frac{\sum_{k=2}^{7}{\epsilon_{k}u_{k}}}{2}$, with $v\in \widetilde{\pi}_{*}(\widetilde{\varphi}^{*}(U^{3}))$.
%Then it is enough to regard $x=\frac{\sum_{k=2}^{7}{\epsilon_{k}u_{k}}}{2}$,
Then the element $x-\sum_{k=2}^{7}{\epsilon_{k}\frac{u_{k}+n_{k}}{2}}-v=\frac{\sum_{k=2}^{7}{\epsilon_{k}n_{k}}}{2}$ is in $H^{2}(Y,\Z)$.
Then by Lemma \ref{primitif}, all the $\epsilon_{k}$ are even or $\sum_{k=2}^{7}{\epsilon_{k}n_{k}}=n_{1}$, which is impossible.

\textbf{Remark}: With the help of Smith theory, we also can find $H^{3}(\overline{Y},\Z)=\Z/2\Z$ and $H^{4}(\overline{Y},\Z)=\Z$.
\section{Beauville--Bogomolov lattice of a partial resolution of the quotient of a $K3^{[2]}$-type manifold by a symplectic involution}
\subsection{Statement of the main theorem}
Now we want to generalize the previous result by replacing a K3 surface $S$ by its Hilbert scheme of two points $S^{[2]}$. We immediately encounter some new difficulties. The first one is that the cup product is not a bilinear form on $H^{2}(S^{[2]},\Z)$. We have to work with the Beauville--Bogomolov form on $H^{2}(S^{[2]},\Z)$ instead, but it is no more unimodular. Yet another difficulty is that if we take a symplectic involution $\sigma$ on $S^{[2]}$, there is no definition of a Beauville--Bogomolov form on $H^{2}(S^{[2]}/\sigma,\Z)$.
However, it is possible to generalize the definition of the Beauville--Bogomolov form to some singular varieties.
In \cite{Nanikawa}, we find the following two definitions.
\begin{defi}
A normal compact Kähler variety $Z$ is said to be symplectic if there is a nondegenerate holomorphic 2-form $\omega$ on the smooth locus $U$ of $Z$ which extends to a regular 2-form $\widetilde{\omega}$ on a desingularization $\widetilde{Z}$ of $Z$.
If, moreover, $\dim H^{1}(Z,\mathcal{O}_{Z})=0$ and $\dim H^{2}(U,\Omega_{U}^{2})=1$, we say that $Z$ is an irreducible symplectic variety.
\end{defi}
\begin{defi}\label{definition}
Let $Z$ be a $2n$-dimensional projective irreducible symplectic variety and $\nu: \widetilde{Z}\rightarrow Z$ a resolution of singularities of $Z$.
Assume that 
\begin{itemize}
\item  The codimension of the singular locus is $\geq 4$.
\item $Z$ has only $\mathbb{Q}$-factorial singularities.
\end{itemize}
We define the quadratic form $q_{Z}$ on $H^{2}(Z,\mathbb{C})$ by
$$q_{Z}(\alpha):=\frac{n}{2}\int_{\widetilde{Z}}(\widetilde{\omega}\overline{\widetilde{\omega}})^{n-1}\widetilde{\alpha}^2+(1-n)\int_{\widetilde{Z}}\widetilde{\omega}^{n-1}\overline{\widetilde{\omega}}^{n}\widetilde{\alpha}\cdot\int_{\widetilde{Z}}\widetilde{\omega}^{n}\overline{\widetilde{\omega}}^{n-1}\widetilde{\alpha},$$
where $\widetilde{\alpha}:=\nu^{*}\alpha$, $\alpha\in H^{2}(Z,\mathbb{C})$ and $\int_{\widetilde{Z}}\widetilde{\omega}^{n}\cdot\overline{\widetilde{\omega}}^{n}=1$.
%$\omega$ is a fixed generator of $H^{2}(Z,\mathcal{O}_{Z})$ with $\omega^{n}\cdot\overline{\omega}^{n}=1$.
\end{defi}
The following theorem is proved in \cite{Mat}.
\begin{thm}\label{Form}
Let $Z$ be a projective irreducible symplectic variety of dimension $2n$ with only $\mathbb{Q}$-factorial singularities, and $\codim\sing Z\geq 4$.
There exists a unique indivisible integral symmetric bilinear form $B_{Z}\in S^{2}(H^2(Z,\Z))^{*}$ and a unique positive constant $c_{Z}\in \mathbb{Q}$, such that for any $\alpha\in H^2(Z,\mathbb{C})$,
$$\alpha^{2n}=c_{Z}B_{Z}(\alpha,\alpha)^n. \ \ \ \ \ \ \ \ \ \ \ (1)$$
For $0\neq \omega\in H^{0}(\Omega_{U}^{2})$
$$B_{Z}(\omega+\overline{\omega},\omega+\overline{\omega})>0.\ \ \ \ \ \ \ \ \ \ \ (2)$$
Moreover the signature of $B_{Z}$ is $(3,h^{2}(Z,\mathbb{C})-3)$.

The form $B_{Z}$ is proportional to $q_{Z}$ and is called the Beauville--Bogomolov form of $Z$.
\end{thm}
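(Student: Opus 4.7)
The plan is to adapt the classical Beauville--Fujiki argument to the singular setting, performing the computation on the resolution $\widetilde{Z}$ and using the hypothesis $\codim\sing Z\geq 4$ to transport the outcome back to $Z$. Since the exceptional locus of $\nu$ has codimension $\geq 4$ in $\widetilde Z$, the map $\nu^{*}:H^{2}(Z,\mathbb{C})\to H^{2}(\widetilde Z,\mathbb{C})$ is injective, and any top intersection number $\int_{\widetilde Z}\nu^{*}\alpha_{1}\cdots\nu^{*}\alpha_{2n}$ depends only on the classes $\alpha_{i}$ downstairs and not on the resolution chosen. This legitimises the formula of Definition \ref{definition} and reduces the whole statement to a Fujiki-type identity on $\widetilde Z$ restricted to the subspace $\nu^{*}H^{2}(Z,\mathbb{C})$.

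To obtain $\alpha^{2n}=c_{Z}q_{Z}(\alpha,\alpha)^{n}$, I would test it on the three-parameter family $\widetilde\alpha=s\widetilde\omega+t\overline{\widetilde\omega}+r\,\widetilde\eta$ with $\widetilde\eta=\nu^{*}\eta$ an arbitrary pulled-back $(1,1)$-class, observing that both sides are homogeneous polynomials of degree $2n$ in $(s,t,r)$. By Hodge type considerations, $\int_{\widetilde Z}\widetilde\alpha^{2n}$ only sees monomials $s^{a}t^{b}r^{2n-a-b}$ with $a+b$ even, and the Hodge--Riemann bilinear relations applied to the form $(\widetilde\omega\overline{\widetilde\omega})^{n-1}$ identify the quadratic pairing in $\widetilde\eta$ with the expression defining $q_{Z}$. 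Matching the coefficients of $s^{n}t^{n}$, $s^{n-1}t^{n}r$, $s^{n}t^{n-1}r$ and $s^{n-1}t^{n-1}r^{2}$ is enough to pin down $c_{Z}$ and to verify the identity on a Zariski-dense subset of the space of $\alpha$ of the form $a\omega+b\overline\omega+c\eta$; polynomial continuation then gives it on all of $H^{2}(Z,\mathbb{C})$. Uniqueness of the pair $(B_{Z},c_{Z})$ falls out because any rational form realising the Fujiki relation is determined, up to scalar, by the degree-$2n$ polynomial $\alpha\mapsto \alpha^{2n}$.

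Integrality and the positivity statements then come out cheaply: the cup-product pairing on $H^{2n}(\widetilde Z,\Z)$ is integral, so $\alpha\mapsto\alpha^{2n}$ takes integer values on $H^{2}(Z,\Z)$, and extracting an $n$-th root at the level of polynomial coefficients together with clearing denominators produces a unique indivisible integral $B_{Z}$ with positive rational Fujiki constant $c_{Z}$. The inequality $B_{Z}(\omega+\overline\omega,\omega+\overline\omega)>0$ is a direct evaluation of the integral formula for $q_{Z}$ using the normalisation $\int_{\widetilde Z}\widetilde\omega^{n}\overline{\widetilde\omega}^{n}=1$. For the signature $(3,h^{2}(Z,\mathbb{C})-3)$, projectivity together with $\mathbb{Q}$-factoriality supplies an ample class whose pullback remains big and whose orthogonal complement in $H^{1,1}$ is negative definite by Hodge--Riemann; combined with the positive two-plane $\langle\omega+\overline\omega,\,i(\omega-\overline\omega)\rangle$ this produces a positive three-plane and the stated signature.

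The main obstacle I expect is the very first step: proving that the right-hand side of Definition \ref{definition} does not depend on the chosen resolution and, equivalently, that the map $\alpha\mapsto\int_{\widetilde Z}\widetilde\alpha^{2n}$ is a well-defined invariant of $\alpha\in H^{2}(Z,\mathbb{C})$. One has to rule out parasitic contributions coming from exceptional classes on $\widetilde Z$, and this is where the hypothesis $\codim\sing Z\geq 4$ enters decisively: any $2n$-fold product involving at least one exceptional class vanishes upon pushforward by dimension reasons. A secondary subtlety will be passing from integrality to \emph{indivisibility} of $B_{Z}$, which requires controlling torsion in $H^{2}(Z,\Z)$; here the $\mathbb{Q}$-factoriality assumption should supply enough integral classes to force the normalisation.
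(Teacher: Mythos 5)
You should first be aware that the paper does not actually prove this theorem: its ``proof'' is a citation to Matsushita's paper \cite{Mat} (where the Fujiki relation for these singular symplectic varieties is established), supplemented by the single remark that integrality of $B_{Z}$, which is not stated in \cite{Mat}, follows from Lemma 2.2 of \cite{Mat} together with the argument of Theorem 5 a), c) of \cite{Beauville}. So your attempt to reprove the statement from scratch is doing considerably more than the paper, and it has to be measured against Matsushita's actual argument.

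Your first step (independence of the resolution, injectivity of $\nu^{*}$, reduction to a Fujiki-type identity on $\nu^{*}H^{2}(Z,\mathbb{C})\subset H^{2}(\widetilde Z,\mathbb{C})$) is sound. The genuine gap is in the middle step. Writing $\widetilde\alpha=s\widetilde\omega+t\overline{\widetilde\omega}+r\widetilde\eta$ and matching the coefficients of $s^{n}t^{n}$, $s^{n}t^{n-1}r$, $s^{n-1}t^{n}r$ and $s^{n-1}t^{n-1}r^{2}$ does \emph{not} verify the identity $\int\widetilde\alpha^{2n}=c_{Z}q_{Z}(\alpha)^{n}$: both sides are homogeneous of degree $2n$ in $(s,t,r)$, and the hard coefficients are precisely the ones you do not touch, above all the coefficient of $r^{2n}$, i.e.\ the statement $\int_{\widetilde Z}\widetilde\eta^{\,2n}=c_{Z}\,q_{Z}(\eta)^{n}$ for $(1,1)$-classes $\eta$. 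This cannot be extracted from the Hodge--Riemann relations on a single fixed complex structure; in the classical Beauville--Fujiki proof it is obtained by \emph{deforming} the complex structure and using the local Torelli theorem (the period points sweep out an open subset of the quadric $\{q=0\}$, forcing the polynomial $\alpha\mapsto\int\alpha^{2n}$ to be proportional to $q^{n}$). In the singular setting this is exactly where the hypotheses ``projective, $\mathbb{Q}$-factorial, $\codim\sing Z\geq 4$'' earn their keep: they are the hypotheses under which Namikawa's deformation theory and local Torelli theorem for singular symplectic varieties apply, and Matsushita's proof rests on that input. Your sketch never invokes any deformation of $Z$, so ``polynomial continuation'' has nothing to continue from: the identities you verify live on too small a subvariety of $H^{2}(Z,\mathbb{C})$ to determine a degree-$2n$ polynomial. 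A secondary caveat: even the $s^{n-1}t^{n-1}r^{2}$ coefficient is not a direct application of Hodge--Riemann on $\widetilde Z$, since $(\widetilde\omega\overline{\widetilde\omega})^{n-1}$ degenerates along the exceptional locus and the relevant positivity must be argued on the smooth locus of $Z$. Your treatment of uniqueness, integrality/indivisibility and the signature is essentially correct once the Fujiki relation is in place.
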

\begin{proof}
The statement of the theorem in \cite{Mat} does not say that the form is integral, but it follows from Lemma 2.2 of \cite{Mat} using the proof of Theorem 5 a), c) of \cite{Beauville}.
\end{proof}

For other generalizations of the Beauville--Bogomolov form see \cite{Tim}.
\newline

An irreducible symplectic V-manifold is an irreducible symplectic variety with at worst finite quotient singularities, so it has only $\mathbb{Q}$-factorial singularities. Hence, if its singular locus has codimension greater or equal to four, it is endowed with a Beauville--Bogomolov form.

Now we will describe the singularities of our variety $S^{[2]}/\sigma$. By Theorem 4.1 of \cite{Mongardi} the fixed locus of $\sigma$ is the union of 28 points and a K3 surface $\Sigma$. Then the singular locus of $M:=S^{[2]}/\sigma$ is the union of a K3 and 28 points. The singular locus is not of codimension four. We will lift to a partial resolution of singularities,
$M'$ of $M$, obtained by blowing up the image of $\Sigma$. By Section 2.3 and Lemma 1.2 of \cite{Fujiki}, the variety $M'$ is an irreducible symplectic V-manifold which has singular locus of codimension four.

In fact, we can consider a more general case. We have the following theorem of Mongardi from \cite{Mongardi}:
%See also \cite{Nanikawa}.
%Consequently, we will study the cohomology group $H^{2}(M',\Z)$ endowed with the Beauville--Bogomolov form.
%In fact, we can generelise a little more our study. 
\begin{thm}(\emph{Theorem 1.3 of Mongardi})
Let $X$ be an irreducible symplectic manifold of $K3^{[2]}$-type and $\sigma$ a symplectic involution on $X$.
Then there exists a K3 surface $S$ endowed with a symplectic involution $i$ such that $(X,\sigma)$ and $(S^{[2]},i^{[2]})$ are deformation equivalent.
\end{thm}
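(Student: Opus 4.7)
The plan is to combine a lattice-theoretic classification of the action of $\sigma^{*}$ on $H^{2}(X,\Z)$ with the global Torelli theorem and an equivariant deformation argument for $K3^{[2]}$-type manifolds.

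First I would study the induced involution $\sigma^{*}$ on the Beauville--Bogomolov lattice $L:=H^{2}(X,\Z)\cong U^{3}\oplus E_{8}(-1)^{2}\oplus\left\langle -2\right\rangle$. Since $\sigma$ is symplectic, $\sigma^{*}$ fixes the symplectic form and hence acts trivially on the line $H^{2,0}(X)\subset L\otimes\mathbb{C}$. This forces the coinvariant lattice $S_{\sigma}:=(L^{\sigma})^{\bot}$ to be negative definite and entirely contained in the Neron--Severi part. Using Nikulin's theory of lattices with involution (discriminant forms, signature constraints, and the restriction that $S_{\sigma}$ embeds primitively into $L$), I would show that the only possibility is $S_{\sigma}\cong E_{8}(-2)$, with $L^{\sigma}\cong U^{3}\oplus E_{8}(-2)\oplus\left\langle -2\right\rangle$. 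This parallels the K3 case of Proposition \ref{Morri}, and identifies $(L,\sigma^{*})$ as an isometry class of involutive lattice uniquely up to conjugation.

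Second, I would verify that the model pair $(S^{[2]},i^{[2]})$, with $i$ a Nikulin involution on $S$, realizes the same invariant/coinvariant decomposition. The splitting $H^{2}(S^{[2]},\Z)\cong H^{2}(S,\Z)\oplus\Z\delta$ is respected by $i^{[2]}$, with $i^{[2]}$ acting as $i^{*}$ on the first summand and trivially on $\Z\delta$. By Proposition \ref{Morri} the coinvariant of $i^{*}$ on $H^{2}(S,\Z)$ is $E_{8}(-2)$, so the coinvariant of $(i^{[2]})^{*}$ is again $E_{8}(-2)$, matching the abstract classification of Step~1.

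Third, I would deduce the deformation equivalence from the global Torelli theorem of Verbitsky--Markman together with moduli considerations. Consider the moduli space $\mathcal{M}$ of pairs $(Y,\tau)$, where $Y$ is of $K3^{[2]}$-type and $\tau$ is a symplectic involution with coinvariant lattice isometric to $E_{8}(-2)$. Via the period map, $\mathcal{M}$ is identified with an open subset of a connected period domain for $L^{\sigma}$-polarized manifolds, because Nikulin's theorem on primitive embeddings of even lattices shows that $L^{\sigma}\hookrightarrow L$ is unique up to isometry of $L$. By equivariant Torelli, any two points of $\mathcal{M}$ with the same invariant lattice data are deformation equivalent through involutive pairs. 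Exhibiting $(S^{[2]},i^{[2]})$ as a point of $\mathcal{M}$, together with $(X,\sigma)$ by Step~1, gives the claimed deformation equivalence.

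The main obstacle is Step~3: one must produce an \emph{equivariant} family, i.e.\ ensure that the involution extends along the whole deformation rather than merely lifting the fiberwise Hodge datum. This is exactly where the equivariant Torelli theorem and the analysis of the monodromy group acting on $L$ enter; the connectedness of the invariant-lattice-polarized moduli space (equivalent to the uniqueness of the embedding $L^{\sigma}\hookrightarrow L$ with $S_{\sigma}\cong E_{8}(-2)$) is the crucial input that makes the argument go through.
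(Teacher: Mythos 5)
The first thing to say is that the paper contains no proof of this statement: it is quoted verbatim as Theorem~1.3 of \cite{Mongardi} and used as a black box, so there is no internal argument to compare yours against. Measured instead against Mongardi's actual proof, your outline reproduces its broad architecture correctly: classify the coinvariant lattice $S_{\sigma}=(H^{2}(X,\Z)^{\sigma})^{\bot}$, check that the model $(S^{[2]},i^{[2]})$ realizes the same lattice data (your Step~2 is exactly the content of Proposition~\ref{Mong}), and conclude by a Torelli-plus-connectedness argument for a moduli space of pairs.

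However, the two steps you assert are precisely where the content of the theorem lies, and as written both have gaps. In Step~1, Nikulin's theory of involutions on lattices does not by itself force $S_{\sigma}\cong E_{8}(-2)$: signature and discriminant-form constraints leave other negative definite $2$-elementary candidates, and the decisive inputs are geometric, namely that $S_{\sigma}$ contains no $(-2)$-classes (more generally no stably prime exceptional classes of $K3^{[2]}$-type), because averaging such a class over $\sigma$ would produce a nonzero invariant effective class lying in the coinvariant lattice, and a determination of $\rk S_{\sigma}=8$, which does not come from abstract lattice theory either (in the K3 case it comes from the Lefschetz fixed point formula; Mongardi needs an analogous analysis). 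Only after these exclusions does Nikulin's classification single out $E_{8}(-2)$. In Step~3, connectedness of the moduli space of pairs does not follow from uniqueness of the primitive embedding $E_{8}(-2)\hookrightarrow L$ up to $O(L)$: one needs uniqueness up to the monodromy group $\mathrm{Mon}^{2}$, which for $K3^{[2]}$-type is a proper subgroup of $O(L)$ computed by Markman, together with the Hodge-theoretic form of Verbitsky's global Torelli theorem to promote the fiberwise isometry to an actual regular involution along the deformation. Your last paragraph correctly identifies this as the crux, but identifying the obstacle is not the same as overcoming it; as it stands the proposal is a correct road map with the two essential lemmas left unproved.
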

So, instead of $S^{[2]}$, we can consider any irreducible symplectic manifold $X$ of $K3^{[2]}$-type with a symplectic involution $\sigma$ (Theorem 4.1 of \cite{Mongardi} is formulated for this case). 
%We denote $M:=X/\iota$ and $M'$ the partial resolution of singularities of $M$ obtained by blowing up the K3 surface of the singular locus. The variety $M'$ will be a irreducible symplectic V-manifolds.
We will prove the following theorem:
\begin{thm}\label{theorem}
Let $X$ be an irreducible symplectic manifold of $K3^{[2]}$-type and $\sigma$ a symplectic involution on $X$.
Let $\Sigma$ be the K3 surface which is in the fixed locus of $\sigma$.
We denote $M=X/\sigma$ and $M'$ the partial resolution of singularities of $M$ obtained by blowing up the image of $\Sigma$.
Then the Beauville--Bogomolov lattice $H^2(M',\Z)$ is isomorphic to $E_{8}(-1)\oplus U(2)^{3}\oplus(-2)^{2}$, and the Fujiki constant is equal to $6$.
\end{thm}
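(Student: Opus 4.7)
The plan is to transpose the strategy of Proposition~\ref{Nikuinvo} to this higher-dimensional setting, with the cup product replaced by the Beauville--Bogomolov form. First, by Mongardi's Theorem~1.3 and the deformation invariance of both the Beauville--Bogomolov lattice and the Fujiki constant (which is legitimate since $\codim\sing M'\geq 4$ throughout), I reduce to $X=S^{[2]}$ with $\sigma=i^{[2]}$. I then set up an analogue of diagrams (2) and (3): blow up $X$ along $\Fix\sigma=\Sigma\sqcup\{p_{1},\ldots,p_{28}\}$ to obtain $\widetilde{\varphi}:\widetilde{X}\to X$, take $Y:=\widetilde{X}/\widetilde{\sigma}$ (smooth), and complete this to a commutative square with $\pi:X\to M$ and $\varphi:Y\to M'$. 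The proof then splits into three parts: identifying the invariant lattice, proving the key primitivity statement, and computing the Fujiki constant.

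For the identification step, the splitting $H^{2}(X,\Z)\cong H^{2}(S,\Z)\oplus\Z\delta$ (with $B(\delta)=-2$) on which $\sigma^{*}$ acts as $(i^{*},\id)$, combined with Proposition~\ref{Morri}, gives $H^{2}(X,\Z)^{\sigma}\cong U^{3}\oplus E_{8}(-2)\oplus\langle -2\rangle$ of rank $15$. The analogues of Corollary~\ref{piso} and Lemma~\ref{dot} show that $\pi_{*}(H^{2}(X,\Z)^{\sigma})$ carries the rescaled form $U^{3}(2)\oplus E_{8}(-4)\oplus\langle -4\rangle$, and the $E_{8}(-2)$ part becomes divisible by $2$ in $H^{2}(M',\Z)$ (writing $x=t+\sigma^{*}t$ with $t\in E_{8}(-1)$), contributing a primitive $E_{8}(-1)$ summand. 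The sixteenth class in $H^{2}(M',\Z)$ comes from (half of) the exceptional divisor $E$ over the image of $\Sigma$; a local computation at the transversal $A_{1}$-singularity along $\Sigma$ gives $B(E/2)=-2$, yielding the second $\langle -2\rangle$ summand.

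The main obstacle is the \emph{primitivity} in $H^{2}(M',\Z)$ of the candidate $U(2)^{3}\oplus\langle -2\rangle$ descended from $U^{3}\oplus\langle -2\rangle\subset H^{2}(X,\Z)^{\sigma}$. Following Section~\ref{proof}, I would first establish the corresponding primitivity of $\widetilde{\pi}_{*}(\widetilde{\varphi}^{*}(U^{3}\oplus\langle -2\rangle))$ inside $H^{2}(Y,\Z)$, and then transfer it to $M'$ by the same formal diagram chase. Unlike the K3 surface case, $H^{2}(Y,\Z)$ is not unimodular, so the discriminant shortcut of Lemma~\ref{U2} is not available. Instead I would rely on the Smith-theoretic and equivariant-cohomology techniques of Sections~\ref{OtherP} and~\ref{ASmith}, applied to the open complement of \emph{all} exceptional divisors of $Y$, arising from $\Sigma$ and from the $28$ points. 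The genuinely new ingredient, and the main technical work, is to describe the primitive overlattice in $H^{2}(Y,\Z)$ of the span of these exceptional classes: a ``generalized Nikulin lattice'' mixing the $28$ point-divisors with the K3-indexed exceptional class over $\Sigma$. This requires explicit knowledge of $\widetilde{\sigma}^{*}$ on $H^{2}(\widetilde{X},\Z)$, that is, the Hilbert-scheme analogue of Proposition~\ref{Morri}.

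Once the lattice $H^{2}(M',\Z)\cong E_{8}(-1)\oplus U(2)^{3}\oplus\langle -2\rangle^{2}$ is in hand, the Fujiki constant follows from a short computation. Since $\pi^{*}\circ\pi_{*}=\id+\sigma^{*}$ acts as $2\id$ on $H^{2}(X,\Z)^{\sigma}$, one deduces $B_{X}(\pi^{*}\alpha)=2B_{M'}(\alpha)$ for any $\alpha$ descended from the invariant part, so
\[
2\int_{M'}\alpha^{4}=\int_{X}(\pi^{*}\alpha)^{4}=3\,B_{X}(\pi^{*}\alpha)^{2}=12\,B_{M'}(\alpha)^{2},
\]
using that the Fujiki constant of a $K3^{[2]}$-type manifold is $3$. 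This yields $c_{M'}=6$, completing the proof.
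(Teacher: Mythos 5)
Your overall strategy --- reduce to $(S^{[2]},i^{[2]})$ by Mongardi's theorem, mimic the argument of Proposition \ref{Nikuinvo}, and isolate the primitivity of the image of $U^{3}\oplus(-2)$ as the key difficulty --- matches the paper, but the proposal has genuine gaps, the first of which is the heart of the matter. You correctly observe that the discriminant argument of Lemma \ref{U2} breaks down because the second cohomology of the resolution is no longer unimodular, but you then propose to salvage it by Smith theory and a ``generalized Nikulin lattice'' inside $H^{2}(Y,\Z)$. This cannot work as stated: on a fourfold there is no unimodular (indeed no natural) bilinear form on $H^{2}$ of the smooth resolution, so there is no orthogonal complement and no isomorphism of discriminant groups to exploit; and the equivariant/Smith computation in degree two only yields the torsion $\Z/2\Z$ of $H^{2}(U,\Z)$ (Proposition \ref{H2}), which controls divisibility among the exceptional classes but says nothing by itself about the primitivity of $\pi_{1*}(s_{1}^{*}(U^{3}\oplus(-2)))$. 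In the K3 case these two questions were linked precisely through unimodularity of $H^{2}(Y,\Z)$, and that link is what is missing here. The paper's essential new idea, absent from your plan, is to transport the unimodularity argument to $H^{4}(\widetilde{M},\Z)$ with the cup product (unimodular by Poincar\'e duality): this requires the Qin--Wang integral basis of $H^{4}(S^{[2]},\Z)$ via Nakajima operators, the computation of the rank and discriminant of $H^{4}(S^{[2]},\Z)^{\iota}$ (Lemma \ref{invariant}), the lattice $T$ of exceptional classes in $H^{4}(N_{2},\Z)$, and the equivariant computation of $H^{4}(U,\Z)$, all feeding into the key primitivity Lemma \ref{key0}; primitivity in degree two (Lemma \ref{divisible}) is then deduced by squaring classes into $H^{4}$.

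Two further points. First, your description of the final lattice is wrong in a delicate place: $\overline{\Sigma}'$ is \emph{not} divisible by $2$ in $H^{2}(M',\Z)$ (Lemma \ref{sigma2}), so the second $(-2)$ cannot come from $\overline{\Sigma}'/2$; rather, neither $\overline{\delta}'=\pi_{1*}(s_{1}^{*}(\delta))$ nor $\overline{\Sigma}'$ is individually divisible by $2$ but their sum is, and the $(-2)^{2}$ summand is spanned by $\frac{\overline{\delta}'+\overline{\Sigma}'}{2}$ and $\frac{\overline{\delta}'-\overline{\Sigma}'}{2}$ (Theorem \ref{fin}); establishing the divisibility of $\overline{\delta}'+\overline{\Sigma}'$ is itself a substantial step requiring the $H^{4}$ analysis together with Smith theory. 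Second, the Fujiki computation is circular as written: the identity $B_{X}(\pi^{*}\alpha)=2B_{M'}(\alpha)$ does not follow from $\pi^{*}\pi_{*}=2\id$, because $B_{M'}$ is only defined up to the normalization fixed by integrality and indivisibility of the whole lattice; what one actually gets is $B_{M'}(\pi_{1*}s_{1}^{*}\alpha,\pi_{1*}s_{1}^{*}\beta)=\sqrt{24/C_{M'}}\,B_{S^{[2]}}(\alpha,\beta)$ with the unknown constant $C_{M'}$ (Proposition \ref{passage}), and $C_{M'}=6$ is only pinned down at the very end by requiring the resulting lattice to be integral and indivisible.
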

\subsection{Pullback and pushforward via the quotient map}\label{Focus}
The main difficulty is the same as in the first case.
By Theorem 1.3 of \cite{Mongardi}, we can reduce our study to the case where $(X,\sigma)=(S^{[2]},i^{[2]})$ with $S$ a K3 surface an $i$ a symplectic involution on $S$. We are working with this case during all the section; we will denote $\iota:=i^{[2]}$.

%We will denote $\pi:S^{[2]}\rightarrow M$ and we will use the following natural morphism:
%$$\pi^{*}:H^{2}(M,\Z)\rightarrow H^{2}(S^{[2]},\Z)^{i^{[2]}}.$$
To calculate the Beauville--Bogomolov form on $H^{2}(M',\Z)$, we will use our good knowledge of $(H^{2}(S^{[2]},\Z),B_{S^{[2]}})$,
see Part 2 of \cite{Beauville}. 
%And by Mongardi (proof of Corollary 5.3 of \cite{Mongardi}) we have the following proposition.
\begin{prop}\label{Mong}
There is an isometry $H^{2}(S^{[2]},\Z)\cong U^{3}\oplus(-2)\oplus E_{8}(-1)\oplus E_{8}(-1)$ such that $\iota^{*}$ acts as follows: 
\begin{equation}
\begin{split}
\iota^{*}:H^{2}(S^{[2]},\Z)\cong U^{3}\oplus(-2)\oplus E_{8}(-1)\oplus E_{8}(-1)&\rightarrow H^{2}(S^{[2]},\Z)\\ (u,\delta,x,y)&\longmapsto(u,\delta,y,x).
\end{split}
\label{invo}
\end{equation}
The invariant sublattice is
\begin{equation}
\begin{split}
H^{2}(S^{[2]},\Z)^{\iota}&\cong\left\{(u,\delta,x,x)\in U^{3}\oplus(-2)\oplus E_{8}(-1)\oplus E_{8}(-1)\right\}\\
&\cong U^{3}\oplus(-2)\oplus E_{8}(-2).
\end{split}
\label{inva}
\end{equation}
The anti-invariant sublattice, that is the orthogonal complement to the invariant sublattice, is
$$(H^{2}(S^{[2]},\Z)^{\iota})^{\bot}\cong\left\{(0,0,x,-x)\in U^{3}\oplus(-2)\oplus E_{8}(-1)\oplus E_{8}(-1)\right\}\cong E_{8}(-2).$$
%And we can build also a morphism $\pi_{*}:H^{2}(S^{[2]},\Z)\rightarrow H^{2}(M,\Z)$ with 
%$$\pi_{*}\circ\pi^{*}=2id_{H^{2}(M,\Z)}, \ \ \ \ \ \ \pi^{*}\circ\pi_{*}=id_{H^{2}(S^{[2]},\Z)}+\iota^{*}.\ \ \ \ \ (**)$$
\end{prop}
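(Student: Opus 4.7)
The plan is to combine Beauville's orthogonal decomposition of the second cohomology of a Hilbert square with the description of the involution on the K3 surface itself supplied by Proposition \ref{Morri}. By Section 6 of \cite{Beauville}, one has an orthogonal direct sum decomposition
$$H^{2}(S^{[2]},\Z) = H^{2}(S,\Z)\oplus \Z\delta$$
under the Beauville--Bogomolov form, where $2\delta$ is the class of the exceptional divisor $E$ of the Hilbert--Chow morphism $S^{[2]}\to S^{(2)}$, and $\delta^{2}=-2$. The natural embedding of $H^{2}(S,\Z)$ into $H^{2}(S^{[2]},\Z)$ is an isometry for the cup product on the source and the Beauville--Bogomolov form on the target, so this realises $\Z\delta$ as an orthogonal summand isometric to $(-2)$.

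The next step is to check that this decomposition is preserved by $\iota^{*}=(i^{[2]})^{*}$ and to identify the action on each summand. Because $\iota$ is induced from $i$ on $S$, it preserves the Beauville summand $H^{2}(S,\Z)$ and acts on it by $i^{*}$. For the class $\delta$, the divisor $E$ is the preimage under the Hilbert--Chow morphism of the small diagonal $\Delta\subset S^{(2)}$; since $i^{(2)}$ fixes $\Delta$ set-theoretically, it fixes $E$, hence $\iota^{*}\delta=\delta$. Thus $\iota^{*}$ acts trivially on the $(-2)$ summand.

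Now I would invoke Proposition \ref{Morri} to choose an isometry $H^{2}(S,\Z)\cong U^{3}\oplus E_{8}(-1)\oplus E_{8}(-1)$ under which $i^{*}$ acts by $(u,x,y)\mapsto(u,y,x)$. Combining this with the orthogonal $(-2)$ summand from Beauville's decomposition and the triviality of $\iota^{*}$ on it gives the claimed isometry
$$H^{2}(S^{[2]},\Z)\cong U^{3}\oplus(-2)\oplus E_{8}(-1)\oplus E_{8}(-1)$$
and the formula $\iota^{*}(u,\delta,x,y)=(u,\delta,y,x)$ of \eqref{invo}.

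The invariant and anti-invariant sublattices can then be read off summand by summand. On the factor $U^{3}\oplus(-2)$ the involution is the identity, so this sits entirely inside the invariant part. On $E_{8}(-1)\oplus E_{8}(-1)$ the calculation is identical to the one in Proposition \ref{Morri}: the diagonal $\{(x,x)\}$ is isometric to $E_{8}(-2)$, since $(x,x)\cdot(x',x')=2\,x\cdot x'$, and the antidiagonal $\{(x,-x)\}$ is again $E_{8}(-2)$. This yields \eqref{inva} and the final formula for the anti-invariant lattice. No serious obstacle is expected: the only point that is not purely formal is the $\iota$-invariance of $\delta$, which is handled by the set-theoretic argument on the small diagonal indicated above.
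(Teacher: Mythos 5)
Your proposal is correct and follows essentially the same route as the paper: Beauville's orthogonal decomposition $H^{2}(S^{[2]},\Z)=j(H^{2}(S,\Z))\oplus\Z\delta$ with $B(\delta,\delta)=-2$, the compatibility $j\circ i^{*}=\iota^{*}\circ j$ together with the invariance of $\delta$, and then Proposition \ref{Morri} to identify the action on $H^{2}(S,\Z)$; your set-theoretic justification that $\iota^{*}\delta=\delta$ merely fills in a step the paper states without comment.
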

\begin{proof}
We begin by recalling the result of Beauville \cite{Beauville}.
We have 
\begin{equation}
H^{2}(S^{[2]},\Z)=j(H^{2}(S,\Z))\oplus\Z\delta,
\label{j}
\end{equation}
where $\delta$ is half the diagonal of $S^{[2]}$. In our case, $\delta$ is invariant by $\iota$.
We are going to give the definition of $j$.
Denote by $\omega: S^{2}\rightarrow S^{(2)}$ and $\epsilon: S^{[2]}\rightarrow S^{(2)}$ the quotient map and the blowup in the diagonal respectively. Also denote $Pr_{1}$ and $Pr_{2}$ the first and second projections $S^{2}\rightarrow S$.
For $\alpha\in H^{2}(S,\Z)$, we define $j(\alpha)=\epsilon^{*}(\beta)$, where $\beta$ is the element of $H^{2}(S^{(2)},\Z)$ such that $\omega^{*}(\beta)=Pr_{1}^{*}(\alpha)+Pr_{2}^{*}(\alpha)$.
With this construction we have 
\begin{equation}
j\circ i=\iota\circ j.
\label{ij}
\end{equation}
Moreover, by Beauville \cite{Beauville} again, we have 
\begin{equation}
B_{S^{[2]}}(\delta,\delta)=-2,\ \ \ \ \ B_{S^{[2]}}(j(\alpha_{1}),j(\alpha_{2}))=\alpha_{1}\cdot\alpha_{2},
\label{Beau}
\end{equation}
for all $(\alpha_{1},\alpha_{2})\in H^{2}(S,\Z)^{2}$.
Now, we consider the isometry $H^{2}(S,\Z)\cong U^{3}\oplus E_{8}(-1)\oplus E_{8}(-1)$ of Proposition \ref{Morri} with 
$$i^{*}:H^{2}(S,\Z)\cong U^{3}\oplus E_{8}(-1)\oplus E_{8}(-1)\rightarrow H^{2}(S,\Z), (u,x,y)\longmapsto(u,y,x).$$
Then by \eqref{j} and \eqref{Beau} we get an isometry $H^{2}(S^{[2]},\Z)\cong U^{3}\oplus(-2)\oplus E_{8}(-1)\oplus E_{8}(-1)$, and \eqref{ij} 
implies the wanted formula for $\iota^{*}$
%$$\iota^{*}:H^{2}(S^{[2]},\Z)\cong U^{3}\oplus(-2)\oplus E_{8}(-1)\oplus E_{8}(-1)\rightarrow H^{2}(S^{[2]},\Z), (u,\delta,x,y)\longmapsto(u,\delta,y,x).$$
\end{proof}
We now turn to the quotient $S^{[2]}/\iota=:M$.
Let $\Sigma$ be the K3 surface in the fixed locus of $\iota$. Consider the partial resolution of singularities $r':M'\rightarrow M$ obtained by blowing up $\overline{\Sigma}:=\pi(\Sigma)$, where $\pi:S^{[2]}\rightarrow M$ is the quotient map. Denote by $\overline{\Sigma}'$ the exceptional divisor. Let $s_{1}:N_{1}\rightarrow S^{[2]}$ be the blowup of $S^{[2]}$ in $\Sigma$; and denote by $\Sigma_{1}$ the exceptional divisor in $N_{1}$. Denote by $\iota_{1}$ the involution on $N_{1}$ induced by $\iota$. We have $M'\simeq N_{1}/\iota_{1}$, and we denote $\pi_{1}:N_{1}\rightarrow M'$ the quotient map. We sum up the notations in the diagram:
$$\xymatrix{
 M' \ar[r]^{r'}& M \\
   N_{1}\ar@(dl,dr)[]_{\iota_{1}} \ar[r]^{s_{1}} \ar[u]^{\pi_{1}} & S^{[2]}.\ar@(dl,dr)[]_{\iota}\ar[u]^{\pi}
   }$$
We have
$$H^{2}(N_{1},\Z)\cong H^{2}(S^{[2]},\Z)\oplus\Z \Sigma_{1},$$

$$s_{1}^{*}:H^{2}(S^{[2]},\Z)\rightarrow H^{2}(N_{1},\Z)= H^{2}(S^{[2]},\Z)\oplus \Z \Sigma_{1},\ \ \ \ \ x\longmapsto(x,0),$$
$$s_{1*}: H^{2}(N_{1},\Z)\rightarrow H^{2}(S^{[2]},\Z), \ \ \ \ \ (x,e)\rightarrow x.$$
%To end with, we denote $\delta$ an half of the diagonal of $S^{[2]}$, in our case, $\delta$ is invariant by $\iota$.
As in Section 1, we will study the morphism $\pi_{1}^{*}:H^{2}(M',\Z)\rightarrow H^{2}(N_{1},\Z)$.
Hence, we need a generalization of Lemma \ref{dot} which comes from Proposition \ref{Smithy}.
%\begin{prop}
%Let $G$ be a finite group acting on a variety $X$ with orbit map $\pi:X\rightarrow X/G$ structured as a $d:=|G|$ fold ramified covering. Then 
%we can build $\pi_{*}:H^{*}(X,\Z)\rightarrow H^{*}(X/G,\Z)$ with 
%$$\pi_{*}\circ\pi^{*}=d \times \id_{H^{*}(X/G,\Z)}, \ \ \ \ \ \ \pi^{*}\circ\pi_{*}=\sum_{g\in G}{g^{*}}.$$
%\end{prop}
%\begin{proof}
%See \cite{Smith}
%\end{proof}
\begin{lemme}\label{inter}
Let $\mathcal{V}$ be a complex variety of dimension four endowed with an involution $\tau$.
Let $\pi:\mathcal{V}\rightarrow \mathcal{V}/\tau$ be the quotient map.
\begin{itemize}
\item[(1)] 
For $x_{i}\in H^{2}(\mathcal{V},\Z)^{\tau}$, $1\leq i\leq 4$, we have $\pi_{*}(x_{1})\cdot...\cdot\pi_{*}(x_{4})=8x_{1}\cdot...\cdot x_{4}$.
\item[(2)]
Let $x,y\in H^{4}(\mathcal{V},\Z)^{\tau}$. Then $\pi_{*}(x)\cdot \pi_{*}(y)=2x\cdot y$.
\item[(3)]
The map $\pi^{*}$ is injective on $H^{*}(\mathcal{V}/\tau,\Z)/(\tors)$.
If $H^{4}(\mathcal{V},\Z)$ is torsion free, we have
$$\pi_{*}(x)\cdot \pi_{*}(y)=2\pi_{*}(x\cdot y),$$
for all $x,y\in H^{2}(\mathcal{V},\Z)^{\tau}$.
\end{itemize}
\end{lemme}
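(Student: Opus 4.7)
My approach is to push everything through Proposition \ref{Smithy} applied to $G=\langle\tau\rangle$, which gives the two fundamental identities $\pi_*\pi^*=2\,\id$ on $H^*(\mathcal{V}/\tau,\Z)$ and $\pi^*\pi_*=\id+\tau^*$ on $H^*(\mathcal{V},\Z)$, combined with the fact that $\pi^*$ is a ring homomorphism. From these, for any $\tau$-invariant class $x$ one extracts the key identity $\pi^*\pi_*(x)=x+\tau^*x=2x$.

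For (1) and (2), the plan is to compute $\pi^*$ of the product on the left-hand side using the ring homomorphism property, and then translate the resulting identity in $H^8(\mathcal{V},\Z)\cong\Z$ into an equality of integers, using that $\pi$ has degree $2$ and so $\int_{\mathcal{V}}\pi^*\alpha=2\int_{\mathcal{V}/\tau}\alpha$. For (1),
$$\pi^*\bigl(\pi_*(x_1)\cdots\pi_*(x_4)\bigr)=\prod_{i=1}^{4}\pi^*\pi_*(x_i)=\prod_{i=1}^{4}2x_i=16\,x_1\cdots x_4,$$
whence $\pi_*(x_1)\cdots\pi_*(x_4)=8\,x_1\cdots x_4$ as integers; symmetrically for (2), $\pi^*(\pi_*(x)\pi_*(y))=(2x)(2y)=4xy$, whence $\pi_*(x)\pi_*(y)=2\,x\cdot y$.

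For the first half of (3), if $\pi^*\alpha=0$ then $2\alpha=\pi_*\pi^*\alpha=0$, so $\alpha$ is $2$-torsion in $H^*(\mathcal{V}/\tau,\Z)$; this yields the injectivity of $\pi^*$ modulo torsion. For the second half the same ring-homomorphism computation gives
$$\pi^*(\pi_*(x)\pi_*(y))=(x+\tau^*x)(y+\tau^*y)=4xy\quad\text{and}\quad\pi^*(2\pi_*(x\cdot y))=2(xy+\tau^*(xy))=4xy,$$
so the difference $\delta:=\pi_*(x)\pi_*(y)-2\pi_*(x\cdot y)$ lies in $\ker\pi^*$ and is therefore $2$-torsion in $H^4(\mathcal{V}/\tau,\Z)$. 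The delicate step, and the main obstacle, is to upgrade this to $\delta=0$; this is precisely where the torsion-freeness of $H^4(\mathcal{V},\Z)$ is used. The cleanest route is to derive, by the analogous $\pi^*$-computation, the projection formula $\pi_*(\pi^*\alpha\cdot\beta)=\alpha\cdot\pi_*(\beta)$: both sides pull back under $\pi^*$ to $\pi^*\alpha\cdot(\beta+\tau^*\beta)$, and the resulting $2$-torsion ambiguity is killed by the torsion-freeness hypothesis, which together with $\pi^*\pi_*=2\,\id$ on invariants forces $\pi_*|_{H^4(\mathcal{V},\Z)^\tau}$ to be injective. Applying this projection formula with $\alpha=\pi_*(x)$ and $\beta=y$ then gives $\pi_*(x)\pi_*(y)=\pi_*(\pi^*\pi_*(x)\cdot y)=\pi_*(2xy)=2\pi_*(x\cdot y)$, as required.
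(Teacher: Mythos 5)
Parts (1), (2) and the injectivity claim in (3) are correct and take exactly the paper's route: pull the product back by the ring homomorphism $\pi^{*}$, use $\pi^{*}\pi_{*}=\id+\tau^{*}$ (so $=2\id$ on invariants) together with the degree formula $\int_{\mathcal{V}}\pi^{*}\alpha=2\int_{\mathcal{V}/\tau}\alpha$ in top degree, and for injectivity modulo torsion use $\pi_{*}\pi^{*}=2\id$. This is precisely the argument of Lemma \ref{dot} that the paper invokes.

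The last assertion of (3) is where your argument has a genuine gap. You correctly reduce everything to killing a class annihilated by $\pi^{*}$ --- the difference of the two sides, either of the asserted identity or of your intermediate projection formula --- and correctly observe that $\pi_{*}\pi^{*}=2\id$ makes it $2$-torsion. But that torsion class lives in $H^{4}(\mathcal{V}/\tau,\Z)$, and the hypothesis you invoke, torsion-freeness of $H^{4}(\mathcal{V},\Z)$ (the cohomology of the \emph{cover}), does not control it. The consequence you extract from that hypothesis, injectivity of $\pi_{*}$ restricted to $H^{4}(\mathcal{V},\Z)^{\tau}$, concerns a different map and cannot be applied: the term $\alpha\cdot\pi_{*}(\beta)$ in the projection formula is not exhibited as $\pi_{*}$ of anything, so knowing $\pi_{*}$ is injective on invariants upstairs says nothing about the $2$-torsion difference downstairs. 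What is actually needed is that $\pi^{*}$ be injective on all of $H^{4}(\mathcal{V}/\tau,\Z)$, i.e.\ that $H^{4}(\mathcal{V}/\tau,\Z)$ be torsion-free --- and this is exactly what the paper's own proof assumes at this step (it reads ``we assume that $H^{4}(\mathcal{V}/\tau,\Z)$ is torsion free'', in visible tension with the hypothesis printed in the statement). To close the gap you must either take the hypothesis on the quotient, as the paper's proof implicitly does, or establish the projection formula $\pi_{*}(\pi^{*}\alpha\cdot\beta)=\alpha\cdot\pi_{*}(\beta)$ at the level of Smith's transfer (where it does hold, and then no torsion hypothesis is needed at all); your pullback computation alone does not prove it.
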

\begin{proof}
~~\\
\begin{itemize}
\item[(1) and (2)] are proved in the same way as in Lemma \ref{dot}
\item[(3)] By Proposition \ref{Smithy}, the map $\pi^{*}$ is injective on $H^{*}(\mathcal{V}/\tau,\Z)/(\tors)$.
We assume that $H^{4}(\mathcal{V}/\tau,\Z)$ is torsion free.
Let $x,y$ in $H^{2}(\mathcal{V},\Z)^{\tau}$. We have $\pi^{*}(\pi_{*}(x)\cdot \pi_{*}(y))= 4x\cdot y =\pi^{*}(2\pi_{*}(x\cdot y))$.
So $\pi_{*}(x)\cdot \pi_{*}(y)=2\pi_{*}(x\cdot y)$.
\end{itemize}
\end{proof}

\begin{prop}\label{b2}
We have $b_{2}(M')=16$.
\end{prop}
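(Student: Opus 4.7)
The plan is to apply the pullback/pushforward formalism developed in Section 1 to the quotient map $\pi_{1}\colon N_{1}\rightarrow M'$. Since $N_{1}$ is smooth and $\iota_{1}$ is an involution, Proposition \ref{Smithy} yields a pushforward $\pi_{1*}\colon H^{*}(N_{1},\Z)\rightarrow H^{*}(M',\Z)$ satisfying $\pi_{1*}\circ\pi_{1}^{*}=2\id$ and $\pi_{1}^{*}\circ\pi_{1*}=\id+\iota_{1}^{*}$ (the analogue of Corollary \ref{piso}). Tensoring with $\mathbb{Q}$, the first relation shows $\pi_{1}^{*}$ is injective and the second shows that every $\iota_{1}$-invariant class is in its image: if $x\in H^{2}(N_{1},\mathbb{Q})^{\iota_{1}}$ then $x=\frac{1}{2}\pi_{1}^{*}(\pi_{1*}(x))$. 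Hence $\pi_{1}^{*}$ realizes an isomorphism $H^{2}(M',\mathbb{Q})\simeq H^{2}(N_{1},\mathbb{Q})^{\iota_{1}}$, so $b_{2}(M')=\rk H^{2}(N_{1},\Z)^{\iota_{1}}$.

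It remains to compute this rank. The blowup $s_{1}\colon N_{1}\rightarrow S^{[2]}$ is centered at the $\iota$-fixed K3 surface $\Sigma$, so it is $\iota$-equivariant; the induced involution $\iota_{1}$ preserves the exceptional divisor $\Sigma_{1}$ setwise, and therefore $\iota_{1}^{*}[\Sigma_{1}]=[\Sigma_{1}]$. With respect to the decomposition $H^{2}(N_{1},\Z)=s_{1}^{*}(H^{2}(S^{[2]},\Z))\oplus\Z\Sigma_{1}$ stated above, the action of $\iota_{1}^{*}$ is block-diagonal: it acts as $\iota^{*}$ on the first summand and as the identity on $\Z\Sigma_{1}$. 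By Proposition \ref{Mong} we have $H^{2}(S^{[2]},\Z)^{\iota}\cong U^{3}\oplus(-2)\oplus E_{8}(-2)$, which has rank $15$, so $H^{2}(N_{1},\Z)^{\iota_{1}}$ has rank $15+1=16$, giving $b_{2}(M')=16$.

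The argument is essentially formal once Propositions \ref{Smithy} and \ref{Mong} are granted; the only point that has to be checked is that the blowup is $\iota$-equivariant and the class of the exceptional divisor is $\iota_{1}^{*}$-invariant, which is immediate from $\iota(\Sigma)=\Sigma$. Note also the consistency check with the total rank: $\rk H^{2}(N_{1},\Z)=\rk H^{2}(S^{[2]},\Z)+1=24$, which splits as the invariant part (rank $16$) plus the anti-invariant part $E_{8}(-2)$ (rank $8$) coming from Proposition \ref{Mong}.
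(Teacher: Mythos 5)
Your proof is correct and follows essentially the same route as the paper: identify $H^{2}(M',\mathbb{Q})$ with $H^{2}(N_{1},\mathbb{Q})^{\iota_{1}}$ via Proposition \ref{Smithy}, then compute the rank of the invariant part using the blowup decomposition $H^{2}(N_{1},\Z)\simeq s_{1}^{*}(H^{2}(S^{[2]},\Z))\oplus\Z\Sigma_{1}$, the equivariance $s_{1}\circ\iota_{1}=\iota\circ s_{1}$, and the rank-$15$ invariant lattice of Proposition \ref{Mong}. The only difference is that you spell out the transfer identities and add a rank consistency check, which the paper leaves implicit.
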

\begin{proof}
From \eqref{inva}, we see that $\rk H^{2}(S^{[2]},\Z)^{\iota}=15$. Moreover, $s_{1}\circ\iota_{1}=\iota\circ s_{1}$ and $\Sigma_{1}$ is invariant under $\iota_{1}$. Then $H^{2}(N_{1},\Z)^{\iota_{1}}\simeq s_{1}^{*}(H^{2}(S^{[2]},\Z)^{\iota}) \oplus \Z \Sigma_{1}$.
So $\rk H^{2}(N_{1},\Z)^{\iota_{1}}=16$.
%Always by \cite{Smith} we can build a morphism $\pi_{1*}:H^{2}(S^{[2]},\Z)\rightarrow H^{2}(M',\Z)$ with 
%$$\pi_{1*}\circ\pi_{1}^{*}=2\id_{H^{2}(M',\Z)}, \ \ \ \ \ \ \pi_{1}^{*}\circ\pi_{1*}=\id_{H^{2}(S^{[2]},\Z)}+\iota_{1}^{*}.\ \ \ \ \ (**)$$
By Proposition \ref{Smithy}, $H^{2}(M',\mathbb{Q})\simeq H^{2}(N_{1},\mathbb{Q})^{\iota_{1}}$ and we get $b_{2}(M')=16$.
\end{proof}
We are going to give three other propositions that explain the link between $(H^{2}(S^{[2]},\Z)^{\iota},B_{S^{[2]}})$ and $(H^{2}(M',\Z),B_{M'})$.
\begin{prop}\label{passage}
We have the formula $$B_{M'}(\pi_{1*}(s_{1}^{*}(\alpha),\pi_{1*}(s_{1}^{*}(\beta)))=\sqrt{\frac{24}{C_{M'}}}B_{S^{[2]}}(\alpha,\beta),$$
where $C_{M'}$ is the Fujiki constant of $M'$ and $\alpha$, $\beta$ are in $H^{2}(S^{[2]},\Z)^{\iota}$.
\end{prop}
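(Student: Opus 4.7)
The approach is to compute the top self-intersection $\int_{M'}\gamma_{\alpha}^{4}$ of $\gamma_{\alpha}:=\pi_{1*}(s_{1}^{*}(\alpha))$ in two different ways and match them via the Fujiki relations on $S^{[2]}$ and $M'$, then polarize. For the first computation, part~(1) of Lemma~\ref{inter} applied to the four-dimensional variety $N_{1}$ with the involution $\iota_{1}$ gives $\int_{M'}\gamma_{\alpha}^{4}=8\int_{N_{1}}(s_{1}^{*}\alpha)^{4}$. Since $s_{1}$ is a blowup, hence a birational morphism, the projection formula yields $\int_{N_{1}}s_{1}^{*}(\alpha^{4})=\int_{S^{[2]}}\alpha^{4}$. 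Combining this with Beauville's Fujiki relation $\int_{S^{[2]}}\alpha^{4}=3\,B_{S^{[2]}}(\alpha,\alpha)^{2}$ (the Fujiki constant of $S^{[2]}$ being $C_{S^{[2]}}=3$), I obtain $\int_{M'}\gamma_{\alpha}^{4}=24\,B_{S^{[2]}}(\alpha,\alpha)^{2}$.

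For the second computation, since $M'$ is an irreducible symplectic V-manifold with $\codim\sing M'\geq 4$, Theorem~\ref{Form} applies and yields $\int_{M'}\gamma_{\alpha}^{4}=C_{M'}\,B_{M'}(\gamma_{\alpha},\gamma_{\alpha})^{2}$. Equating the two expressions gives
$$B_{M'}(\gamma_{\alpha},\gamma_{\alpha})^{2}=\frac{24}{C_{M'}}\,B_{S^{[2]}}(\alpha,\alpha)^{2}.\qquad(\star)$$

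To extract a square root of $(\star)$, I view both sides as quadratic forms in $\alpha\in H^{2}(S^{[2]},\mathbb{R})^{\iota}$, i.e.\ elements of the integral polynomial ring $\sym\bigl(H^{2}(S^{[2]},\mathbb{R})^{\iota *}\bigr)$. There, the identity $A^{2}=B^{2}$ factors as $(A-B)(A+B)=0$, so $A=\pm B$ as forms. To pin down the sign, I would test on $\alpha=\omega+\overline{\omega}$, where $\omega$ is the holomorphic symplectic form on $S^{[2]}$ (which is $\iota$-invariant because $\iota$ is symplectic): property~(2) of Theorem~\ref{Form} then gives $B_{S^{[2]}}(\alpha,\alpha)>0$. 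On the other hand, if $\omega'$ denotes the symplectic form of $M'$, the descent construction ensures $\pi_{1}^{*}\omega'=s_{1}^{*}\omega$, so $\gamma_{\alpha}=\pi_{1*}\pi_{1}^{*}(\omega'+\overline{\omega'})=2(\omega'+\overline{\omega'})$ and hence $B_{M'}(\gamma_{\alpha},\gamma_{\alpha})>0$. This forces the positive sign, and polarizing the quadratic identity $B_{M'}(\gamma_{\alpha},\gamma_{\alpha})=\sqrt{24/C_{M'}}\,B_{S^{[2]}}(\alpha,\alpha)$ produces the stated bilinear formula.

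The main subtlety is the global sign argument: a priori, square-rooting a pointwise identity of quadratic forms could give different signs on different regions of $H^{2}(S^{[2]},\mathbb{R})^{\iota}$. The integral-domain structure of $\sym$ rules this out, reducing the sign question to a single check on one class where both sides are positive, for which the symplectic class is the natural candidate supplied by property~(2) of Theorem~\ref{Form}.
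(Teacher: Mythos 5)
Your proposal is correct and follows essentially the same route as the paper: both compare the two Fujiki relations on $S^{[2]}$ and $M'$ via part (1) of Lemma \ref{inter}, obtain $C_{M'}B_{M'}(\gamma_\alpha,\gamma_\alpha)^2=24\,B_{S^{[2]}}(\alpha,\alpha)^2$, and use property (2) of Theorem \ref{Form} to fix the sign of the square root before polarizing. The only difference is that you spell out the square-root and sign-determination step (via the integral-domain argument and the test on $\omega+\overline{\omega}$), which the paper leaves implicit in the phrase ``By Point (2) of Theorem \ref{Form}, we get the result.''
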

\begin{proof}
%We begin with the easy lemma
%\begin{lemme}\label{inter}
%Let $x_{i}$, $1\leq i\leq 4$, in $H^{2}(N_{1},\Z)^{\iota_{1}}$ we have $\pi_{1*}(x_{1})\cdot...\cdot\pi_{1*}(x_{4})=8x_{1}\cdot...\cdot x_{4}$
%\end{lemme}
%\begin{proof}
%$\pi_{1*}(x_{1})\cdot...\cdot\pi_{1*}(x_{4})=\frac{1}{2}\pi_{1}^{*}(\pi_{1*}(x_{1})\cdot...\cdot\pi_{1*}(x_{4}))=\frac{1}{2}\pi_{1}^{*}(\pi_{1*}(x))\cdot...\cdot\pi_{1}^{*}(\pi_{1*}(y))=8x_{1}\cdot...\cdot x_{4}$.
%\end{proof}
By (1) of Theorem \ref{Form}, we have $$(\pi_{1*}(s_{1}^{*}(\alpha)))^{4}=C_{M'}B_{M'}(\pi_{1*}(s_{1}^{*}(\alpha),\pi_{1*}(s_{1}^{*}(\alpha)))^{2}.$$
And $$\alpha^{4}=3B_{S^{[2]}}(\alpha,\alpha)^{2}.$$
Moreover, by Lemma \ref{inter}, $$(\pi_{1*}(s^{*}(\alpha)))^{4}=8s^{*}(\alpha)^{4}=8\alpha^{4}.$$
By Point (2) of Theorem \ref{Form}, we get the result.
\end{proof}
\begin{prop}\label{delta}
We have $$B_{M'}(\overline{\Sigma}',\overline{\Sigma}')=B_{M'}(\pi_{1*}(s_{1}^{*}(\delta)),\pi_{1*}(s_{1}^{*}(\delta)))=-2\sqrt{\frac{24}{C_{M'}}}.$$
\end{prop}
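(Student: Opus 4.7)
The second equality is immediate from Proposition~\ref{passage} applied with $\alpha = \beta = \delta$ and the fact $B_{S^{[2]}}(\delta,\delta) = -2$ from \eqref{Beau}. For the first equality, my plan is to show $\overline{\Sigma}'^4 = 96 = (\pi_{1*}(s_1^*(\delta)))^4$ via the Fujiki relation (1) of Theorem~\ref{Form}, and then pin down the sign so that the two Beauville--Bogomolov squares agree.

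By Lemma~\ref{inter}(1) and Beauville's Fujiki formula $\alpha^4 = 3 B_{S^{[2]}}(\alpha,\alpha)^2$, one has $(\pi_{1*}(s_1^*(\delta)))^4 = 8\delta^4 = 96$. For $\overline{\Sigma}'^4$, since $\pi_1 \colon N_1 \to M'$ is a double cover ramified exactly along the fixed divisor $\Sigma_1$, one has $\pi_1^*(\overline{\Sigma}') = 2\Sigma_1$, hence $\overline{\Sigma}'^4 = 8\Sigma_1^4$. The standard blowup formula applied to $\Sigma_1 \cong \mathbb{P}(\mathcal{N})$ with $\mathcal{N} := \mathcal{N}_{\Sigma/S^{[2]}}$ of rank $2$ gives $\Sigma_1^4 = c_2(\mathcal{N})[\Sigma]$, using that $c_1(\mathcal{N}) = 0$ by adjunction ($\det\mathcal{N} = K_\Sigma \otimes K_{S^{[2]}}^{-1}|_\Sigma = \mathcal{O}_\Sigma$). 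Everything thus reduces to proving $c_2(\mathcal{N})[\Sigma] = 12$.

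I would establish this equality via the Atiyah--Bott holomorphic Lefschetz formula applied to $\iota$ on $(S^{[2]}, \mathcal{O})$. Since $\iota$ is symplectic, $\iota^*$ acts trivially on $H^*(S^{[2]}, \mathcal{O})$, so the left side equals $\chi(\mathcal{O}_{S^{[2]}}) = 3$. On the right, the $28$ isolated fixed points (where $\iota$ acts as $-\id$ on $\mathbb{C}^4$) each contribute $1/16$, for a total of $7/4$. The contribution of $\Sigma$ is $\int_\Sigma \mathrm{td}(\Sigma)/\mathrm{ch}_\iota(\Lambda^{-1}\mathcal{N}^*) = \tfrac{1}{2} + c_2(\mathcal{N})[\Sigma]/16$, obtained from the equivariant computation $\mathrm{ch}_\iota(\Lambda^{-1}\mathcal{N}^*) = 4 - c_2(\mathcal{N})$ (using $\iota = -1$ on $\mathcal{N}^*$ and $\det\mathcal{N}^* = \mathcal{O}_\Sigma$). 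Solving $\tfrac{1}{2} + c_2(\mathcal{N})[\Sigma]/16 + \tfrac{7}{4} = 3$ yields $c_2(\mathcal{N})[\Sigma] = 12$ and hence $\overline{\Sigma}'^4 = 96$.

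For the sign, I would use the polarized Fujiki identity $\int x^3 y = 3 C_{M'} B_{M'}(x,x) B_{M'}(x,y)$ with $x = \pi_{1*}(s_1^*(\alpha))$ and $y = \overline{\Sigma}'$ to show $B_{M'}(\overline{\Sigma}', \pi_{1*}(s_1^*(\alpha))) = 0$ for every $\alpha \in H^2(S^{[2]},\mathbb{Z})^\iota$: by the same ramification computation the left side reduces to $8\int_{N_1}\Sigma_1 \cdot s_1^*(\alpha)^3$, which vanishes because $s_1^*(\alpha)^3|_{\Sigma_1} = p^*((\alpha|_\Sigma)^3) \in p^* H^6(\Sigma) = 0$. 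By Propositions~\ref{Mong}, \ref{passage} and~\ref{b2}, the image of $\pi_{1*}\circ s_1^*$ is a rank-$15$ subspace of $H^2(M',\mathbb{R})$ of BB-signature $(3,12)$, while Theorem~\ref{Form} gives total signature $(3, b_2(M') - 3) = (3,13)$; thus its orthogonal complement, which is $1$-dimensional and contains $\overline{\Sigma}'$, is negative definite. Hence $B_{M'}(\overline{\Sigma}',\overline{\Sigma}') = -2\sqrt{24/C_{M'}}$. The main obstacle is the Atiyah--Bott calculation, which demands careful bookkeeping of the equivariant Chern character of the anti-invariant normal bundle $\mathcal{N}$; once that input is in hand, everything else follows from combining Fujiki's relation, the blowup formula, and a straightforward signature argument.
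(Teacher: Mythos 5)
Your proposal is correct, but for the first equality it takes a genuinely different route from the paper. The paper never computes $\overline{\Sigma}'^4$: it instead evaluates the mixed number $\overline{\Sigma}'^2\cdot\pi_{1*}(s_{1}^{*}(\delta))^2=8\,\Sigma_1^2\cdot s_1^*(\delta)^2=-8\,\Sigma\cdot\delta^2$ using Lemma \ref{Fulton} ($\Sigma_1^2=-s_1^*(\Sigma)$) and an explicit geometric identification of $\Delta\cap\Sigma$ as eight $(-2)$-curves in the K3 surface $\Sigma$, giving $\Sigma\cdot\delta^2=-4$; plugging this into the polarized Fujiki relation then yields $B_{M'}(\overline{\Sigma}',\overline{\Sigma}')$ \emph{with its sign} in one stroke. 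You instead compute $\overline{\Sigma}'^4=8\Sigma_1^4=8\,c_2(\mathcal{N})[\Sigma]$ and obtain $c_2(\mathcal{N})[\Sigma]=12$ from the holomorphic Lefschetz fixed point formula; your bookkeeping is right (Lefschetz number $3$, the $28$ points contributing $7/4$, the surface contributing $\tfrac12+c_2(\mathcal{N})[\Sigma]/16$), and it is consistent with the paper's data, since Lemma \ref{Fulton} gives $\Sigma_1^4=(s_1^*\Sigma)^2=\Sigma\cdot\Sigma=c_2(\mathcal{N})[\Sigma]$. The price of your route is that the unpolarized Fujiki relation only determines $B_{M'}(\overline{\Sigma}',\overline{\Sigma}')$ up to sign, so you must add the orthogonality of $\overline{\Sigma}'$ to $\pi_{1*}(s_1^*(H^2(S^{[2]},\Z)^{\iota}))$ (which is the paper's Proposition \ref{ortho}, proved the same way you do) together with the signature $(3,13)$ from Theorem \ref{Form} and Proposition \ref{b2} to conclude negativity; that argument is sound since the rank-$15$ image has signature $(3,12)$ and its orthogonal complement is a negative-definite line containing $\overline{\Sigma}'\neq 0$. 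What your approach buys is independence from the explicit description of $\Delta\cap\Sigma$ and from Lemma \ref{Fulton}'s normal-bundle computation (your adjunction argument for $c_1(\mathcal{N})=0$ is cleaner); what it costs is importing the Atiyah--Bott formula, which the paper does not use. One small slip: the polarized identity should read $x^3y=C_{M'}B_{M'}(x,x)B_{M'}(x,y)$, not $3C_{M'}B_{M'}(x,x)B_{M'}(x,y)$ (all three pairings of $\{x,x,x,y\}$ coincide), but this does not affect your conclusion that the cross term vanishes.
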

\begin{proof}
First, by \cite{Beauville} we have $B_{S^{[2]}}(\delta,\delta)=-2$, hence $$B_{M'}(\pi_{1*}(s_{1}^{*}(\delta)),\pi_{1*}(s_{1}^{*}(\delta)))=-2\sqrt{\frac{24}{C_{M'}}}$$ by the last proposition.
Now we need the following proposition which easily follows from Theorem \ref{Form}.
\begin{prop}\label{beauville}
Let $X$ be a projective irreducible symplectic variety of dimension $2n$ with $\codim\sing X\geq 4$.
The equality (1) of Theorem \ref{Form} implies that
$$\alpha_{1}\cdot...\cdot\alpha_{2n}=\frac{c_{X}}{(2n)!}\sum_{\sigma\in S_{2n}}B_{X}(\alpha_{\sigma(1)},\alpha_{\sigma(2)})...B_{X}(\alpha_{\sigma(2n-1)},\alpha_{\sigma(2n)}).$$
for all $\alpha_{i}\in H^{2}(X,\Z)$.
\end{prop}
We need also the following lemma:
\begin{lemme}\label{Fulton}
We have $$\Sigma_{1}^{2}=-s_{1}^{*}(\Sigma).$$
\end{lemme}
\begin{proof}
Consider the following diagram:
$$\xymatrix{
 \Sigma_{1}\ar[d]^{g}\ar@{^{(}->}[r]^{l_{1}}& N_{1} \ar[d]^{s_{1}}\\
   \Sigma \ar@{^{(}->}[r]^{l_{0}}  & S^{[2]},
   }$$
where $l_{0}$ and $l_{1}$ are the inclusions and $g:=s_{1|\Sigma_{1}}$.
By Proposition 6.7 of \cite{Fulton}, we have:
$$s_{1}^{*}l_{0*}(\Sigma)=l_{1*}(c_{1}(E)),$$
where $E:=g^{*}(\mathscr{N}_{\Sigma/S^{[2]}})/\mathscr{N}_{\Sigma_{1}/N_{1}}$.
Hence $$s_{1}^{*}l_{0*}(\Sigma)=c_{1}(g^{*}(\mathscr{N}_{\Sigma/S^{[2]}}))-\Sigma_{1}^{2}.$$
It remains to calculate $c_{1}(g^{*}(\mathscr{N}_{\Sigma/S^{[2]}}))$.
We consider the diagram
$$\xymatrix{
 \Sigma_{0}\ar@{^{(}->}[r]^{l_{0}'}& S\times S \\
  \widetilde{\Sigma_{0}}\ar[u]^{r_{0}} \ar[d]_{p_{0}}\ar@{^{(}->}[r]^{\widetilde{l_{0}}}  &\ar[u]^{r} \ar[d]^{p}\mathscr{S}\\
   \Sigma \ar@{^{(}->}[r]^{l_{0}} & S^{[2]},
   }$$
where $\Sigma_{0}=\left\{\left.(x,i(x))\right|x\in S\right\}$, $r: \mathscr{S}\rightarrow S\times S$ is the blowup in the diagonal of $S\times S$: $\Delta_{0}$, $r_{0}: \widetilde{\Sigma_{0}}\rightarrow\Sigma_{0}$ is the blowup in $\Delta_{0}\cap \Sigma_{0}=8pt$, and $p$, $p_{0}$ are the quotient maps.
Since $\Delta_{0}$ and $\Sigma_{0}$ intersect properly in $S\times S$, $\widetilde{\Sigma_{0}}$ is equal to the total transform of $\Sigma_{0}$ by $r$ in $\mathscr{S}$. Hence $$c_{1}(\mathscr{N}_{\widetilde{\Sigma_{0}}/\mathscr{S}})=c_{1}(r_{0}^{*}(\mathscr{N}_{\Sigma_{0}/S\times S})).$$
But since $\Sigma_{0}\simeq S$, we have $\mathscr{N}_{\Sigma_{0}/S\times S}\simeq \mathscr{T}_{S}$. Hence $c_{1}(\mathscr{N}_{\widetilde{\Sigma_{0}}/\mathscr{S}})=0$.
Since $\widetilde{\Sigma_{0}}$ is also the total transform of $\Sigma$ by $p$ in $\mathscr{S}$, we have
$$c_{1}(\mathscr{N}_{\widetilde{\Sigma_{0}}/\mathscr{S}})=c_{1}(p^{*}(\mathscr{N}_{\Sigma/S^{[2]}})).$$
Hence $c_{1}(\mathscr{N}_{\Sigma/S^{[2]}})=0$.
So
$$\Sigma_{1}^{2}=-s_{1}^{*}l_{0*}(\Sigma).$$
\end{proof}
Now, we can calculate $B_{M'}(\overline{\Sigma}',\overline{\Sigma}')$ from the cup product. 
\begin{align*}
\overline{\Sigma}'^{2}\cdot\pi_{1*}(s_{1}^{*}(\delta))^{2}
&=\frac{C_{M'}}{3}B_{M'}(\overline{\Sigma}',\overline{\Sigma}')\times B_{M'}(\pi_{1*}(s_{1}^{*}(\delta)),\pi_{1*}(s_{1}^{*}(\delta)))\\
&=\frac{C_{M'}}{3}B_{M'}(\overline{\Sigma}',\overline{\Sigma}')\times\left(-2\sqrt{\frac{24}{C_{M'}}}\right)\\
&=-4\sqrt{\frac{2C_{M'}}{3}}B_{M'}(\overline{\Sigma}',\overline{\Sigma}')
\end{align*}
By Lemma \ref{inter}, we have $\overline{\Sigma}'^{2}\cdot\pi_{1*}(s_{1}^{*}(\delta))^{2}=8\Sigma_{1}^{2}\cdot (s_{1}^{*}(\delta))^{2}$.
By the projection formula, $\Sigma_{1}^{2}\cdot (s_{1}^{*}(\delta))^{2}=s_{1*}(\Sigma_{1}^{2})\cdot\delta^{2}$.
Moreover by the last lemma, $s_{1*}(\Sigma_{1}^{2})=-\Sigma$. 
%Indeed, we can write
%$$\omega_{\Sigma_{1}}=\omega_{N_{1}}\otimes\mathscr{N}_{\Sigma_{1}/N_{1}}.$$
%The map $s_{|\Sigma_{1}}:\Sigma_{1}\rightarrow\Sigma$ is a $\mathbb{P}^{1}$-bundle, so $\omega_{\Sigma_{1}}=\mathcal{O}_{\Sigma_{1}}(-2u(\Sigma))$,
%where $u:\Sigma\hookrightarrow\Sigma_{1}$ is the embedding.
%We obtain:
%\begin{align*}
%\mathcal{O}_{\Sigma_{1}}(-2u(\Sigma))&=\mathcal{O}_{N_{1}}(\Sigma_{1})\otimes\mathcal{O}_{N_{1}}(\Sigma_{1})\otimes\mathcal{O}_{\Sigma_{1}}\\
%&=\mathcal{O}_{N_{1}}(2\Sigma_{1})\otimes\mathcal{O}_{\Sigma_{1}}.
%\end{align*}
%So $\Sigma_{1}^{2}=-s_{1}^{*}(\Sigma)$.
%Indead, we know that $\Sigma_{1}\rightarrow \Sigma$ is a $\mathbb{P}^{1}$-bundle, then the canonical bundle of $\Sigma_{1}$ is $K_{\Sigma_{1}}=-2\Sigma$ and the canonical bundle of $N_{1}$ is $K_{N_{1}}=\Sigma_{1}$.
Hence, $$-8\Sigma\cdot\delta^{2}=-4\sqrt{\frac{2C_{M'}}{3}}B_{M'}(\overline{\Sigma}',\overline{\Sigma}').$$
It is possible to understand geometrically the intersection $\Sigma\cdot\Delta^{2}$, where $\Delta=2\delta$ is the diagonal in $S^{[2]}$.
Since $\iota=i^{[2]}$ with $i$ a symplectic involution on $S$, we have $\Sigma=\left\{\left.\xi\in S^{[2]} \right|\Supp \xi=x+i(x),\ x\in S\right\}$
and $\Delta\rightarrow\Delta_{0}$ is a $\mathbb{P}^{1}$-bundle over the diagonal $\Delta_{0}$ in $S^{(2)}$.
We recall that $i$ has 8 fixed points on $S$: $x_{1},...,x_{8}$.
Then we see that $\Delta\cdot\Sigma=\cup_{j=1}^{8}\left\{\left.\xi\in S^{[2]}\right|\Supp \xi =\left\{x_{j}\right\}\right\}$, the union of 8 lines.
Therefore $\Delta^{2}\cdot\Sigma$ is the self-intersection of 8 lines in the K3 surface $\Sigma$ (in fact $\Sigma\simeq Y$).
So $\Delta^{2}\cdot\Sigma=-2\times8$. We get:$$-8\times\frac{-2\times8}{4}=-4\sqrt{\frac{2C_{M'}}{3}}B_{M'}(\overline{\Sigma}',\overline{\Sigma}'),$$
and so
$$B_{M'}(\overline{\Sigma}',\overline{\Sigma}')=-2\sqrt{\frac{24}{C_{M'}}}.$$
\end{proof}
%We give also the following lemma that we will use later.
%\begin{lemme}
%We have $\Sigma\cdot\alpha\cdot\beta=2B_{S^{[2]}}(\alpha,\beta)$, for all $\alpha$, $\beta$ in $H^{2}(S^{[2]},\Z)$.
%\end{lemme}
\begin{prop}\label{ortho}
$$B_{M'}(\pi_{1*}(s_{1}^{*}(\alpha)),\overline{\Sigma}')=0,$$
for all $\alpha\in H^{2}(S^{[2]},\Z)^{\iota}$.
\end{prop}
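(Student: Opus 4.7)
Set $a:=\pi_{1*}(s_{1}^{*}(\alpha))$. The strategy is to compute the four-fold cup product $\overline{\Sigma}'\cdot a^{3}\in H^{8}(M',\mathbb{C})$ in two different ways and compare. On the Beauville--Bogomolov side, applying Proposition \ref{beauville} to the quadruple $(\overline{\Sigma}',a,a,a)$: each of the three pair-partitions of $\{1,2,3,4\}$ produces the same term $B_{M'}(\overline{\Sigma}',a)\,B_{M'}(a,a)$, and summing over $S_{4}$ gives
\[
\overline{\Sigma}'\cdot a^{3}=C_{M'}\,B_{M'}(\overline{\Sigma}',a)\,B_{M'}(a,a).
\]
On the geometric side, I will show directly that $\overline{\Sigma}'\cdot a^{3}=0$. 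Comparing the two identities will force $B_{M'}(\overline{\Sigma}',a)\,B_{M'}(a,a)=0$.

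For the geometric vanishing, observe first that $\overline{\Sigma}'=\pi_{1*}(\Sigma_{1})$: since $\Sigma_{1}$ is the (divisorial) ramification locus of the double cover $\pi_{1}$ we have $\pi_{1}^{*}\overline{\Sigma}'=2\Sigma_{1}$, and combining with $\pi_{1*}\pi_{1}^{*}=2\id$ from Proposition \ref{Smithy} yields $\pi_{1*}\Sigma_{1}=\overline{\Sigma}'$. Since both $\Sigma_{1}$ and $s_{1}^{*}\alpha$ are $\iota_{1}$-invariant classes in $H^{2}(N_{1},\Z)$, Lemma \ref{inter}(1) applied to the quadruple $(\Sigma_{1},s_{1}^{*}\alpha,s_{1}^{*}\alpha,s_{1}^{*}\alpha)$, together with the projection formula for the birational morphism $s_{1}$, gives
\[
\overline{\Sigma}'\cdot a^{3}=8\int_{N_{1}}\Sigma_{1}\cdot s_{1}^{*}(\alpha)^{3}=8\int_{S^{[2]}}s_{1*}(\Sigma_{1})\cdot \alpha^{3}=0,
\]
because $s_{1*}(\Sigma_{1})=0$: the exceptional divisor of the blowup $s_{1}$ is contracted onto $\Sigma$, which has codimension two in $S^{[2]}$.

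The only point that requires a separate argument, and therefore the place I expect the real (mild) obstacle, is the passage from $B_{M'}(\overline{\Sigma}',a)\,B_{M'}(a,a)=0$ to $B_{M'}(\overline{\Sigma}',a)=0$ in the case where $a$ is isotropic. To handle this I would polarize: replacing $\alpha$ by $\alpha+t\beta$ with $\beta\in H^{2}(S^{[2]},\Z)^{\iota}$ and a formal variable $t$, setting $b:=\pi_{1*}(s_{1}^{*}(\beta))$, and extracting the coefficient of $t$ in the resulting polynomial identity yields
\[
2B_{M'}(\overline{\Sigma}',a)\,B_{M'}(a,b)+B_{M'}(\overline{\Sigma}',b)\,B_{M'}(a,a)=0.
\]
When $B_{M'}(a,a)=0$ this collapses to $B_{M'}(\overline{\Sigma}',a)\,B_{M'}(a,b)=0$ for every invariant $\beta$. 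By Proposition \ref{passage} the form $B_{M'}$ is a nonzero multiple of $B_{S^{[2]}}$ on the sublattice $\pi_{1*}(s_{1}^{*}(H^{2}(S^{[2]},\Z)^{\iota}))$, hence nondegenerate there (Proposition \ref{Mong} shows that $B_{S^{[2]}}$ restricts to the nondegenerate lattice $U^{3}\oplus(-2)\oplus E_{8}(-2)$ on the invariant part); so unless $a=0$ one can choose $\beta$ with $B_{M'}(a,b)\neq 0$, forcing $B_{M'}(\overline{\Sigma}',a)=0$ in every case.
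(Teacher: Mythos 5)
Your proof is correct and follows essentially the same route as the paper's: identify $\overline{\Sigma}'=\pi_{1*}(\Sigma_{1})$, compute $\overline{\Sigma}'\cdot\pi_{1*}(s_{1}^{*}(\alpha))^{3}=8\,\Sigma_{1}\cdot s_{1}^{*}(\alpha)^{3}=0$ via Lemma \ref{inter} and the projection formula with $s_{1*}(\Sigma_{1})=0$, then compare with the polarized Fujiki relation of Proposition \ref{beauville}. The paper's proof stops at ``we conclude by Proposition \ref{beauville}'' and does not address the degenerate case $B_{M'}(\pi_{1*}(s_{1}^{*}(\alpha)),\pi_{1*}(s_{1}^{*}(\alpha)))=0$; your polarization argument, using the nondegeneracy of $B_{S^{[2]}}$ on the invariant sublattice via Propositions \ref{passage} and \ref{Mong}, correctly fills that small gap.
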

\begin{proof}
We have $\pi_{1*}(s_{1}^{*}(\alpha))^{3}\cdot\overline{\Sigma}'=8s_{1}^{*}(\alpha)^{3}\cdot\Sigma_{1}$ by Lemma \ref{inter},
and $s_{1*}(s_{1}^{*}(\alpha^{3})\cdot\Sigma_{1})=\alpha^{3}\cdot s_{1*}(\Sigma_{1})=0$ by the projection formula.
We conclude by Proposition \ref{beauville}.
\end{proof}

We see from these propositions that the difficulty is the same as in Section 1: we need to understand the image of $s_{1}^{*}(H^{2}(S^{[2]},\Z)^{\iota})$ by $\pi_{1*}$. 
The main question is whether an element $\pi_{1*}(x)$ with $x\in s_{1}^{*}(H^{2}(S^{[2]},\Z)^{\iota})$ is divisible by $2$ in $H^{2}(M',\Z)$ or not.
Let $H^{2}(S^{[2]},\Z)^{\iota}\cong U^{3}\oplus(-2)\oplus E_{8}(-2)$ be the isometry from Proposition \ref{Mong}.
As in the first case, the elements $\pi_{1*}(s_{1}^{*}(x))$ with $s_{1}^{*}(x)\in s_{1}^{*}(E_{8}(-2))\subset s_{1}^{*}(H^{2}(S^{[2]},\Z)^{\iota})$ are divisible by $2$ in $H^{2}(M',\Z)$.
Indeed, we can write $s_{1}^{*}(x)=s_{1}^{*}(t)+s_{1}^{*}(\iota^{*}(t))$ by (2), then $\pi_{1*}(s_{1}^{*}(x))=2\pi_{1*}(s_{1}^{*}(t))$.
What can we say about the divisibility of the elements of $\pi_{1*}(s_{1}^{*}(U^{3}\oplus(-2)))\subset H^{2}(M',\Z)$?

To avoid confusion, we fix an isometry $H^{2}(X,\Z)^{i}\cong U^{3}\oplus E_{8}(-2)$ from Proposition \ref{Mong}, and we will \textbf{write $U^{3}\oplus(-2)\oplus E_{8}(-2)$ instead of $H^{2}(X,\Z)^{i}$} throughout this section. 
%To avoid confusion, in all this section, \textbf{$U^{3}\oplus E_{8}(-2)\oplus (-2)$ will denote the lattice $(H^{2}(S^{[2]},\Z)^{i},B_{S^{[2]}})\cong U^{3}\oplus E_{8}(-2)\oplus (-2)$}.
\subsection{Plan of the proof}\label{Plan}
Now we will focus on the study of the group $\pi_{1*}(s_{1}^{*}(U^{3}\oplus(-2)))\subset H^{2}(M',\Z)$.
In this subsection, we will give the plan of this study.
We will use the same ideas that in the first example.
First, we need a manifold which will play the role of $Y$.
Consider the blowup $s_{2}:N_{2}\rightarrow N_{1}$ of $N_{1}$ in the 28 points fixed by $\iota_{1}$ and the blowup $\widetilde{r}:\widetilde{M}\rightarrow M'$ of $M'$ in its 28 singulars points.
Denote $\iota_{2}$ the involution induced by $\iota$ on $N_{2}$ and $\pi_{2}:N_{2}\rightarrow N_{2}/\iota_{2}$ the quotient map.
We have $N_{2}/\iota_{2}\simeq \widetilde{M}$. We collect this notation in commutative diagram
$$\xymatrix{
 \widetilde{M}\ar[r]^{\widetilde{r}} & M' \ar[r]^{r'}& M \\
  N_{2}\ar@(dl,dr)[]_{\iota_{2}} \ar[r]^{s_{2}} \ar[u]^{\pi_{2}}& N_{1}\ar@(dl,dr)[]_{\iota_{1}} \ar[r]^{s_{1}} \ar[u]^{\pi_{1}} & S^{[2]}\ar@(dl,dr)[]_{\iota}\ar[u]^{\pi}
   }$$

The manifold $\widetilde{M}$ will play the role of $Y$. In the first part, the fact that $H^{2}(Y,\Z)$ endowed with the cup product was an unimodular lattice have played the central role. But now, the cup product is not a bilinear form on $H^{2}(\widetilde{M},\Z)$. For this reason, we will study $H^{4}(\widetilde{M},\Z)$ endowed with the cup product. This lattice is unimodular. And finally we will deduce from this study the information on $H^{2}(M',\Z)$ that we need. 
We will study $H^{4}(\widetilde{M},\Z)$ in three steps.
\begin{itemize}
\item{1)}
First, we will describe the lattice $H^{4}(S^{[2]},\Z)$ endowed with the cup product.
\item{2)}
Denote $V=S^{[2]}\setminus\Fix \iota$ and $U=M\setminus\sing M= \pi(V)$. 
We will calculate $H^{4}(U,\Z)$ with the same method as in Section \ref{OtherP}.
\item{3)}
Denote $s=s_{2}\circ s_{1}$. Let $x,y\in H^{2}(S^{[2]},\Z)$. Using the results on $H^{4}(\widetilde{M},\Z)$, we
will be able to understand whether the elements $\pi_{2*}(s^{*}(x\cdot y))$ are divisible by 2 or not.
\end{itemize}
From this study, we will deduce the divisibility properties for all the elements $\pi_{2*}(s^{*}(x))\in H^{2}(\widetilde{M},\Z)$
and then conclude as in Section \ref{proof}.
\subsection{The group $H^{4}(S^{[2]},\Z)$}\label{base}
We take the isometry $H^{2}(S^{[2]},\Z)\cong U^{3}\oplus(-2)\oplus E_{8}(-1)\oplus E_{8}(-1)$ given by Proposition \ref{Mong}. Let 
$(u_{k,l})_{k\in\left\{1,2,3\right\}, l\in\left\{1,2\right\}}$ be a basis of $U^{3}$ and $(e_{k,l})_{k\in\left\{1,...,8\right\},}$ $_{l\in\left\{1,2\right\}}$ a basis of $E_{8}(-1)\oplus E_{8}(-1)$. To simplify, we will also use the notation: $$(\gamma_{k})_{k\in\left\{1,...,22\right\}}=(u_{a,b})_{a\in\left\{1,2,3\right\}, b\in\left\{1,2\right\}}\cup(e_{l,p})_{l\in\left\{1,...,8\right\}, p\in\left\{1,2\right\}}.$$ By the proof of Proposition \ref{Mong}, we can write $\gamma_{k}=j(\alpha_{k})$ for all $k\in \left\{1,...,22\right\}$ where $(\alpha_{k})_{k\in\left\{1,...,22\right\}}$ is the corresponding basis of $H^{2}(S,\Z)$. Also denote  $j(v_{k,l})$ $=u_{k,l}$, for all $k\in\left\{1,2,3\right\}$, $l\in\left\{1,2\right\}$.

For $\alpha\in H^{*}(S,\Z)$ and $l\in\Z$, we denote by $\mathfrak{q}_{l}(\alpha)\in\End(H^{*}(S^{[2]},\Z))$ the Nakajima operators \cite{Nakajima} and by $\left|0\right\rangle\in H^{*}(S^{[0]},\Z))$ the unit. We denote also $1\in H^{0}(S,\Z)$ the unit and by $x\in H^{4}(S,\Z)$ the class of a point. 
By Verbitsky \cite{Verbitsky} we know that the cup product map $\sym^{2} H^{2}(S^{[2]},\mathbb{Q})\rightarrow H^{4}(S^{[2]},\mathbb{Q})$ is an isomorphism. Moreover, we have the following theorem by Qin-Wang (\cite{Wang} Theorem 5.4 and Remark 5.6).
\begin{thm}\label{baseS}
The following elements form an integral basis for $H^{4}(S^{[2]},\Z)$:
$$\mathfrak{q}_{1}(1)\mathfrak{q}_{1}(x)\left|0\right\rangle,\ \ \mathfrak{q}_{2}(\alpha_{k})\left|0\right\rangle, \ \ \mathfrak{q}_{1}(\alpha_{k})\mathfrak{q}_{1}(\alpha_{m})\left|0\right\rangle,$$
$$\mathfrak{m}_{1,1}(\alpha_{k})\left|0\right\rangle=\frac{1}{2}(\mathfrak{q}_{1}(\alpha_{k})^{2}-\mathfrak{q}_{2}(\alpha_{k}))\left|0\right\rangle,$$
with $k<m$.
\end{thm}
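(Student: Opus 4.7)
The plan is to invoke the Nakajima--Fock space description of $\bigoplus_{n\ge 0}H^{*}(S^{[n]},\mathbb{Q})$ and then to upgrade a rational basis to an integral one using the ``monomial'' classes $\mathfrak{m}_{\lambda}$ of Qin--Wang \cite{Wang}.

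First I would recall that a $\mathbb{Q}$-basis of $H^{*}(S^{[n]},\mathbb{Q})$ is indexed by the data of a partition $\lambda=(m_{1},\dots,m_{\ell})\vdash n$ together with a choice of class $\beta_{i}\in H^{*}(S,\mathbb{Q})$ attached to each part, the corresponding basis element being $\prod_{i}\mathfrak{q}_{m_{i}}(\beta_{i})\left|0\right\rangle$, taken up to permutation of equal parts. For $n=2$ the only partitions are $(2)$ and $(1,1)$.

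Next, using that $\mathfrak{q}_{m}(\beta)$ raises total cohomological degree by $|\beta|+2(m-1)$, I would enumerate the monomials landing in $H^{4}$. From $(2)$ one gets $\mathfrak{q}_{2}(\alpha_{k})\left|0\right\rangle$ for $\alpha_{k}$ ranging over the basis of $H^{2}(S,\Z)$ ($22$ classes). From $(1,1)$ one gets $\mathfrak{q}_{1}(1)\mathfrak{q}_{1}(x)\left|0\right\rangle$ (one class) together with $\mathfrak{q}_{1}(\alpha_{k})\mathfrak{q}_{1}(\alpha_{m})\left|0\right\rangle$ for $k\le m$. Counting yields $1+22+\binom{22}{2}+22=276$ classes, which matches $b_{4}(S^{[2]})=276$ obtained from G\"ottsche's formula, confirming that these form a $\mathbb{Q}$-basis.

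The main obstacle is integrality: nothing in Nakajima's construction \emph{a priori} guarantees that $\mathfrak{m}_{1,1}(\alpha_{k})\left|0\right\rangle=\tfrac{1}{2}(\mathfrak{q}_{1}(\alpha_{k})^{2}-\mathfrak{q}_{2}(\alpha_{k}))\left|0\right\rangle$ lies in $H^{4}(S^{[2]},\Z)$. Following Qin--Wang, I would identify this class with a geometric one: for a line bundle $L$ on $S$ with $c_{1}(L)=\alpha_{k}$, the tautological rank-$2$ bundle $L^{[2]}$ on $S^{[2]}$ produces via its Chern classes an integral representative whose weight-$(1,1)$ Nakajima expansion is precisely $\mathfrak{m}_{1,1}(\alpha_{k})\left|0\right\rangle$; extending $\Z$-linearly in $\alpha_{k}$ then exhibits all $22$ classes $\mathfrak{m}_{1,1}(\alpha_{k})\left|0\right\rangle$ as integral. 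Finally one checks that replacing, for each $k$, the rational element $\mathfrak{q}_{1}(\alpha_{k})^{2}\left|0\right\rangle$ by the pair $(\mathfrak{q}_{2}(\alpha_{k})\left|0\right\rangle,\mathfrak{m}_{1,1}(\alpha_{k})\left|0\right\rangle)$ is a base change of the $\mathbb{Q}$-basis with unit determinant, so that the listed $276$ classes form a $\Z$-basis of $H^{4}(S^{[2]},\Z)$.
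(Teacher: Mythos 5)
The paper does not actually prove this statement: it is imported verbatim from Qin--Wang (\cite{Wang}, Theorem 5.4 and Remark 5.6), so there is no internal argument to compare yours with. Your enumeration of the degree-$4$ Nakajima monomials and the count $1+22+\binom{22}{2}+22=276=b_{4}(S^{[2]})$ correctly reproduce the rational statement, which is standard; the content of the theorem lies entirely in the two integral assertions, and both of these have gaps in your sketch.

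First, integrality of $\mathfrak{m}_{1,1}(\alpha_{k})\left|0\right\rangle$: the tautological-bundle construction only yields integral classes when $\alpha_{k}\in\Pic(S)$, and for a K3 surface most of the $22$ classes in an integral basis of $H^{2}(S,\Z)$ are not algebraic. Your proposed remedy, ``extending $\Z$-linearly in $\alpha_{k}$'', fails because $\mathfrak{m}_{1,1}$ is not additive: one has $\mathfrak{m}_{1,1}(\alpha+\beta)\left|0\right\rangle=\mathfrak{m}_{1,1}(\alpha)\left|0\right\rangle+\mathfrak{m}_{1,1}(\beta)\left|0\right\rangle+\mathfrak{q}_{1}(\alpha)\mathfrak{q}_{1}(\beta)\left|0\right\rangle$; it is a divided-power (quadratic) construction, which is precisely why Qin--Wang define their integral classes through explicit correspondences valid for arbitrary cohomology classes (alternatively one can argue by deformation and monodromy invariance). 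Second, even granting that all $276$ listed classes are integral, you have only produced a finite-index sublattice of $H^{4}(S^{[2]},\Z)$. The claim that the passage from the monomial $\mathbb{Q}$-basis to the listed classes ``has unit determinant'' is false: the substitution replaces each $\mathfrak{q}_{1}(\alpha_{k})^{2}\left|0\right\rangle$ by $\tfrac{1}{2}(\mathfrak{q}_{1}(\alpha_{k})^{2}-\mathfrak{q}_{2}(\alpha_{k}))\left|0\right\rangle$ while keeping $\mathfrak{q}_{2}(\alpha_{k})\left|0\right\rangle$, so the determinant is $2^{-22}$, and in any case a determinant relative to a $\mathbb{Q}$-basis says nothing about the index in integral cohomology. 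To conclude one must bound the lattice from above, for instance by checking that the Gram matrix of the $276$ classes under the intersection form on the middle cohomology (which is unimodular by Poincar\'e duality, $H^{4}(S^{[2]},\Z)$ being torsion free) has determinant $\pm 1$, or simply by citing Qin--Wang as the paper does. As written, your proposal establishes neither of the two points that separate the integral statement from the rational one.
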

To get a better idea of this theorem, we will give the following proposition which is Remark 6.7 of \cite{SmithTh}.
\begin{prop}\label{base2}
\begin{itemize}
\item
For all $k\in\left\{1,...,22\right\}$, $$\mathfrak{q}_{2}(\alpha_{k})\left|0\right\rangle=\delta\cdot\gamma_{k},$$
\item
for all $1\leq k\leq m\leq22$, $$\gamma_{k}\cdot\gamma_{m}=(\alpha_{k}\cdot\alpha_{m})\mathfrak{q}_{1}(1)\mathfrak{q}_{1}(x)\left|0\right\rangle+\mathfrak{q}_{1}(\alpha_{k})\mathfrak{q}_{1}(\alpha_{m})\left|0\right\rangle,$$
\item for all $k\in\left\{1,...,22\right\}$, $$\mathfrak{m}_{1,1}(\alpha_{k})\left|0\right\rangle=\frac{\gamma_{k}^{2}-\delta\cdot\gamma_{k}}{2}-\frac{\alpha_{k}^{2}}{2}\mathfrak{q}_{1}(1)\mathfrak{q}_{1}(x)\left|0\right\rangle,$$
\item
denote by $d:S\rightarrow S^{2}$ the diagonal embedding. We denote the push-forward map followed by the Künneth isomorphism by $d_{*}:H^{*}(S,\Z)\rightarrow H^{*}(S,\Z)\otimes H^{*}(S,\Z)$. We write $d_{*}(1)= \sum_{k,m} \mu_{k,m} \alpha_{k}\otimes\alpha_{m} +x\otimes 1 +1\otimes x$, $\mu_{k,m}\in\Z$. Since $\mu_{k,m}=\mu_{m,k}$, one has:
$$\delta^{2}=\sum_{i<j}{\mu_{i,j}\mathfrak{q}_{1}(\alpha_{i})\mathfrak{q}_{1}(\alpha_{j})\left|0\right\rangle}+\frac{1}{2}\sum_{i}{\mu_{i,i}\mathfrak{q}_{1}(\alpha_{i})^{2}\left|0\right\rangle}+\mathfrak{q}_{1}(1)\mathfrak{q}_{1}(x)\left|0\right\rangle.$$
%where $(\mu_{i,j})$ is the adjugate matrix of $(\alpha_{i}\cdot\alpha_{j})=(B_{S^{[2]}}(\gamma_{i},\gamma_{j}))$.
\end{itemize}
\end{prop}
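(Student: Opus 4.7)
The plan is to translate each of the four identities into statements in the Fock-space model of $\bigoplus_{n\geq 0} H^{*}(S^{[n]},\mathbb{Q})$ and verify them by a computation with Nakajima operators. Under the standard identifications (see for instance Lehn or Qin--Wang), one has $j(\alpha)=\mathfrak{q}_{1}(\alpha)\mathfrak{q}_{1}(1)\left|0\right\rangle$, so $\gamma_{k}=\mathfrak{q}_{1}(\alpha_{k})\mathfrak{q}_{1}(1)\left|0\right\rangle$, and $\delta$ is a definite scalar multiple of $\mathfrak{q}_{2}(1)\left|0\right\rangle$; the normalization is forced by $B_{S^{[2]}}(\delta,\delta)=-2$ together with Beauville's description \eqref{j}. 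The main tool for the cup product computations is Lehn's formula (equivalently, the Lehn--Sorger description of the ring structure) which expresses the cup-product action of a divisor class on $H^{*}(S^{[n]})$ as an explicit combinator in the $\mathfrak{q}_{n}(\alpha)$.

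For the first identity, I would compute $\delta\cdot\gamma_{k}$ via Lehn's boundary derivation $\partial$: applying $\partial$ to the monomial $\mathfrak{q}_{1}(\alpha_{k})\mathfrak{q}_{1}(1)\left|0\right\rangle$ picks out precisely $\mathfrak{q}_{2}(\alpha_{k})\left|0\right\rangle$ once the scalar normalizations are chased. The second identity is the same kind of computation applied to the product of two copies of the monomial $\mathfrak{q}_{1}(\alpha)\mathfrak{q}_{1}(1)\left|0\right\rangle$: the Heisenberg commutation relation $[\mathfrak{q}_{m}(\alpha),\mathfrak{q}_{n}(\beta)]=m\delta_{m+n,0}\,\bigl(\int_{S}\alpha\beta\bigr)\,\mathrm{id}$ is what produces the pairing $\alpha_{k}\cdot\alpha_{m}$ that appears in front of the term $\mathfrak{q}_{1}(1)\mathfrak{q}_{1}(x)\left|0\right\rangle$, while the residual normal-ordered part is exactly $\mathfrak{q}_{1}(\alpha_{k})\mathfrak{q}_{1}(\alpha_{m})\left|0\right\rangle$. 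The third identity is then a formal consequence: specialize the second to $m=k$ and substitute into the defining relation $\mathfrak{m}_{1,1}(\alpha_{k})\left|0\right\rangle=\tfrac{1}{2}(\mathfrak{q}_{1}(\alpha_{k})^{2}-\mathfrak{q}_{2}(\alpha_{k}))\left|0\right\rangle$, using the first identity to replace $\mathfrak{q}_{2}(\alpha_{k})\left|0\right\rangle$ by $\delta\cdot\gamma_{k}$.

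The fourth identity, expressing $\delta^{2}$ in the Qin--Wang basis of Theorem~\ref{baseS}, is the most intricate and is where the geometry of the diagonal of $S$ intervenes. The approach is to write $\delta^{2}$ as a scalar multiple of the square of $\mathfrak{q}_{2}(1)\left|0\right\rangle$ and apply Lehn--Sorger; the K\"unneth expansion $d_{*}(1)=\sum_{k,m}\mu_{k,m}\alpha_{k}\otimes\alpha_{m}+x\otimes 1+1\otimes x$ enters because $d_{*}(1)$ is precisely the combinatorial datum controlling how two $\mathfrak{q}_{2}$'s collide in the Lehn--Sorger product. Collecting the resulting terms in the basis of Theorem~\ref{baseS} yields exactly the stated expression; the symmetry $\mu_{k,m}=\mu_{m,k}$ explains the factor $\tfrac{1}{2}$ on the diagonal terms $\mu_{i,i}$ and the restriction $i<j$ on the cross terms. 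The main obstacle is the bookkeeping in this last step, but the whole argument is essentially formal once the Nakajima/Lehn--Sorger dictionary is in place, and indeed the statement is already recorded as Remark~6.7 of \cite{SmithTh}.
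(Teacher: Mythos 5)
Your overall route is the same as the paper's: identify $\delta=\tfrac{1}{2}\mathfrak{q}_{2}(1)\left|0\right\rangle$ and $\gamma_{k}=\mathfrak{q}_{1}(1)\mathfrak{q}_{1}(\alpha_{k})\left|0\right\rangle$, and then read the four identities off the explicit cup-product formulas of the Lehn--Sorger model. The paper's proof does exactly this: it quotes the two product formulas $\tfrac{1}{2}\mathfrak{q}_{2}(1)\left|0\right\rangle\cdot\mathfrak{q}_{1}(1)\mathfrak{q}_{1}(\alpha)\left|0\right\rangle=\mathfrak{q}_{2}(\alpha)\left|0\right\rangle$ and $\mathfrak{q}_{1}(1)\mathfrak{q}_{1}(\alpha)\left|0\right\rangle\cdot\mathfrak{q}_{1}(1)\mathfrak{q}_{1}(\beta)\left|0\right\rangle=(\alpha\cdot\beta)\mathfrak{q}_{1}(1)\mathfrak{q}_{1}(x)\left|0\right\rangle+\mathfrak{q}_{1}(\alpha)\mathfrak{q}_{1}(\beta)\left|0\right\rangle$ and deduces everything formally, referring to Remark 6.7 of \cite{SmithTh} for the $\delta^{2}$ expansion. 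Your treatment of the first identity via Lehn's boundary derivation, of the third as a formal consequence of the first two, and of the fourth via the datum $d_{*}(1)$ entering the Lehn--Sorger multiplication, all match this.

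There is, however, one concretely wrong step: the Heisenberg commutation relation cannot be ``what produces the pairing $\alpha_{k}\cdot\alpha_{m}$'' in the second identity. The commutator $[\mathfrak{q}_{m}(\alpha),\mathfrak{q}_{n}(\beta)]$ vanishes unless $m+n=0$, so $\mathfrak{q}_{1}(\alpha_{k})$ and $\mathfrak{q}_{1}(\alpha_{m})$ commute and there is no normal-ordering correction to extract; the Nakajima operators encode the additive structure of $\bigoplus_{n}H^{*}(S^{[n]})$, not its ring structure, and the cup product of two classes of the form $\mathfrak{q}_{1}(1)\mathfrak{q}_{1}(\alpha)\left|0\right\rangle$ is not computed by composing those operators. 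The term $(\alpha_{k}\cdot\alpha_{m})\mathfrak{q}_{1}(1)\mathfrak{q}_{1}(x)\left|0\right\rangle$ is an excess-intersection contribution from the locus where the two points of $\xi$ collide, and it has to come from the Lehn--Sorger product formula itself (or from a direct geometric intersection computation of the cycles, as in Proposition \ref{sigma}). Replace the appeal to the commutator by that formula and the rest of your argument goes through as in the paper.
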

\begin{proof}
We have $$\frac{1}{2}\mathfrak{q}_{2}(1)\left|0\right\rangle=\delta,\ \ \ \ \mathfrak{q}_{1}(1)\mathfrak{q}_{1}(\alpha_{k})\left|0\right\rangle=j(\alpha_{k})=\gamma_{k},$$
for all $k\in\left\{1,...,22\right\}$.
The cup product map $\sym^{2} H^{2}(S^{[2]},\mathbb{Q})\rightarrow H^{4}(S^{[2]},\mathbb{Q})$ can be computed explicitly by using the algebraic model constructed by Lehn-Sorger \cite{Lehn}:
\begin{itemize}
\item[1)] for $\alpha\in H^{2}(S,\Z)$, $\frac{1}{2}\mathfrak{q}_{2}(1)\left|0\right\rangle\cdot\mathfrak{q}_{1}(1)\mathfrak{q}_{1}(\alpha)\left|0\right\rangle=\mathfrak{q}_{2}(\alpha)\left|0\right\rangle,$
\item[2)] for $\alpha,\beta\in H^{2}(S,\Z)$,
$$\mathfrak{q}_{1}(1)\mathfrak{q}_{1}(\alpha)\left|0\right\rangle\cdot\mathfrak{q}_{1}(1)\mathfrak{q}_{1}(\beta)\left|0\right\rangle=(\alpha\cdot\beta)\mathfrak{q}_{1}(1)\mathfrak{q}_{1}(x)\left|0\right\rangle+\mathfrak{q}_{1}(\alpha)\mathfrak{q}_{1}(\beta)\left|0\right\rangle,$$
%\item[3)]
%\begin{align*}
%\frac{1}{2}\mathfrak{q}_{2}(1)\left|0\right\rangle\cdot\frac{1}{2}\mathfrak{q}_{2}(1)\left|0\right\rangle &=\sum_{i<j}{\mu_{i,j}\mathfrak{q}_{1}(\alpha_{i})\mathfrak{q}_{1}(\alpha_{j})\left|0\right\rangle}\\
%&+\frac{1}{2}\sum_{i}{\mu_{i,i}\mathfrak{q}_{1}(\alpha_{i})^{2}\left|0\right\rangle}+\mathfrak{q}_{1}(1)\mathfrak{q}_{1}(x)\left|0\right\rangle,
%\end{align*}
%where $(\mu_{i,j})$ is the adjugate matrix of $(\alpha_{i}\cdot\alpha_{j})=(B_{S^{[2]}}(\gamma_{i},\gamma_{j}))$.
\end{itemize}
This implies the Proposition.
\end{proof}
We will need also a proposition on the cup product of the previous elements.
\begin{prop}\label{sigma}
We have 
\begin{itemize}
\item[1)] $$\mathfrak{q}_{1}(1)\mathfrak{q}_{1}(x)\left|0\right\rangle\cdot\mathfrak{q}_{2}(\alpha_{k})\left|0\right\rangle=\mathfrak{q}_{1}(1)\mathfrak{q}_{1}(x)\left|0\right\rangle\cdot\mathfrak{q}_{1}(\alpha_{k})\mathfrak{q}_{1}(\alpha_{l})\left|0\right\rangle=0$$
for all $(k,l)\in \left\{1,...,22\right\}^{2}$, and
$$\mathfrak{q}_{1}(1)\mathfrak{q}_{1}(x)\left|0\right\rangle\cdot\mathfrak{q}_{1}(1)\mathfrak{q}_{1}(x)\left|0\right\rangle=1.$$
\item[2)]
$$\Sigma\cdot\mathfrak{q}_{1}(1)\mathfrak{q}_{1}(x)\left|0\right\rangle=1, \ \ \ \Sigma\cdot\mathfrak{q}_{2}(\alpha_{k})\left|0\right\rangle=0,$$ and 
$$\Sigma\cdot\mathfrak{q}_{1}(\alpha_{k})\mathfrak{q}_{1}(\alpha_{l})\left|0\right\rangle=\alpha_{k}\cdot i^{*}\alpha_{l},$$
for all $(k,l)\in \left\{1,...,22\right\}^{2}$.
\end{itemize}
\end{prop}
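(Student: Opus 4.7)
The plan is to verify both parts by combining the explicit Lehn--Sorger/Qin--Wang formulas underlying Proposition~\ref{base2} with direct cycle-theoretic intersections in $S^{[2]}$, using the fact that $\Sigma$ admits a degree-$2$ (generically finite) parametrization from $S$.

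For Part 1), I would represent $\mathfrak{q}_{1}(1)\mathfrak{q}_{1}(x)|0\rangle=j(x)$ by the surface $\{p\}+S\subset S^{[2]}$ of length-$2$ subschemes containing a fixed generic point $p\in S$. Then: (i) $\mathfrak{q}_{2}(\alpha_{k})|0\rangle=\delta\cdot\gamma_{k}$ is supported on the diagonal $\Delta$, and $(\{p\}+S)\cap\Delta$ is the single scheme $2[p]$, which lies on a generic representative of $\mathfrak{q}_{2}(\alpha_{k})|0\rangle$ only if $p\in\alpha_{k}$, giving vanishing; (ii) $\mathfrak{q}_{1}(\alpha_{k})\mathfrak{q}_{1}(\alpha_{l})|0\rangle$ is represented by $\{\{a,b\}:a\in A_{k},\ b\in A_{l}\}$ for cycles $A_{i}$ representing $\alpha_{i}$, and generically $p\notin A_{k}\cup A_{l}$; (iii) the self-intersection is computed by moving $p$ to $p'$, so $(\{p\}+S)\cap(\{p'\}+S)=\{\{p,p'\}\}$ yields exactly one transverse point, giving $1$.

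For Part 2), I would use the map $\rho:S\to S^{[2]}$, $y\mapsto\{y,i(y)\}$, whose image is $\Sigma$ and which is generically $2$-to-$1$ onto $\Sigma$ (identifying $y$ with $i(y)$). This gives $\rho_{*}[S]=2[\Sigma]$ in $H^{4}(S^{[2]},\Z)$, so the projection formula yields $\Sigma\cdot\beta=\tfrac12\int_{S}\rho^{*}\beta$ for $\beta\in H^{4}(S^{[2]},\Z)$. I then compute $\rho^{*}$ on each basis class. For $\beta=\mathfrak{q}_{1}(1)\mathfrak{q}_{1}(x)|0\rangle$, the preimage of $\{p\}+S$ is $\{p,i(p)\}\subset S$, i.e.\ $2x$, giving $\Sigma\cdot\beta=1$. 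For $\beta=\mathfrak{q}_{2}(\alpha_{k})|0\rangle$, the support lies over $\Delta$ and $\rho^{-1}(\Delta)=\Fix i$, which is disjoint from a generic representative of $\alpha_{k}$, so the integral vanishes. For $\beta=\mathfrak{q}_{1}(\alpha_{k})\mathfrak{q}_{1}(\alpha_{l})|0\rangle$, the preimage is $(A_{k}\cap i(A_{l}))\cup(A_{l}\cap i(A_{k}))$, Poincar\'e-dual to $\alpha_{k}\cdot i^{*}\alpha_{l}+\alpha_{l}\cdot i^{*}\alpha_{k}=2\,\alpha_{k}\cdot i^{*}\alpha_{l}$, giving the stated formula after dividing by $2$.

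The main obstacle is the careful justification of $\rho_{*}[S]=2[\Sigma]$ in spite of the ramification of $\rho$ at the $8$ points of $\Fix i$, where $\rho$ is $1$-to-$1$ onto the scheme $2[y]\in\Sigma\cap\Delta$. This can be handled by choosing generic cycle representatives of $\alpha_{k}$ missing the eight $i$-fixed points so that all relevant intersections lie in the locus where $\rho$ is \'etale of degree $2$; the ramification contributes to a real-codimension-$4$ locus and does not affect the cohomological identity. As a consistency check one can compute $\gamma_{k}\cdot\gamma_{l}\cdot\Sigma$ directly from the embedding $S\hookrightarrow\Sigma\subset S^{[2]}$ and match it with the formula $\gamma_{k}\cdot\gamma_{l}=(\alpha_{k}\cdot\alpha_{l})\mathfrak{q}_{1}(1)\mathfrak{q}_{1}(x)|0\rangle+\mathfrak{q}_{1}(\alpha_{k})\mathfrak{q}_{1}(\alpha_{l})|0\rangle$ of Proposition~\ref{base2}, which pins down both multiplicities unambiguously.
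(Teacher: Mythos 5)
Your proposal is correct and follows essentially the same route as the paper: the paper's proof also identifies $\mathfrak{q}_{1}(1)\mathfrak{q}_{1}(x)\left|0\right\rangle$, $\mathfrak{q}_{2}(\alpha_{k})\left|0\right\rangle$, $\mathfrak{q}_{1}(\alpha_{k})\mathfrak{q}_{1}(\alpha_{l})\left|0\right\rangle$ with their explicit cycle representatives in $S^{[2]}$ and reads off the intersection numbers geometrically, using $\Sigma=\left\{\xi \mid \Supp\xi=x+i(x)\right\}$. Your added bookkeeping via the generically $2$-to-$1$ parametrization of $\Sigma$ (which, strictly, should be pushed forward from the blow-up $\widetilde{\Sigma_{0}}$ of $S$ at the eight fixed points, as in the paper's Lemma \ref{Fulton}, since the map $y\mapsto\{y,i(y)\}$ is undefined there) just makes explicit the factor of $2$ that the paper leaves implicit.
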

\begin{proof}
We recall the definition of Nakajima's operators.
Let 
$$Q^{[m+n,n]}=\left\{\left.(\xi,x,\eta)\in S^{[m+n]}\times S\times S^{[m]}\right|\xi\supset \eta,\ \Supp(I_{\eta}/I_{\xi})=\left\{x\right\}\right\}$$
We have 
$$\mathfrak{q}_{n}(\alpha)(A)=\widetilde{p}_{1*}\left(\left[Q^{[m+n,m]}\right]\cdot\widetilde{\rho}^{*}\alpha\cdot \widetilde{p}_{2}^{*}A\right),$$
for $A\in H^{*}(S^{[m]})$ and $\alpha\in H^{*}(S)$ , where $\widetilde{p}_{1}$, $\widetilde{\rho}$, $\widetilde{p}_{2}$ are the projections from $S^{[m+n]}\times S\times S^{[m]}$ to $S^{[m+n]}$, $S$, $S^{[m]}$ respectively.
Then we find that $\mathfrak{q}_{1}(1)\mathfrak{q}_{1}(x)\left|0\right\rangle$
corresponds to the cycle $\left\{\left.\xi\in S^{[2]}\right|\Supp \xi\ni x\right\}$.
The element $\mathfrak{q}_{1}(\alpha_{k})\mathfrak{q}_{1}(\alpha_{m})\left|0\right\rangle$
corresponds to the cycle $\left\{\left.\xi\in S^{[2]}\right|\Supp \xi=x+y,\ x\in \alpha_{k},\ y\in\alpha_{m}\right\}$.
And $\mathfrak{q}_{2}$ $(\alpha_{k})\left|0\right\rangle$ corresponds to the cycle $\left\{\left.\xi\in S^{[2]}\right|\Supp \xi=\left\{x\right\},\ x\in \alpha_{k}\right\}$.
This implies 1).
Since $\Sigma=\left\{\left.\xi\in S^{[2]}\right|\Supp \xi=x+i(x)\right\}$, we deduce 2).
\end{proof}
\subsection{The group $H^{4}(U,\Z)$}
We will follow the same method as in Section \ref{OtherP} to prove the following proposition.
\begin{prop}\label{U}
We have an isomorphism of group:
$$H^{4}(U,\Z)\simeq H^{4}(S^{[2]},\Z)^{\iota}/\Z\Sigma\oplus(\Z/2\Z)^{8}.$$
\end{prop}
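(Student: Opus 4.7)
The strategy is to apply the equivariant cohomology method of Section \ref{OtherP}. Since $\iota$ acts freely on $V = S^{[2]}\setminus\Fix\iota$, one has $H^{*}(U,\Z)\simeq H^{*}_{I}(V,\Z)$ with $I=\{\id,\iota\}$, and the Leray--Serre spectral sequence yields
$$E_{2}^{p,q}=H^{p}(I;H^{q}(V,\Z))\Rightarrow H^{p+q}(U,\Z).$$
First I would compute the low-degree cohomology of $V$ via Thom's isomorphism applied to the pair $(S^{[2]},V)$. Since $\Fix\iota$ decomposes as $\Sigma$ (real codimension $4$) and $28$ isolated points (real codimension $8$), the groups $H^{i}(S^{[2]},V,\Z)$ vanish for $i\leq 3$ and $H^{4}(S^{[2]},V,\Z)\simeq H^{0}(\Sigma,\Z)=\Z$, whose generator maps to $[\Sigma]\in H^{4}(S^{[2]},\Z)$. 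Together with $H^{\mathrm{odd}}(S^{[2]},\Z)=0$ and the injectivity of $\Z\cdot[\Sigma]\hookrightarrow H^{4}(S^{[2]},\Z)$, this gives $H^{1}(V,\Z)=H^{3}(V,\Z)=0$, $H^{2}(V,\Z)=H^{2}(S^{[2]},\Z)$, and $H^{4}(V,\Z)=H^{4}(S^{[2]},\Z)/\Z\Sigma$; the associated long exact sequence in group cohomology then gives $H^{4}(V,\Z)^{\iota}=H^{4}(S^{[2]},\Z)^{\iota}/\Z\Sigma$.

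Next I would assemble the $E_{2}$ terms on the diagonal $p+q=4$. By Proposition \ref{Mong}, $\iota^{*}$ acts trivially on the summand $U^{3}\oplus(-2)$ and swaps the two $E_{8}(-1)$ summands of $H^{2}(S^{[2]},\Z)$; since $E_{8}(-1)^{2}$ is a free $\Z[I]$-module, a direct check shows it contributes $0$ to both $H^{1}(I;\cdot)$ and $H^{2}(I;\cdot)$, while the trivial part contributes its reduction modulo $2$. Hence
$$E_{2}^{0,4}=H^{4}(S^{[2]},\Z)^{\iota}/\Z\Sigma,\qquad E_{2}^{2,2}=(\Z/2\Z)^{7},\qquad E_{2}^{4,0}=\Z/2\Z,$$
with $E_{2}^{1,3}=E_{2}^{3,1}=0$. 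For each potentially nontrivial differential $d_{r}$ touching this diagonal, either the source or the target is a group already computed to be zero: for instance $d_{3}\colon E_{3}^{0,4}\to E_{3}^{3,2}$ has target $H^{3}(I;H^{2}(V,\Z))\cong H^{1}(I;H^{2}(V,\Z))=0$, and $d_{3}\colon E_{3}^{2,2}\to E_{3}^{5,0}$ has target $H^{5}(I;\Z)=0$. Thus $E_{\infty}$ coincides with $E_{2}$ on this diagonal.

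The main obstacle is the extension problem for the resulting filtration on $H^{4}(U,\Z)$. Its associated graded pieces are the torsion-free quotient $H^{4}(S^{[2]},\Z)^{\iota}/\Z\Sigma$ at the top and a finite abelian $2$-group $F$ of order $2^{8}$ at the bottom, where $F$ is itself an extension of $(\Z/2\Z)^{7}$ by $\Z/2\Z$. Because the top piece is torsion-free, the torsion subgroup of $H^{4}(U,\Z)$ is exactly $F$ and the outer extension automatically splits; the delicate point is to rule out non-elementary extensions like $\Z/4\Z\oplus(\Z/2\Z)^{6}$. To settle this I would run the same spectral sequence with $\mathbb{F}_{2}$-coefficients, where the analogous computation gives $\dim_{\mathbb{F}_{2}}H^{4}(U,\mathbb{F}_{2})=\dim_{\mathbb{F}_{2}}H^{4}(V,\mathbb{F}_{2})^{\iota}+8$, and compare with the universal coefficient theorem applied to $H^{4}(U,\Z)$: the only way to match $\mathbb{F}_{2}$-dimensions is for the $2$-torsion of $H^{4}(U,\Z)$ to be annihilated by $2$, forcing $F\simeq(\Z/2\Z)^{8}$ and concluding the proof. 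If needed, the Smith-theoretic apparatus of Proposition \ref{SmithProp}, applied to the pair $(V,\Fix\iota)$ as in Section \ref{ASmith}, provides the auxiliary $\mathbb{F}_{2}$-dimension counts.
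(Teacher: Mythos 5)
Your proposal follows the same route as the paper: identify $H^{4}(U,\Z)$ with $H^{4}_{I}(V,\Z)$, compute $H^{\leq 4}(V,\Z)$ by Thom's isomorphism for the pair $(S^{[2]},V)$, evaluate the $E_{2}$-terms on the diagonal $p+q=4$ using the action of $\iota^{*}$ from Proposition \ref{Mong}, and check that no differential touching this diagonal can be nonzero. These steps are all correct and agree with the paper's preliminary lemma and with Lemma \ref{calcul}; your derivation of $H^{3}(V,\Z)=0$ from the injectivity of $H^{4}(S^{[2]},V,\Z)\to H^{4}(S^{[2]},\Z)$ is more direct than the paper's detour through $H_{3}(V)$, and your handling of the differentials via $H^{\mathrm{odd}}(I;\Z)=0$ and the periodicity $H^{3}(I;-)\cong H^{1}(I;-)$ is cleaner than the paper's formulation. (The paper's Lemma \ref{calcul} also records $H^{1}(I;H^{4}(V,\Z))=0$ by a long computation in the Qin--Wang basis, but that term lives in total degree $5$ and is indeed not needed for this Proposition.)

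The one place you go beyond the paper is the extension problem, which the paper passes over in silence by simply writing $H^{4}_{I}(V,\Z)$ as the direct sum of the $E_{\infty}^{p,4-p}$. You correctly isolate the real issue: the torsion-free quotient splits off automatically, but one must rule out $F\simeq\Z/4\Z\oplus(\Z/2\Z)^{6}$ for the order-$2^{8}$ piece. However, your proposed resolution is not complete as stated. The mod-$2$ spectral sequence is much less tame than the integral one: $H^{p}(I;\mathbb{F}_{2})\neq 0$ for every $p$, $H^{1}(I;H^{2}(V,\mathbb{F}_{2}))\cong(\mathbb{F}_{2})^{7}$ is nonzero, and the differential $d_{3}\colon E_{3}^{1,2}\to E_{3}^{4,0}$ could a priori kill $E^{4,0}$, so the identity $\dim_{\mathbb{F}_{2}}H^{4}(U,\mathbb{F}_{2})=\dim_{\mathbb{F}_{2}}H^{4}(V,\mathbb{F}_{2})^{\iota}+8$ requires an argument you have not supplied. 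Moreover the universal coefficient comparison reads $\dim_{\mathbb{F}_{2}}H^{4}(U,\mathbb{F}_{2})=\dim_{\mathbb{F}_{2}}(H^{4}(U,\Z)\otimes\mathbb{F}_{2})+\dim_{\mathbb{F}_{2}}\mathrm{Tor}(H^{5}(U,\Z),\Z/2\Z)$, so it drags in the unknown torsion of $H^{5}(U,\Z)$ and by itself cannot pin down the torsion of $H^{4}(U,\Z)$. A clean fix uses the multiplicative structure of the integral spectral sequence: cup product with the periodicity class gives a surjection $H^{2}(I;\Z)\otimes H^{0}(I;H^{2}(V,\Z))\to H^{2}(I;H^{2}(V,\Z))$, and since $E_{\infty}^{0,2}=H^{2}(V,\Z)^{\iota}$ and $E_{\infty}^{2,0}$ is represented by a $2$-torsion class $u\in H^{2}_{I}(V,\Z)$, every class of $E_{\infty}^{2,2}$ lifts to a product $u\cdot\widetilde{m}$, which is annihilated by $2$; hence the torsion subgroup is elementary abelian and equals $(\Z/2\Z)^{8}$.
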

We begin by determining $H^{3}(V,\Z)$ and $H^{4}(V,\Z)$.
\subsubsection{The groups $H^{3}(V,\Z)$ and $H^{4}(V,\Z)$}
\begin{lemme}
We have:
\begin{itemize}
\item[(1)] $H^{2}(V,\Z)=H^{2}(S^{[2]},\Z)$,
\item[(2)] $H^{3}(V,\Z)=0$,
\item[(3)] $H^{4}(V,\Z)=H^{4}(S^{[2]},\Z)/\Z \Sigma.$
\end{itemize}
\end{lemme}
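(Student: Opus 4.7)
The plan is to run the long exact sequence of the pair $(S^{[2]},V)$ and feed it into the Thom isomorphism, exploiting the smoothness of $\Fix\iota$ (which, as a fixed locus of a holomorphic involution, is automatically smooth and here decomposes into the K3 surface $\Sigma$ of complex codimension $2$ and $28$ isolated points of complex codimension $4$).

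First I would record the Thom isomorphism
$$H^{k}(S^{[2]},V,\Z)\cong H^{k-4}(\Sigma,\Z)\oplus H^{k-8}(\text{28 pts},\Z),$$
noting that the two components of $\Fix\iota$ are disjoint smooth submanifolds of real codimensions $4$ and $8$ respectively, so their contributions add. In the range of interest this gives
$H^{k}(S^{[2]},V,\Z)=0$ for $k\le 3$, $H^{4}(S^{[2]},V,\Z)\cong H^{0}(\Sigma,\Z)=\Z$, and $H^{5}(S^{[2]},V,\Z)\cong H^{1}(\Sigma,\Z)=0$, the last vanishing because $\Sigma$ is a K3 surface.

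Next I would insert these values into the long exact sequence of the pair, together with the fact that $H^{3}(S^{[2]},\Z)=0$ (since $S^{[2]}$ is of $K3^{[2]}$-type, whose odd cohomology vanishes). From
$$0=H^{2}(S^{[2]},V)\to H^{2}(S^{[2]})\to H^{2}(V)\to H^{3}(S^{[2]},V)=0$$
one reads off (1). From
$$0=H^{3}(S^{[2]})\to H^{3}(V)\to H^{4}(S^{[2]},V)\xrightarrow{\;\partial\;} H^{4}(S^{[2]})$$
one gets (2) once $\partial$ is seen to be injective, and from
$$H^{4}(S^{[2]},V)\xrightarrow{\;\partial\;}H^{4}(S^{[2]})\to H^{4}(V)\to H^{5}(S^{[2]},V)=0$$
one gets (3), provided the image of $\partial$ is exactly $\Z\Sigma$.

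The only real point to check is therefore the description of $\partial\colon H^{4}(S^{[2]},V)\to H^{4}(S^{[2]})$. This is the standard Gysin/Thom pushforward from the closed smooth submanifold $\Sigma$ (the $28$ points contribute nothing to $H^{4}$), and under the identification $H^{4}(S^{[2]},V)\cong H^{0}(\Sigma,\Z)=\Z$ it sends the generator to the fundamental class $[\Sigma]\in H^{4}(S^{[2]},\Z)$. Since $[\Sigma]\neq 0$ in $H^{4}(S^{[2]},\Z)$, for instance because the intersection number $\Sigma\cdot\mathfrak{q}_{1}(1)\mathfrak{q}_{1}(x)\left|0\right\rangle=1$ computed in Proposition \ref{sigma} is nonzero, the map $\partial$ is injective with image $\Z\Sigma$, which simultaneously yields (2) and (3). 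The mild subtlety, and the only thing that could go wrong, is ensuring $H^{1}(\Sigma,\Z)=0$ so that $H^{5}(S^{[2]},V,\Z)=0$; this is automatic because $\Sigma$ is a K3 surface, so no obstacle arises.
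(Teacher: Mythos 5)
Your proof is correct and follows the same route as the paper: the long exact sequence of the pair $(S^{[2]},V)$ combined with the Thom isomorphism for the two components of $\Fix\iota$, giving $H^{k}(S^{[2]},V,\Z)=0$ for $k\leq 3$, $\cong\Z$ for $k=4$, and $=0$ for $k=5$. The only divergence is in establishing (2): you observe directly that the map $H^{4}(S^{[2]},V,\Z)\cong H^{0}(\Sigma,\Z)\to H^{4}(S^{[2]},\Z)$ is the Gysin pushforward sending the generator to $[\Sigma]\neq 0$ (nonzero since $\Sigma\cdot\mathfrak{q}_{1}(1)\mathfrak{q}_{1}(x)\left|0\right\rangle=1$) and hence is injective, whereas the paper passes to the homology exact sequence to show $H_{3}(V,\Z)$ is torsion and then invokes the universal coefficient theorem together with the torsion-freeness of $H^{3}(V,\Z)$; your argument is the more economical of the two and reaches the same conclusion.
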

\begin{proof}
\begin{itemize}
\item[(1)] 
Consider the following exact sequence:
$$\xymatrix@C=10pt{ H^{2}(S^{[2]},V,\Z)\ar[r] & H^{2}(S^{[2]},\Z)\ar[r] & H^{2}(V,\Z)\ar[r] & H^{3}(S^{[2]},V,\Z)}.$$
By Thom's isomorphism, we have $H^{2}(S^{[2]},V,\Z)=H^{3}(S^{[2]},V,\Z)$ $=0$.
\item[(2) and (3)]
Consider the following exact sequence:
$$\xymatrix@C=10pt@R=0pt{H^{3}(S^{[2]},\Z)\ar[r]& H^{3}(V,\Z)\ar[r] & H^{4}(S^{[2]},V,\Z)\ar[r] & H^{4}(S^{[2]},\Z)\ar[r] & H^{4}(V,\Z)\\
 & & & \ar[r]& H^{5}(S^{[2]},V,\Z).}$$
We have $H^{3}(S^{[2]},\Z)=0$, and by Thom's isomorphism $H^{4}(S^{[2]},V,$ $\Z)=H^{0}(\Sigma,\Z)=\Z$, $H^{5}(S^{[2]},V,\Z)=H^{1}(\Sigma,\Z)=0$.
We can see that $H^{3}(V,\Z)=0$. For this, consider the exact sequence of homology:
$$\xymatrix@C=15pt{0& H_{3}(V)\ar[l] & H_{4}(S^{[2]},V)\ar[l]^{\partial} & H_{4}(S^{[2]})\ar[l]^{p} & H_{4}(V)\ar[l]^{l} & \ar[l]0}.$$

We know that $\rk H_{4}(S^{[2]},V)=1$. Let $\left[\Sigma\right]\in H_{4}(S^{[2]})$ be the cycle corresponding to $\Sigma$. We have $p(\left[\Sigma\right])\neq 0$ and $\partial(p(\left[\Sigma\right]))=0$, hence $H_{3}(V,\Z)$ is torsion.
Since $H^{3}(V,\Z)$ is torsion free, by the universal coefficient theorem, we have $H^{3}(V,\Z)=0$.
Hence $$H^{4}(V,\Z)=H^{4}(S^{[2]},\Z)/\Z \Sigma.$$
\end{itemize}
\end{proof}
\subsubsection{Calculation with equivariant cohomology}
Now we will calculate $H^{4}(U,\Z)$ by using equivariant cohomology.
Let $G:=\left\{\id, \iota\right\}$. We have $$H^{4}(U,\Z)\simeq H_{G}^{4}(V,\Z)$$
and
$$E_{2}^{p,q}:=H^{p}(G;H^{q}(V,\Z))\Rightarrow H_{G}^{p+q}(V,\Z).$$
By using the resolution by $\Z[G]$-modules
$$\xymatrix@C=10pt{...\ar[r]^{\iota-1}& \Z[G]\ar[r]^{\iota+1} & \Z[G]\ar[r]^{\iota-1} & \Z[G]\ar[r] & \Z},$$
we get $$H^{0}(G;H^{q}(V,\Z))=H^{q}(V,\Z)^{\iota},$$
\begin{align*}
&H^{2k+1}(G;H^{q}(V,\Z))\\
&=\left\{\left. m\in H^{q}(V,\Z)\right|\iota^{*}(m)=-m\right\}/\left\{\left. \iota^{*}(m)-m\right|m\in H^{q}(V,\Z) \right\},\\
&H^{2k+2}(G;H^{q}(V,\Z))\\
&=\left\{\left. m\in H^{q}(V,\Z)\right|\iota^{*}(m)=m\right\}/\left\{\left. \iota^{*}(m)+m\right|m\in H^{q}(V,\Z) \right\},
\end{align*}
for all $k\in\mathbb{N}$.
The lemma follows.
\begin{lemme}\label{calcul}
We have $$H^{2}(G;H^{2}(V,\Z))=(\Z/2\Z)^{7},\ \ \ \ H^{3}(G;H^{2}(V,\Z))=0,\ \ \ \ H^{1}(G;H^{4}(V,\Z))=0.$$
\end{lemme}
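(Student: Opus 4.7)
The plan is to treat the three cohomology groups separately, handling the first two by direct inspection of the $\iota^*$-action given in Proposition \ref{Mong} and the third via the long exact sequence associated to the short exact sequence $0 \to \Z\Sigma \to H^4(S^{[2]}, \Z) \to H^4(V, \Z) \to 0$.

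For the first two, recall from the preceding lemma that $H^2(V, \Z) = H^2(S^{[2]}, \Z)$, and by Proposition \ref{Mong} this splits $G$-equivariantly as $(U^3 \oplus (-2)) \oplus (E_8(-1) \oplus E_8(-1))$, with $\iota^*$ trivial on the rank-$7$ piece and swapping the two copies of $E_8(-1)$. Choosing a basis $(e_k)$ of the first $E_8(-1)$ and pairing each $e_k$ with $\iota^* e_k$, the second summand becomes $\Z[G]^8$ as a $\Z[G]$-module, which is cohomologically trivial. The trivial rank-$7$ piece contributes $(\Z/2\Z)^7$ in every positive even degree and $0$ in every positive odd degree. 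This yields immediately $H^2(G; H^2(V,\Z)) = (\Z/2\Z)^7$ and $H^3(G; H^2(V,\Z)) = 0$.

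For $H^1(G; H^4(V, \Z))$, I would first establish that $H^1(G; H^4(S^{[2]}, \Z)) = 0$. The Qin--Wang basis of Theorem \ref{baseS} consists of Nakajima monomials built from the basis $(\alpha_k)$ of $H^2(S, \Z)$; since $\iota^* \mathfrak{q}_n(\alpha) = \mathfrak{q}_n(i^* \alpha) \iota^*$ and $i^*$ permutes the $(\alpha_k)$ (fixing the $U^3$-basis and swapping the two $E_8(-1)$-bases), every Nakajima monomial in the basis is either fixed or exchanged with another. Hence $H^4(S^{[2]}, \Z)$ is a permutation $\Z[G]$-module, so its positive odd group cohomology vanishes. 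Since $G$ also acts trivially on $\Z\Sigma$ (the K3 surface $\Sigma$ lies in the fixed locus of $\iota$), one has $H^1(G; \Z\Sigma) = 0$ and $H^2(G; \Z\Sigma) = \Z/2\Z$, and the long exact sequence reduces to
$$0 \to H^1(G; H^4(V, \Z)) \xrightarrow{\delta} H^2(G; \Z\Sigma) \to H^2(G; H^4(S^{[2]}, \Z)).$$
It remains to prove that the last arrow is injective, i.e.\ that $\Sigma \notin (1 + \iota^*) H^4(S^{[2]}, \Z)$.

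This last step is the main obstacle. Suppose for contradiction that $\Sigma = y + \iota^* y$ for some $y \in H^4(S^{[2]}, \Z)$. Pairing with the $\iota$-invariant class $c := \mathfrak{q}_1(1)\mathfrak{q}_1(x)\left|0\right\rangle$ and using $\iota^* c = c$ together with the fact that $\iota$ preserves orientations, one obtains $\iota^* y \cdot c = \iota^*(y \cdot c)$ and $\int \iota^*(y \cdot c) = \int y \cdot c$, so $\Sigma \cdot c = 2(y \cdot c)$ would be even. But Proposition \ref{sigma} gives $\Sigma \cdot c = 1$, a contradiction. This yields the required injectivity, and consequently $H^1(G; H^4(V, \Z)) = 0$.
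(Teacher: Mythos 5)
Your proof is correct and follows essentially the same strategy as the paper: the first two groups are read off from the $G$-module decomposition of $H^{2}(S^{[2]},\Z)$ given by Proposition \ref{Mong} (trivial module of rank $7$ plus a free $\Z[G]$-module), and the third is reduced to $H^{1}(G;H^{4}(S^{[2]},\Z))=0$, which you obtain by observing that the Qin--Wang basis is permuted by $\iota^{*}$ --- a cleaner packaging of the paper's explicit coefficient computation, which writes out an arbitrary anti-invariant class and checks term by term that it lies in $\Image(\iota^{*}-1)$. The one place where you genuinely add something is the passage from $H^{4}(S^{[2]},\Z)$ to the quotient $H^{4}(V,\Z)=H^{4}(S^{[2]},\Z)/\Z\Sigma$: the paper dismisses this in one sentence, whereas an anti-invariant class of the quotient only lifts to an $x$ with $x+\iota^{*}x\in\Z\Sigma$, so one really does need to rule out $\Sigma\in(1+\iota^{*})H^{4}(S^{[2]},\Z)$. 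Your argument via the long exact sequence and the parity of $\Sigma\cdot\mathfrak{q}_{1}(1)\mathfrak{q}_{1}(x)\left|0\right\rangle=1$ (Proposition \ref{sigma}) supplies exactly the missing justification, so your write-up is, if anything, more complete than the paper's at that step.
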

\begin{proof}
The equalities $H^{2}(G;H^{2}(V,\Z))=(\Z/2\Z)^{7}$ and $H^{3}(G;H^{2}(V,\Z))=0$ follow from Proposition \ref{Mong}.
It remains to show that $H^{1}(G;H^{4}(V,\Z))=0$.
First note that:
$$\iota^{*}(\mathfrak{q}_{2}(\alpha_{k})\left|0\right\rangle)=\mathfrak{q}_{2}(i^{*}\alpha_{k})\left|0\right\rangle, \ \ \iota^{*}(\mathfrak{q}_{1}(\alpha_{k})\mathfrak{q}_{1}(\alpha_{j})\left|0\right\rangle)=\mathfrak{q}_{1}(i^{*}\alpha_{k})\mathfrak{q}_{1}(i^{*}\alpha_{j})\left|0\right\rangle,$$ 
$$\iota^{*}(\mathfrak{m}_{1,1}(\alpha_{k})\left|0\right\rangle)=\mathfrak{m}_{1,1}(i^{*}\alpha_{k})\left|0\right\rangle,\ \ \ \ \iota^{*}(\mathfrak{q}_{1}(1)\mathfrak{q}_{1}(x)\left|0\right\rangle)=\mathfrak{q}_{1}(1)\mathfrak{q}_{1}(x)\left|0\right\rangle,$$
Let $x\in H^{4}(S^{[2]},\Z)$ such that $\iota^{*}(x)=-x$. By Theorem \ref{baseS}, we can write
\begin{align*}
x=&\sum_{0\leq k< j\leq 22}{\lambda_{k,j}\mathfrak{q}_{1}(\alpha_{k})\mathfrak{q}_{1}(\alpha_{j})\left|0\right\rangle}\\
&+\sum_{0\leq k\leq 22}{\eta_{k}\mathfrak{q}_{2}(\alpha_{k})\left|0\right\rangle}\\
&+\sum_{0\leq k\leq 22}{\nu_{k}\mathfrak{m}_{1,1}(\alpha_{k})\left|0\right\rangle}\\
&+y\mathfrak{q}_{1}(1)\mathfrak{q}_{1}(x)\left|0\right\rangle,
\end{align*}
where the $\lambda_{k,j}$, $\eta_{k}$, $\nu_{k}$  are in $\Z$.
%Assume that $x$ is anti-invariant.
By Proposition \ref{Mong}, 
%we get for the first factor (it is the same idea for the other factors):
\begin{align*}
&\iota^{*}(\sum_{0\leq k< j\leq 22}{\lambda_{k,j}\mathfrak{q}_{1}(\alpha_{k})\mathfrak{q}_{1}(\alpha_{j})\left|0\right\rangle})\\
&=\sum_{0\leq k< j\leq 6}{\lambda_{k,j}\mathfrak{q}_{1}(\alpha_{k})\mathfrak{q}_{1}(\alpha_{j})\left|0\right\rangle}+\sum_{0\leq k\leq 6<j\leq 14}{\lambda_{k,j}\mathfrak{q}_{1}(\alpha_{k})\mathfrak{q}_{1}(\alpha_{8+j})\left|0\right\rangle}\\
&+\sum_{0\leq k\leq 6,15\leq j\leq 22}{\lambda_{k,j}\mathfrak{q}_{1}(\alpha_{k})\mathfrak{q}_{1}(\alpha_{j-8})\left|0\right\rangle}+\sum_{7\leq k< j\leq 14}{\lambda_{k,j}\mathfrak{q}_{1}(\alpha_{k+8})\mathfrak{q}_{1}(\alpha_{j+8})\left|0\right\rangle}\\
&+\sum_{7\leq k\leq 14< j\leq 22}{\lambda_{k,j}\mathfrak{q}_{1}(\alpha_{k+8})\mathfrak{q}_{1}(\alpha_{j-8})\left|0\right\rangle}+\sum_{15\leq k< j\leq 22}{\lambda_{k,j}\mathfrak{q}_{1}(\alpha_{k-8})\mathfrak{q}_{1}(\alpha_{j-8})\left|0\right\rangle}.\\
\end{align*}
Since $x$ is anti-invariant and by Theorem \ref{baseS},
$$\iota^{*}\left(\sum_{0\leq k< j\leq 22}{\lambda_{k,j}\mathfrak{q}_{1}(\alpha_{k})\mathfrak{q}_{1}(\alpha_{j})\left|0\right\rangle}\right)=-\sum_{0\leq k< j\leq 22}{\lambda_{k,j}\mathfrak{q}_{1}(\alpha_{k})\mathfrak{q}_{1}(\alpha_{j})\left|0\right\rangle}.$$
Hence
\begin{align*}
&\sum_{0\leq k< j\leq 6}{\lambda_{k,j}\mathfrak{q}_{1}(\alpha_{k})\mathfrak{q}_{1}(\alpha_{j})\left|0\right\rangle}+\sum_{0\leq k\leq 6<j\leq 14}{\lambda_{k,j}\mathfrak{q}_{1}(\alpha_{k})\mathfrak{q}_{1}(\alpha_{8+j})\left|0\right\rangle}\\
&+\sum_{0\leq k\leq 6,15\leq j\leq 22}{\lambda_{k,j}\mathfrak{q}_{1}(\alpha_{k})\mathfrak{q}_{1}(\alpha_{j-8})\left|0\right\rangle}+\sum_{7\leq k< j\leq 14}{\lambda_{k,j}\mathfrak{q}_{1}(\alpha_{k+8})\mathfrak{q}_{1}(\alpha_{j+8})\left|0\right\rangle}\\
&+\sum_{7\leq k\leq 14< j\leq 22}{\lambda_{k,j}\mathfrak{q}_{1}(\alpha_{k+8})\mathfrak{q}_{1}(\alpha_{j-8})\left|0\right\rangle}+\sum_{15\leq k< j\leq 22}{\lambda_{k,j}\mathfrak{q}_{1}(\alpha_{k-8})\mathfrak{q}_{1}(\alpha_{j-8})\left|0\right\rangle}\\
&=-\sum_{0\leq k< j\leq 22}{\lambda_{k,j}\mathfrak{q}_{1}(\alpha_{k})\mathfrak{q}_{1}(\alpha_{j})\left|0\right\rangle}.
\end{align*}
Hence
\begin{itemize}
\item
$\lambda_{k,j}=0$ for $0\leq k \leq j\leq 6$.
\item
$\lambda_{k,j}=-\lambda_{k,j+8}$ for $0\leq k\leq 6<j\leq 14$.
\item
$\lambda_{k,j}=-\lambda_{k+8,j+8}$ for $7\leq k\leq j\leq 14$.
\item
$\lambda_{k,j}=-\lambda_{j-8,k+8}$ for $7\leq k\leq 14< j\leq 22$.
\end{itemize}
Therefore after the same calculation on $\eta_{k}$ and $\nu_{k}$, we get:
%\begin{align*}
%x&=\sum_{0\leq i\leq 6<j\leq 14}{q_{i,j}(\gamma_{i}\cdot\gamma_{j}-\gamma_{i}\cdot\gamma_{8+j})}+\sum_{7\leq i\leq j\leq 14}{q_{i,j}(\gamma_{i}\cdot\gamma_{j}-\gamma_{i+8}\cdot\gamma_{j+8})}\\
%&+\sum_{i=8}^{14}{\sum_{j=15}^{7+i}{q_{i,j}(\gamma_{i}\cdot\gamma_{j}-\gamma_{j-8}\cdot\gamma_{i+8})}}\\
%&=\sum_{0\leq i\leq 6<j\leq 14}{q_{i,j}(\gamma_{i}\cdot\gamma_{j}-\iota^{*}(\gamma_{i}\cdot\gamma_{j}))}+\sum_{7\leq i\leq j\leq 14}{q_{i,j}(\gamma_{i}\cdot\gamma_{j}-\iota^{*}(\gamma_{i}\cdot\gamma_{j}))}\\
%&+\sum_{i=8}^{14}{\sum_{j=15}^{7+i}{q_{i,j}(\gamma_{i}\cdot\gamma_{j}-\iota^{*}(\gamma_{i}\cdot\gamma_{j}))}}.
%\end{align*}
\begin{align*}
x&=\sum_{1\leq k\leq 6<j\leq 14}{\lambda_{k,j}(\mathfrak{q}_{1}(\alpha_{k})\mathfrak{q}_{1}(\alpha_{j})\left|0\right\rangle-\mathfrak{q}_{1}(\alpha_{k})\mathfrak{q}_{1}(i^{*}\alpha_{j})\left|0\right\rangle)}\\
&+\sum_{7\leq k\leq 14}{\eta_{k}(\mathfrak{q}_{2}(\alpha_{k})\left|0\right\rangle-\mathfrak{q}_{2}(i^{*}\alpha_{k})\left|0\right\rangle)}\\
&+\sum_{7\leq k\leq j\leq14}{\lambda_{k,j}(\mathfrak{q}_{1}(\alpha_{k})\mathfrak{q}_{1}(\alpha_{j})\left|0\right\rangle-\mathfrak{q}_{1}(i^{*}\alpha_{k})\mathfrak{q}_{1}(i^{*}\alpha_{j})\left|0\right\rangle)}\\
&+\sum_{7\leq k\leq 14}{\nu_{k}(\mathfrak{m}_{1,1}(\alpha_{k})\left|0\right\rangle-\mathfrak{m}_{1,1}(i^{*}\alpha_{k})\left|0\right\rangle)}\\
&+\sum_{k=8}^{14}{\sum_{j=15}^{7+k}{\lambda_{k,j}(\mathfrak{q}_{1}(\alpha_{k})\mathfrak{q}_{1}(\alpha_{j})\left|0\right\rangle-\mathfrak{q}_{1}(i^{*}\alpha_{k})\mathfrak{q}_{1}(i^{*}\alpha_{j})\left|0\right\rangle)}},
\end{align*}
Since $H^{4}(V,\Z)=H^{4}(S^{[2]},\Z)/\Z \Sigma$, we get $H^{1}(G;H^{4}(V,\Z))=0$.
\end{proof}
%Moreover we can show by the same method used for $H^{3}(V,\Z)$ that $H^{5}(V,\Z)=0$.
The only remaining difficulty for our calculation is now to see that the differential of the page $E_{3}$,
$D_{3}:H^{2}(G;H^{2}(V,\Z))\rightarrow H^{0}(G;H^{5}(V,\Z))$, and that of the page $E_{5}$,
$D_{5}:H^{4}(G;H^{0}(V,\Z))\rightarrow H^{0}(G;H^{5}(V,\Z))$, are trivial.
This is true because $H^{5}(V,\Z)$ is torsion free and $H^{2}(G;H^{2}(V,\Z))=(\Z/2\Z)^{7}$, $H^{4}(G;H^{0}(V,\Z))=\Z/2\Z$.
%To see this we use the same method used in Subsection \ref{proof} for the differential $d_{3}$ (here, $H^{5}(V,\Z)$ is without torsion).

Hence the spectral sequence gives 
\begin{align*}
H_{G}^{4}(V,\Z)&=H^{0}(G;H^{4}(V,\Z))\oplus H^{1}(G;H^{3}(V,\Z))\oplus H^{2}(G;H^{2}(V,\Z))\\
&\oplus H^{3}(G;H^{1}(V,\Z))\oplus H^{4}(G;H^{0}(V,\Z)).
\end{align*}
It follows that
$$H_{G}^{4}(V,\Z)=H^{4}(V,\Z)^{\iota}\oplus(\Z/2\Z)^{8}.$$
And we get Proposition \ref{U}:
$$H^{4}(U,\Z)\simeq H^{4}(S^{[2]},\Z)^{\iota}/\Z\Sigma\oplus(\Z/2\Z)^{8}.$$

Now, we will determine the torsion of $H^{4}(U,\Z)$.
%To do this, we need the following lemma.
%\begin{lemme}\label{sigma}
%The cycle of $\Sigma$ is not divided by 2 in $H^{4}(S^{[2]},\Z)$. 
%Hence $s^{*}(\Sigma)$ is not divided by 2 in $H^{4}(N_{2},\Z)$.
%\end{lemme}
%\begin{proof}
%We are going to take the cup product of $\Sigma$ and $\mathfrak{q}_{1}(1)\mathfrak{q}_{1}(x)\left|0\right\rangle$.
%We recall that $\Sigma=\left\{\left.\xi\in S^{[2]}\right|\Supp \xi=\alpha+i(\alpha), \alpha\in S\right\}$
%and $\mathfrak{q}_{1}(1)\mathfrak{q}_{1}(x)\left|0\right\rangle=\left\{\left.\xi\in S^{[2]}\right|\Supp \xi\ni x\right\}$.
%Then $\Sigma\cdot\mathfrak{q}_{1}(1)\mathfrak{q}_{1}(x)\left|0\right\rangle=1$. So, $\Sigma$ is not divided by 2 in $H^{4}(S^{[2]},\Z)$.
%\end{proof}
\begin{prop}\label{torsion}
The torsion group of $H^{4}(U,\Z)$ is $(\Z/2\Z)^{8}$.
\end{prop}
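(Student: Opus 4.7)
The strategy is to read off the torsion of $H^{4}(U,\Z)$ from the decomposition
$$H^{4}(U,\Z) \simeq H^{4}(S^{[2]},\Z)^{\iota}/\Z\Sigma \oplus (\Z/2\Z)^{8}$$
established in Proposition \ref{U}. The second summand is already $2$-torsion of total order $2^{8}$, so the proposition reduces to showing that the first summand is torsion-free. Because $H^{4}(S^{[2]},\Z)$ has no torsion (the Hilbert scheme of a K3 has torsion-free integral cohomology), neither does the invariant sublattice $H^{4}(S^{[2]},\Z)^{\iota}$; the quotient $H^{4}(S^{[2]},\Z)^{\iota}/\Z\Sigma$ is then torsion-free if and only if $\Sigma$ is a primitive element of $H^{4}(S^{[2]},\Z)^{\iota}$.

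Primitivity in the invariant sublattice reduces to primitivity in the ambient lattice $H^{4}(S^{[2]},\Z)$: any relation of the form $\Sigma = n\xi$ with $\xi \in H^{4}(S^{[2]},\Z)^{\iota}$ is a fortiori a relation in $H^{4}(S^{[2]},\Z)$. To establish primitivity in the ambient lattice I would invoke Proposition \ref{sigma}, which supplies the intersection number
$$\Sigma \cdot \mathfrak{q}_{1}(1)\mathfrak{q}_{1}(x)\left|0\right\rangle = 1.$$
If we had $\Sigma = n\xi$ for some $\xi \in H^{4}(S^{[2]},\Z)$ and $n \geq 2$, then bilinearity of the cup product would force $\Sigma \cdot \mathfrak{q}_{1}(1)\mathfrak{q}_{1}(x)\left|0\right\rangle$ to be divisible by $n$, contradicting the value $1$. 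Hence $\Sigma$ is primitive and the conclusion follows.

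I do not anticipate a substantive obstacle. The real content was the derivation of Proposition \ref{U} via equivariant cohomology and the computation of the differentials on the $E_{3}$ and $E_{5}$ pages; the present proposition is a clean algebraic consequence of that decomposition combined with the intersection-theoretic formula of Proposition \ref{sigma}. The only point to be a touch careful about is the direction of the implication that primitivity in an overlattice implies primitivity in a containing sublattice, which is immediate once stated.
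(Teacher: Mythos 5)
Your proof is correct and is essentially the paper's own argument: the paper also reduces the statement to the torsion-freeness of $H^{4}(S^{[2]},\Z)^{\iota}/\Z\Sigma$ and settles it by invoking Proposition \ref{sigma}(2). If anything, your version is slightly more careful, since the intersection number $\Sigma\cdot\mathfrak{q}_{1}(1)\mathfrak{q}_{1}(x)\left|0\right\rangle=1$ rules out divisibility by any $n\geq 2$, whereas the paper only remarks that $\Sigma$ is not divisible by $2$.
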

\begin{proof}
By (2) of Proposition \ref{sigma}, $\Sigma$ is not divisible by 2, hence $H^{4}(S^{[2]},\Z)^{\iota}/\Sigma$ is torsion free.
%By the Lemma \ref{sigma} we get that $H^{4}(S^{[2]},\Z)^{\iota}/\Sigma$ is without torsion. Then we get the result by Proposition \ref{U}.
\end{proof}
\subsubsection{The groups $H^{2}(U,\Z)$ and $H^{3}(U,\Z)$}
%We will need also the following two propositions which uses equivariant cohomology.
\begin{prop}\label{H3}
We have $H^{3}(U,\Z)=0$.
\end{prop}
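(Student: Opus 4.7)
The plan is to reuse the equivariant cohomology machinery from the calculation of $H^{4}(U,\Z)$. Since $\iota$ acts freely on $V=S^{[2]}\setminus\Fix\iota$ with quotient $U$, we have $H^{*}(U,\Z)\simeq H^{*}_{G}(V,\Z)$ for $G=\{\id,\iota\}$, and the Leray--Serre spectral sequence
$$E_{2}^{p,q}=H^{p}(G;H^{q}(V,\Z))\Rightarrow H^{p+q}_{G}(V,\Z)$$
applies. It suffices to check that each of the four entries $E_{2}^{0,3}$, $E_{2}^{1,2}$, $E_{2}^{2,1}$, $E_{2}^{3,0}$ on the antidiagonal $p+q=3$ vanishes; no differentials then matter and we conclude $H^{3}(U,\Z)=H^{3}_{G}(V,\Z)=0$.

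First I would dispatch $E_{2}^{0,3}=H^{3}(V,\Z)^{\iota}$ and $E_{2}^{3,0}=H^{3}(G;\Z)$. The former is $0$ by the lemma computing $H^{3}(V,\Z)$ in the previous subsection. The latter is $0$ because $G=\Z/2\Z$ acts trivially on $H^{0}(V,\Z)=\Z$, so the explicit resolution written just before Lemma \ref{calcul} gives $H^{2k+1}(G;\Z)=0$.

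Next, for $E_{2}^{2,1}=H^{2}(G;H^{1}(V,\Z))$, the point is to observe $H^{1}(V,\Z)=0$. This follows from the long exact sequence of the pair $(S^{[2]},V)$ together with Thom's isomorphism: the fixed locus $\Fix\iota=\Sigma\cup\{28\,\text{pts}\}$ is of (real) codimension $\ge 4$ in $S^{[2]}$, so $H^{1}(S^{[2]},V,\Z)=H^{2}(S^{[2]},V,\Z)=0$, and $H^{1}(S^{[2]},\Z)=0$ since $S^{[2]}$ is simply connected.

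Finally, for $E_{2}^{1,2}=H^{1}(G;H^{2}(V,\Z))$, the argument of Section~1 carries over verbatim: an element $m\in H^{2}(V,\Z)=H^{2}(S^{[2]},\Z)$ satisfying $\iota^{*}m=-m$ lies in the anti-invariant sublattice, which by Proposition \ref{Mong} is identified with $\{(0,0,x,-x)\}\cong E_{8}(-2)$; but every such element is already of the form $\iota^{*}n-n$ for $n=(0,0,0,x)$ under the description of $\iota^{*}$ in \eqref{invo}. Hence the quotient vanishes. The only place I expect any friction is confirming that the description of $H^{2}(V,\Z)$ and the $\iota^{*}$-action on it is literally that of $H^{2}(S^{[2]},\Z)$; this is the content of point~(1) of the lemma in the subsection computing $H^{3}(V,\Z)$ and $H^{4}(V,\Z)$, and requires nothing beyond Thom's isomorphism.
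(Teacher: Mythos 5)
Your proof is correct and follows essentially the same route as the paper: the paper's proof consists precisely of observing that the four $E_{2}$-terms $H^{3}(G;H^{0}(V,\Z))$, $H^{2}(G;H^{1}(V,\Z))$, $H^{1}(G;H^{2}(V,\Z))$ and $H^{0}(G;H^{3}(V,\Z))$ all vanish, with the last nontrivial one killed by Proposition \ref{Mong} exactly as you argue. You have merely filled in the details the paper leaves implicit (notably $H^{1}(V,\Z)=0$ via Thom's isomorphism and the explicit check that the anti-invariants coincide with the image of $\iota^{*}-1$), all of which are accurate.
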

\begin{proof}
Indeed, all the groups $H^{3}(G;H^{0}(V,\Z))$, $H^{2}(G;H^{1}(V,\Z))$, $H^{1}(G;H^{2}(V,$ $\Z))$ and $H^{0}(G;H^{3}(V,\Z))$ are trivial (the group $H^{1}(G;H^{2}(V,\Z))$ is trivial by Proposition \ref{Mong}).
\end{proof}
\begin{prop}\label{H2}
We have an isomorphism of group:
$$H^{2}(U,\Z)\simeq H^{2}(S^{[2]},\Z)^{\iota}\oplus\Z/2\Z.$$
\end{prop}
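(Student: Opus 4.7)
The statement is exactly the analogue for $S^{[2]}$ of the corresponding result proved earlier (Subsection \ref{ASmith}) for the K3 surface $X$. My plan is to imitate that equivariant-cohomology argument, using the same spectral sequence
$$E_{2}^{p,q}:=H^{p}(G;H^{q}(V,\Z))\Rightarrow H_{G}^{p+q}(V,\Z)\simeq H^{p+q}(U,\Z),$$
with $G=\{\id,\iota\}$ acting freely on $V=S^{[2]}\setminus\Fix\iota$.

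First I would gather the input cohomology of $V$. From the Thom isomorphism argument that gave $H^{2}(V,\Z)=H^{2}(S^{[2]},\Z)$ in the previous subsection, together with the fact that $\Fix\iota$ has real codimension $4$ in $S^{[2]}$ and $S^{[2]}$ is simply connected, I get $H^{0}(V,\Z)=\Z$ and $H^{1}(V,\Z)=0$. Using the $\Z[G]$-resolution recalled earlier in the proof of Proposition \ref{U}, together with the explicit form of $\iota^{*}$ on $H^{2}(S^{[2]},\Z)$ from Proposition \ref{Mong}, I compute the $E_{2}$ entries of total degree $2$:
\begin{align*}
E_{2}^{0,2}&=H^{2}(V,\Z)^{\iota}=H^{2}(S^{[2]},\Z)^{\iota},\\
E_{2}^{1,1}&=H^{1}(G;0)=0,\\
E_{2}^{2,0}&=H^{2}(G;\Z)=\Z/2\Z.
\end{align*}

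Next I would check that the relevant differentials vanish, exactly as was done for the $H^{4}$ calculation. The differential $d_{2}\colon E_{2}^{0,2}\to E_{2}^{2,1}$ lands in $H^{2}(G;H^{1}(V,\Z))=0$; the differential $d_{3}\colon E_{3}^{0,2}\to E_{3}^{3,0}$ lands in $H^{3}(G;\Z)=0$; and no nonzero differential can hit $E_{2}^{2,0}$, since the potential source $E_{2}^{0,1}=H^{0}(G;H^{1}(V,\Z))$ already vanishes. Hence $E_{\infty}^{p,q}=E_{2}^{p,q}$ for $p+q=2$.

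Finally I would resolve the filtration extension
$$0\longrightarrow \Z/2\Z\longrightarrow H^{2}(U,\Z)\longrightarrow H^{2}(S^{[2]},\Z)^{\iota}\longrightarrow 0.$$
Since $H^{2}(S^{[2]},\Z)^{\iota}$ is a free abelian group (by Proposition \ref{Mong}, it is isometric to $U^{3}\oplus(-2)\oplus E_{8}(-2)$), the quotient is projective as a $\Z$-module, so the extension splits and we obtain the desired isomorphism $H^{2}(U,\Z)\simeq H^{2}(S^{[2]},\Z)^{\iota}\oplus\Z/2\Z$. The only delicate point is the vanishing of the higher differentials, but this is immediate here because every potential target or source on the relevant anti-diagonal is zero, so no analogue of the torsion-freeness argument used for $H^{4}$ is actually required.
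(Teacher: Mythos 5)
Your proposal is correct and follows essentially the same route as the paper: the equivariant Leray--Serre spectral sequence for the free action of $G=\{\id,\iota\}$ on $V$, the identification $H^{2}(V,\Z)=H^{2}(S^{[2]},\Z)$, the vanishing of the terms involving $H^{1}(V,\Z)$, and the degeneration on the anti-diagonal $p+q=2$. Your explicit treatment of the extension problem (splitting because $H^{2}(S^{[2]},\Z)^{\iota}$ is free) is a small point the paper leaves implicit, but the argument is the same.
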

\begin{proof}
We have $H^{2}(V,\Z)=H^{2}(S^{[2]},\Z)$
and by Proposition \ref{Mong}, $H^{1}(G,H^{2}(S^{[2]}$ $,\Z))=0$.
Hence the previous spectral sequence gives 
$$H_{G}^{2}(V,\Z)=H^{0}(G;H^{2}(V,\Z))\oplus H^{1}(G;H^{1}(V,\Z))\oplus H^{2}(G;H^{0}(V,\Z)).$$
It follows that
$$H_{G}^{2}(V,\Z)=H^{2}(V,\Z)^{i}\oplus \Z/2\Z.$$
Hence
$$H^{2}(U,\Z)\simeq H^{2}(S^{[2]},\Z)^{i}\oplus \Z/2\Z.$$
\end{proof}
\subsection{The group $H^{4}(\widetilde{M},\Z)$}
\subsubsection{The group $H^{4}(N_{2},\Z)$}
We begin with some notation. Denote $s=s_{2}\circ s_{1}$ (see the diagram of Section \ref{Plan} and Section \ref{Focus}). 
Also denote $\Sigma_{2}=s_{2}^{*}(\Sigma_{1})=s^{-1}(\Sigma)$, $(p_{k})_{1\leq k\leq 28}$ the 28 points fixed by $\iota$, $E_{k}=s^{-1}(p_{k})$ for all $1\leq k\leq 28$ and $h_{k}=c_{1}(\mathcal{O}_{E_{k}}(1))$. Denote $l:\Sigma_{2}\hookrightarrow N_{2}$ and $f_{k}:E_{k}\hookrightarrow N_{2}$ the injections.
By Theorem 7.31 of \cite{Voisin}, we have the following proposition.
\begin{prop}\label{vois}
We have the following two isomorphisms
%$$
%H^{2}(S^{[2]},\Z)\oplus H^{0}(\Sigma,\Z)\oplus(\oplus_{k} H^{0}(p_{k},\Z))\stackrel{s^{*}+l_{*}\circ s_{|\Sigma_{2}}^{*}+l_{k*}\circ s_{|E_{k}}^{*}}{\longrightarrow} H^{2}(N_{2},\Z),$$
$$\xymatrix{
H^{2}(S^{[2]},\Z)\oplus H^{0}(\Sigma,\Z)\oplus(\oplus_{k} H^{0}(p_{k},\Z))\ar[rrrr]^{\ \ \ \ \ \ \ \ \ \ \ s^{*}+l_{*}\circ s_{|\Sigma_{2}}^{*}+f_{k*}\circ s_{|E_{k}}^{*}} & & & & H^{2}(N_{2},\Z),}$$
%$$\xymatrix@C=0pt@R=0pt{
%& s^{*}+l_{*}\circ s_{|\Sigma_{2}}^{*}+f_{k*}\circ s_{|E_{k}}^{*} & \\
%H^{2}(S^{[2]},\Z)\oplus H^{0}(\Sigma,\Z)\oplus(\oplus_{k} H^{0}(p_{k},\Z))\ar[rr] & &  H^{2}(N_{2},\Z),}$$
$$\xymatrix{
H^{4}(S^{[2]},\Z)\oplus H^{2}(\Sigma,\Z)\oplus(\oplus_{k} H^{0}(p_{k},\Z))\ar[rrrr]^{\ \ \ \ \ \ \ \ \ \ \ \ s^{*}+l_{*}\circ s_{|\Sigma_{2}}^{*}+f_{k*}\circ h_{k}\circ s_{|E_{k}}^{*}} & & & & H^{4}(N_{2},\Z),}$$
where $h_{k}$ is the morphism given by the cup product by $h_{k}$.
\end{prop}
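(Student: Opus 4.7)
The plan is to iterate the blowup formula (Voisin, Theorem 7.31) along the factorization $s=s_{2}\circ s_{1}$. For the smooth blowup $s_{1}:N_{1}\rightarrow S^{[2]}$ along the K3 surface $\Sigma$ of codimension $2$, with exceptional $\mathbb{P}^{1}$-bundle $j_{1}:\Sigma_{1}\hookrightarrow N_{1}$, the formula produces a canonical decomposition
$$H^{n}(N_{1},\Z)\cong s_{1}^{*}H^{n}(S^{[2]},\Z)\oplus (j_{1})_{*}(s_{1|\Sigma_{1}})^{*}H^{n-2}(\Sigma,\Z),$$
only the $i=0$ summand of Voisin's general formula surviving in codimension two. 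Specializing to $n=2$ and $n=4$, the extra factor is $H^{0}(\Sigma,\Z)$ and $H^{2}(\Sigma,\Z)$ respectively.

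Next I apply the same formula to $s_{2}:N_{2}\rightarrow N_{1}$, the simultaneous blowup at the $28$ isolated fixed points $p_{k}$, each of codimension $4$ with exceptional divisor $E_{k}\simeq\mathbb{P}^{3}$ and hyperplane class $h_{k}$. This gives
$$H^{n}(N_{2},\Z)\cong s_{2}^{*}H^{n}(N_{1},\Z)\oplus\bigoplus_{k=1}^{28}\bigoplus_{i=0}^{2}f_{k*}\bigl(h_{k}^{i}\cdot (s_{2|E_{k}})^{*}H^{n-2i-2}(p_{k},\Z)\bigr).$$
For $n=2$ only $i=0$ survives, contributing $\bigoplus_{k} f_{k*}(s_{2|E_{k}})^{*}H^{0}(p_{k},\Z)$. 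For $n=4$, because $H^{2}(p_{k})=0$, only $i=1$ survives, contributing $\bigoplus_{k}f_{k*}\bigl(h_{k}\cdot (s_{2|E_{k}})^{*}H^{0}(p_{k},\Z)\bigr)$. Substituting the decomposition of $H^{n}(N_{1})$ and applying $s_{2}^{*}$ recovers precisely the three summands listed in the proposition, once the middle summand is identified.

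The identification I need is $s_{2}^{*}(j_{1})_{*}(s_{1|\Sigma_{1}})^{*}=l_{*}(s_{|\Sigma_{2}})^{*}$. The geometric input is that the $28$ isolated fixed points of $\iota$ are disjoint from the fixed K3 surface $\Sigma$, hence their lifts in $N_{1}$ lie off $\Sigma_{1}$. Consequently $s_{2|\Sigma_{2}}:\Sigma_{2}\rightarrow\Sigma_{1}$ is an isomorphism and the square formed by $\Sigma_{2},\Sigma_{1},N_{2},N_{1}$ is Cartesian. The base change formula for proper pushforward then yields $s_{2}^{*}(j_{1})_{*}=l_{*}(s_{2|\Sigma_{2}})^{*}$, and composing with $(s_{1|\Sigma_{1}})^{*}$ gives the stated identity using $s_{|\Sigma_{2}}=s_{1|\Sigma_{1}}\circ s_{2|\Sigma_{2}}$.

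The whole argument is essentially bookkeeping with the classical blowup formula; there is no substantive obstacle. The only point requiring any care is the base change compatibility, which in turn rests on the disjointness of $\Sigma$ from the $28$ isolated fixed points of $\iota$ in $S^{[2]}$.
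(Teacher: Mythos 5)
Your argument is correct and follows the same route as the paper, which simply invokes Voisin's Theorem 7.31 (the blowup formula) for the composite $s=s_{2}\circ s_{1}$; you have merely written out the bookkeeping explicitly. The one point you rightly single out --- that the $28$ isolated fixed points are disjoint from $\Sigma$, so that $s_{2}$ is an isomorphism near $\Sigma_{1}$ and the base-change identification $s_{2}^{*}(j_{1})_{*}=l_{*}(s_{2|\Sigma_{2}})^{*}$ holds --- is exactly the compatibility the paper leaves implicit.
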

Let $(a_{k})_{1\leq k \leq 22}$ be an integer basis of $H^{2}(\Sigma,\Z)$. Denote $\theta_{k}=l_{*}\circ s_{|\Sigma_{2}}^{*}(a_{k})$ for all $1\leq k\leq22$.
For a better understanding of $H^{4}(N_{2},\Z)$, we will calculate the cup product on $H^{2}(N_{2},\Z)$.
We give the following proposition.
\begin{prop}\label{intersections}
We have:
\begin{itemize}
\item[1)]
$E_{l}\cdot E_{k}=0$ if $l\neq k$, $E_{l}^{2}= -f_{l*}(h_{l})$, $E_{l}^{4}=-1$ and $E_{l}\cdot z=0$ for all $(l,k)\in \left\{1,...,28\right\}^{2}$
and for all $z\in s^{*}(H^{4}(S^{[2]},\Z))$;
\item[2)]
$\Sigma_{2}\cdot E_{k}=0$ for all $k\in\left\{1,...,28\right\}$ and $\Sigma_{2}^{2}=-s^{*}(\Sigma)$;
\item[3)]
%$\alpha_{i}\cdot\alpha_{k}=\left[l_{*}\circ s_{|\Sigma_{2}}^{*}(a_{i})\right]\cdot \left[l_{*}\circ s_{|\Sigma_{2}}^{*}(a_{k})\right]$,
%$\alpha_{i}\cdot j_{q*}(h_{q})=0$ 
$\theta_{k}\cdot z=0$ for all $k\in \left\{1,...,22\right\}$ and $z\in s^{*}(H^{4}(S^{[2]},\Z))$.
\item[4)]
Denote $\sigma_{x}:=\Sigma_{2}\cdot s^{*}(x)$ for all $x\in H^{2}(S^{[2]},\Z)$. We have
$$\Sigma_{2}^{2}\cdot s^{*}(x)\cdot s^{*}(y)=-2B_{S^{[2]}}(x,y)$$
for all $(x,y)\in H^{2}(S^{[2]},\Z)^{\iota}$.
Hence $$\rk \left\langle\left\{\left. \sigma_{x} \right| x\in H^{2}(S^{[2]},\Z)^{\iota} \right\}\right\rangle=\rk H^{2}(S^{[2]},\Z)^{\iota}=15.$$
\item[5)]
Let $T$ be the sublattice of $(H^{4}(N_{2},\Z),\cdot)$ generated by the set $$\left\{\left.\theta_{i}\right|i\in\left\{1,...,22\right\}\right\}\cup\left\{\left.E_{k}^{2}\right|k\in\left\{1,...,28\right\}\right\}.$$
Then $T$ is unimodular.
\end{itemize}
\end{prop}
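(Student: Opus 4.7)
My strategy is to handle parts (1)--(3) by projection-formula computations and degree reasons, to derive (4) from the explicit Nakajima-operator description of $H^*(S^{[2]},\Z)$ recalled in the previous subsection, and to reduce (5) to the unimodularity of the K3 intersection form on $\Sigma$. Parts (1) and (2) are standard blowup theory: the $E_k$ are pairwise disjoint, $E_l^2 = -f_{l*}(h_l)$ is the self-intersection formula for the exceptional divisor $E_l \simeq \mathbb{P}^3$ with normal bundle $\mathcal{O}(-1)$, and iterating yields $E_l^3 = f_{l*}(h_l^2)$ and $E_l^4 = -1$; moreover $E_l \cdot s^{*}(z) = f_{l*}(f_l^{*}s^{*}(z)) = 0$ because $s \circ f_l$ factors through the point $p_l$. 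Since the 28 isolated fixed points of $\iota$ lie off $\Sigma$, their proper transforms in $N_1$ miss $\Sigma_1$, so $\Sigma_2$ and the $E_k$ are disjoint; the identity $\Sigma_2^2 = -s^{*}(\Sigma)$ comes from pulling Lemma \ref{Fulton} back via $s_2$. Part (3) is then immediate from the projection formula: $\theta_k \cdot s^{*}(z) = l_{*}(s|_{\Sigma_2}^{*}(a_k \cdot z|_{\Sigma}))$, and $a_k \cdot z|_{\Sigma} \in H^{6}(\Sigma,\Z) = 0$ since $\Sigma$ has complex dimension 2.

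For (4), combining (2) with the projection formula gives $\int_{N_2} \Sigma_2^2 \cdot s^{*}(x) \cdot s^{*}(y) = -\int_{S^{[2]}} \Sigma \cdot x \cdot y$, and I evaluate the right-hand side on the model of Proposition \ref{Mong}. Writing $x = j(\tilde\alpha) + \lambda \delta$ and $y = j(\tilde\beta) + \mu \delta$ with $\tilde\alpha,\tilde\beta \in H^{2}(S,\Z)^{i}$, I expand via Propositions \ref{base2} and \ref{sigma}: the pure $j\cdot j$ piece contributes $\tilde\alpha\cdot\tilde\beta + \tilde\alpha\cdot i^{*}\tilde\beta = 2\tilde\alpha\cdot\tilde\beta$; the cross terms $\Sigma \cdot \delta \cdot j(\tilde\beta) = \Sigma \cdot \mathfrak{q}_2(\tilde\beta)\left|0\right\rangle$ vanish; and $\Sigma \cdot \delta^{2} = -4$ from the computation already carried out in the proof of Proposition \ref{delta}. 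The three contributions sum to $2(\tilde\alpha\cdot\tilde\beta - 2\lambda\mu) = 2 B_{S^{[2]}}(x,y)$, giving the stated identity. Injectivity of $x \mapsto \sigma_x$ then follows from nondegeneracy of $B_{S^{[2]}}$ on $H^{2}(S^{[2]},\Z)^{\iota} \simeq U^{3}\oplus(-2)\oplus E_{8}(-2)$: if $\sigma_x = 0$ then $\Sigma_2^{2}\cdot s^{*}(x)\cdot s^{*}(y) = 0$ for all invariant $y$, hence $B_{S^{[2]}}(x,y) = 0$ for all $y$, forcing $x = 0$. The image has rank $15$.

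For (5), parts (1) and (2) already show that the bilinear form on $T$ splits orthogonally into the $E_k^2$-block, whose matrix is $-I_{28}$, and the $\theta_k$-block. The self-intersection formula gives
$$\theta_k \cdot \theta_l = l_{*}\bigl(s|_{\Sigma_2}^{*}(a_k \cdot a_l) \cdot c_1(N_{\Sigma_2/N_2})\bigr).$$
Because $s_2|_{\Sigma_2}\colon \Sigma_2 \to \Sigma_1$ is an isomorphism, and $s_1|_{\Sigma_1}\colon \Sigma_1 = \mathbb{P}(\mathscr{N}_{\Sigma/S^{[2]}}) \to \Sigma$ is the standard projective bundle of the blowup, the line bundle $N_{\Sigma_2/N_2}$ restricts to the tautological $\mathcal{O}(-1)$ on each $\mathbb{P}^{1}$-fibre of $s|_{\Sigma_2}\colon \Sigma_2 \to \Sigma$. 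Using $c_1(\mathscr{N}_{\Sigma/S^{[2]}}) = 0$ from the proof of Lemma \ref{Fulton}, this yields $(s|_{\Sigma_2})_{*}c_1(N_{\Sigma_2/N_2}) = -[\Sigma]$, so $\theta_k \cdot \theta_l = -\int_{\Sigma} a_k \cdot a_l$. The $\theta$-block is therefore minus the intersection form of the K3 surface $\Sigma$, which is unimodular; together with $-I_{28}$, the lattice $T$ is unimodular.

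The main obstacle I expect is the computation of $\theta_k \cdot \theta_l$ in (5): correctly identifying $N_{\Sigma_2/N_2}$ with the tautological bundle on $\mathbb{P}(\mathscr{N}_{\Sigma/S^{[2]}})$ and leveraging $c_1(\mathscr{N}_{\Sigma/S^{[2]}}) = 0$ is what allows the fibre-integration to reduce cleanly to the K3 intersection pairing. Once this reduction is justified, the unimodularity of $H^{2}(\Sigma,\Z)$ closes the argument.
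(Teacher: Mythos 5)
Your proof is correct, and for parts 1)--3) it follows essentially the same lines as the paper (canonical/normal bundle of $E_k\simeq\mathbb{P}^3$, disjointness, pullback of Lemma \ref{Fulton}, projection formula). For parts 4) and 5), however, you take a genuinely different and more self-contained route. In 4), the paper does \emph{not} compute $\Sigma\cdot x\cdot y$ directly: it writes $\Sigma_2^2\cdot s^{*}(x)\cdot s^{*}(y)=\Sigma_1^2\cdot s_1^{*}(x)\cdot s_1^{*}(y)=\frac{1}{8}\overline{\Sigma}'^2\cdot\pi_{1*}(s_1^{*}(x))\cdot\pi_{1*}(s_1^{*}(y))$ via Lemma \ref{inter} and then evaluates this through the Fujiki relation (Proposition \ref{beauville}) together with Propositions \ref{passage}, \ref{delta} and \ref{ortho}, the unknown constant $C_{M'}$ cancelling at the end. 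Your computation stays entirely on $S^{[2]}$, expanding $x=j(\tilde\alpha)+\lambda\delta$ and using Propositions \ref{base2} and \ref{sigma} plus $\Sigma\cdot\delta^2=-4$; I checked that it gives $-\Sigma\cdot x\cdot y=-2(\tilde\alpha\cdot\tilde\beta-2\lambda\mu)=-2B_{S^{[2]}}(x,y)$, in agreement with the paper, and it has the advantage of serving as an independent cross-check of the chain of propositions the paper relies on. In 5), the paper argues structurally: by Proposition \ref{vois} and parts 1), 3) one has the orthogonal splitting $H^4(N_2,\Z)=s^{*}(H^4(S^{[2]},\Z))\oplus^{\bot}T$ with both the ambient lattice and the first summand unimodular, hence $T$ is unimodular; you instead compute the Gram matrix of $T$ explicitly as $(-I_{28})\oplus(-1)\cdot H^2(\Sigma,\Z)$, which also works (one small remark: the fibre integration $p_{*}c_1(\mathscr{N}_{\Sigma_2/N_2})=-1$ is just the degree of $\mathcal{O}(-1)$ on a $\mathbb{P}^1$-fibre and does not actually require $c_1(\mathscr{N}_{\Sigma/S^{[2]}})=0$, so that invocation is redundant though harmless). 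Your version yields slightly more information (the isometry class of $T$, not just its discriminant), while the paper's is shorter and avoids the normal-bundle computation.
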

\begin{proof}
\begin{itemize}
\item[1)]
Let $k\in \left\{1,...,28\right\}$.
We have $$\omega_{E_{k}}=\omega_{N_{2}}\otimes\mathscr{N}_{E_{k}/N_{2}}.$$
Since $E_{k}\simeq \mathbb{P}^{3}$,
\begin{align*}
\mathcal{O}_{E_{k}}(-4)&=\mathcal{O}_{N_{2}}(3\sum_{j=1}^{28}{E_{j}+\Sigma_{2}})\otimes\mathcal{O}_{N_{2}}(E_{k})\otimes\mathcal{O}_{E_{k}}\\
&=\mathcal{O}_{N_{2}}(4E_{k})\otimes\mathcal{O}_{E_{k}}.
\end{align*}
We get $E_{k}^{2}= -f_{k*}(h_{k})$, where $h_{k}$ is the class of a hyperplane in $E_{k}\simeq \mathbb{P}^{3}$.
Hence $E_{k}^{4}=c_{1}(\mathscr{N}_{E_{k}/N_{2}})^{3}=(-h_{k})^{3}=-1$.
\item[2)]
%We can write
%$$\omega_{\Sigma_{2}}=\omega_{N_{2}}\otimes\mathscr{N}_{\Sigma_{2}/N_{2}}.$$
%Moreover $s_{|\Sigma_{2}}:\Sigma_{2}\rightarrow\Sigma$ is a $\mathbb{P}^{1}$-bundle, then $\omega_{\Sigma_{2}}=\mathcal{O}_{\Sigma_{2}}(-2u(\Sigma))$,
%where $u:\Sigma\hookrightarrow\Sigma_{2}$ is the embedding.
%Then
%\begin{align*}
%\mathcal{O}_{\Sigma_{2}}(-2u(\Sigma))&=\mathcal{O}_{N_{2}}(3\sum_{j=1}^{28}{E_{j}+\Sigma_{2}})\otimes\mathcal{O}_{N_{2}}(\Sigma_{2})\otimes\mathcal{O}_{\Sigma_{2}}\\
%&=\mathcal{O}_{N_{2}}(2\Sigma_{2})\otimes\mathcal{O}_{\Sigma_{2}}.
%\end{align*}
%So $\Sigma_{2}^{2}=-s^{*}(\Sigma)$.
We have $\Sigma_{2}^{2}=s_{2}^{*}(\Sigma_{1}^{2})$, so the result follows from Lemma \ref{Fulton}.
\item[3)]
If we take $z=s^{*}(y)$, then $s_{*}(z\cdot\theta_{i})=y\cdot s_{*}(\theta_{i})$ by the projection formula.
Since $s_{*}(\theta_{i})=0$, we have $s_{*}(z\cdot\theta_{i})=0$, and then $z\cdot\theta_{i}=0$.
\item[4)]
%Here we denote $\overline{Y}=S/i$ and $\pi_{0}:S\rightarrow\overline{Y}$. We denote also $\varphi:Y\rightarrow \overline{Y}$ the blow up in the 8 singular points. We see that $\Sigma\simeq Y$.
%And if we consider the cycles, we see that $\Sigma\cdot j(\alpha)=\varphi^{*}(\pi_{0*}(\alpha))$,
%for all $\alpha\in H^{2}(S,\Z)$.
We have
$$\Sigma_{2}^{2}\cdot s^{*}(x)\cdot s^{*}(y)=\Sigma_{1}^{2}\cdot s_{1}^{*}(x)\cdot s_{1}^{*}(y).$$
By Proposition \ref{inter},
$$\Sigma_{1}^{2}\cdot s_{1}^{*}(x)\cdot s_{1}^{*}(y)=\frac{1}{8}\overline{\Sigma}'^{2}\cdot \pi_{1*}( s_{1}^{*}(x))\cdot \pi_{1*}( s_{1}^{*}(y)).$$
By Proposition \ref{beauville} and \ref{ortho},
$$\Sigma_{1}^{2}\cdot s_{1}^{*}(x)\cdot s_{1}^{*}(y)=\frac{C_{M'}}{24}B_{M'}(\overline{\Sigma}',\overline{\Sigma}')\times B_{M'}(\pi_{1*}(s_{1}^{*}(x)),\pi_{1*}(s_{1}^{*}(y))).$$
Hence, by Proposition \ref{passage} and \ref{delta},
\begin{align*}
\Sigma_{1}^{2}\cdot s_{1}^{*}(x)\cdot s_{1}^{*}(y)&= \frac{C_{M'}}{24}\times\left[-2\sqrt{\frac{24}{C_{M'}}}\right]\times\sqrt{\frac{24}{C_{M'}}}B_{S^{[2]}}(x,y)\\
&=-2B_{S^{[2]}}(x,y).
\end{align*}
\item[5)]
By Proposition \ref{vois}, point 1) and 3), we can write $$H^{4}(N_{2},\Z)=s^{*}(H^{4}(S^{[2]},\Z))\oplus^{\bot} T.$$
Since $N_{2}$ and $S^{[2]}$ are smooth, $(H^{4}(N_{2},\Z),\cdot)$ and $(H^{4}(S^{[2]},\Z),\cdot)$ are unimodular, hence $T$ is unimodular.
\end{itemize}
\end{proof}
From this proposition and (3) of Lemma \ref{inter}, we deduce a similar proposition on the cohomology of $\widetilde{M}$.
We will denote $$h_{k}=-E_{k}^{2},\ \ \ \ \widetilde{\Sigma}:=\pi_{2*}(\Sigma_{2}),\ \ \ \ D_{k}:=\pi_{2*}(E_{k}),$$ $$\widetilde{h_{k}}:=\pi_{2*}(h_{k}),\ \ \ \ \widetilde{\sigma_{x}}:=\pi_{2*}(\sigma_{x}),\ \ \ \ \widetilde{\theta_{l}}:=\pi_{2*}(\theta_{l}),$$ for $k\in\left\{1,...,28\right\}$, $l\in\left\{1,...,22\right\}$ and  $x\in H^{2}(S^{[2]},\Z)$. 
\begin{prop}\label{interr}
We have:
\begin{itemize}
\item{1)}
$D_{l}\cdot D_{k}=0$ if $l\neq k$, $D_{l}^{2}= -2\widetilde{h_{l}}$, and $D_{l}\cdot z=0$ for all $(l,k)\in \left\{1,...,28\right\}^{2}$
and for all $z\in \pi_{2*}(s^{*}(H^{4}(S^{[2]},\Z)^{\iota}))$;
\item{2)}
$\widetilde{\Sigma}\cdot D_{k}=0$ for all $k\in\left\{1,...,28\right\}$ and $\widetilde{\Sigma}^{2}=-2\pi_{2*}(s^{*}(\Sigma))$;
\item{3)}
%$\alpha_{i}\cdot\alpha_{k}=\left[l_{*}\circ s_{|\Sigma_{2}}^{*}(a_{i})\right]\cdot \left[l_{*}\circ s_{|\Sigma_{2}}^{*}(a_{k})\right]$,
%$\alpha_{i}\cdot j_{q*}(h_{q})=0$ 
$\widetilde{\theta}_{k}\cdot z=0$ for all $k\in \left\{1,...,22\right\}$ and $z\in \pi_{2*}(s^{*}(H^{4}(S^{[2]},\Z)^{\iota}))$,
\item{4)}
$\widetilde{\Sigma}\cdot \pi_{2*}(s^{*}(x))=2\widetilde{\sigma_{x}}$ for all $x\in H^{2}(S^{[2]},\Z)^{\iota}$. Moreover 
$$\widetilde{\Sigma}^{2}\cdot \pi_{2*}(s^{*}(x))\cdot \pi_{2*}(s^{*}(y))=-16B_{S^{[2]}}(x,y)$$
for all $(x,y)\in H^{2}(S^{[2]},\Z)^{\iota}$.
Hence
$$\rk \left\langle\left\{\left. \widetilde{\sigma_{x}} \right| x\in H^{2}(S^{[2]},\Z)^{\iota} \right\}\right\rangle=\rk H^{2}(S^{[2]},\Z)^{\iota}=15.$$
\end{itemize}
\end{prop}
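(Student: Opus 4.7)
The plan is to transport each of the four identities of Proposition \ref{intersections} from $N_2$ down to $\widetilde{M}$ by the push–pull calculus of Lemma \ref{inter}. First I would verify that all the classes appearing on $N_2$ are $\iota_2$-invariant: $E_k$ is invariant because $p_k$ is a fixed point of $\iota$; $\Sigma_2=s^{-1}(\Sigma)$ is invariant because $\Sigma$ is fixed pointwise; $s^*(w)$ for any $w\in H^*(S^{[2]},\Z)^\iota$ is invariant by equivariance of $s$; and $\theta_k=l_*(s_{|\Sigma_2}^*(a_k))$ is invariant because $l$ is equivariant and $\iota_2$ restricts to the identity on the base $\Sigma$ of $\Sigma_2$. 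I would also record that $H^4(N_2,\Z)$ is torsion-free by Proposition \ref{vois}, so the hypothesis of (3) in Lemma \ref{inter} applies.

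Next I would do the four cases in order. For (1), Lemma \ref{inter}(3) gives $D_l\cdot D_k=\pi_{2*}(E_l)\cdot \pi_{2*}(E_k)=2\pi_{2*}(E_l\cdot E_k)$, which is $0$ for $l\neq k$ and equals $2\pi_{2*}(-f_{l*}(h_l))=-2\widetilde{h_l}$ for $l=k$ by Proposition \ref{intersections}(1). For the identity $D_l\cdot z=0$ with $z=\pi_{2*}(s^*(w))$, $w\in H^4(S^{[2]},\Z)^\iota$, I would apply the projection formula: $\pi_2^*(D_l)=2E_l$, so $D_l\cdot z=\pi_{2*}(\pi_2^*(D_l)\cdot s^*(w))=2\pi_{2*}(E_l\cdot s^*(w))=0$ by Proposition \ref{intersections}(1). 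For (2), Lemma \ref{inter}(3) gives $\widetilde\Sigma\cdot D_k=2\pi_{2*}(\Sigma_2\cdot E_k)=0$ and $\widetilde\Sigma^2=2\pi_{2*}(\Sigma_2^2)=-2\pi_{2*}(s^*(\Sigma))$ by Proposition \ref{intersections}(2).

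For (3), the classes $\theta_k$ and $s^*(w)$ both lie in $H^4(N_2,\Z)^{\iota_2}$, so Lemma \ref{inter}(2) yields $\widetilde{\theta_k}\cdot z=\pi_{2*}(\theta_k)\cdot \pi_{2*}(s^*(w))=2\theta_k\cdot s^*(w)=0$ by Proposition \ref{intersections}(3). For (4), the identity $\widetilde\Sigma\cdot \pi_{2*}(s^*(x))=2\pi_{2*}(\Sigma_2\cdot s^*(x))=2\widetilde{\sigma_x}$ is again Lemma \ref{inter}(3). The four-fold product falls in the scope of Lemma \ref{inter}(1), since $\Sigma_2$ and $s^*(x),s^*(y)$ are all $\iota_2$-invariant classes in $H^2(N_2,\Z)$:
$$\widetilde\Sigma^2\cdot \pi_{2*}(s^*(x))\cdot \pi_{2*}(s^*(y))=8\,\Sigma_2^2\cdot s^*(x)\cdot s^*(y)=-16\,B_{S^{[2]}}(x,y),$$
where the second equality is Proposition \ref{intersections}(4).

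Finally, for the rank assertion, I would argue that the map $x\mapsto \widetilde{\sigma_x}$ from $H^2(S^{[2]},\Z)^\iota$ is injective: if $\widetilde{\sigma_x}=0$, then multiplying by $\widetilde\Sigma$ and any $\pi_{2*}(s^*(y))$ and using the formula just derived gives $B_{S^{[2]}}(x,y)=0$ for all $y\in H^2(S^{[2]},\Z)^\iota$, forcing $x=0$ by the nondegeneracy of $B_{S^{[2]}}$ on this invariant lattice (rank $15$ by \eqref{inva}). Thus the only step that requires any care is checking the invariance hypotheses and the torsion-freeness of $H^4(N_2,\Z)$, so that Lemma \ref{inter}(1)–(3) apply cleanly; everything else is a mechanical translation of Proposition \ref{intersections}.
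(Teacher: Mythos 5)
Your proposal is correct and follows exactly the route the paper intends: the paper states Proposition \ref{interr} with no separate proof, merely remarking that it follows from Proposition \ref{intersections} together with Lemma \ref{inter}, and your argument fills in precisely those transfers (using parts (1)--(3) of Lemma \ref{inter}, the projection formula $\pi_2^*(D_l)=2E_l$, and nondegeneracy of $B_{S^{[2]}}$ on the rank-15 invariant lattice for the final rank count). Nothing essentially different from, or missing relative to, the paper's reasoning.
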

Now we will use the same techniques as in Section \ref{end1} and Proof of Lemma \ref{U2}. We will study $\pi_{2*}(s^{*}(H^{4}(S^{[2]},\Z)))\subset H^{4}(\widetilde{M},\Z)$ and $\pi_{2*}(T)\subset H^{4}(\widetilde{M},\Z)$.
Denote $K:=\pi_{2*}(s^{*}(H^{4}(S^{[2]},\Z)))$. As the first step, we will look at the sublattice 
$H^{4}(S^{[2]},\Z)^{\iota}$ in $H^{4}(S^{[2]},\Z)$ (all the groups are endowed with the cup product). We will find its rank, its discriminant and an integral basis. In the second step, we will determine the discriminant of $K$ and will enumerate the elements that we already know to be divisible by 2. In the third step, we will study $\pi_{2*}(T)$. We will find a primitive overlattice of $\pi_{2*}(T)$ and we will calculate its discriminant. And, in the last step, we will be able to give our key lemma \ref{key} on $H^{4}(\widetilde{M},\Z)$.
\subsubsection{The lattice $H^{4}(S^{[2]},\Z)^{\iota}$}
We use the notation from Section \ref{base} and Lemma \ref{calcul}.
\begin{lemme}\label{invariant}
\begin{itemize}
\item[1)]
The following elements form an integral basis of $H^{4}(S^{[2]},\Z)^{\iota}$:
\begin{itemize}
\item[a)]
$$\mathfrak{q}_{2}(\alpha_{k})\left|0\right\rangle, \ \ \mathfrak{q}_{1}(\alpha_{k})\mathfrak{q}_{1}(\alpha_{l})\left|0\right\rangle, \ \ \ \mathfrak{m}_{1,1}(\alpha_{k})\left|0\right\rangle,$$
for $1\leq k<l\leq 6$;
\item[b)]
$$\mathfrak{q}_{1}(\alpha_{k})\mathfrak{q}_{1}(\alpha_{l})\left|0\right\rangle+\mathfrak{q}_{1}(\alpha_{k})\mathfrak{q}_{1}(i^{*}\alpha_{l})\left|0\right\rangle),\ \ \ 
\mathfrak{q}_{2}(\alpha_{l})\left|0\right\rangle+\mathfrak{q}_{2}(i^{*}\alpha_{l})\left|0\right\rangle,$$
for $k\in\left\{1,...,6\right\}$ and $l\in\left\{7,...,14\right\}$;
\item[c)]
$$\mathfrak{q}_{1}(\alpha_{k})\mathfrak{q}_{1}(\alpha_{l})\left|0\right\rangle+\mathfrak{q}_{1}(i^{*}\alpha_{k})\mathfrak{q}_{1}(i^{*}\alpha_{l})\left|0\right\rangle,\ \ \ \ \ \mathfrak{m}_{1,1}(\alpha_{k})\left|0\right\rangle+\mathfrak{m}_{1,1}(i^{*}\alpha_{k})\left|0\right\rangle,$$
for $7\leq k<l\leq 14$;
\item[d)]
$$\mathfrak{q}_{1}(\alpha_{j})\mathfrak{q}_{1}(i^{*}\alpha_{l})\left|0\right\rangle,$$
for $l\in\left\{7,...,14\right\}$;
\item[e)]
$$\mathfrak{q}_{1}(\alpha_{k})\mathfrak{q}_{1}(\alpha_{l})\left|0\right\rangle+\mathfrak{q}_{1}(i^{*}\alpha_{k})\mathfrak{q}_{1}(i^{*}\alpha_{l})\left|0\right\rangle),$$
for $k\in\left\{8,...,14\right\}$ and $l\in\left\{15,...,7+k\right\}$;
\item[f)]
$$\mathfrak{q}_{1}(1)\mathfrak{q}_{1}(x)\left|0\right\rangle.$$
\end{itemize}
\item[2)]
$\rk H^{4}(S^{[2]},\Z)^{\iota}=156$.
\item[3)]
$\discr H^{4}(S^{[2]},\Z)^{\iota}=2^{120}$.
\end{itemize}
\end{lemme}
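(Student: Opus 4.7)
My plan is to apply $\iota^{*}$ to the Qin--Wang integral basis of $H^{4}(S^{[2]},\Z)$ from Theorem \ref{baseS}, using the action on Nakajima monomials already recorded at the start of the proof of Lemma \ref{calcul}: $\iota^{*}$ sends $\mathfrak{q}_{2}(\alpha_{k})\left|0\right\rangle$, $\mathfrak{q}_{1}(\alpha_{k})\mathfrak{q}_{1}(\alpha_{j})\left|0\right\rangle$, $\mathfrak{m}_{1,1}(\alpha_{k})\left|0\right\rangle$ to the analogous monomials with $i^{*}$ applied to each $\alpha_{k}$, and fixes $\mathfrak{q}_{1}(1)\mathfrak{q}_{1}(x)\left|0\right\rangle$. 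Combined with the explicit description of $i^{*}$ on $(\alpha_{k})_{k=1}^{22}$ (fixing $\alpha_{1},\ldots,\alpha_{6}$ and swapping $\alpha_{k}\leftrightarrow\alpha_{k+8}$ for $k\in\{7,\ldots,14\}$), this turns $\iota^{*}$ into a combinatorial permutation of the basis. A standard observation then furnishes a $\Z$-basis of the invariant sublattice: the fixed basis vectors together with the symmetrised sums $v+\iota^{*}v$ over one representative per $2$-orbit. Matching these with (a)--(f) establishes part 1).

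For part 2), I would partition the pairs $(k,m)$ with $1\le k<m\le 22$ indexing $\mathfrak{q}_{1}(\alpha_{k})\mathfrak{q}_{1}(\alpha_{m})\left|0\right\rangle$ according to which of the blocks $\{1,\ldots,6\}$, $\{7,\ldots,14\}$, $\{15,\ldots,22\}$ contain $k$ and $m$. The fixed pairs are the $\binom{6}{2}=15$ pairs inside the first block together with the $8$ diagonal pairs $(l,l+8)$, $l\in\{7,\ldots,14\}$. The $2$-orbits of non-fixed pairs have cardinalities $6\cdot 8=48$ (one index in the first block, one in the $E_{8}$-blocks), $\binom{8}{2}=28$ (both indices in a single $E_{8}$-block, paired with their swaps in the other block), and $28$ (one index in each $E_{8}$-block, non-diagonal). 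Adding $6$ fixed and $8$ swapped pairs for each of $\mathfrak{q}_{2}$ and $\mathfrak{m}_{1,1}$, together with the single invariant $\mathfrak{q}_{1}(1)\mathfrak{q}_{1}(x)\left|0\right\rangle$, yields $36$ fixed basis vectors and $120$ orbits of size two, whence $\rk H^{4}(S^{[2]},\Z)^{\iota}=36+120=156$.

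For part 3), set $L=H^{4}(S^{[2]},\Z)$, $L_{+}=L^{\iota}$ and $L_{-}=L_{+}^{\bot}\subset L$; the lattice $L$ is unimodular by Poincar\'e duality on the closed oriented $8$-manifold $S^{[2]}$. In the $L$-basis formed by the fixed vectors $\{v_{i}\}$ together with the pairs $\{w_{j},\iota^{*}w_{j}\}$ over each $2$-orbit, a $\Z$-basis of $L_{+}\oplus L_{-}$ is $\{v_{i}\}\cup\{w_{j}+\iota^{*}w_{j},\,w_{j}-\iota^{*}w_{j}\}$, and each $2$-orbit contributes a local factor of $2$ to the index, so $[L:L_{+}\oplus L_{-}]=2^{120}$. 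Unimodularity of $L$ implies that the cup-product map $L\to L_{+}^{*}$ is surjective with kernel $L_{-}$, whence $L/(L_{+}\oplus L_{-})\cong L_{+}^{*}/L_{+}=A_{L_{+}}$, and therefore $|\discr L_{+}|=|A_{L_{+}}|=[L:L_{+}\oplus L_{-}]=2^{120}$.

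The main obstacle I anticipate is combinatorial bookkeeping: the non-fixed orbits of the last case must be enumerated picking exactly one representative each, which is why (e) uses the asymmetric range $k\in\{8,\ldots,14\}$, $l\in\{15,\ldots,7+k\}$. Once the orbit count is carried out correctly, parts 2) and 3) follow formally from the basis in part 1) without any computation of Gram matrices.
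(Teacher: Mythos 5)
Your proposal is correct, and for parts 1) and 2) it is essentially the paper's argument in different clothing: observing that $\iota^{*}$ permutes the Qin--Wang basis and taking fixed vectors plus orbit sums is exactly what the paper's explicit coefficient computation (modelled on the proof of Lemma \ref{calcul}) establishes, and your tally of $36$ fixed monomials and $120$ two-element orbits agrees with the paper's count $27+56+36+8+28+1=156$. Part 3) is where you genuinely diverge. Writing $L=H^{4}(S^{[2]},\Z)$, $L_{+}=L^{\iota}$, $L_{-}=L_{+}^{\bot}$, the paper first invokes Lemma 5.3 of the Boissi\`ere--Nieper-Wisskirchen--Sarti reference to see that $A_{L_{+}}$ is $2$-elementary, and then, for a class $\overline{y/2}$ with $(y+z)/2\in L$ and $z\in L_{-}$, expands $y$ and $z$ in the two eigenbases and reads off that the coefficients of the type a), d), f) vectors must be even, so that $A_{L_{+}}$ is generated by the $120$ half-classes coming from types b), c), e). You instead use the general identity $A_{L_{+}}\cong L/(L_{+}\oplus L_{-})$ for a primitive nondegenerate sublattice of a unimodular lattice and compute the index $[L:L_{+}\oplus L_{-}]=2^{120}$ directly from the permutation structure, each two-orbit contributing a change-of-basis matrix of determinant $\pm 2$. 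This is cleaner: it yields the exact order of the discriminant group in one stroke (the paper's computation as written gives the upper bound $a\leq 120$ explicitly and leaves the lower bound implicit), and it avoids the external citation. The only step you leave tacit is the primitivity of $L_{+}$ in $L$, needed for the surjectivity of $L\to L_{+}^{*}$, but this is immediate since $L_{+}=\ker(\iota^{*}-\id)$ and $L/L_{+}$ therefore embeds in $L$.
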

\textbf{Remark}: 
\begin{itemize}
\item[(a)] The elements of type (a) are products of elements of $U^{3}\oplus(-2)$.
\item[(b)] The elements of type (b) are products of one element of $U^{3}\oplus(-2)$ and one element of $E_{8}(-2)$.
\item[(c)] The elements of type (c) are sums  $x\cdot y+ \iota^{*}(x)\cdot\iota^{*}(y)$ with $x$ and $y$ in $E_{8}(-1)$.
\item[(d)] The elements of type (d) are products of one element of $E_{8}(-1)$ with its image by $\iota^{*}$.
\item[(e)] The elements of type (e) are sums $x\cdot y+ \iota^{*}(x)\cdot\iota^{*}(y)$ with $x$ in $E_{8}(-1)$ and $y$ in $\iota^{*}(E_{8}(-1))$, $y\neq \iota^{*}(x)$.
\end{itemize}
\begin{proof}
\begin{itemize}
\item[1)]
Let $x\in H^{4}(S^{[2]},\Z)^{\iota}$. As follows from Theorem \ref{baseS}, by the same method as in the proof of Lemma \ref{calcul}, we can write:
\begin{align*}
x&=\sum_{1\leq k< j\leq6}{\lambda_{k,j}(\mathfrak{q}_{1}(\alpha_{k})\mathfrak{q}_{1}(\alpha_{j})\left|0\right\rangle)}\\
&+\sum_{1\leq k\leq 6}{\eta_{k}(\mathfrak{q}_{2}(\alpha_{k})\left|0\right\rangle)+\nu_{k}(\mathfrak{m}_{1,1}(\alpha_{k})\left|0\right\rangle)}\\
&+\sum_{1\leq k\leq 6<j\leq 14}{\lambda_{k,j}(\mathfrak{q}_{1}(\alpha_{k})\mathfrak{q}_{1}(\alpha_{j})\left|0\right\rangle+\mathfrak{q}_{1}(\alpha_{k})\mathfrak{q}_{1}(i^{*}\alpha_{j})\left|0\right\rangle)}\\
&+\sum_{7\leq k\leq 14}{\eta_{k}(\mathfrak{q}_{2}(\alpha_{k})\left|0\right\rangle+\mathfrak{q}_{2}(i^{*}\alpha_{k})\left|0\right\rangle)}\\
&+\sum_{7\leq k\leq j\leq14}{\lambda_{k,j}(\mathfrak{q}_{1}(\alpha_{k})\mathfrak{q}_{1}(\alpha_{j})\left|0\right\rangle+\mathfrak{q}_{1}(i^{*}\alpha_{k})\mathfrak{q}_{1}(i^{*}\alpha_{j})\left|0\right\rangle)}\\
&+\sum_{7\leq k\leq 14}{\nu_{k}(\mathfrak{m}_{1,1}(\alpha_{k})\left|0\right\rangle+\mathfrak{m}_{1,1}(i^{*}\alpha_{k})\left|0\right\rangle)}\\
&+\sum_{j=7}^{14}{\lambda_{j,j+8}(\mathfrak{q}_{1}(\alpha_{j})\mathfrak{q}_{1}(i^{*}\alpha_{j})\left|0\right\rangle)}\\
&+\sum_{k=8}^{14}{\sum_{j=15}^{7+k}{\lambda_{k,j}(\mathfrak{q}_{1}(\alpha_{k})\mathfrak{q}_{1}(\alpha_{j})\left|0\right\rangle+\mathfrak{q}_{1}(i^{*}\alpha_{k})\mathfrak{q}_{1}(i^{*}\alpha_{j})\left|0\right\rangle)}}+y\mathfrak{q}_{1}(1)\mathfrak{q}_{1}(x)\left|0\right\rangle,
\end{align*}
with the $\lambda_{k,j}, \mu_{k}, \nu_{k}$ in $\Z$.
\item[2)]
We count the elements of all the types. There are:
\begin{itemize}
\item[]
$\frac{6\times5}{2}+6+6=27$ elements of type a),
\item[]
$6\times8+8=56$ elements of type b),
\item[]
$\frac{8\times7}{2}+8=36$ elements of type c),
\item[]
8 elements of type d),
\item[]
$\frac{8\times8-8}{2}=28$ elements of type e), and
\item[]
1 element of type f).
\end{itemize}
Hence $\rk H^{4}(S^{[2]},\Z)^{\iota}=27+56+36+8+28+1=156$.
\item[3)]
%For all elements $x$ of type b),c) and e), we can find and element $z$ anti-invariant such that $\frac{x+z}{2}$ are in $H^{4}(S^{[2]},\Z)$.
%We have $56+36+28=120$ elements of this types.
%Hence the discriminant group of $H^{4}(S^{[2]},\Z)^{\iota}$ is $(\Z/2\Z)^{120}$ in $H^{4}(S^{[2]},\Z)$.
%Then $\discr H^{4}(S^{[2]},\Z)^{\iota}=2^{120}$.
We will show that the discriminant group of $H^{4}(S^{[2]},\Z)^{\iota}$ in $H^{4}(S^{[2]},\Z)$: $A_{H^{4}(S^{[2]},\Z)^{\iota}}$ is $(\Z/2\Z)^{120}$.
First, by Lemma 5.3 of \cite{SmithTh} we have $A_{H^{4}(S^{[2]},\Z)^{\iota}}=(\Z/2\Z)^{a}$, $a\in \mathbb{N}$.
Now, let $x\in A_{H^{4}(S^{[2]},\Z)^{\iota}}$, then there are $y\in H^{4}(S^{[2]},$ $\Z)^{\iota}$ and $z\in H^{4}(S^{[2]},\Z)^{\iota\bot}$ such that
$\overline{\frac{y}{2}}=x$ and $\frac{y+z}{2}\in H^{4}(S^{[2]},\Z)$.
We can write 
\begin{align*}
y&=\sum_{1\leq k< j\leq6}{\lambda_{k,j}(\mathfrak{q}_{1}(\alpha_{k})\mathfrak{q}_{1}(\alpha_{j})\left|0\right\rangle)}\\
&+\sum_{1\leq k\leq 6}{\eta_{k}(\mathfrak{q}_{2}(\alpha_{k})\left|0\right\rangle)+\nu_{k}(\mathfrak{m}_{1,1}(\alpha_{k})\left|0\right\rangle)}\\
&+\sum_{1\leq k\leq 6<j\leq 14}{\lambda_{k,j}(\mathfrak{q}_{1}(\alpha_{k})\mathfrak{q}_{1}(\alpha_{j})\left|0\right\rangle+\mathfrak{q}_{1}(\alpha_{k})\mathfrak{q}_{1}(i^{*}\alpha_{j})\left|0\right\rangle)}\\
&+\sum_{7\leq k\leq 14}{\eta_{k}(\mathfrak{q}_{2}(\alpha_{k})\left|0\right\rangle+\mathfrak{q}_{2}(i^{*}\alpha_{k})\left|0\right\rangle)}\\
&+\sum_{7\leq k\leq j\leq14}{\lambda_{k,j}(\mathfrak{q}_{1}(\alpha_{k})\mathfrak{q}_{1}(\alpha_{j})\left|0\right\rangle+\mathfrak{q}_{1}(i^{*}\alpha_{k})\mathfrak{q}_{1}(i^{*}\alpha_{j})\left|0\right\rangle)}\\
&+\sum_{7\leq k\leq 14}{\nu_{k}(\mathfrak{m}_{1,1}(\alpha_{k})\left|0\right\rangle+\mathfrak{m}_{1,1}(i^{*}\alpha_{k})\left|0\right\rangle)}\\
&+\sum_{j=7}^{14}{\lambda_{j,j+8}(\mathfrak{q}_{1}(\alpha_{j})\mathfrak{q}_{1}(i^{*}\alpha_{j})\left|0\right\rangle)}\\
&+\sum_{k=8}^{14}{\sum_{j=15}^{7+k}{\lambda_{k,j}(\mathfrak{q}_{1}(\alpha_{k})\mathfrak{q}_{1}(\alpha_{j})\left|0\right\rangle+\mathfrak{q}_{1}(i^{*}\alpha_{k})\mathfrak{q}_{1}(i^{*}\alpha_{j})\left|0\right\rangle)}}+y\mathfrak{q}_{1}(1)\mathfrak{q}_{1}(x)\left|0\right\rangle,
\end{align*}
and 
\begin{align*}
z&=\sum_{1\leq k\leq 6<j\leq 14}{\lambda_{k,j}'(\mathfrak{q}_{1}(\alpha_{k})\mathfrak{q}_{1}(\alpha_{j})\left|0\right\rangle-\mathfrak{q}_{1}(\alpha_{k})\mathfrak{q}_{1}(i^{*}\alpha_{j})\left|0\right\rangle)}\\
&+\sum_{7\leq k\leq 14}{\eta_{k}'(\mathfrak{q}_{2}(\alpha_{k})\left|0\right\rangle-\mathfrak{q}_{2}(i^{*}\alpha_{k})\left|0\right\rangle)}\\
&+\sum_{7\leq k\leq j\leq14}{\lambda_{k,j}'(\mathfrak{q}_{1}(\alpha_{k})\mathfrak{q}_{1}(\alpha_{j})\left|0\right\rangle-\mathfrak{q}_{1}(i^{*}\alpha_{k})\mathfrak{q}_{1}(i^{*}\alpha_{j})\left|0\right\rangle)}\\
&+\sum_{7\leq k\leq 14}{\nu_{k}'(\mathfrak{m}_{1,1}(\alpha_{k})\left|0\right\rangle-\mathfrak{m}_{1,1}(i^{*}\alpha_{k})\left|0\right\rangle)}\\
&+\sum_{k=8}^{14}{\sum_{j=15}^{7+k}{\lambda_{k,j}'(\mathfrak{q}_{1}(\alpha_{k})\mathfrak{q}_{1}(\alpha_{j})\left|0\right\rangle-\mathfrak{q}_{1}(i^{*}\alpha_{k})\mathfrak{q}_{1}(i^{*}\alpha_{j})\left|0\right\rangle)}},
\end{align*}
By summing the two expressions, we see that all the coefficients of $y$ in front of elements of type a), d) and f) are even.
Then 
\begin{align*}
\overline{\frac{y}{2}}
&=\sum_{1\leq k\leq 6<j\leq 14}{\overline{\lambda_{k,j}}\left(\overline{\frac{\mathfrak{q}_{1}(\alpha_{k})\mathfrak{q}_{1}(\alpha_{j})\left|0\right\rangle+\mathfrak{q}_{1}(\alpha_{k})\mathfrak{q}_{1}(i^{*}\alpha_{j})\left|0\right\rangle}{2}}\right)}\\
&+\sum_{7\leq k\leq 14}{\overline{\eta_{k}}\left(\overline{\frac{\mathfrak{q}_{2}(\alpha_{k})\left|0\right\rangle+\mathfrak{q}_{2}(i^{*}\alpha_{k})\left|0\right\rangle}{2}}\right)}\\
&+\sum_{7\leq k\leq j\leq14}{\overline{\lambda_{k,j}}\left(\overline{\frac{\mathfrak{q}_{1}(\alpha_{k})\mathfrak{q}_{1}(\alpha_{j})\left|0\right\rangle+\mathfrak{q}_{1}(i^{*}\alpha_{k})\mathfrak{q}_{1}(i^{*}\alpha_{j})\left|0\right\rangle}{2}}\right)}\\
&+\sum_{7\leq k\leq 14}{\overline{\nu_{k}}\left(\overline{\frac{\mathfrak{m}_{1,1}(\alpha_{k})\left|0\right\rangle+\mathfrak{m}_{1,1}(i^{*}\alpha_{k})\left|0\right\rangle}{2}}\right)}\\
&+\sum_{k=8}^{14}{\sum_{j=15}^{7+k}{\overline{\lambda_{k,j}}\left(\overline{\frac{\mathfrak{q}_{1}(\alpha_{k})\mathfrak{q}_{1}(\alpha_{j})\left|0\right\rangle+\mathfrak{q}_{1}(i^{*}\alpha_{k})\mathfrak{q}_{1}(i^{*}\alpha_{j})\left|0\right\rangle}{2}}\right)}}.
\end{align*}
\end{itemize}
The result follows.
\end{proof}
\subsubsection{The lattice $K$}\label{K}
By Lemma \ref{inter}, we get that $\discr K=2^{120}\times2^{156}=2^{276}$.
But all the elements $\pi_{2*}(s^{*}(x))$ for $x$ of type b), c) or e) are divisible by 2, for example: $$\pi_{2*}(s^{*}(\mathfrak{q}_{1}(\alpha_{k})\mathfrak{q}_{1}(\alpha_{j})\left|0\right\rangle+\mathfrak{q}_{1}(i^{*}\alpha_{k})\mathfrak{q}_{1}(i^{*}\alpha_{j})\left|0\right\rangle))=2\pi_{2*}(s^{*}(\mathfrak{q}_{1}(\alpha_{k})\mathfrak{q}_{1}(\alpha_{j})\left|0\right\rangle).$$
Let $\widetilde{K}$ be the overlattice of $K$ where we have divided by 2 all the elements of this kind.
We have added 120 independent halves of elements of $K$ hence $\discr \widetilde{K}=\frac{\discr K}{2^{2\times120}}=\frac{2^{276}}{2^{240}}=2^{36}$.
Now, we will show that $\widetilde{K}$ is primitive in $H^{4}(\widetilde{M},\Z)$.
\subsubsection{The lattice $\pi_{2*}(T)$}
As in Subsection \ref{end1}, we are going to deduce some information on $\pi_{2*}(T)$ from the relation between $H^{4}(\widetilde{M},\Z)$ and $H^{4}(U,\Z)$ .
As in Subsection \ref{UY}, we begin  with the following lemma.
\begin{lemme}
Let $R:=\left\langle (\widetilde{h_{k}})_{k\in\left\{1,...,28\right\}}\cup(\widetilde{\theta_{k}})_{k\in\left\{1,...,22\right\}}\cup\left\{\pi_{2*}(s^{*}(\Sigma))\right\}\right\rangle$. Then
$$H^{4}(U,\Z)\simeq H^{4}(\widetilde{M},\Z)/R.$$
\end{lemme}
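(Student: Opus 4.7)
My plan is to mimic the argument of Proposition \ref{HU0}, now in degree 4 and applied to the pair $(\widetilde M,U)$. From the long exact sequence
$$H^{3}(U,\Z)\to H^{4}(\widetilde M,U,\Z)\to H^{4}(\widetilde M,\Z)\to H^{4}(U,\Z)\to H^{5}(\widetilde M,U,\Z),$$
it is enough to show that $H^{5}(\widetilde M,U,\Z)=0$ and that the Gysin image in $H^{4}(\widetilde M,\Z)$ equals $R$. The complement $\widetilde M\setminus U$ is the disjoint union of the $28$ exceptional divisors $D_{k}\simeq\mathbb{P}^{3}$ of $\widetilde r$ and of $\widetilde\Sigma$, which is a $\mathbb{P}^{1}$-bundle over the K3 surface $\Sigma$. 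The fixed locus of $\iota_{2}$ in $N_{2}$ is the disjoint union of smooth divisors $\Sigma_{2}\sqcup\bigsqcup_{k}E_{k}$ ($\iota_{2}$ acts trivially on each $E_{k}$ because $\iota$ acts as $-\id$ on the tangent space at each isolated fixed point, and trivially on $\Sigma_{2}$ because the induced action on the projectivized normal bundle of $\Sigma$ is trivial), so $\widetilde M$ is smooth along $\widetilde\Sigma$ and along each $D_{k}$. Thom's isomorphism therefore yields
$$H^{i}(\widetilde M,U,\Z)\simeq H^{i-2}(\widetilde\Sigma,\Z)\oplus\bigoplus_{k=1}^{28}H^{i-2}(D_{k},\Z).$$
The projective bundle formula together with $H^{3}(\Sigma,\Z)=0$ gives $H^{3}(\widetilde\Sigma,\Z)=0$, and $H^{3}(\mathbb{P}^{3},\Z)=0$; hence $H^{5}(\widetilde M,U,\Z)=0$, which gives the required surjectivity.

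For $i=4$ one has $H^{2}(D_{k},\Z)=\Z$ and $H^{2}(\widetilde\Sigma,\Z)=H^{2}(\Sigma,\Z)\oplus\Z\cdot t$, where $t$ is the tautological class of the $\mathbb{P}^{1}$-bundle. The Gysin pushforward of the generator of $H^{2}(D_{k},\Z)$ is $\widetilde h_{k}$: the analogous local computation in $N_{2}$ shows that the pushforward there is $-E_{k}^{2}=h_{k}$, and one then applies $\pi_{2*}$. The restriction of the Gysin map to $H^{2}(\Sigma,\Z)\subset H^{2}(\widetilde\Sigma,\Z)$ gives, by construction and because $\pi_{2|\Sigma_{2}}$ is an isomorphism onto $\widetilde\Sigma$, exactly the classes $\widetilde\theta_{1},\dots,\widetilde\theta_{22}$.

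The delicate point, and the main obstacle, is the Gysin pushforward of $t$, which is sensitive to the ramification of $\pi_{2}$ along $\Sigma_{2}$. The relation $\pi_{2}^{*}[\widetilde\Sigma]=2[\Sigma_{2}]$ restricts on $\Sigma_{2}$ to $\pi_{2|\Sigma_{2}}^{*}c_{1}(N_{\widetilde\Sigma/\widetilde M})=2c_{1}(N_{\Sigma_{2}/N_{2}})$; since $\pi_{2|\Sigma_{2}}$ is an isomorphism and $N_{\Sigma_{2}/N_{2}}=\mathcal{O}_{\mathbb{P}(N_{\Sigma/S^{[2]}})}(-1)$, this yields $c_{1}(N_{\widetilde\Sigma/\widetilde M})=-2t$. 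Writing $\widetilde l:\widetilde\Sigma\hookrightarrow\widetilde M$ for the inclusion, one has $\widetilde l_{*}(-2t)=\widetilde l_{*}\widetilde l^{*}[\widetilde\Sigma]=[\widetilde\Sigma]^{2}$, and by Proposition \ref{interr}(2) $[\widetilde\Sigma]^{2}=-2\pi_{2*}(s^{*}(\Sigma))$, so $\widetilde l_{*}(t)=\pi_{2*}(s^{*}(\Sigma))$. This pins down the image as $R$ and gives the isomorphism $H^{4}(U,\Z)\simeq H^{4}(\widetilde M,\Z)/R$. Without this factor-of-two bookkeeping one would only obtain $2\pi_{2*}(s^{*}(\Sigma))$ in the image, so it is precisely here that the ramified structure of $\pi_{2}$ has to be handled carefully.
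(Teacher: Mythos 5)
Your proof is correct and follows the same skeleton as the paper's: the long exact sequence of the pair $(\widetilde{M},U)$, Thom's isomorphism for the relative groups, the vanishing of $H^{3}$ of $\widetilde{\Sigma}$ (via the $\mathbb{P}^{1}$-bundle structure) and of the $D_{k}\simeq\mathbb{P}^{3}$ to kill $H^{5}(\widetilde{M},U,\Z)$, and then an identification of the Gysin image with $R$. Where you genuinely diverge is in that last identification. The paper does it wholesale: it compares the two short exact sequences for $(\widetilde{M},U)$ and $(N_{2},V)$ via $\pi_{2}^{*}$ and uses the fact that the Thom class downstairs pulls back to twice the Thom class upstairs (the ramification of $\pi_{2}$ along the fixed divisor), so that the whole Gysin image downstairs is forced to be $\pi_{2*}$ of the Gysin image upstairs. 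You instead compute the three types of generators one by one, and the only delicate one — the tautological class $t$ of $\widetilde{\Sigma}\rightarrow\Sigma$ — is handled by the explicit normal-bundle identity $c_{1}(N_{\widetilde{\Sigma}/\widetilde{M}})=-2t$ obtained from $\pi_{2}^{*}[\widetilde{\Sigma}]=2[\Sigma_{2}]$, together with $[\widetilde{\Sigma}]^{2}=-2\pi_{2*}(s^{*}(\Sigma))$. The two mechanisms are the same factor-of-two phenomenon seen from different ends; your version is more concrete and makes visible exactly where the ramification enters, while the paper's version avoids having to name a basis of $H^{2}(\widetilde{\Sigma},\Z)$. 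One small caveat: your last step divides the relation $\widetilde{l}_{*}(-2t)=-2\pi_{2*}(s^{*}(\Sigma))$ by $2$ inside $H^{4}(\widetilde{M},\Z)$, which pins down $\widetilde{l}_{*}(t)$ only up to $2$-torsion; since the torsion-freeness of $H^{4}(\widetilde{M},\Z)$ is only established \emph{after} this lemma, this is a point to acknowledge — but the paper's own argument (which deduces the equality of images from an equality after applying $\pi_{2}^{*}$, injective only modulo torsion) rests on exactly the same implicit step, so this does not put you below the paper's standard of rigor.
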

\begin{proof}
Consider the following exact sequence:
$$\xymatrix@C=15pt{H^{3}(U,\Z)\ar[r] &H^{4}(\widetilde{M},U,\Z)\ar[r]&H^{4}(\widetilde{M},\Z) \ar[r]&H^{4}(U,\Z) \ar[r]&H^{5}(\widetilde{M},U,\Z)
}.$$
By Proposition \ref{H3}, $H^{3}(U,\Z)=0$. Moreover, by Thom's isomorphism (see Section 11.1.2 of \cite{Voisin}), we have 
$$H^{5}(\widetilde{M},U,\Z)\simeq H^{3}(\widetilde{\Sigma},\Z)\oplus(\oplus_{i=1}^{28}H^{3}(D_{i},\Z)).$$
The map $\widetilde{\Sigma}\rightarrow\Sigma$ is a $\mathbb{P}^{1}$-bundle, hence by the Leray-Serre spectal sequence, we get $H^{3}(\widetilde{\Sigma},\Z)=0$. Moreover $D_{i}\simeq \mathbb{P}^{3}$, so $H^{3}(D_{i},\Z)=0$.
Hence, we get the following exact sequence:
\begin{equation}
\xymatrix@C=15pt{0\ar[r] &H^{4}(\widetilde{M},U,\Z)\ar[r]^{g}&H^{4}(\widetilde{M},\Z) \ar[r]&H^{4}(U,\Z) \ar[r]&0
}.
\label{cohomology}
\end{equation}
We have also $H^{4}(\widetilde{M},U,\Z)\simeq H^{2}(\widetilde{\Sigma},\Z)\oplus(\oplus_{i=1}^{28}H^{2}(D_{i},\Z))$.
Moreover, by the Leray-Serre spectral sequence, we have $H^{2}(\widetilde{\Sigma},\Z)=H^{2}(\Sigma,\Z)\oplus H^{0}(\Sigma,\Z)$.
%We recall that by Proposition \ref{intersections}, we have $E_{k}^{2}= -j_{k*}(h_{k})$  for all $k\in \left\{1,...,28\right\}$
%and $\Sigma_{2}^{2}=-s^{*}(\Sigma)$, where $h_{k}$ is the class of an hyperplan in $E_{k}$.

Look at the following commutative diagram,
$$\xymatrix@C=10pt{0\ar[r] &\ar[d]^{d\pi_{2}^{*}}H^{4}(\mathscr{N}_{\widetilde{M}/F},\mathscr{N}_{\widetilde{M}/F}-0,\Z)=H^{4}(\widetilde{M},U,\Z)\ar[r]^{\ \ \ \ \ \ \ \ \ \ \ \ \ \ \ \ \ \ \ \ g}&\ar[d]_{\pi_{2}^{*}}H^{4}(\widetilde{M},\Z) \ar[r]&\ar[d]H^{4}(U,\Z) \ar[r]&0\\
0\ar[r] &H^{4}(\mathscr{N}_{N_{2}/F},\mathscr{N}_{N_{2}/F}-0,\Z)=H^{4}(N_{2},V,\Z)\ar[r]&H^{4}(N_{2},\Z) \ar[r]&H^{4}(V,\Z) \ar[r]&0,
}$$ 
where $F:=\Fix \iota_{2}$ and $\mathscr{N}_{N_{2}/F}-0$, $\mathscr{N}_{\widetilde{M}/F}-0$ are the vector bundles minus the zero section.
By property of the Thom isomorphism, $d\pi_{2}^{*}(H^{4}(\mathscr{N}_{\widetilde{M}/F},\mathscr{N}_{\widetilde{M}/F}-0,\Z))=2H^{4}(\mathscr{N}_{N_{2}/F},\mathscr{N}_{N_{2}/F}-0,\Z)$.
Then by commutativity of the diagram, we have
$g(H^{4}(\widetilde{M},U,\Z))=R$.
%We denote $R:=\left\langle (\pi_{2*}(E_{k}^{2}))_{k\in\left\{1,...,28\right\}}\cup(\pi_{2*}(\alpha_{k}))_{k\in\left\{1,...,22\right\}}\cup\left\{\pi_{2*}(\Sigma_{2}^{2})\right\}\right\rangle$.
Hence, it follows by \ref{cohomology} that $$H^{4}(U,\Z)\simeq H^{4}(\widetilde{M},\Z)/R.$$
\end{proof}
Now, we can prove the following lemma.
\begin{lemme}
Let $\widetilde{T}$ be the minimal primitive overlattice of $\pi_{2*}(T)$ in $H^{4}(\widetilde{M},\Z)$.
We have $$\discr \widetilde{T}=2^{36+2\rktor H^{4}(\widetilde{M},\Z)},$$
where $\rktor H^{4}(\widetilde{M},\Z)$ is the dimension of the torsion part of $H^{4}(\widetilde{M},\Z)$ as a $\mathbb{F}_{2}$-vector space.
\end{lemme}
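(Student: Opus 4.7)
The plan is in two main steps: first, compute $\discr \pi_{2*}(T)$; then determine $[\widetilde T : \pi_{2*}(T)]$ via the exact sequence of the preceding lemma, combined with a Smith-theoretic count of halves. For the first step, every generator of $T$ (the $\theta_i$ for $1 \leq i \leq 22$ and the $E_k^2$ for $1 \leq k \leq 28$) is $\iota_2$-invariant, so $T \subset H^4(N_2,\Z)^{\iota_2}$. By Lemma \ref{inter}(2), the intersection form on $\pi_{2*}(T)$ is twice that on $T$; since $T$ is unimodular of rank $50$ by Proposition \ref{intersections}(5), one has $\discr \pi_{2*}(T) = 2^{50}$.

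For the second step, the preceding lemma gives $H^4(\widetilde M,\Z)/R \simeq H^4(U,\Z)$ with $R = \pi_{2*}(T) \oplus \Z\pi_{2*}(s^*(\Sigma))$ (direct because $\pi_{2*}(T)$ and $K \ni \pi_{2*}(s^*(\Sigma))$ are orthogonal). One first shows $\widetilde T \cap R = \pi_{2*}(T)$: every $x \in \widetilde T$ satisfies $nx \in \pi_{2*}(T) \perp K$ for some $n$, so $x \perp K$, and pairing $r + c\pi_{2*}(s^*(\Sigma)) \in \widetilde T$ with $\pi_{2*}(s^*(\Sigma))$ yields $2c\Sigma^2 = 0$, forcing $c = 0$ since $\Sigma^2 \neq 0$ in $H^8(S^{[2]},\Z)$. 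Therefore $\widetilde T/\pi_{2*}(T)$ embeds into $H^4(U,\Z)$; being finite, the image lies in $\tors H^4(U,\Z) \simeq (\Z/2\Z)^8$. Since $R$ is torsion-free, the inclusion $\tors H^4(\widetilde M,\Z) \subset \widetilde T$ maps injectively into this $(\Z/2\Z)^8$, occupying $\rktor H^4(\widetilde M,\Z)$ independent generators of its image.

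The main obstacle is to show that the image of $\widetilde T/\pi_{2*}(T)$ in $(\Z/2\Z)^8$ has order exactly $2^7$. By Lemma \ref{calcul} and the proof of Proposition \ref{U}, the equivariant spectral sequence decomposes $\tors H^4(U,\Z) = H^2(G; H^2(V,\Z)) \oplus H^4(G; H^0(V,\Z)) = (\Z/2\Z)^7 \oplus \Z/2\Z$. The expected picture, analogous to Section~1, is that the image of $\widetilde T$ is exactly the first summand $(\Z/2\Z)^7$: the extra $\Z/2\Z$ generator from $H^4(G; H^0)$ is realized by a half in $\widetilde K$ (the Hilbert-square analog of $\widehat N = \frac{1}{2}\sum_i N_i$, most likely of the form $\frac{1}{2}\bigl(\pi_{2*}(s^*(\Sigma)) + \sum_k \widetilde h_k\bigr)$), while $\widetilde T$ contributes $7$ independent halves of combinations of the $\widetilde \theta_k$ and the $\widetilde h_k$. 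Establishing this rigorously requires an adaptation of Section \ref{ASmith}'s Smith-theoretic calculation to $H^*(N_2, \mathbb{F}_2)$, exhibiting the $7$ halves and verifying exhaustiveness via a dimension count of $\sigma$-cohomology groups.

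Granting this, $[\widetilde T : \pi_{2*}(T)] = 2^7$, and the decomposition $\widetilde T = \widetilde T_{\mathrm{free}} \oplus \tors\widetilde T$ (with $\pi_{2*}(T) \subset \widetilde T_{\mathrm{free}}$) gives $[\widetilde T_{\mathrm{free}} : \pi_{2*}(T)] = 2^{7 - \rktor H^4(\widetilde M,\Z)}$. Therefore
$$\discr \widetilde T = \frac{\discr \pi_{2*}(T)}{[\widetilde T_{\mathrm{free}} : \pi_{2*}(T)]^2} = \frac{2^{50}}{2^{14 - 2\rktor H^4(\widetilde M,\Z)}} = 2^{36+2\rktor H^4(\widetilde M,\Z)}.$$
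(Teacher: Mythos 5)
Your opening moves coincide with the paper's: $\discr\pi_{2*}(T)=2^{\rk T}=2^{50}$ by Lemma \ref{inter} and the unimodularity of $T$ (Proposition \ref{intersections}), and the index $[\widetilde T:\pi_{2*}(T)]$ is controlled through $H^{4}(\widetilde M,\Z)/R\simeq H^{4}(U,\Z)$ and $\tors H^{4}(U,\Z)=(\Z/2\Z)^{8}$. But the decisive step --- that this index is exactly $2^{7}$ --- is precisely where you stop. You only describe an ``expected picture'' in which the image of $\widetilde T/\pi_{2*}(T)$ is the summand $H^{2}(G;H^{2}(V,\Z))=(\Z/2\Z)^{7}$, and you defer the proof to an unperformed adaptation of the Smith-theoretic count of Section \ref{ASmith} to $N_{2}$. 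That is the heart of the lemma, so this is a genuine gap; note also that the splitting of $\tors H^{4}(U,\Z)$ into spectral-sequence graded pieces is a filtration statement, and matching specific geometric classes (halves of the $\widetilde h_{k}$, $\widetilde\theta_{k}$ versus a half involving $\pi_{2*}(s^{*}(\Sigma))$) with individual pieces is not something Lemma \ref{calcul} hands you.

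The paper closes this step with no further Smith theory. Because the sequence $0\to H^{4}(\widetilde M,U,\Z)\to H^{4}(\widetilde M,\Z)\to H^{4}(U,\Z)\to 0$ is exact, the saturation of $R$ accounts for \emph{all} of the $(\Z/2\Z)^{8}$, i.e.\ $\widetilde R/R=(\Z/2\Z)^{8-\rktor H^{4}(\widetilde M,\Z)}$ is an equality, not just an upper bound. Writing $R=\pi_{2*}(T)\oplus\Z\,\pi_{2*}(s^{*}(\Sigma))$, the subgroup $\widetilde T/\pi_{2*}(T)$ is exactly the kernel of the homomorphism $\widetilde R/R\to\tfrac{1}{2}\Z/\Z$ reading off the coefficient of $\pi_{2*}(s^{*}(\Sigma))$, so one only needs that this map is onto; that is supplied by the explicit class $\frac{D_{1}+\cdots+D_{28}+\widetilde\Sigma}{2}\in H^{2}(\widetilde M,\Z)$ (from $\pi_{2*}\mathcal{O}_{N_{2}}=\mathcal{O}_{\widetilde M}\oplus\mathcal{L}$), whose square $-\frac{\widetilde h_{1}+\cdots+\widetilde h_{28}+\pi_{2*}(s^{*}(\Sigma))}{2}$ lies in $\widetilde R$ and carries $\pi_{2*}(s^{*}(\Sigma))$ with odd coefficient. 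You should either import this argument or actually carry out the $h^{*}_{\sigma}(N_{2})$ computation (which the paper only needs later, for Lemma \ref{seven}). A smaller point: to force $c=0$ you pair with $\pi_{2*}(s^{*}(\Sigma))$ and invoke $\Sigma^{2}\neq 0$, which you do not justify; pairing instead with $\pi_{2*}(s^{*}(\mathfrak{q}_{1}(1)\mathfrak{q}_{1}(x)\left|0\right\rangle))$ gives $2c=0$ directly from Proposition \ref{sigma} and the orthogonality of $\widetilde T$ to $K$.
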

\begin{proof}
%We are going to deduce informations on $\pi_{2*}(T)$ from the link between $H^{4}(\widetilde{M},\Z)$ and $H^{4}(U,\Z)$.
Let $\widetilde{R}$ be the minimal primitive overlattice of $R$ in $H^{4}(\widetilde{M},\Z)$. By Proposition \ref{torsion} and (\ref{cohomology}), $$\widetilde{R}/R=(\Z/2\Z)^{8-\rktor H^{4}(\widetilde{M},\Z)}.$$
%where $\rktor H^{4}(\widetilde{M},\Z)$ is the rank of the torsion part of $H^{4}(\widetilde{M},\Z)$.
%we know that there is in $R$, 8 independant elements divisible by 2 which have their half in $H^{4}(\widetilde{M},\Z)$, but not in $R$.
But we already know one element divisible by 2 in $R$.
Indeed, we have $\pi_{2*}(\mathcal{O}_{N_{2}})=\mathcal{O}_{\widetilde{M}}\oplus\mathcal{L}$, with $\mathcal{L}^{2}=\mathcal{O}_{\widetilde{M}}(-D_{1}-...-D_{28}-\widetilde{\Sigma})$.
Thus $$\frac{D_{1}+...+D_{28}+\widetilde{\Sigma}}{2}\in H^{2}(\widetilde{M},\Z).$$
It follows that $$\left(\frac{D_{1}+...+D_{28}+\widetilde{\Sigma}}{2}\right)^{2}=-\frac{\widetilde{h_{1}}+...+\widetilde{h_{28}}+\pi_{2*}(s^{*}(\Sigma))}{
2}\in H^{4}(\widetilde{M},\Z).$$
%But by the proof of Lemma \ref{inter}, we have $D_{k}^{2}=2\pi_{2*}(E_{k}^{2})$ for all $k\in \left\{1,...,28\right\}$ and $\widetilde{\Sigma}^{2}=2\pi_{2*}(\Sigma_{2}^{2})$.
%Then $$\frac{\pi_{2*}(E_{1}^{2})+...+\pi_{2*}(E_{28}^{2})+\pi_{2*}(\Sigma_{2}^{2})}{2}\in H^{4}(\widetilde{M},\Z).$$
%We are going to deduce from this the number of independant elements divisible by 2 in $\pi_{2*}(T)$ (with of course, their half in $H^{4}(\widetilde{M},\Z)$ but not in $\pi_{2*}(T)$).
We are going to deduce $\widetilde{T}/\pi_{2*}(T)$.
%We have two cases.
%\begin{itemize}
%\item{\textbf{First case: $\pi_{2*}(s^{*}(\Sigma))$ is divisible by 2 in $H^{4}(\widetilde{M},\Z)$}}

%In this case, $\pi_{2*}(s^{*}(\Sigma))\in (\widetilde{R}/R)\setminus(\widetilde{T}/\pi_{2*}(T))$.
%Then $$\widetilde{T}/\pi_{2*}(T)=(\Z/2\Z)^{7}.$$
%\item{\textbf{Second case: $\pi_{2*}(s^{*}(\Sigma))$ is not divisible by 2 in $H^{4}(\widetilde{M},\Z)$}}

%In this case, $\frac{\widetilde{h_{1}}+...+\widetilde{h_{28}}+\pi_{2*}(s^{*}(\Sigma))}{
%2}\in (\widetilde{R}/R)\setminus(\widetilde{T}/\pi_{2*}(T))$.
%is one of the 8 independant elements divisible by 2 in $R$ (which have not their half in $R$) and it is not an element of $\pi_{2*}(T)$. Then it remains also 7 independant elements divisible by 2 in $\pi_{2*}(T)$ (which have not their half in $\pi_{2*}(T)$).
If $\pi_{2*}(s^{*}(\Sigma))$ is divisible by 2 in $H^{4}(\widetilde{M},\Z)$, then  $\pi_{2*}(s^{*}(\Sigma))\in (\widetilde{R}/R)\setminus(\widetilde{T}/\pi_{2*}(T))$, if not $\frac{\widetilde{h_{1}}+...+\widetilde{h_{28}}+\pi_{2*}(s^{*}(\Sigma))}{
2}\in (\widetilde{R}/R)\setminus(\widetilde{T}/\pi_{2*}(T))$. 
Then in both cases $$\widetilde{T}/\pi_{2*}(T)=(\Z/2\Z)^{7-\rktor H^{4}(\widetilde{M},\Z)}.$$

%\textbf{Remark}: In fact, we can show that $\pi_{2*}(s^{*}(\Sigma))$ is not divisible by 2 in $H^{4}(\widetilde{M},\Z)$.
%$\pi_{2*}(\Sigma_{2}^{2})=-\pi_{2*}(s^{*}(\Sigma))$ is not divisible by 2 because $\Sigma=\mathfrak{q}_{1}(1)\mathfrak{q}_{1}(x)\left|0\right\rangle+...$

Now we can calculate the discriminant of $\widetilde{T}$.
We have $$\discr \widetilde{T}=\frac{\discr \pi_{2*}(T)}{2^{2\times(7-\rktor H^{4}(\widetilde{M},\Z))}}.$$
Moreover by Lemma \ref{inter}, $\discr \pi_{2*}(T)=2^{\rk T}=2^{28+22}=2^{50}$.
Hence $$\discr\widetilde{T}=\frac{2^{50}}{2^{2\times(7-\rktor H^{4}(\widetilde{M},\Z))}}=2^{36+2\rktor H^{4}(\widetilde{M},\Z)}.$$
\end{proof}
\subsubsection{The key Lemma}
\begin{lemme}\label{key0}
The Lattice $\widetilde{K}$ is primitive in $H^{4}(\widetilde{M},\Z)$.
\end{lemme}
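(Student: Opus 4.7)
The plan is to mimic the proof of Lemma \ref{U2}, using $H^{4}(\widetilde{M},\Z)/\tors$ in the role played there by the unimodular lattice $H^{2}(Y,\Z)$, and using the unimodularity of $H^{4}(N_{2},\Z)=s^{*}(H^{4}(S^{[2]},\Z))\oplus^{\bot}T$ (Proposition \ref{intersections}(5)) as the ultimate source of unimodularity.

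First I would record that $\widetilde{K}$ is orthogonal to $\widetilde{T}$ inside $H^{4}(\widetilde{M},\Z)$: this follows from parts (1) and (3) of Proposition \ref{interr}, since every $D_{k}$ and every $\widetilde{\theta}_{l}$ already pairs trivially with $\pi_{2*}(s^{*}(H^{4}(S^{[2]},\Z)^{\iota}))=K$, and the additional half-elements that distinguish $\widetilde{K}$ from $K$ pair orthogonally because their doubles lie in $K$. Next, a rank count using the analogue of Proposition \ref{b2} for $H^{4}$ gives $b_{4}(\widetilde{M})=\rk H^{4}(N_{2},\Z)^{\iota_{2}}=\rk H^{4}(S^{[2]},\Z)^{\iota}+\rk T=156+50=206=\rk\widetilde{K}+\rk\widetilde{T}$, so $\widetilde{K}\oplus\widetilde{T}$ spans $L:=H^{4}(\widetilde{M},\Z)/\tors$ rationally.

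Now let $\mathfrak{K}$ be the saturation of $\widetilde{K}$ in $L$. Since saturation preserves orthogonality to any fixed lattice, $\mathfrak{K}\subseteq\widetilde{T}^{\bot}$; comparing ranks we get equality, so $\mathfrak{K}=\widetilde{T}^{\bot}$. Writing $[\mathfrak{K}:\widetilde{K}]=2^{c}$ (a power of $2$ because the discriminant group of $\widetilde{K}$ is $2$-elementary, inherited from $A_{H^{4}(S^{[2]},\Z)^{\iota}}\simeq(\Z/2\Z)^{120}$ of Lemma \ref{invariant} via Lemma \ref{inter}), the goal is to show $c=0$. The standard discriminant formula for orthogonal primitive sublattices of complementary rank gives
\[
\discr\mathfrak{K}\cdot\discr\widetilde{T}=\discr(L)\cdot[L:\mathfrak{K}\oplus\widetilde{T}]^{2}.
\]
Plugging in $\discr\mathfrak{K}=2^{36-2c}$ (from Subsection \ref{K}) and $\discr\widetilde{T}=2^{36+2\rktor H^{4}(\widetilde{M},\Z)}$ (from the previous lemma) yields
\[
2^{72+2\rktor H^{4}(\widetilde{M},\Z)-2c}=\discr(L)\cdot[L:\mathfrak{K}\oplus\widetilde{T}]^{2}.
\]

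Finally, the right-hand side has to be evaluated. For this I would use Poincar\'e--Lefschetz duality for the rational homology manifold $\widetilde{M}$ (whose singular locus has codimension $4$): the cup product on $L$ has discriminant $|\tors H^{5}(\widetilde{M},\Z)|=|\tors H^{4}(\widetilde{M},\Z)|=2^{\rktor H^{4}(\widetilde{M},\Z)}$ (the second equality comes from the universal coefficient theorem and Bockstein), so $\discr(L)=2^{2\rktor H^{4}(\widetilde{M},\Z)}$. The index $[L:\mathfrak{K}\oplus\widetilde{T}]$ is then read off from the injection $\pi_{2}^{*}:L\hookrightarrow H^{4}(N_{2},\Z)^{\iota_{2}}=s^{*}(H^{4}(S^{[2]},\Z)^{\iota})\oplus T$ combined with Lemma \ref{inter}, which controls the cokernel in terms of the same torsion invariant $\rktor H^{4}(\widetilde{M},\Z)$. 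The $2$-adic valuations balance iff $c=0$, giving the primitivity. The main obstacle is precisely this last bookkeeping: one must show that the three unknowns ($c$, $\discr L$, and the lattice index) are rigidly linked through the discriminant formula and through the unimodularity of $H^{4}(N_{2},\Z)$, and that the only solution consistent with the explicit generators divided out in the construction of $\widetilde{K}$ (listed in Lemma \ref{invariant}) is $c=0$.
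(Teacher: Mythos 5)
Your overall plan---saturate $\widetilde{K}$ inside $L:=H^{4}(\widetilde{M},\Z)/\tors$, identify the saturation $\mathfrak{K}$ with $\widetilde{T}^{\bot}$, and compare discriminants---is the paper's plan, but two things go wrong in the execution. First, $\widetilde{M}$ is \emph{smooth}: it is the quotient of the smooth $N_{2}$ by the involution $\iota_{2}$, whose fixed locus $\Sigma_{2}\cup E_{1}\cup\dots\cup E_{28}$ is a divisor, so $\widetilde{M}$ plays exactly the role of the K3 surface $Y$ of Section 1. By Poincar\'e duality for a closed oriented $8$-manifold, the cup product on $L$ is unimodular, i.e.\ $\discr L=1$; your claim $\discr L=2^{2\rktor H^{4}(\widetilde{M},\Z)}$ is false (torsion in $H^{4}$ never enters the discriminant of the pairing on the free quotient in middle degree). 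This is not a harmless slip, because your final ``balance of $2$-adic valuations'' hinges on that factor.

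Second, and more seriously, the identity $\discr\mathfrak{K}\cdot\discr\widetilde{T}=\discr(L)\cdot[L:\mathfrak{K}\oplus\widetilde{T}]^{2}$ with the index left as an unknown is vacuous: it holds for \emph{any} pair of primitive orthogonal complements and cannot by itself distinguish $c=0$ from $c>0$. The step you defer as ``bookkeeping''---reading the index off $\pi_{2}^{*}$ and Lemma \ref{inter}---is precisely the missing content, and it is not where the information lies. What unimodularity of $L$ actually buys is an isomorphism of discriminant groups $A_{\mathfrak{K}}\simeq A_{\widetilde{T}}$ (the glue group $L/(\mathfrak{K}\oplus\widetilde{T})$ projects isomorphically onto each factor), hence the single equation $\discr\mathfrak{K}=\discr\widetilde{T}$. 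Combined with your own computations this reads $2^{36-2c}=2^{36+2\rktor H^{4}(\widetilde{M},\Z)}$, and since $c\geq 0$ and $\rktor H^{4}(\widetilde{M},\Z)\geq 0$ it forces $c=0$ and $\rktor H^{4}(\widetilde{M},\Z)=0$ simultaneously. That is the paper's one-line argument, and it delivers the torsion-freeness of $H^{4}(\widetilde{M},\Z)$ as a byproduct (the subsequent lemma); you never invoke the isomorphism $A_{\mathfrak{K}}\simeq A_{\widetilde{T}}$, so your proof as written does not close.
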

\begin{proof}
Let $\mathcal{K}$ be the minimal primitive overlattice of $\widetilde{K}$ in $H^{4}(\widetilde{M},\Z)$.
We have $\widetilde{T}=\mathcal{K}^{\bot}$. Since $H^{4}(\widetilde{M},\Z)$ is unimodular, we have $A_{\mathcal{K}}\simeq A_{\widetilde{T}}$.
In particular, $\discr \mathcal{K}=\discr \widetilde{T}$, so that $\discr \mathcal{K}=2^{36+2\rktor H^{4}(\widetilde{M},\Z)}\geq\discr \widetilde{K}$. Hence, necessarily $\rktor H^{4}(\widetilde{M},\Z)=0$ and $\mathcal{K}=\widetilde{K}$, so that $\widetilde{K}$ is primitive in $H^{4}(\widetilde{M},\Z)$.
\end{proof}
Moreover by the proof follows the following lemma.
\begin{lemme}
The group $H^{4}(\widetilde{M},\Z)$ is without torsion.
\end{lemme}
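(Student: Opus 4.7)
The plan is to extract this statement directly from the discriminant comparison already carried out in the proof of Lemma \ref{key0}. Recall that $\mathcal{K}$ was defined as the minimal primitive overlattice of $\widetilde{K}$ in $H^{4}(\widetilde{M},\Z)$, so by construction $\widetilde{K} \subseteq \mathcal{K}$ and $|\discr \mathcal{K}|$ divides $|\discr \widetilde{K}|$; in particular $|\discr \mathcal{K}| \leq |\discr \widetilde{K}| = 2^{36}$.

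On the other hand, the unimodularity of $H^{4}(\widetilde{M},\Z)$ (which holds because $\widetilde{M}$ is a smooth compact oriented 8-manifold with torsion-free cohomology in degree $3$, so Poincar\'e duality makes the middle cup-product pairing unimodular modulo torsion) together with the orthogonal decomposition argument gives $A_{\mathcal{K}} \simeq A_{\widetilde{T}}$. Combined with the computation $|\discr \widetilde{T}| = 2^{36 + 2\rktor H^{4}(\widetilde{M},\Z)}$ from the previous lemma, this forces
\[
2^{36 + 2\rktor H^{4}(\widetilde{M},\Z)} = |\discr \mathcal{K}| \leq 2^{36}.
\]

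The inequality forces $\rktor H^{4}(\widetilde{M},\Z) = 0$, i.e.\ the $2$-torsion part of $H^{4}(\widetilde{M},\Z)$ is trivial. The only step that needs attention is to argue that no odd torsion can appear either: since $\widetilde{M}$ is the quotient of $N_{2}$ by an involution, by Proposition \ref{Smithy} the group $H^{*}(\widetilde{M},\Z)$ injects into $H^{*}(N_{2},\Z)$ modulo $2$-torsion, and $H^{*}(N_{2},\Z)$ is torsion-free (as $N_{2}$ is the blow-up of a smooth projective variety with torsion-free cohomology along smooth centers with torsion-free cohomology). Hence $H^{4}(\widetilde{M},\Z)$ has no odd torsion, and combined with the vanishing of the $2$-torsion we conclude that $H^{4}(\widetilde{M},\Z)$ is torsion-free. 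The main (and essentially only) obstacle has already been overcome in the proof of Lemma \ref{key0}; the present lemma is just the by-product of that discriminant comparison.
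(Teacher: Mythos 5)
Your proposal is correct and follows exactly the paper's route: the paper derives this lemma as an immediate by-product of the discriminant comparison in the proof of Lemma \ref{key0}, which forces $\rktor H^{4}(\widetilde{M},\Z)=0$. Your extra remark ruling out odd torsion is a worthwhile elaboration of a point the paper leaves implicit (there it follows from the exact sequence $0\rightarrow H^{4}(\widetilde{M},U,\Z)\rightarrow H^{4}(\widetilde{M},\Z)\rightarrow H^{4}(U,\Z)\rightarrow 0$ with free first term and torsion $(\Z/2\Z)^{8}$ in the last), and your transfer argument via Proposition \ref{Smithy} achieves the same thing.
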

%\begin{lemme}
%Let $\mathcal{R}$ be the lattice generated by the image by $\pi_{2*}$ of all the elements of type a), d) and f).
%The lattice $\mathcal{R}$ is primitive in $H^{4}(\widetilde{M},\Z)$.
%\end{lemme}
%\begin{proof}
%Let $\widetilde{\mathcal{R}}$ be the primitive over sublattice of $\mathcal{R}$ in $H^{4}(\widetilde{M},\Z)$ and let $x\in \widetilde{\mathcal{R}}$.
%Since $\mathcal{R}\subset\widetilde{K}$, by Lemma \ref{key0} we have $x\in\widetilde{K}$.
%Then by Subsection \ref{K} and Theorem \ref{base}, we get $x\in \mathcal{R}$.
%\end{proof}
\subsection{The end of the proof of Theorem \ref{theorem}}
\subsubsection{The sublattice $\pi_{1*}(s_{1}^{*}(U^{3}\oplus (-2)))$ in $H^{2}(M',\Z)$}
In agreement with Section \ref{Focus}, we need the following lemma to finish the proof.
\begin{lemme}\label{key}
The subgroup $\pi_{1*}(s_{1}^{*}(U^{3}\oplus (-2)))$ is primitive in $H^{2}(M',\Z)$.
\end{lemme}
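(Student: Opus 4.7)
The plan is to transport the strategy of Section \ref{proof} upward: since $H^2(M',\Z)$ is not unimodular, I cannot mimic the discriminant-counting of Lemma \ref{U2} directly there, but I do have the unimodular lattice $H^4(\widetilde{M},\Z)$ and the primitivity of $\widetilde{K}$ supplied by Lemma \ref{key0}. The passage from $H^2(M',\Z)$ up to $H^4(\widetilde{M},\Z)$ will be accomplished by cup-squaring.

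Concretely, let $\mathcal{U}$ denote the minimal primitive overlattice of $\pi_{1*}(s_1^*(U^3\oplus(-2)))$ in $H^2(M',\Z)$. A standard argument at odd primes (using that the cokernel of $\pi_{1*}$ on the invariants is annihilated by $2$, together with the fact that $s_1^*$ preserves primitivity via $H^2(N_1,\Z)\simeq H^2(S^{[2]},\Z)\oplus\Z\Sigma_1$) reduces the whole problem to the prime $2$. So I take $x\in H^2(M',\Z)$ with $2x=\pi_{1*}(s_1^*(u))$ for some $u\in U^3\oplus(-2)$, and aim to show that $u$ itself is divisible by $2$ in $U^3\oplus(-2)$. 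Applying $\pi_1^*$ and using that $u$ is $\iota$-invariant gives $\pi_1^*(x)=s_1^*(u)$ (torsion-freeness of $H^2(N_1,\Z)$). By the commutativity of the diagram in Section \ref{Plan}, $\pi_2^*(\widetilde{r}^*(x))=s^*(u)$, hence
$$2\,\widetilde{r}^*(x)^2 = \pi_{2*}\bigl(\pi_2^*(\widetilde{r}^*(x)^2)\bigr) = \pi_{2*}(s^*(u^2)) \in K \subset \widetilde{K}.$$
Lemma \ref{key0} then forces $\widetilde{r}^*(x)^2\in\widetilde{K}$.

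The next step interprets this in terms of $u$. Unwinding the construction of $\widetilde{K}$ in Subsection \ref{K}, the membership $\widetilde{r}^*(x)^2\in\widetilde{K}$ is equivalent to the reduction of $u^2\in H^4(S^{[2]},\Z)^{\iota}$ modulo $2$ lying in the subspace spanned by the basis vectors of types (b), (c), (e) of Lemma \ref{invariant}, precisely the classes whose $\pi_{2*}\circ s^*$-image was halved to form $\widetilde{K}$. On the other hand, since $u\in U^3\oplus(-2)$, Proposition \ref{base2} shows that the expansion of $u^2$ on the Qin--Wang basis has nonzero components only in types (a) and (f): no (b), (c), (d) or (e) class ever appears. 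Thus the type (a) and type (f) coordinates of $u^2$ must all be even.

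Writing $u=\sum_{k=1}^6 a_k\gamma_k+a_7\delta$ and expanding $u^2$ via Proposition \ref{base2}, a short parity computation gives: the coefficient of $\mathfrak{q}_2(\alpha_k)|0\rangle$ is $a_k^2+2a_ka_7\equiv a_k^2\pmod 2$, while the coefficient of $\mathfrak{q}_1(1)\mathfrak{q}_1(x)|0\rangle$ is $\sum a_ka_l(\alpha_k\cdot\alpha_l)+a_7^2\equiv a_7^2\pmod 2$ (using the even-ness of the form on $U^3$). Requiring these to be even forces every $a_k$ and $a_7$ to be even, so $u=2u'$ in $U^3\oplus(-2)$, whence $x=\pi_{1*}(s_1^*(u'))\in\pi_{1*}(s_1^*(U^3\oplus(-2)))$. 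The hard part of this plan is the conversion of divisibility of $u^2$ into divisibility of $u$: it succeeds only because the sublattice $U^3\oplus(-2)$ is chosen so that $u^2$ is confined to types (a) and (f), which are exactly the types \emph{not} halved in the construction of $\widetilde{K}$; without this compatibility between the type decomposition of Lemma \ref{invariant} and the block $U^3\oplus(-2)$, the $H^4$-level primitivity of $\widetilde{K}$ would yield no $H^2$-level information.
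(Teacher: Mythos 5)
Your proposal is correct and is essentially the paper's own argument: the paper likewise passes to $H^{4}(\widetilde{M},\Z)$ by cup-squaring a half-integral candidate, invokes Lemma \ref{key0} together with the description of $\widetilde{K}$ in Section \ref{K}, and runs the same parity computation via Proposition \ref{base2}; the only cosmetic difference is that the paper isolates the $\widetilde{M}$-level statement as Lemma \ref{divisible} and then descends to $M'$ by the diagram chase of Section \ref{proof}, whereas you merge the two steps. One inessential misstatement: since the diagonal class of $S$ has components along $E_{8}(-1)^{2}$, the expansion of $\delta^{2}$ (hence of $u^{2}$ whenever the $\delta$-coordinate is nonzero) does contain classes of types (b) and (c), so $u^{2}$ is not literally confined to types (a) and (f); this does no harm, because membership of $u^{2}$ modulo $2$ in the span of the types (b), (c), (e) already forces the type (a), (d), (f) coordinates to be even, which is all your parity argument actually uses.
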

\begin{proof}
We begin with another lemma, and we will end the proof by the same method as in Section \ref{proof}.
\begin{lemme}\label{divisible}
The subgroup $\pi_{2*}(s^{*}(U^{3}\oplus(-2)))$ is primitive in $H^{2}(\widetilde{M},\Z)$.
\end{lemme} 
\begin{proof}
Let $\mathcal{H}$ be the primitive overgroup of $\pi_{2*}(s^{*}(U^{3}\oplus(-2)))$ in $H^{2}(\widetilde{M},\Z)$ and let $x\in \mathcal{H}$.
We can write $$x=\frac{\sum_{(k,j)\in\left\{1,2,3\right\}\times\left\{1,2\right\}}{a_{k,j}\pi_{2*}(s^{*}(u_{k,j}))}+y(\pi_{2*}(s^{*}(\delta)))}{2},$$
where $a_{k,j}$ and $y$ are integers.
Then by (3) of Lemma \ref{inter},
\begin{align*}
x^{2}&=\frac{2\pi_{2*}\left[\left(\sum_{(k,j)\in\left\{1,2,3\right\}\times\left\{1,2\right\}}{a_{k,j}s^{*}(u_{k,j})}+ys^{*}(\delta)\right)^{2}\right]}{4}\\
&=\frac{\pi_{2*}s^{*}\left[\left(\sum_{(k,j)\in\left\{1,2,3\right\}\times\left\{1,2\right\}}{a_{k,j}u_{k,j}}+y\delta\right)^{2}\right]}{2}\\
&=\frac{\pi_{2*}s^{*}\left[\sum_{(k,j)\in\left\{1,2,3\right\}\times\left\{1,2\right\}}{a_{k,j}^{2}u_{k,j}^{2}}+y^{2}\delta^{2}\right]}{2}+A
\end{align*}
with $A\in  H^{4}(\widetilde{M},\Z)$.

We also have by Proposition \ref{base2}:
$$u_{k,j}^{2}=\mathfrak{q}_{1}(v_{k,j})^{2}\left|0\right\rangle=2\mathfrak{m}_{1,1}(v_{k,j})\left|0\right\rangle+\mathfrak{q}_{2}(v_{k,j})\left|0\right\rangle$$
and $$\delta^{2}=\sum_{i<j}{\mu_{i,j}\mathfrak{q}_{1}(\alpha_{i})\mathfrak{q}_{1}(\alpha_{j})\left|0\right\rangle}+\frac{1}{2}\sum_{i}{\mu_{i,i}\mathfrak{q}_{1}(\alpha_{i})^{2}\left|0\right\rangle}+\mathfrak{q}_{1}(1)\mathfrak{q}_{1}(x)\left|0\right\rangle.$$
Then
\begin{align*} x^{2}&=\frac{1}{2}\left[\pi_{2*}s^{*}\left(\sum_{(k,j)\in\left\{1,2,3\right\}\times\left\{1,2\right\}}{a_{k,j}^{2}\mathfrak{q}_{2}(v_{k,j})\left|0\right\rangle}\right.\right.\\
&\left.\left.+y^{2}\left(\sum_{i<j}{\mu_{i,j}\mathfrak{q}_{1}(\alpha_{i})\mathfrak{q}_{1}(\alpha_{j})\left|0\right\rangle}+\frac{1}{2}\sum_{i}{\mu_{i,i}\mathfrak{q}_{2}(\alpha_{i})\left|0\right\rangle}\right)+y^{2}\mathfrak{q}_{1}(1)\mathfrak{q}_{1}(x)\left|0\right\rangle\right)\right]\\
&+A'
\end{align*}
with $A'\in H^{4}(\widetilde{M},\Z)$.

By Lemma \ref{key0}, $x^{2}\in \widetilde{K}$. 
Then by Section \ref{K}, the coefficient of $\mathfrak{q}_{1}(1)\mathfrak{q}_{1}(x)\left|0\right\rangle$ must be integer.
So $y$ is even.

Now, we have 
$$x^{2}=\frac{\pi_{2*}s^{*}\left[\sum_{(k,j)\in\left\{1,2,3\right\}\times\left\{1,2\right\}}{a_{k,j}^{2}\mathfrak{q}_{2}(v_{k,j})\left|0\right\rangle}\right]}{2}+A''$$
with $A''\in H^{4}(\widetilde{M},\Z)$.
Again by Section \ref{K}, all the coefficients $a_{k,j}$ must be even.
We get $x\in\pi_{2*}(s^{*}(U^{3}\oplus(-2)))$.
\end{proof}
The end of the proof is the same as in Section \ref{proof}.
We have the following commutative diagram:
$$\xymatrix{
 \widetilde{M}\ar[r]^{\widetilde{r}} & M'\\
  N_{2} \ar[r]^{s_{2}} \ar[u]^{\pi_{2}}& N_{1} \ar[u]^{\pi_{1}}.
}
\label{diagram}$$
It induces a diagram of maps of cohomology groups:
$$\xymatrix{\mathcal{S} \incl[d] & s_{1}^{*}(U^{3}\oplus(-2)) \incl[d]\\
H^{2}(M',\Z)\ar@/_/[r]_{\pi_{1}^{*}} \ar[d]_{\widetilde{r}^{*}}& H^{2}(N_{1},\Z)\ar@/_/[l]^{\pi_{1*}}\ar[d]^{s_{2}^{*}}\\
H^{2}(\widetilde{M},\Z)\ar@/^/[r]^{\pi_{2}^{*}}& \ar@/^/[l]_{\pi_{2*}} H^{2}(N_{2},\Z)\\
\pi_{2*}(s^{*}(U^{3}\oplus(-2)))\incl[u]& s^{*}(U^{3}\oplus(-2)),\incl[u]}$$
where 
%$U^{3}\oplus(-2)$ is the lattice $\left\langle \left\{\delta\right\}\cup (u_{i,j})_{i\in\left\{1,2,3\right\},j\in\left\{1,2\right\}}\right\rangle$ in $H^{2}(S^{[2]},\Z)$ and 
$\mathcal{S}$ is the minimal primitive overgroup of $\pi_{1*}(s_{1}^{*}(U^{3}\oplus (-2)))$ in $H^{2}(M',\Z)$. We know by Lemma \ref{divisible} that $\pi_{2*}(s^{*}(U^{3}\oplus(-2)))$ is primitive in $H^{2}(\widetilde{M},\Z)$.
We are going to show that $\mathcal{S}=\pi_{1*}(s_{1}^{*}(U^{3}\oplus (-2)))$.

Let $x\in \mathcal{S}$, we have $\pi_{1}^{*}(x)\in s_{1}^{*}(U^{3}\oplus(-2))$ and $s_{2}^{*}(\pi_{1}^{*}(x))\in s^{*}(U^{3}\oplus(-2))$.
Then by commutativity, $\pi_{2}^{*}(\widetilde{r}^{*}(x))\in s^{*}(U^{3}\oplus(-2))$, so $2\widetilde{r}^{*}(x)\in\pi_{2*}(s^{*}(U^{3}\oplus(-2)))$. Since $\pi_{2*}(s^{*}(U^{3}\oplus(-2)))$ is primitive in  $H^{2}(\widetilde{M},\Z)$, we have $\widetilde{r}^{*}(x)\in\pi_{2*}(s^{*}(U^{3}\oplus(-2)))$. It means that $\widetilde{r}^{*}(x)=\pi_{2*}(s^{*}(u))$ where $u\in U^{3}\oplus(-2)$.
Then, we have, $\pi_{2}^{*}(\widetilde{r}^{*}(x))=2s^{*}(u)$ and by commutativity, we get $s_{2}^{*}(\pi_{1}^{*}(x))=2s^{*}(u)$. Therefore $\pi_{1}^{*}(x)=2s_{1}^{*}(u)$ and $2x=\pi_{1*}(\pi_{1}^{*}(x))=2\pi_{1*}(s_{1}^{*}(u))$, so $x=\pi_{*}(s^{1}(u))\in \pi_{1*}(s^{*}(U^{3}\oplus(-2)))$.   
\end{proof}
\subsubsection{The element $\overline{\Sigma}'$}
We will now show that the class of $\overline{\Sigma}'$ is not divisible by 2 in $H^{2}(M',\Z)$.
\begin{lemme}\label{sigma2}
The element $\overline{\Sigma}'$ is not divisible by 2 in $H^{2}(M',\Z)$.
\end{lemme}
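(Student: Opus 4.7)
The plan is to argue by contradiction, pulling the hypothetical division back to the full resolution $\widetilde{M}$ and contradicting the $2$-torsion structure of $H^{2}(U,\Z)$ already computed in Proposition \ref{H2}.

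Suppose $\overline{\Sigma}' = 2y$ for some $y \in H^{2}(M',\Z)$. First I would check that $\widetilde{r}^{*}(\overline{\Sigma}') = \widetilde{\Sigma}$. The map $\widetilde{r}:\widetilde{M}\to M'$ resolves the $28$ singular points of $M'$, which arise from the $28$ isolated fixed points of $\iota$ and therefore lie away from $\overline{\Sigma}'$; alternatively this can be verified by pulling further back to $N_{2}$ via $\pi_{2}^{*}\widetilde{r}^{*} = s_{2}^{*}\pi_{1}^{*}$ together with $\pi_{1}^{*}(\overline{\Sigma}')=2\Sigma_{1}$ and $\pi_{2}^{*}(\widetilde{\Sigma})=2\Sigma_{2}$. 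From $\widetilde{\Sigma} = 2\widetilde{r}^{*}(y)$ it follows that $\widetilde{\Sigma}/2 \in H^{2}(\widetilde{M},\Z)$, so the problem reduces to showing that $\widetilde{\Sigma}$ is not divisible by $2$ in $H^{2}(\widetilde{M},\Z)$.

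Mimicking the proof of Proposition \ref{HU0}, I would next set up the short exact sequence
$$0 \to L \to H^{2}(\widetilde{M},\Z) \to H^{2}(U,\Z) \to 0, \qquad L:=\Z\widetilde{\Sigma}\oplus\bigoplus_{i=1}^{28}\Z D_{i},$$
using Thom's isomorphism together with the vanishings $H^{1}(U,\Z)=0$ (by Fujiki's lemma, as in Section \ref{UY}), $H^{1}(\widetilde{\Sigma},\Z)=0$ (since $\widetilde{\Sigma}$ is a $\mathbb{P}^{1}$-bundle over the K3 surface $\Sigma$), and $H^{1}(D_{i},\Z)=0$ (since $D_{i}\cong\mathbb{P}^{3}$). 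Letting $L':=\{w\in H^{2}(\widetilde{M},\Z) \mid 2w\in L\}$, the quotient $L'/L$ embeds into the $2$-torsion of $H^{2}(U,\Z)$, which by Proposition \ref{H2} is exactly $\Z/2\Z$. Hence $|L'/L|\le 2$, and the class $\overline{z}:=\tfrac{1}{2}(\widetilde{\Sigma}+\sum_{i=1}^{28}D_{i})$, known to lie in $H^{2}(\widetilde{M},\Z)$ by the formula $\pi_{2*}\mathcal{O}_{N_{2}}=\mathcal{O}_{\widetilde{M}}\oplus\mathcal{L}$ with $\mathcal{L}^{\otimes 2}=\mathcal{O}_{\widetilde{M}}(-\widetilde{\Sigma}-\sum D_{i})$, already represents the unique nontrivial coset.

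To conclude, if $\widetilde{\Sigma}/2\in H^{2}(\widetilde{M},\Z)$, then $\widetilde{\Sigma}/2\in L'\setminus L$, and so $\widetilde{\Sigma}/2\equiv \overline{z}\pmod{L}$; but the difference $\overline{z}-\widetilde{\Sigma}/2 = \tfrac{1}{2}\sum_{i}D_{i}$ has strictly half-integer coordinates in the free $\Z$-basis $\{\widetilde{\Sigma},D_{1},\dots,D_{28}\}$ of $L$, contradiction. I expect the main obstacle to be the careful cohomological setup of this exact sequence — namely, establishing $\widetilde{r}^{*}(\overline{\Sigma}')=\widetilde{\Sigma}$ and confirming all the $H^{1}$-vanishings so that $L'/L$ is genuinely trapped inside $\Z/2\Z$; once these are in place, the rest is a short combinatorial comparison of the two possible lifts in $L'$.
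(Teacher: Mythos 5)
Your proposal is correct and follows essentially the same route as the paper: reduce to the non-divisibility of $\widetilde{\Sigma}$ in $H^{2}(\widetilde{M},\Z)$ via $\widetilde{r}^{*}$, use the Thom/long-exact-sequence identification $H^{2}(U,\Z)\simeq H^{2}(\widetilde{M},\Z)/\langle\widetilde{\Sigma},D_{1},\dots,D_{28}\rangle$ together with the $\Z/2\Z$ torsion from Proposition \ref{H2}, and observe that the unique half-integral class is $\tfrac{1}{2}(\widetilde{\Sigma}+D_{1}+\dots+D_{28})$, which rules out $\widetilde{\Sigma}/2$. The only difference is that you spell out the verification of $\widetilde{r}^{*}(\overline{\Sigma}')=\widetilde{\Sigma}$ and the $H^{1}$-vanishings slightly more explicitly than the paper does.
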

\begin{proof}
Since $\widetilde{\Sigma}=\widetilde{r}^{*}(\overline{\Sigma}')$, it is enough to show that $\widetilde{\Sigma}$ is not divisible by 2 in $H^{2}(\widetilde{M},\Z)$. We use the same idea as in Section \ref{proof}.
We have the following exact sequence:
$$\xymatrix@C=15pt{H^{1}(U,\Z)\ar[r] &H^{2}(\widetilde{M},U,\Z)\ar[r]&H^{2}(\widetilde{M},\Z) \ar[r]&H^{2}(U,\Z) \ar[r]&H^{3}(\widetilde{M},U,\Z)
}.$$
By the universal coefficient theorem, $H^{1}(U,\Z)$ is torsion free. Since $H^{1}(V,\Z)$ $=0$, we have $H^{1}(U,\Z)=0$.
Moreover, by Thom's isomorphism, we have $H^{3}(\widetilde{M},U,\Z)$ $\simeq H^{1}(\widetilde{\Sigma},\Z)\oplus(\oplus_{i=1}^{28}H^{1}(D_{i},\Z))=0$.
We have also $H^{2}(\widetilde{M},U,\Z)$ $\simeq H^{0}(\widetilde{\Sigma},\Z)\oplus(\oplus_{i=1}^{28}H^{0}(D_{i},\Z))$.
Then the exact sequence gives:
$$H^{2}(U,\Z)\simeq H^{2}(\widetilde{M},\Z)/\left\langle \widetilde{\Sigma},D_{1},...,D_{28}\right\rangle.$$
But by Proposition \ref{H2}, the torsion of $H^{2}(U,\Z)$ is equal to $\Z/2\Z$. This means that if $\widetilde{\mathcal{D}}$ is the minimal primitive overgroup of $\left\langle \widetilde{\Sigma},D_{1},...,D_{28}\right\rangle$ in $H^{2}(\widetilde{M},\Z)$, then $\widetilde{\mathcal{D}}/\left\langle \widetilde{\Sigma},D_{1},...,D_{28}\right\rangle=\Z/2\Z$.
But, we still know that $\widetilde{\Sigma}+D_{1}+...+D_{28}$ is divisible by $2$. So $\widetilde{\mathcal{D}}=\left\langle \widetilde{\Sigma},D_{1},...,D_{28},\frac{\widetilde{\Sigma}+D_{1}+...+D_{28}}{2}\right\rangle$.
Hence $\widetilde{\Sigma}$ is not divisible by 2.
\end{proof}

It remains to find an answer to one more divisibility question.
We know that $\frac{1}{2}\pi_{1*}(s_{1}^{*}(E_{8}(-2)))\oplus\pi_{1*}(s_{1}^{*}(U^{3}))$ $\oplus\Z\pi_{1*}(s_{1}^{*}(\delta))$ and $\Z\overline{\Sigma}'$ are primitive in $H^{2}(M',\Z)$, but it might turn out that an element of the form
$$x=\frac{y\pm \overline{\Sigma}'}{2},$$
with $y\in\frac{1}{2}\pi_{1*}(s_{1}^{*}(E_{8}(-2)))\oplus\pi_{1*}(s_{1}^{*}(U^{3}))\oplus\Z\pi_{1*}(s_{1}^{*}(\delta))$ is integer.
To simplify the formulas, we will use the following notation:
$$\widetilde{\delta}:=\pi_{2*}(s^{*}(\delta)),\ \ \ \ \widetilde{\delta^{2}}:=\pi_{2*}(s^{*}(\delta^{2})),\ \ \ \ \overline{\delta}':=\pi_{1*}(s_{1}^{*}(\delta)),$$
$$\widetilde{u_{k,l}}:=\pi_{2*}(s^{*}(u_{k,l})),\ \ \ \ \ \overline{u_{k,l}}':=\pi_{1*}(s_{1}^{*}(u_{k,l})),$$
for all $k\in \left\{1,2,3\right\}$ and $l\in\left\{1,2\right\}$.
We will show that $\frac{\overline{\delta}'+\overline{\Sigma}'}{2}\in H^{2}(M',\Z)$.
We will need several lemmas.
We will show that $$\frac{\widetilde{\delta^{2}}-\pi_{2*}(s^{*}(\Sigma))}{2}=(\frac{\widetilde{\delta}+\widetilde{\Sigma}}{2})^{2}-\widetilde{\sigma_{\delta}}\in H^{4}(\widetilde{M},\Z);$$
next we will deduce that $$\frac{\widetilde{\delta}+\widetilde{\Sigma}}{2}\in H^{2}(\widetilde{M},\Z),$$ and finally we will be able to prove that $$\frac{\overline{\delta}'+\overline{\Sigma}'}{2}\in H^{2}(M',\Z).$$
\begin{lemme}
The element $\widetilde{\delta^{2}}-\pi_{2*}(s^{*}(\Sigma))$ is divisible by 2 in $H^{4}(\widetilde{M},\Z)$.
\end{lemme}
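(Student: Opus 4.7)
The strategy is to reduce the divisibility question to a parity check inside the explicit sublattice $\widetilde{K}$. By Lemma \ref{key0}, $\widetilde{K}$ is primitive in $H^{4}(\widetilde{M},\Z)$, so $\pi_{2*}(s^{*}(\delta^{2}-\Sigma))\in K\subset\widetilde{K}$ is divisible by $2$ in $H^{4}(\widetilde{M},\Z)$ if and only if it is divisible by $2$ inside $\widetilde{K}$. Denoting by $L_{adf}$ (resp.\ $L_{bce}$) the sublattice of $H^{4}(S^{[2]},\Z)^{\iota}$ generated by the integral basis vectors of types (a),(d),(f) (resp.\ (b),(c),(e)) of Lemma \ref{invariant}, the construction of $\widetilde{K}$ in Subsection \ref{K} yields the direct-sum decomposition $\widetilde{K}=\pi_{2*}(s^{*}(L_{adf}))\oplus\tfrac{1}{2}\pi_{2*}(s^{*}(L_{bce}))$. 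Consequently, writing $w=w_{adf}+w_{bce}\in H^{4}(S^{[2]},\Z)^{\iota}$, the class $\pi_{2*}(s^{*}(w))$ is divisible by $2$ in $\widetilde{K}$ exactly when $w_{adf}\in 2L_{adf}$, and the problem reduces to showing that the type-(a)/(d)/(f) component of $\delta^{2}-\Sigma$ is even.

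Next I would expand $\delta^{2}$ in the Nakajima integral basis using Proposition \ref{base2}(4) combined with $\mathfrak{q}_{1}(\alpha_{i})^{2}|0\rangle = 2\mathfrak{m}_{1,1}(\alpha_{i})|0\rangle + \mathfrak{q}_{2}(\alpha_{i})|0\rangle$; the class $\beta := \tfrac{1}{2}\sum_{i}\mu_{ii}\alpha_{i}$ is integral because $E_{8}(-1)$ is an even unimodular lattice and hence all diagonal entries of its inverse Gram matrix are even. In the splitting $H^{2}(S,\Z)\cong U^{3}\oplus E_{8}(-1)^{2}$, the off-diagonal entries $\mu_{ij}$ vanish outside the $U$-pairs and across distinct $E_{8}$-blocks, so one reads off directly that the only odd type-(a)/(d)/(f) coefficients of $\delta^{2}$ are $\mu_{2m-1,2m}=1$ on each $U$-pair element $\mathfrak{q}_{1}(\alpha_{2m-1})\mathfrak{q}_{1}(\alpha_{2m})|0\rangle$ for $m=1,2,3$, together with the coefficient $1$ on $\mathfrak{q}_{1}(1)\mathfrak{q}_{1}(x)|0\rangle$.

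For $\Sigma$ I would extract the same coefficients using the pairings in Proposition \ref{sigma} together with the fact that $\mathfrak{q}_{1}(1)\mathfrak{q}_{1}(x)|0\rangle$ and each $U$-pair element $\mathfrak{q}_{1}(\alpha_{2m-1})\mathfrak{q}_{1}(\alpha_{2m})|0\rangle$ is cup-orthogonal to every other Nakajima basis vector and self-pairs to $1$; the resulting coefficients are $\Sigma\cdot\mathfrak{q}_{1}(1)\mathfrak{q}_{1}(x)|0\rangle=1$ and $\Sigma\cdot\mathfrak{q}_{1}(\alpha_{2m-1})\mathfrak{q}_{1}(\alpha_{2m})|0\rangle=\alpha_{2m-1}\cdot i^{*}\alpha_{2m}=\alpha_{2m-1}\cdot\alpha_{2m}=1$, matching $\delta^{2}$. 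The remaining type-(a) and type-(d) coefficients of $\Sigma$, attached to $\mathfrak{q}_{2}(\alpha_{k})|0\rangle$ and $\mathfrak{m}_{1,1}(\alpha_{k})|0\rangle$ for $k\in\{1,\ldots,6\}$ and to $\mathfrak{q}_{1}(\alpha_{k})\mathfrak{q}_{1}(\alpha_{k+8})|0\rangle$ for $k\in\{7,\ldots,14\}$, are obtained by inverting the restricted cup-product Gram matrix of the Nakajima basis on the span of these vectors plus the ones they pair with nontrivially. Thanks to the block structure of $H^{2}(S,\Z)$ and to the vanishings $\Sigma\cdot\mathfrak{q}_{2}(\alpha_{k})|0\rangle=0$, this inversion decomposes into a few finite-size linear systems whose solutions give coefficients that either vanish or are even, so that the corresponding differences with the (also vanishing) $\delta^{2}$-coefficients are even. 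The main obstacle is the type-(d) block, where the self-pairing $(\alpha_{k}^{2})^{2}=4$, the pairing $\Sigma\cdot\mathfrak{q}_{1}(\alpha_{k})\mathfrak{q}_{1}(\alpha_{k+8})|0\rangle=\alpha_{k}^{2}\in 2\Z$ and the off-diagonal interactions governed by the $E_{8}$-Gram entries must all be combined carefully; a direct computation using the explicit Gram matrix of $E_{8}$ shows that the resulting net integer coefficient is even, whence $(\delta^{2}-\Sigma)_{adf}\in 2L_{adf}$ and the lemma follows.
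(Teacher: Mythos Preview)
Your reduction is exactly the paper's: both arguments come down to showing that in the integral basis of Lemma~\ref{invariant}, the coefficients of $\delta^{2}-\Sigma$ on the type-(a), (d), (f) basis vectors are all even. Where you diverge from the paper is in how you extract those coefficients. The paper pairs $\delta^{2}-\Sigma$ against carefully chosen test classes of the form $\gamma_{p}\cdot\gamma_{q}$ and $\gamma_{j}\cdot\iota^{*}\gamma_{j}$, using the Fujiki relation and Proposition~\ref{sigma} to evaluate each side and then solving the resulting linear equations; in particular, for the type-(d) coefficients $\lambda_{j,j+8}$ it exploits the explicit upper-triangular presentation of the $E_{8}$ root basis to run a short recursion showing one by one that every $\lambda_{j,j+8}$ is even.

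Your route---expanding $\delta^{2}$ directly and recovering the $\Sigma$-coefficients by inverting the cup-product Gram matrix of the Nakajima basis---is sound in principle, but the argument as written has a real gap at the type-(d) block. You assert that ``a direct computation using the explicit Gram matrix of $E_{8}$ shows that the resulting net integer coefficient is even'' without doing it, and this is precisely the nontrivial step. Concretely: the type-(d) elements $\mathfrak{q}_{1}(\alpha_{j})\mathfrak{q}_{1}(\alpha_{j+8})\left|0\right\rangle$ are \emph{not} cup-orthogonal to the type-(e) elements, and among themselves their Gram matrix is the Hadamard square of the $E_{8}$ Cartan matrix, not a diagonal matrix; the pairing $\Sigma\cdot\mathfrak{q}_{1}(\alpha_{j})\mathfrak{q}_{1}(\alpha_{j+8})\left|0\right\rangle=\alpha_{j}^{2}=-2$ is indeed even, but to conclude that the \emph{coefficients} themselves are even you need the mod-$2$ Gram matrix on the type-(d) span to be invertible. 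This happens to be true (it equals the $E_{8}$ adjacency matrix, which is the Cartan matrix mod~$2$ and hence has odd determinant), but that fact must be stated and used. A similar, if easier, verification is missing for the $\mathfrak{q}_{2}(\alpha_{k})$ and $\mathfrak{m}_{1,1}(\alpha_{k})$ coefficients with $k\le 6$. Your orthogonality claims for the $U$-pair and $\mathfrak{q}_{1}(1)\mathfrak{q}_{1}(x)\left|0\right\rangle$ elements are correct but also require more justification than Proposition~\ref{sigma} alone provides, since that proposition does not list all the relevant pairings.
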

\begin{proof}
We need to show that $\pi_{2*}(s^{*}(\delta^{2}-\Sigma))$ is divisible by 2.
So, look at $\delta^{2}-\Sigma\in H^{4}(S^{[2]},\Z)$.
By (1) of Lemma \ref{invariant}, we can write:
\begin{align*}
\delta^{2}-\Sigma&=\sum_{1\leq k< j\leq6}{\lambda_{k,j}(\mathfrak{q}_{1}(\alpha_{k})\mathfrak{q}_{1}(\alpha_{j})\left|0\right\rangle)}\\
&+\sum_{1\leq k\leq 6}{\eta_{k}(\mathfrak{q}_{2}(\alpha_{k})\left|0\right\rangle)+\nu_{k}(\mathfrak{m}_{1,1}(\alpha_{k})\left|0\right\rangle)}\\
&+\sum_{1\leq k\leq 6<j\leq 14}{\lambda_{k,j}(\mathfrak{q}_{1}(\alpha_{k})\mathfrak{q}_{1}(\alpha_{j})\left|0\right\rangle+\mathfrak{q}_{1}(\alpha_{k})\mathfrak{q}_{1}(i^{*}\alpha_{j})\left|0\right\rangle)}\\
&+\sum_{7\leq k\leq 14}{\eta_{k}(\mathfrak{q}_{2}(\alpha_{k})\left|0\right\rangle+\mathfrak{q}_{2}(i^{*}\alpha_{k})\left|0\right\rangle)}\\
&+\sum_{7\leq k\leq j\leq14}{\lambda_{k,j}(\mathfrak{q}_{1}(\alpha_{k})\mathfrak{q}_{1}(\alpha_{j})\left|0\right\rangle+\mathfrak{q}_{1}(i^{*}\alpha_{k})\mathfrak{q}_{1}(i^{*}\alpha_{j})\left|0\right\rangle)}\\
&+\sum_{7\leq k\leq 14}{\nu_{k}(\mathfrak{m}_{1,1}(\alpha_{k})\left|0\right\rangle+\mathfrak{m}_{1,1}(i^{*}\alpha_{k})\left|0\right\rangle)}\\
&+\sum_{j=7}^{14}{\lambda_{j,j+8}(\mathfrak{q}_{1}(\alpha_{j})\mathfrak{q}_{1}(i^{*}\alpha_{j})\left|0\right\rangle)}\\
&+\sum_{k=8}^{14}{\sum_{j=15}^{7+k}{\lambda_{k,j}(\mathfrak{q}_{1}(\alpha_{k})\mathfrak{q}_{1}(\alpha_{j})\left|0\right\rangle+\mathfrak{q}_{1}(i^{*}\alpha_{k})\mathfrak{q}_{1}(i^{*}\alpha_{j})\left|0\right\rangle)}}+y\mathfrak{q}_{1}(1)\mathfrak{q}_{1}(x)\left|0\right\rangle,
\end{align*}
with $\lambda_{k,j}, \mu_{k}, \nu_{k}$ in $\Z$.
To see that $\pi_{2*}(s^{*}(\delta^{2}))+\pi_{2*}(\Sigma_{2}^{2})$ is divisible by 2, we need to show that the coefficients of the basis elements of type a), d) and f) are even.
Then we rewrite:
\begin{align*}
\delta^{2}-\Sigma&=\sum_{1\leq k< j\leq6}{\lambda_{k,j}(\mathfrak{q}_{1}(\alpha_{k})\mathfrak{q}_{1}(\alpha_{j})\left|0\right\rangle)}+\sum_{1\leq k\leq 6}{\eta_{k}(\mathfrak{q}_{2}(\alpha_{k})\left|0\right\rangle)+\nu_{k}(\mathfrak{m}_{1,1}(\alpha_{k})\left|0\right\rangle)}\\
&+\sum_{j=7}^{14}{\lambda_{j,j+8}(\mathfrak{q}_{1}(\alpha_{j})\mathfrak{q}_{1}(i^{*}\alpha_{j})\left|0\right\rangle)}\\
&+y\mathfrak{q}_{1}(1)\mathfrak{q}_{1}(x)\left|0\right\rangle+Z,
\end{align*}
where $Z$ is a sum of elements of type b), c), e).
Now we re-arrange the sums as follows:
\begin{align*}
\delta^{2}-\Sigma&=\sum_{(i,j)\neq(l,k)\in \left\{1,2,3\right\}\times\left\{1,2\right\}}{\lambda_{i,j,k,l}(\mathfrak{q}_{1}(v_{i,j})\mathfrak{q}_{1}(v_{k,l})\left|0\right\rangle)}\\
&+\sum_{(l,k)\in \left\{1,2,3\right\}\times\left\{1,2\right\} }{\eta_{l,k}(\mathfrak{q}_{2}(v_{k,l})\left|0\right\rangle)+\nu_{l,k}(\mathfrak{m}_{1,1}(v_{k,l})\left|0\right\rangle)}\\
&+\sum_{j=7}^{14}{\lambda_{j,j+8}(\mathfrak{q}_{1}(\alpha_{j})\mathfrak{q}_{1}(i^{*}\alpha_{j})\left|0\right\rangle)}\\
&+y\mathfrak{q}_{1}(1)\mathfrak{q}_{1}(x)\left|0\right\rangle+Z,
\end{align*}
Making the cup product  of the two sides of the equality by $\mathfrak{q}_{1}(1)\mathfrak{q}_{1}(x)\left|0\right\rangle$ and using Propositions \ref{base2} and \ref{sigma}, we obtain
$y=0$.
Now, again by Proposition \ref{base2}, we can rewrite:
\begin{align*}
\delta^{2}-\Sigma&=\sum_{(i,j)\neq(l,k)\in \left\{1,2,3\right\}\times\left\{1,2\right\}}{\lambda_{i,j,k,l}(u_{i,j}\cdot u_{k,l}-\Delta_{i=k}\mathfrak{q}_{1}(1)\mathfrak{q}_{1}(x)\left|0\right\rangle )}\\
&+\sum_{(l,k)\in \left\{1,2,3\right\}\times\left\{1,2\right\} }{\eta_{k,l}(u_{k,l}\cdot\delta)+\nu_{k,l}(\frac{u_{k,l}^{2}-u_{k,l}\cdot\delta}{2})}\\
&+\sum_{j=7}^{14}{\lambda_{j,j+8}(\gamma_{j}\cdot \iota^{*}\gamma_{j})}\\
&+Z,
\end{align*}
where $\Delta_{i=k}$ is the Kronecker symbol.
Now marking the cup product with $u_{k,l}^{2}$ and using Propositions \ref{beauville}, \ref{base2}, we get $\nu_{k,l}=0$ for all $(k,l)\in\left\{1,2,3\right\}\times\left\{1,2\right\}$.
Next, we get $\eta_{k,l}=0$ by taking the cup product with $u_{k,l}\cdot\delta$.
Now we take the cup product with $u_{i,j}\cdot u_{k,l}$, $i\neq k$, and we obtain that all the 
$\lambda_{i,j,k,l}$ with $i\neq k$ vanish.
Then it remains:
$$\delta^{2}-\Sigma=\sum_{i=1}^{3}{\lambda_{i,i,1,2}(u_{i,1}\cdot u_{i,2}-\mathfrak{q}_{1}(1)\mathfrak{q}_{1}(x)\left|0\right\rangle )}+\sum_{j=7}^{14}{\lambda_{j,j+8}(\gamma_{j}\cdot \iota^{*}\gamma_{j})}+Z$$
Now by taking the cup product with the $u_{i,1}\cdot u_{i,2}$ we get the three equations
\begin{align*}
&-4=-\lambda_{2,2,1,2}-\lambda_{3,3,1,2}\\
&-4=-\lambda_{1,1,1,2}-\lambda_{3,3,1,2}\\
&-4=-\lambda_{2,2,1,2}-\lambda_{1,1,1,2}.\\
\end{align*}
Hence $\lambda_{1,1,1,2}=\lambda_{2,2,1,2}=\lambda_{3,3,1,2}=2$.
So $$\delta^{2}-\Sigma=\sum_{j=7}^{14}{\lambda_{j,j+8}(\gamma_{j}\cdot \iota^{*}\gamma_{j})}+Z',$$
with $Z'=Z+2(u_{1,1}\cdot u_{1,2}+u_{2,1}\cdot u_{2,2}+u_{3,1}\cdot u_{3,2}-3\mathfrak{q}_{1}(1)\mathfrak{q}_{1}(x)\left|0\right\rangle)$.

Now, it remains to handle the cup-products $\gamma_{j}\cdot \iota^{*}\gamma_{j}$.
We recall that the lattice $E_{8}$ can be embedded in $\mathbb{R}^{8}$ with its canonical scalar product as the lattice, freely generated by the columns of the matrix
$$\begin{bmatrix}
2 & -1 & 0 &0 & 0 & 0 &0 
& \frac{1}{2} \\
0 & 1 & -1 & 0 &0 & 0 & 0
&  \frac{1}{2}\\
0 & 0 & 1 & -1 & 0 & 0 & 0
& \frac{1}{2} \\
0 & 0 & 0 & 1 & -1& 0 & 0
& \frac{1}{2}\\
0 & 0 & 0 & 0 &1 & -1& 0 
& \frac{1}{2}\\
0 & 0 & 0 & 0 & 0 &1 & -1
& \frac{1}{2}\\
0 & 0 & 0 & 0 & 0 &0 &1 
& \frac{1}{2}\\
0 & 0 & 0 & 0 & 0 &0 &0 
& \frac{1}{2}
\end{bmatrix}.$$
With this identification, we use the columns of this matrix as the basis of $E_{8}(-1)\subset H^{2}(S,\Z)$. 

%We can assume that the elements $\alpha_{j}$ 
Then making the cup product of $\delta^{2}-\Sigma$ with $\gamma_{14}\cdot \iota^{*}\gamma_{14}$,
we get: $2=-2\lambda_{14,14+8}-\lambda_{7,7+8}+ \gamma_{14}\cdot \iota^{*}\gamma_{14}\cdot Z'$. Since $\gamma_{14}\cdot \iota^{*}\gamma_{14}\cdot Z'$ is necessarily even, we see that $\lambda_{7,7+8}$ is even.
Next, we take the cup product with $\gamma_{7}\cdot \iota^{*}\gamma_{7}$, and we get that $\lambda_{14,14+8}$ is even;
we go on with the cup products with $\gamma_{13}\cdot \iota^{*}\gamma_{13}$,..., $\gamma_{8}\cdot \iota^{*}\gamma_{8}$, and we get that all the $\lambda_{j,j+8}$ are even.
\end{proof}

We will deduce that $\widetilde{\delta}+\widetilde{\Sigma}$ is divisible by 2 in $H^{2}(\widetilde{M},\Z)$. To this end, we will use Smith theory as in Section \ref{ASmith}.

Look at the following exact sequence:
$$\xymatrix@C=10pt@R0pt{0\ar[r] &H^{2}(\widetilde{M},\widetilde{\Sigma}\cup(\cup_{k=1}^{8}D_{k}),\mathbb{F}_{2}))\ar[r]&H^{2}(\widetilde{M},\mathbb{F}_{2}) \ar[r]& H^{2}(\widetilde{\Sigma}\cup(\cup_{k=1}^{8}D_{k},\mathbb{F}_{2}))\\
\ar[r]&H^{3}(\widetilde{M},\widetilde{\Sigma}\cup(\cup_{k=1}^{28}D_{k}),\mathbb{F}_{2})\ar[r] &0.&
}$$
First, we will calculate the vector spaces $H^{2}(\widetilde{M},\widetilde{\Sigma}\cup(\cup_{k=1}^{28}D_{k}),\mathbb{F}_{2})$ and $H^{3}(\widetilde{M},$ $\widetilde{\Sigma}\cup(\cup_{k=1}^{28}D_{k}),\mathbb{F}_{2})$.
By 3) of Proposition \ref{SmithProp}, we have 
$$H^{*}(\widetilde{M},\widetilde{\Sigma}\cup(\cup_{k=1}^{28}D_{k}),\mathbb{F}_{2})\simeq H^{*}_{\sigma}(N_{2}).$$
%where $H^{*}_{\sigma}(N_{2})$ is the cohomology group of the complex $\sigma C_{*}(N_{2})$.
\begin{lemme}
We have:$$h^{2}_{\sigma}(N_{2})=36,\ \ \ \ \ h^{3}_{\sigma}(N_{2})=43.$$
\end{lemme}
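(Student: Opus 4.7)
The plan is to extract two linear equations in $h^{2}_{\sigma}(N_{2})$ and $h^{3}_{\sigma}(N_{2})$ and solve, following the pattern of Section \ref{ASmith}. Write $F := \widetilde{\Sigma} \cup \bigcup_{k=1}^{28} D_{k}$ for the copy in $\widetilde{M}$ of the fixed locus of $\iota_{2}$. Taking the alternating sum of $\mathbb{F}_{2}$-dimensions in the four-term exact sequence displayed just above the statement (whose outer terms are identified with $H^{*}_{\sigma}(N_{2})$ by part 3) of Proposition \ref{SmithProp}) yields
$$h^{2}_{\sigma}(N_{2}) - h^{3}_{\sigma}(N_{2}) \;=\; h^{2}(\widetilde{M},\mathbb{F}_{2}) - h^{2}(F,\mathbb{F}_{2}).$$
I would compute $h^{2}(F,\mathbb{F}_{2}) = 23 + 28 = 51$ using that $\widetilde{\Sigma}\to\Sigma$ is a $\mathbb{P}^{1}$-bundle over a K3 surface (so $b_{2}(\widetilde{\Sigma}) = 23$ by Leray--Hirsch) and that each $D_{k}\simeq\mathbb{P}^{3}$ contributes one. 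For $h^{2}(\widetilde{M},\mathbb{F}_{2}) = 44$, I would use $H^{2}(\widetilde{M},\mathbb{Q}) \cong H^{2}(N_{2},\mathbb{Q})^{\iota_{2}}$ together with the $\iota_{2}$-equivariant decomposition provided by Proposition \ref{vois}: the invariant part consists of the rank-$15$ piece $s^{*}(H^{2}(S^{[2]},\Z)^{\iota})$ together with $\Z\Sigma_{2}$ and the $28$ classes $\Z E_{k}$, all pointwise fixed. This supplies the first equation $h^{2}_{\sigma}(N_{2}) - h^{3}_{\sigma}(N_{2}) = -7$.

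For the second equation, I would deploy the Smith long exact sequence on $N_{2}$ exactly as in the K3 analogue of Section \ref{ASmith}: combining part 1) of Proposition \ref{SmithProp} (for $p=2$, $i=1$) with the vanishings $H^{1}(N_{2},\mathbb{F}_{2}) = H^{1}(F,\mathbb{F}_{2}) = 0$ and the identity $h^{1}_{\sigma}(N_{2}) = h^{0}(F,\mathbb{F}_{2}) - 1 = 28$ (Lemma~7.4 of \cite{SmithTh}) collapses the sequence to
$$0 \to H^{1}_{\sigma}(N_{2}) \to H^{2}_{\sigma}(N_{2}) \to H^{2}(N_{2},\mathbb{F}_{2}) \to H^{2}_{\sigma}(N_{2}) \oplus H^{2}(F,\mathbb{F}_{2}) \to H^{3}_{\sigma}(N_{2}) \to 0.$$
Since $H^{2}(N_{2},\Z)$ is torsion-free of rank $52$ by Proposition \ref{vois} (the cohomology of $S^{[2]}$ and of its successive blowups at smooth centres is torsion-free), we have $h^{2}(N_{2},\mathbb{F}_{2}) = 52$, and the alternating sum gives $28 - 2h^{2}_{\sigma}(N_{2}) + 52 - 51 + h^{3}_{\sigma}(N_{2}) = 0$, i.e.\ $2h^{2}_{\sigma}(N_{2}) - h^{3}_{\sigma}(N_{2}) = 29$. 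Solving simultaneously with the first equation yields $h^{2}_{\sigma}(N_{2}) = 36$ and $h^{3}_{\sigma}(N_{2}) = 43$.

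The main obstacle I anticipate is the passage from the rational Betti number $b_{2}(\widetilde{M}) = 44$ to the mod-$2$ equality $h^{2}(\widetilde{M},\mathbb{F}_{2}) = 44$: by the universal coefficient theorem, any $2$-torsion in $H^{2}(\widetilde{M},\Z)$ or $H^{3}(\widetilde{M},\Z)$ would inflate this dimension and throw off the arithmetic. To rule this out I would combine the pair exact sequence $H^{1}(U,\Z) \to H^{2}(\widetilde{M},U,\Z) \to H^{2}(\widetilde{M},\Z) \to H^{2}(U,\Z) \to H^{3}(\widetilde{M},U,\Z)$ used in the proof of Lemma \ref{sigma2} with Propositions \ref{H2} and \ref{H3}: the only $2$-torsion in $H^{2}(U,\Z)$ is the class $\Z/2\Z$ realized by the genuinely integral element $\frac{\widetilde{\Sigma}+D_{1}+\cdots+D_{28}}{2}$, which accounts for the cokernel without contributing torsion to $H^{2}(\widetilde{M},\Z)$ itself, and a parallel argument in degree three handles $H^{3}(\widetilde{M},\Z)$.
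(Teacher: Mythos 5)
Your proof is correct and follows essentially the same route as the paper: the alternating sum of the four-term sequence $0\to H^{2}_{\sigma}(N_{2})\to H^{2}(\widetilde{M},\mathbb{F}_{2})\to H^{2}(F,\mathbb{F}_{2})\to H^{3}_{\sigma}(N_{2})\to 0$ combined with the alternating sum of the Smith sequence on $N_{2}$, using the same counts $44$, $51$, $52$, $29$. You even correct the paper's sign slip (its displayed $h^{2}_{\sigma}(N_{2})-h^{3}_{\sigma}(N_{2})=7$ should read $-7$, as you have it) and supply the torsion-freeness justifications for the mod-$2$ Betti numbers that the paper leaves implicit.
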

\begin{proof}
The previous exact sequence gives us the following equation:

$$h^{2}_{\sigma}(N_{2})-h^{2}(\widetilde{M},\mathbb{F}_{2})+h^{2}(\widetilde{\Sigma}\cup(\cup_{k=1}^{28}D_{k}),\mathbb{F}_{2})-h^{3}_{\sigma}(N_{2})=0.$$
As $h^{2}(\widetilde{M},\mathbb{F}_{2})=16+28=44$ and $h^{2}(\widetilde{\Sigma}\cup(\cup_{k=1}^{28}D_{k}),\mathbb{F}_{2})=23+28=51$, we obtain:
%h^{2}_{\sigma}(N_{2})-(16+28)+(23+28)-h^{3}_{\sigma}(N_{2})=0
$$h^{2}_{\sigma}(N_{2})-h^{3}_{\sigma}(N_{2})=7.$$

%where $h^{*}_{\sigma}(\widetilde{X})$ denote the dimension over $\mathbb{F}_{2}$ of $H^{*}_{\sigma}(\widetilde{X})$.
Moreover by 2) of Proposition \ref{SmithProp}, we have the exact sequence
$$\xymatrix@C=10pt@R0pt{0\ar[r] &H^{1}_{\sigma}(N_{2})\ar[r]&H^{2}_{\sigma}(N_{2}) \ar[r]&H^{2}(N_{2},\mathbb{F}_{2}) \ar[r]&H^{2}_{\sigma}(N_{2})\oplus H^{2}(\Sigma_{2}\cup(\cup_{k=1}^{28}E_{k}),\mathbb{F}_{2})\\
\ar[r]&H^{3}_{\sigma}(N_{2})\ar[r]&0. & &
}$$
By Lemma 7.4 of \cite{SmithTh}, $h^{1}_{\sigma}(N_{2})=h^{0}(\Sigma_{2}\cup(\cup_{k=1}^{28}E_{k}),\mathbb{F}_{2})-1$.
Then we get the equation
\begin{align*}
&h^{0}(\Sigma_{2}\cup(\cup_{k=1}^{28}E_{k}),\mathbb{F}_{2})-1-h^{2}_{\sigma}(N_{2})+h^{2}(N_{2},\mathbb{F}_{2})\\
&-h^{2}_{\sigma}(N_{2})-h^{2}(\Sigma_{2}\cup(\cup_{k=1}^{28}E_{k}),\mathbb{F}_{2})+h^{3}_{\sigma}(N_{2})=0,\\
\end{align*}
%&28-h^{2}_{\sigma}(\widetilde{X})+(23+29)-h^{2}_{\sigma}(\widetilde{X})-(23+28)+h^{3}_{\sigma}(\widetilde{X})=0\\
or
$$29-2h^{2}_{\sigma}(\widetilde{X})+h^{3}_{\sigma}(\widetilde{X})=0.$$
From the two equations, we deduce that
$$h^{2}_{\sigma}(N_{2})=36,\ \ \ \ \ h^{3}_{\sigma}(N_{2})=43.$$
\end{proof}
\begin{lemme}\label{seven}
The following seven elements belong to $H^{2}(\widetilde{M},\Z)$:
$\frac{\widetilde{u_{k,l}}+d_{k,l}}{2}$ $(k,l)\in \left\{1,2,3\right\}\times\left\{1,2\right\}$
and $\frac{\widetilde{\delta}+d_{\delta}}{2}$. Moreover, $\Vect_{\mathbb{F}_{2}}((d_{i,j})_{(i,j)\in \left\{1,2,3\right\}\times\left\{1,2\right\}},d_{\delta})$ is a subspace of $\Vect_{\mathbb{F}_{2}}(D_{1},...,D_{28})$ of dimension 7.
\end{lemme}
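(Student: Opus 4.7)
My plan is to imitate the Smith-theoretic argument of Section \ref{ASmith} using the previous computations. From the exact sequence preceding the lemma together with $h^{2}_{\sigma}(N_{2})=36$ and $h^{2}(\widetilde{M},\mathbb{F}_{2})=44$, the restriction map
$j^{*}: H^{2}(\widetilde{M},\mathbb{F}_{2})\rightarrow H^{2}(\widetilde{\Sigma}\cup(\cup_{k}D_{k}),\mathbb{F}_{2})$
has image of dimension exactly $8$. The goal is to identify these eight $\mathbb{F}_{2}$-independent classes in $\mathrm{Im}(j^{*})$ as coming from the already-known half-class $\frac{\widetilde{\Sigma}+D_{1}+\cdots+D_{28}}{2}$ together with the seven sought-after classes.

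The key technical ingredient is the push-forward--restriction identities
$(\pi_{2*}\alpha)|_{\widetilde{\Sigma}}=2\,(\alpha|_{\Sigma_{2}})$ and $(\pi_{2*}\alpha)|_{D_{k}}=2\,(\alpha|_{E_{k}})$ for $\alpha\in H^{*}(N_{2},\Z)$, which follow from the projection formula together with $\pi_{2}^{*}[\widetilde{\Sigma}]=2[\Sigma_{2}]$ and $\pi_{2}^{*}[D_{k}]=2[E_{k}]$ (the components of the fixed locus being isomorphic to their images via $\pi_{2}$). Specialising to $\alpha=s^{*}(x)$ with $x\in H^{2}(S^{[2]},\Z)^{\iota}$ and using that $s|_{E_{k}}$ factors through the point $p_{k}$, I obtain $\widetilde{u}_{k,l}|_{\widetilde{\Sigma}}=2p^{*}(u_{k,l}|_{\Sigma})$, $\widetilde{\delta}|_{\widetilde{\Sigma}}=2p^{*}(\delta|_{\Sigma})$ and vanishing on each $D_{k}$; the classes in $\frac{1}{2}\pi_{2*}(s^{*}(E_{8}(-2)))$ likewise restrict to even classes on $\widetilde{\Sigma}$, using the Section~1 result that $E_{8}(-2)$-type classes become divisible by $2$ on $\Sigma\simeq Y$. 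Together with the even self-intersections $D_{k}|_{D_{k}}=-2h_{k}$ and $\widetilde{\Sigma}|_{\widetilde{\Sigma}}=2c_{1}(\mathscr{N}_{\Sigma_{2}/N_{2}})$, every integer generator of $H^{2}(\widetilde{M},\Z)$ maps to zero under $j^{*}$; only $\frac{\widetilde{\Sigma}+\sum D_{k}}{2}$ already contributes a nonzero image, namely $c_{1}(\mathscr{N}_{\Sigma_{2}/N_{2}})$ on $\widetilde{\Sigma}$ and $h_{k}$ on each $D_{k}$, accounting for one of the eight dimensions.

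The remaining seven dimensions of $\mathrm{Im}(j^{*})$ must therefore come from further half-classes whose doubles lie in the integer span of the generators. A half-class with nontrivial restriction to $\widetilde{\Sigma}$ must involve $\widetilde{u}_{k,l}$ or $\widetilde{\delta}$ in its numerator (since only these halve to primitive classes on $\widetilde{\Sigma}$), while restrictions to $D_{k}$ come only from the $D_{k}$ themselves; hence each of the seven takes the claimed form $\frac{\widetilde{u}_{k,l}+d_{k,l}}{2}$ or $\frac{\widetilde{\delta}+d_{\delta}}{2}$ with $d_{k,l},d_{\delta}\in\Z\langle D_{1},\ldots,D_{28}\rangle$. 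Their $\widetilde{\Sigma}$-restrictions $p^{*}(u_{k,l}|_{\Sigma})$ and $p^{*}(\delta|_{\Sigma})$ are indeed $\mathbb{F}_{2}$-independent: by Lemma \ref{U2}, $u_{k,l}|_{\Sigma}=\widetilde{\pi}_{*}(\widetilde{\varphi}^{*}(v_{k,l}))$ spans the primitive sublattice $U^{3}(2)\subset H^{2}(\Sigma,\Z)$, while $\delta|_{\Sigma}=\hat{N}$ is the primitive Nikulin half-sum of Lemma \ref{primitif} (since $\Delta\cdot\Sigma$ is the sum of the eight exceptional curves $N_{j}$). For the $7$-dimensionality of $\Vect_{\mathbb{F}_{2}}((d_{k,l}),d_{\delta})$, any nontrivial $\mathbb{F}_{2}$-relation $\sum a_{k,l}d_{k,l}+a_{\delta}d_{\delta}=2d$ would produce, after subtracting $d$, an integer class $\frac{\sum a_{k,l}\widetilde{u}_{k,l}+a_{\delta}\widetilde{\delta}}{2}\in H^{2}(\widetilde{M},\Z)$, contradicting the primitivity of $\pi_{2*}(s^{*}(U^{3}\oplus(-2)))$ established in Lemma \ref{divisible}. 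The main obstacle is the careful justification of the push-forward--restriction formula for the ramified double cover $\pi_{2}$ and the explicit matching of $u_{k,l}|_{\Sigma}$ and $\delta|_{\Sigma}$ with their Section~1 counterparts under the isomorphism $\Sigma\simeq Y$; once in place, the rest reduces to a dimension count within the Smith-theoretic exact sequence.
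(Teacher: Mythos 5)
Your overall strategy is the same as the paper's: the Smith-theoretic computation $h^{2}_{\sigma}(N_{2})=36$ gives $\dim_{\mathbb{F}_{2}}\mathrm{Im}(\varsigma^{*})=44-36=8$, the known sublattice $\pi_{2*}(s^{*}(U^{3}\oplus(-2)))\oplus\frac{1}{2}\pi_{2*}(s^{*}(E_{8}(-2)))\oplus\langle D_{1},\dots,D_{28},\widetilde{\Sigma}\rangle$ restricts to even classes, the class $\frac{\widetilde{\Sigma}+D_{1}+\cdots+D_{28}}{2}$ accounts for one dimension, and the remaining seven dimensions force seven further half-classes of the shape $\frac{u+d}{2}$; the final independence of the $d$'s via Lemma \ref{divisible} is exactly the paper's closing step. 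However, there is a genuine gap at the pivot of the argument, namely the sentence ``hence each of the seven takes the claimed form $\frac{\widetilde{u}_{k,l}+d_{k,l}}{2}$ or $\frac{\widetilde{\delta}+d_{\delta}}{2}$.'' What the dimension count gives you a priori is seven classes $\frac{u_{m}+d_{m}}{2}$, $m=2,\dots,8$, where each $u_{m}$ is an \emph{arbitrary} integer combination of the $\widetilde{u}_{k,l}$ and $\widetilde{\delta}$. To extract the seven specific elements of the statement you must first prove that the $u_{m}$ are linearly independent modulo $2$, i.e.\ that they span the full $7$-dimensional space $\Vect_{\mathbb{F}_{2}}((\widetilde{u}_{l,m}),\widetilde{\delta})$, and then change basis. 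Nothing in your write-up rules out, say, all seven classes having the same $u$-part with different $d$-parts.

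Your appeal to the $\mathbb{F}_{2}$-independence of the restrictions $u_{k,l}|_{\Sigma}=\widetilde{\pi}_{*}(\widetilde{\varphi}^{*}(v_{k,l}))$ and $\delta|_{\Sigma}=\hat{N}$ does not fill this hole: it shows that the restriction map is injective mod $2$ on $\Vect_{\mathbb{F}_{2}}((\widetilde{u}_{l,m}),\widetilde{\delta})$, hence that the seven claimed classes, \emph{if they exist}, have independent images under $\varsigma^{*}$ --- but existence is precisely what is at stake, and independence of the eight images $\varsigma^{*}(y_{m})$ in the $29$-dimensional target does not by itself force independence of their $\widetilde{\Sigma}$-components. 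The paper closes the gap differently: if $\sum_{m}\epsilon_{m}u_{m}=2u$, then $\frac{\sum_{m}\epsilon_{m}d_{m}}{2}$ is an integral class supported on $\langle\widetilde{\Sigma},D_{1},\dots,D_{28}\rangle$, and the structure of the primitive overlattice established in the proof of Lemma \ref{sigma2} (only $\frac{\widetilde{\Sigma}+D_{1}+\cdots+D_{28}}{2}$ is a new half-integral class there) forces $\sum_{m}\epsilon_{m}\overline{x_{m}}$ to equal $0$ or $\overline{x_{1}}$, contradicting the independence of the basis $(\overline{x_{m}})$. You should insert this argument (which also uses the prior verification that the family $(d_{m})_{1\leq m\leq 8}$ is free); your identification of the restrictions with the Section~1 lattices, while correct and a nice cross-check, is not where the real difficulty lies. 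The remaining ingredients of your proposal (the ramified push-forward--restriction identities, the reduction of the $E_{8}$-part, the elimination of $\widetilde{\Sigma}$ from the $d$'s by adding the known half-class) are sound.
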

\begin{proof}
Come back to the exact sequence
$$\xymatrix@C=15pt{0\ar[r] &H^{2}(\widetilde{M},\widetilde{\Sigma}\cup(\cup_{k=1}^{28}D_{k}),\mathbb{F}_{2})\ar[r]&\ar[r]^{\varsigma^{*}\ \ \ \ \ \ \  }H^{2}(\widetilde{M},\mathbb{F}_{2}) & H^{2}(\widetilde{\Sigma}\cup(\cup_{k=1}^{28}D_{k}),\mathbb{F}_{2})
},$$
where $\varsigma:\widetilde{\Sigma}\cup(\cup_{k=1}^{28}D_{k})\hookrightarrow \widetilde{M}$ is the inclusion.
Since $h^{2}(\widetilde{M},\widetilde{\Sigma}\cup(\cup_{k=1}^{28}D_{k}),\mathbb{F}_{2})=h^{2}_{\sigma}(N_{2})=36$, we have $\dim_{\mathbb{F}_{2}} \varsigma^{*}(H^{2}(\widetilde{M},\mathbb{F}_{2}))=(16+28)-36=8$.
We can interpret this as follows.
Consider the homomorphism
\begin{align*}
\varsigma^{*}_{\Z}:H^{2}(\widetilde{M},\Z)&\rightarrow H^{2}(\widetilde{\Sigma},\Z)\oplus (\oplus_{k=1}^{28} H^{2}(D_{k},\Z))\\
 u&\rightarrow (u\cdot\widetilde{\Sigma},u\cdot D_{1},...,u\cdot D_{28}).
\end{align*}
Since this is a map of torsion free $\Z$-modules, we can tensor by $\mathbb{F}_{2}$,
$$\varsigma^{*}=\varsigma^{*}_{\Z}\otimes \id_{\mathbb{F}_{2}}: H^{2}(\widetilde{M},\Z)\otimes\mathbb{F}_{2}\rightarrow H^{2}(\widetilde{\Sigma},\Z)\oplus (\oplus_{k=1}^{28} H^{2}(D_{k},\Z))\otimes\mathbb{F}_{2},$$
and we have 8 independent elements such that the intersection with the $D_{k}$ $k\in\left\{1,...,8\right\}$ and $\widetilde{\Sigma}$ are not all zero.
But, $\varsigma^{*}(\pi_{2*}(s^{*}(U^{3}\oplus(-2)))\oplus\frac{1}{2}\pi_{2*}(s^{*}(E_{8}(-2)))\oplus \left\langle D_{1},...,D_{28},\widetilde{\Sigma}\right\rangle)=0$, (it follows from Lemma \ref{inter} (3)).
Hence, there are 8 more independent elements in $H^{2}(\widetilde{M},$ $\Z)$. Moreover, these elements must be of the form $\frac{u+d}{2}$
with $u\in\pi_{2*}(s^{*}(U^{3}\oplus(-2)))$ and $d\in \left\langle D_{1},...,D_{28},\widetilde{\Sigma}\right\rangle$.
%We give the following techinal proof.
Indeed, applying $\pi_{2*}$ to an element of the form $\frac{e_{k}+u_{k}+n_{k}}{2}$ with $e_{k}\in \frac{1}{2}\pi_{2*}(s^{*}(E_{8}(-1)))$,
we see that $e_{k}$ is divisible by 2.

By Proposition \ref{interr}, we know that $\varsigma^{*}(\frac{\widetilde{\Sigma}+D_{1}+...+D_{28}}{2})\neq 0$.
Let $$x_{1}:=\varsigma^{*}_{\Z}(\frac{\widetilde{\Sigma}+D_{1}+...+D_{28}}{2})\in H^{2}(\widetilde{\Sigma},\Z)\oplus(\oplus_{k=1}^{28} H^{2}(D_{k},\Z))$$ and  $$\overline{x_{1}}:=x_{1}\otimes 1\in H^{2}(\widetilde{\Sigma},\mathbb{F}_{2})\oplus(\oplus_{k=1}^{28} H^{2}(D_{k},\mathbb{F}_{2})).$$
We complete the family $(\overline{x_{1}})$ to a basis $(\overline{x_{k}}:=x_{k}\otimes 1)_{1\leq k\leq8}$ of $\varsigma^{*}(H^{2}(\widetilde{M},\mathbb{F}_{2}))$.
For all $1\leq k\leq8$, we have $$x_{k}=\varsigma^{*}_{\Z}(\frac{u_{k}+d_{k}}{2}),$$ 
where $u_{k}\in\pi_{2*}(s^{*}(U^{3}\oplus(-2)))$ and $d_{k}$ is an integer combination of the $D_{l}$, $1\leq l\leq 28$, and $\widetilde{\Sigma}$. 
%$ a sum of the $N_{l}$, $1\leq l\leq 8$.
%%Indeed, if we have $x_{k}=\varsigma^{*}_{\Z}(\frac{e_{k}+u_{k}+n_{k}}{2})$ with $e_{k}\in \frac{1}{2}\pi_{2*}(s^{*}(E_{8}(-1)))$,
%%by taking the image of $\frac{e_{k}+u_{k}+n_{k}}{2}$ by $\pi_{2}^{*}$, we see that $e_{k}$ is divisible by 2, then $\frac{e_{k}}{2}\in \frac{1}{2}\pi_{2*}(s^{*}(E_{8}(-1)))$,  and $\varsigma^{*}(\frac{e_{k}}{2})=0$.
%because $e_{k}\cdot \frac{\widetilde{\Sigma}+D_{1}+...+D_{28}}{2}= \frac{e_{k}\cdot\widetilde{\Sigma}}{2}$. This means that $e_{k}\cdot\widetilde{\Sigma}$ is divisible by 2.
In particular, $u_{1}=0$ and $d_{1}=\widetilde{\Sigma}+D_{1}+...+D_{28}$. 
%Moreover, we choose the $d_{k}$, $k\geq 2$ as an integer combination the $D_{l}$, $1\leq l\leq 28$; if it is not the case for $d_{m}$, $m\in \left\{2,...,8\right\}$, we have just to exchange $\overline{x_{m}}$ by $\overline{x_{m}}+\overline{x_{1}}$.

Now, let $\mathcal{D}$ be the vector subspace of $\Vect_{\mathbb{F}_{2}}(\widetilde{\Sigma},D_{1},...,D_{28})$ generated by the $d_{k}$, $1\leq k\leq 8$.
We have $$\dim_{\mathbb{F}_{2}} \mathcal{D}= 8.$$ 
The fact that the family $(\overline{x_{k}}:=x_{k}\otimes 1)_{1\leq k\leq8}$ of $\varsigma^{*}(H^{2}(\widetilde{M},\mathbb{F}_{2}))$ is a basis, will imply that the $d_{k}$ are $\mathbb{F}_{2}$-linearly independent.

To show this, we just need to see that $(d_{k})_{1\leq k\leq8}$ is free.
Assume $\sum_{k=1}^{8}{ \overline{\epsilon_{k}}\overline{d_{k}}}=0$, then $\sum_{k=1}^{8}{ \epsilon_{k} d_{k}}=2d$, where $d$ is an integer combination of the $D_{k}$ and $\widetilde{\Sigma}$ by the definition of $\mathcal{D}$.
Now we have $\frac{\sum_{k=1}^{8}\epsilon_{k}(u_{k}+d_{k})}{2}=\frac{\sum_{k=1}^{8}\epsilon_{k}u_{k}}{2}+d$.
By Lemma \ref{divisible}, $\frac{\sum_{k=1}^{8}\epsilon_{k}u_{k}}{2}\in\pi_{2*}(s^{*}(U^{3}\oplus(-2)))$, so $\varsigma^{*}\left(\frac{\sum_{k=1}^{8}\epsilon_{k}u_{k}}{2}\right)=0$.
Then
\begin{align*}
\sum_{k=1}^{8}{\epsilon_{k}x_{k}}&=\sum_{k=1}^{8}{\epsilon_{k}\varsigma^{*}_{\Z}\left(\frac{u_{k}+d_{k}}{2}\right)}\\
&=\varsigma^{*}_{\Z}\left(\frac{\sum_{k=1}^{8}\epsilon_{k}(u_{k}+d_{k})}{2}\right)\\
&=\varsigma^{*}_{\Z}\left(\frac{\sum_{k=1}^{8}\epsilon_{k}u_{k}}{2}\right)+\varsigma^{*}_{\Z}(d)\\
\end{align*}
and $\sum_{k=1}^{8}{\overline{\epsilon_{k}}\overline{x_{k}}}=\varsigma^{*}(d)=0$.

%We have $n=\frac{\sum_{k=1}^{7}{ \epsilon_{k} n_{k}}}{2}$, then by Lemma \ref{primitif}, there is  two cases:
%all the $\epsilon_{k}$ are even or $n=\frac{N_{1}+...+N_{8}}{2}$.
%If we are in the second case: we have $\sum_{k=1}^{7}{\epsilon_{k}x_{k}}=x_{1}$.
%Then $\overline{\epsilon_{k}}=0$ for $k\geq 2$ and $\overline{\epsilon_{k}}=1$.
%It is give  which is impossible.
%It gives $\sum_{k=1}^{7}{\overline{\epsilon_{k}}\overline{x_{k}}}=0$, then all the $\overline{\epsilon_{k}}=0$.

Now let $\mathcal{U}$ be the subspace of $\Vect_{\mathbb{F}_{2}}((\widetilde{u_{l,m}})_{1\leq l\leq3,1\leq m\leq2},\widetilde{\delta})$ generated by the $u_{k}$, $2\leq k\leq 8$.
We will show that $$\mathcal{U}=\Vect_{\mathbb{F}_{2}}((\widetilde{u_{l,m}})_{1\leq l\leq3,1\leq m\leq2},\widetilde{\delta}).$$
To do this, we just need to show that the family $(u_{k})_{2\leq k\leq 8}$ is free.
We have $\sum_{k=2}^{8}{ \overline{\epsilon_{k}}u_{k}}=0$, hence $\sum_{k=2}^{8}{ \epsilon_{k} u_{k}}=2u$, where $u$ is an integer combination of the $\widetilde{u_{l,m}}$ and $\widetilde{\delta}$ by the definition of $\mathcal{U}$.
Then $d=\frac{\sum_{k=2}^{8}{ \epsilon_{k} d_{k}}}{2}$ is integer.
By the proof of Lemma \ref{sigma2}, there are just two possibilities:
$d\in \left\langle (D_{k})_{k\in\left\{1,...,28\right\}}, \widetilde{\Sigma}\right\rangle$ or $d=\frac{\widetilde{\Sigma}+D_{1}+...+D_{28}}{2}$.
In the first case we get $\sum_{k=2}^{8}{\overline{\epsilon_{k}}\overline{x_{k}}}=\varsigma^{*}(d)=0$, so that the $\epsilon_{k}$ are even for all $k$.
In the second case, we get $\sum_{k=2}^{8}{ \overline{\epsilon_{k}} d_{k}}=d_{1}$, which is impossible.

This proves the existence of elements
$\frac{\widetilde{u_{k,l}}+d_{k,l}}{2}$, $(k,l)\in \left\{1,2,3\right\}\times\left\{1,2\right\}$
and $\frac{\widetilde{\delta}+d_{\delta}}{2}$ in $H^{2}(\widetilde{M},\Z)$, for which $d_{k,l}$ and $d_{\delta}$ integer combination of the $D_{l}$, $1\leq l\leq 28$ and $\widetilde{\Sigma}$. We can suppose that they are only combinations of the $D_{l}$. If this is not the case, we just have to add $\frac{\widetilde{\Sigma}+D_{1}+...+D_{28}}{2}$. And finally, by Lemma \ref{divisible}, $\Vect_{\mathbb{F}_{2}}((d_{i,j})_{(i,j)\in \left\{1,2,3\right\}\times\left\{1,2\right\}},d_{\delta})$ is a subspace of $\Vect_{\mathbb{F}_{2}}(D_{1},...,D_{28})$ of dimension 7.
\end{proof}
%Now, we will need the following lemma:
%\begin{lemme}
%Let $W=M\setminus\overline{\Sigma}=M'\setminus\overline{\Sigma}'$, we have:
%\begin{itemize}
%\item 1)
%$H^{2}(M',U,\Z)=\Z$ and $H^{2}(M,U,\Z)=0$,
%\item 2)
%$H^{3}(M',U,\Z)=(\Z/2\Z)^{28}$ and $H^{2}(M,U,\Z)=(\Z/2\Z)^{29}$,
%\item 3)
%$H^{2}(M',W,\Z)=\Z$ and $H^{2}(M,W,\Z)=0$,
%\item 4)
%$H^{3}(M',W,\Z)=0$ and $H^{3}(M,W,\Z)=\Z/2\Z$,
%\end{itemize}
%\end{lemme}
%\begin{proof}
%Let $W'=M\setminus \left\{p_{1},...,p_{28}\right\}$
%To have all the four points of the lemma, we will juste need to calculate $H^{*}(M,W,\Z)$ and $H^{*}(M,W',\Z)$.
%We begin with $H^{*}(M,W,\Z)$.
%We take a enought small open neighbourhood $\mathscr{O}$ of $\overline{\Sigma}$ in $M$ to have:
%$H^{*}(M,W,\Z)=H^{*}(M,M\setminus\mathscr{O},\Z)$
%\end{proof}
%\begin{lemme}
%Let $W=M\setminus\overline{\Sigma}=M'\setminus\overline{\Sigma}'$, we have:
%\begin{itemize}
%\item 1)
%$H^{3}(M,W,\Z)=\Z/2\Z$ and $H^{2}(M,W,\Z)=0$,
%\item 2)
%$H^{3}(M,\Z)=H^{3}(M',\Z)\oplus\Z/2\Z$,
%\item 3)
%$H^{3}(M',\Z)=H^{3}(W,\Z)$.
%\end{itemize}
%\end{lemme}
%\begin{proof}
%\end{proof}
%\textbf{Remark}: In fact, we can show that $H^{3}(M',U,\Z)=(\Z/2\Z)^{28}$ and $H^{3}(M,U,\Z)=(\Z/2\Z)^{29}$
\begin{lemme}
The element $\widetilde{\delta}+\widetilde{\Sigma}$ is divisible by 2 in $H^{2}(\widetilde{M},\Z)$.
\end{lemme}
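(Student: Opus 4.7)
The plan is to use the square $\left(\frac{\widetilde{\delta}+\widetilde{\Sigma}}{2}\right)^2$ as a bridge: I will first verify its integrality directly from the preceding lemma, and then extract integrality of $\frac{\widetilde{\delta}+\widetilde{\Sigma}}{2}$ itself by comparing with the integer classes already produced in Lemma \ref{seven} and Lemma \ref{sigma2}. I would begin by expanding
$$\left(\frac{\widetilde{\delta}+\widetilde{\Sigma}}{2}\right)^2=\frac{\widetilde{\delta}^{2}+2\widetilde{\delta}\cdot\widetilde{\Sigma}+\widetilde{\Sigma}^{2}}{4},$$
and applying (in that order) Lemma \ref{inter}(3) to rewrite $\widetilde{\delta}^{2}=2\widetilde{\delta^{2}}$, Proposition \ref{interr}(4) to rewrite $\widetilde{\delta}\cdot\widetilde{\Sigma}=2\widetilde{\sigma_{\delta}}$, and Proposition \ref{interr}(2) to rewrite $\widetilde{\Sigma}^{2}=-2\pi_{2*}(s^{*}(\Sigma))$. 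This yields $\left(\frac{\widetilde{\delta}+\widetilde{\Sigma}}{2}\right)^2=\frac{\widetilde{\delta^{2}}-\pi_{2*}(s^{*}(\Sigma))}{2}+\widetilde{\sigma_{\delta}}$, and the preceding lemma tells us the right-hand side lies in $H^{4}(\widetilde{M},\Z)$.

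Next, I combine the integer classes $\frac{\widetilde{\delta}+d_{\delta}}{2}$ from Lemma \ref{seven} (with $d_{\delta}=\sum_{l}a_{l}D_{l}$) and $\frac{\widetilde{\Sigma}+D_{1}+\cdots+D_{28}}{2}$ from Lemma \ref{sigma2}. Their sum $X\in H^{2}(\widetilde{M},\Z)$ differs from $Y:=\frac{\widetilde{\delta}+\widetilde{\Sigma}}{2}$ by $\frac{d_{\delta}+D_{1}+\cdots+D_{28}}{2}=\frac{\sum(a_{l}+1)D_{l}}{2}$, so it is enough to prove this latter element is integer. Arguing as in the proof of Lemma \ref{seven} with the eight divisibility generators of $H^{2}(\widetilde{M},\Z)$, one sees that an element of the form $\sum c_{l}D_{l}$ lies in $2H^{2}(\widetilde{M},\Z)$ only if all $c_{l}$ are even (none of the eight generators has a $v$-part equal to zero while involving only the $D_{l}$); in other words $\langle D_{l}\rangle\cap 2H^{2}(\widetilde{M},\Z)=2\langle D_{l}\rangle$. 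Thus the problem reduces to the parity claim: every $a_{l}$ is odd.

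Finally, to prove the parity claim I expand $X^{2}-Y^{2}$. Using $\widetilde{\delta}\cdot D_{l}=\widetilde{\Sigma}\cdot D_{l}=0$ (Proposition \ref{interr}(1,2), together with the fact that the 28 fixed points $p_{l}=\{x_{i},x_{j}\}$ avoid the half-diagonal $\delta$) and $D_{l}D_{m}=0$ for $l\neq m$, $D_{l}^{2}=-2\widetilde{h_{l}}$, I get
$$X^{2}-Y^{2}=\frac{(d_{\delta}+\textstyle\sum D_{l})(2\widetilde{\delta}+2\widetilde{\Sigma}+d_{\delta}+\sum D_{l})}{4}=-\frac{\sum_{l}(a_{l}+1)^{2}\widetilde{h_{l}}}{2},$$
and its integrality forces $\sum(a_{l}+1)\widetilde{h_{l}}\equiv 0\pmod{2H^{4}(\widetilde{M},\Z)}$ (since $(a_{l}+1)^{2}\equiv a_{l}+1\pmod{2}$). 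The remaining step, which I expect to be the main obstacle, is to establish the $\mathbb{F}_{2}$-linear independence of the classes $\widetilde{h_{l}}$ in $H^{4}(\widetilde{M},\Z)/2H^{4}(\widetilde{M},\Z)$. This can be done by a direct lattice analysis: the orthogonality relations $\widetilde{h_{l}}\cdot\widetilde{h_{m}}=0$ for $l\neq m$ and $\widetilde{h_{l}}^{2}=-2$ (from $D_{l}^{4}=-8$ via Lemma \ref{inter}(1)) identify $\langle\widetilde{h_{l}}\rangle$ with $\langle-2\rangle^{28}$, and tracing through the list of seven half-generators of $\widetilde{T}/\pi_{2*}(T)$ obtained in Lemma \ref{key0}, together with the unimodularity of $(H^{4}(\widetilde{M},\Z),\cdot)$, shows that $\langle\widetilde{h_{l}}\rangle\cap 2H^{4}(\widetilde{M},\Z)=2\langle\widetilde{h_{l}}\rangle$. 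Granting this, all $a_{l}$ are odd, hence $\frac{d_{\delta}+\sum D_{l}}{2}\in\langle D_{l}\rangle\subset H^{2}(\widetilde{M},\Z)$, and therefore $\frac{\widetilde{\delta}+\widetilde{\Sigma}}{2}\in H^{2}(\widetilde{M},\Z)$.
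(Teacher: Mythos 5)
Your proposal is correct and takes essentially the same route as the paper: both arguments compare squares of the integral classes coming from Lemma \ref{seven} and from $\frac{\widetilde{\Sigma}+D_{1}+\cdots+D_{28}}{2}$, use the divisibility of $\widetilde{\delta^{2}}-\pi_{2*}(s^{*}(\Sigma))$ to conclude that $\frac{1}{2}\sum_{l}(a_{l}+1)\widetilde{h_{l}}$ is an integral class, and then force all $a_{l}$ to be odd from the structure of $\widetilde{T}/\pi_{2*}(T)$. The one step you defer, the $\mathbb{F}_{2}$-independence of the $\widetilde{h_{l}}$, is precisely what the paper settles at the outset by multiplying the seven classes of Lemma \ref{seven} by $\frac{\widetilde{\Sigma}+D_{1}+\cdots+D_{28}}{2}$, which exhibits explicit generators of $\widetilde{T}/\pi_{2*}(T)$ each carrying a nonzero $\widetilde{\sigma}$-component (note that this list of generators is constructed there, not in Lemma \ref{key0} as you suggest).
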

\begin{proof}
Multiplying by $\frac{\widetilde{\Sigma}+D_{1}+...+D_{28}}{2}$ the 7 elements of the Lemma \ref{seven},
we get seven independent elements of the form $\frac{\widetilde{\sigma_{u_{k,l}}}+h_{k,l}}{2}$, $(k,l)\in \left\{1,2,3\right\}\times\left\{1,2\right\}$
and $\frac{\widetilde{\sigma_{\delta}}+h}{2}$; the elements 
%$\pi_{2*}(s^{*}(u_{i,j})\cdot\Sigma_{2})$,  $\pi_{2*}(s^{*}(\delta)\cdot\Sigma_{2})$  are in $\pi_{2*}(T)$.
$h_{k,l}$, and $h$ are integer combinations of the $(\widetilde{h_{k}})_{k\in\left\{1,...,28\right\}}$.
We know that $\widetilde{T}/\pi_{2*}(T)=(\Z/2\Z)^{7}$, hence $$\widetilde{T}=\left\langle \pi_{2*}(T),(\frac{\widetilde{\sigma_{u_{k,l}}}+h_{k,l}}{2})_{(k,l)\in \left\{1,2,3\right\}\times\left\{1,2\right\}}, \frac{\widetilde{\sigma_{\delta}}+h}{2} \right\rangle.\ \ \ \ \ (*)$$
We have shown that $\frac{\widetilde{\delta}+d_{\delta}}{2}\in H^{2}(\widetilde{M},\Z)$; now we will show that $d_{\delta}=D_{1}+...+D_{28}$. It will follow that $\widetilde{\delta}+\widetilde{\Sigma}$ is divisible by 2.
We can write $d_{\delta}=\epsilon_{1}D_{1}+...+\epsilon_{28}D_{28}$ with $\epsilon_{k}\in \left\{0,1\right\}$.
We have $$\left(\frac{\widetilde{\delta}+d_{\delta}}{2}\right)^{2}=\frac{\widetilde{\delta^{2}}+\sum_{k=1}^{28}{\epsilon_{k}\widetilde{h_{k}}}}{2}.$$
We have also $$\left(\frac{\widetilde{\Sigma}+D_{1}+...+D_{28}}{2}\right)^{2}=\frac{-\pi_{2*}(s^{*}(\Sigma))+\widetilde{h_{1}}+...+\widetilde{h_{28}}}{2}.$$
We sum up these two elements and we get the element $$\frac{\widetilde{\delta^{2}}-\pi_{2*}(s^{*}(\Sigma))+\sum_{k=1}^{28}{(\epsilon_{k}+1)\widetilde{h_{k}}}}{2}.$$
Since $\widetilde{\delta^{2}}-\pi_{2*}(s^{*}(\Sigma))$ is divisible by 2, we see that
$\frac{\sum_{k=1}^{28}{(\epsilon_{k}+1)\widetilde{h_{k}}}}{2}$ is integer. 
Then by (*) and Proposition \ref{interr} (4), the unique possibility is $\epsilon_{k}=1$ for all $k\in\left\{1,...,28\right\}$.
\end{proof}
\begin{lemme}
The element $\overline{\delta}'+\overline{\Sigma}'$ is divisible by 2 in $H^{2}(M',\Z)$.
\end{lemme}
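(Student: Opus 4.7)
The plan is to push the divisibility relation $\widetilde{\delta}+\widetilde{\Sigma}=2\alpha$ from the previous lemma down from $\widetilde{M}$ to $M'$ via the resolution map $\widetilde{r}:\widetilde{M}\to M'$. The whole argument hinges on two pullback identities and one pushforward identity.

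First I verify the pullback identities $\widetilde{r}^{*}(\overline{\Sigma}')=\widetilde{\Sigma}$ and $\widetilde{r}^{*}(\overline{\delta}')=\widetilde{\delta}$. Since $\pi_{2}^{*}$ is injective modulo torsion, it suffices to compare both sides after applying $\pi_{2}^{*}$, using the commutativity $\widetilde{r}\circ\pi_{2}=\pi_{1}\circ s_{2}$ of the diagram of Section \ref{Plan}. For the first identity, the map $\pi_{1}$ is a double cover ramified along $\overline{\Sigma}'$, so $\pi_{1}^{*}(\overline{\Sigma}')=2\Sigma_{1}$, and both $\pi_{2}^{*}(\widetilde{r}^{*}(\overline{\Sigma}'))$ and $\pi_{2}^{*}(\widetilde{\Sigma})$ equal $2\Sigma_{2}$ in $N_{2}$. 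For the second, I use Corollary \ref{piso} together with $s_{1}\circ\iota_{1}=\iota\circ s_{1}$ and the $\iota$-invariance of $\delta$:
$$\pi_{2}^{*}(\widetilde{r}^{*}(\overline{\delta}'))=s_{2}^{*}(\pi_{1}^{*}\pi_{1*}s_{1}^{*}\delta)=s_{2}^{*}(s_{1}^{*}\delta+s_{1}^{*}\iota^{*}\delta)=2s^{*}(\delta),$$
and similarly $\pi_{2}^{*}(\widetilde{\delta})=\pi_{2}^{*}\pi_{2*}(s^{*}\delta)=2s^{*}(\delta)$, using that $H^{2}(\widetilde{M},\Z)$ is torsion-free (by the lemma following Lemma \ref{key0}).

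Next, $\widetilde{r}$ is a proper birational morphism (it resolves $28$ isolated quotient singularities of $M'$), so the projection formula combined with $\widetilde{r}_{*}(1)=1$ gives $\widetilde{r}_{*}\circ\widetilde{r}^{*}=\id$ on $H^{*}(M',\Z)$. Applying $\widetilde{r}_{*}$ to the relation $\widetilde{\delta}+\widetilde{\Sigma}=2\alpha$ and using the two identities above, I obtain
$$\overline{\delta}'+\overline{\Sigma}'=\widetilde{r}_{*}(\widetilde{\delta})+\widetilde{r}_{*}(\widetilde{\Sigma})=2\widetilde{r}_{*}(\alpha),$$
which is the desired divisibility by $2$ in $H^{2}(M',\Z)$.

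The main obstacle is ensuring $\widetilde{r}_{*}\widetilde{r}^{*}=\id$ integrally in the singular setting, but this is already the kind of pushforward on V-manifolds used throughout the section via $\pi_{1*}$ and $\pi_{2*}$. As a safeguard one can instead use the direct-sum decomposition $H^{2}(\widetilde{M},\Z)=\widetilde{r}^{*}(H^{2}(M',\Z))\oplus\bigoplus_{k=1}^{28}\Z D_{k}$ for the blow-up of isolated singularities: since neither $\widetilde{\delta}$ nor $\widetilde{\Sigma}$ has any component on the exceptional $D_{k}$ (both are pullbacks via $\widetilde{r}$), the same is true of $\alpha$ by torsion-freeness, so $\alpha=\widetilde{r}^{*}(\beta)$ for some $\beta\in H^{2}(M',\Z)$, and the injectivity of $\widetilde{r}^{*}$ then yields $\overline{\delta}'+\overline{\Sigma}'=2\beta$.
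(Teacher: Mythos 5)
Your two pullback identities $\widetilde{r}^{*}(\overline{\Sigma}')=\widetilde{\Sigma}$ and $\widetilde{r}^{*}(\overline{\delta}')=\widetilde{\delta}$ are correct (the first is used explicitly in the proof of Lemma \ref{sigma2}), but the mechanism you use to descend the divisibility is broken, and in a way that the paper's own results detect. The map $\widetilde{r}$ contracts the $28$ exceptional divisors $D_{k}\cong\mathbb{P}^{3}$ onto singular points of type $\mathbb{C}^{4}/\{\pm1\}$; at such a point $M'$ is not an integral homology manifold (the link is $S^{7}/\{\pm1\}$, whose degree-two integral cohomology is $\Z/2\Z$), so neither an integral Gysin pushforward $\widetilde{r}_{*}$ with $\widetilde{r}_{*}\circ\widetilde{r}^{*}=\id$ nor the smooth-blow-up decomposition $H^{2}(\widetilde{M},\Z)=\widetilde{r}^{*}(H^{2}(M',\Z))\oplus\bigoplus_{k}\Z D_{k}$ is available. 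Both are in fact false here. For the pushforward: any such $\widetilde{r}_{*}$ kills each $D_{k}$ (it is contracted to a point), so applying it to the classes $\frac{\widetilde{u_{k,l}}+d_{k,l}}{2}\in H^{2}(\widetilde{M},\Z)$ of Lemma \ref{seven}, where the $d_{k,l}$ are combinations of the $D_{j}$ that are nonzero mod $2$, would show that $\overline{u_{k,l}}'=\pi_{1*}(s_{1}^{*}(u_{k,l}))$ is divisible by $2$ in $H^{2}(M',\Z)$, contradicting the primitivity of $\pi_{1*}(s_{1}^{*}(U^{3}\oplus(-2)))$ (Lemma \ref{key}). For the decomposition: writing $\frac{\widetilde{u_{k,l}}+d_{k,l}}{2}=\widetilde{r}^{*}(y)+\sum_{j}a_{j}D_{j}$ and restricting to a $D_{j}$ occurring in $d_{k,l}$ forces $a_{j}=\frac{1}{2}$, because $\widetilde{r}^{*}(y)$ and $\widetilde{u_{k,l}}$ restrict to zero on $D_{j}$ while $D_{j}$ restricts to a nonzero multiple of the hyperplane class. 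So your argument, were it valid, would prove too much: it would settle for free exactly the $2$-divisibility questions this whole section is fighting over.

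What does survive is the observation that $\widetilde{\delta}+\widetilde{\Sigma}=\widetilde{r}^{*}(\overline{\delta}'+\overline{\Sigma}')$ is divisible by $2$ on $\widetilde{M}$, hence that $\overline{\delta}'+\overline{\Sigma}'$ becomes divisible by $2$ after restriction to $M'\setminus\{p_{1},\dots,p_{28}\}\cong\widetilde{M}\setminus\bigcup_{k}D_{k}$. The remaining obstruction to extending the half-class across the $28$ singular points lies in $\bigoplus_{k}H^{3}_{\{p_{k}\}}(M',\Z)\cong(\Z/2\Z)^{28}$, and showing it vanishes is the actual content of the lemma. The paper handles this geometrically: it represents $\frac{\widetilde{\delta}+\widetilde{\Sigma}}{2}$ by a Cartier divisor on $\widetilde{M}$ chosen to avoid $\bigcup_{k}\widetilde{r}^{-1}(p_{k})$, and such a divisor descends to a Cartier divisor on $M'$ whose class is necessarily half of $\overline{\delta}'+\overline{\Sigma}'$. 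You need some substitute for that final step; the purely cohomological transfer you propose does not exist integrally.
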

\begin{proof}
%Let $W=M'\setminus \overline{\Sigma}'$.
We can find a Cartier divisor on $\widetilde{M}$ which corresponds to $\frac{\pi_{2*}(s^{*}(\delta))+\widetilde{\Sigma}}{2}$ and which does not meet 
$\cup_{k=1}^{28} \widetilde{r}^{-1}(p_{k})$.
Then this Cartier divisor induces a Cartier divisor on $M'$ which necessarily corresponds to half the cocycle $\pi_{1*}(s_{1}^{*}(\delta))$ $+\overline{\Sigma}'$.
\end{proof}
Finally, we get the following theorem.
\begin{thm}\label{fin}
We have $$H^{2}(M',\Z)=\frac{1}{2}\pi_{1*}(s_{1}^{*}(E_{8}(-2)))\oplus\pi_{1*}(s_{1}^{*}(U^{3}))\oplus\Z(\frac{\overline{\delta}'+\overline{\Sigma}'}{2})\oplus\Z(\frac{\overline{\delta}'-\overline{\Sigma}'}{2}).$$
%where $E_{8}(-2)$ is the sublattice of $H^{2}(S^{[2]},\Z)^{\iota}$ given by $\left\langle (e_{l,1}+e_{l,2})_{l\in\left\{1,...,8\right\}}\right\rangle$ and $U^{3}$ the sublattice of $H^{2}(S^{[2]},\Z)^{\iota}$ given by $\left\langle (u_{l,p})_{l\in\left\{1,2,3\right\},p\in\left\{1,2\right\}}\right\rangle$.
\end{thm}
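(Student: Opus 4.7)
The plan is to prove both inclusions between $H^2(M',\Z)$ and the lattice $L$ defined by the right-hand side. The inclusion $L\subseteq H^2(M',\Z)$ is immediate: each of $\pi_{1*}(s_1^*(U^3))$, $\frac{1}{2}\pi_{1*}(s_1^*(E_8(-2)))$ (by the divisibility remark at the end of Section~\ref{Focus}), and $\frac{\overline{\delta}'+\overline{\Sigma}'}{2}$ (by the lemma just proved) is integral, while $\frac{\overline{\delta}'-\overline{\Sigma}'}{2}=\frac{\overline{\delta}'+\overline{\Sigma}'}{2}-\overline{\Sigma}'$ follows by subtraction. The four summands have combined rank $6+8+1+1=16=b_2(M')$ by Proposition~\ref{b2}, and Proposition~\ref{ortho} together with the non-degeneracy of $B_{S^{[2]}}$ on $H^2(S^{[2]},\Z)^\iota$ shows they are $\mathbb{Q}$-independent, so the sum is direct and $L$ has finite index in $H^2(M',\Z)$.

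For the reverse inclusion, fix $y\in H^2(M',\Z)$. The pullback $\pi_1^*(y)$ lies in $H^2(N_1,\Z)^{\iota_1}=s_1^*(H^2(S^{[2]},\Z)^\iota)\oplus\Z\Sigma_1$ (as in the proof of Proposition~\ref{b2}), so $\pi_1^*(y)=s_1^*(\beta)+\mu\Sigma_1$ for a unique $\beta\in H^2(S^{[2]},\Z)^\iota$ and $\mu\in\Z$. Applying $\pi_{1*}$ and using Proposition~\ref{Smithy} together with $\pi_{1*}(\Sigma_1)=\overline{\Sigma}'$ yields $2y=\pi_{1*}(s_1^*(\beta))+\mu\overline{\Sigma}'$. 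Decomposing $\beta=\beta_U+c\delta+\beta_E$ along $H^2(S^{[2]},\Z)^\iota\cong U^3\oplus(-2)\oplus E_8(-2)$ from Proposition~\ref{Mong}, the term $\frac{1}{2}\pi_{1*}(s_1^*(\beta_E))$ already lies in $L$, so the problem reduces to showing that the class
$$y':=\frac{1}{2}\left(\pi_{1*}(s_1^*(\beta_U))+c\overline{\delta}'+\mu\overline{\Sigma}'\right)$$
lies in $L$ whenever it lies in $H^2(M',\Z)$.

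Reducing $\beta_U$, $c$, $\mu$ modulo $2$ by subtracting the $L$-classes $\pi_{1*}(s_1^*(u_{k,l}))$, $\overline{\delta}'$ and $\overline{\Sigma}'$ as needed, we may assume $\beta_U\in U^3/2U^3$ and $c,\mu\in\{0,1\}$. If $(c,\mu)=(0,0)$, then $y'=\frac{1}{2}\pi_{1*}(s_1^*(\beta_U))$, and Lemma~\ref{key} forces $\beta_U\in 2U^3$, placing $y'$ in $L$. If $(c,\mu)=(1,1)$, subtracting $\frac{\overline{\delta}'+\overline{\Sigma}'}{2}$ (which is in $L$) reduces to the previous case. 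The remaining cases $(c,\mu)\in\{(1,0),(0,1)\}$ produce, respectively, $\frac{1}{2}\pi_{1*}(s_1^*(\beta_U+\delta))$ directly and $\frac{1}{2}\pi_{1*}(s_1^*(\beta_U-\delta))$ after subtracting $\frac{\overline{\delta}'+\overline{\Sigma}'}{2}$; in each case Lemma~\ref{key} requires $\beta_U\pm\delta$ to be divisible by $2$ in $U^3\oplus(-2)$, which is impossible because the $\delta$-coefficient is $\pm 1$. Therefore these two cases cannot arise, and every $y\in H^2(M',\Z)$ lies in $L$. The main obstacle is this final parity bookkeeping, but every obstruction has already been neutralised by the primitivity of $\pi_{1*}(s_1^*(U^3\oplus(-2)))$ (Lemma~\ref{key}), the non-divisibility of $\overline{\Sigma}'$ (Lemma~\ref{sigma2}), and the divisibility of $\overline{\delta}'+\overline{\Sigma}'$ proved just above.
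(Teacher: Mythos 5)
Your proposal is correct and uses the same ingredients, in the same roles, as the paper: the primitivity of $\pi_{1*}(s_{1}^{*}(U^{3}\oplus(-2)))$ (Lemma~\ref{key}), the divisibility of $\overline{\delta}'+\overline{\Sigma}'$ and non-divisibility of $\overline{\Sigma}'$, the divisibility of the $E_{8}(-2)$-classes, and the rank count from Proposition~\ref{b2}. The only difference is that you write out explicitly the final assembly (the decomposition $2y=\pi_{1*}(s_{1}^{*}(\beta))+\mu\overline{\Sigma}'$ and the four-case parity analysis on $(c,\mu)$) which the paper leaves implicit when it deduces the theorem from the preceding lemmas.
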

\subsubsection{Last calculations}
Now we are able to finish the calculation of the Beauville--Bogomolov form on $H^{2}(M',\Z)$.
By Propositions \ref{Mong}, \ref{passage}, \ref{delta}, \ref{ortho} and Theorem \ref{fin},
the Beauville--Bogomolov form on $H^{2}(M',\Z)$ gives the lattice:
$$\frac{1}{2}E_{8}\left(-2\sqrt{\frac{24}{C_{M'}}}\right)\oplus U^{3}\left(\sqrt{\frac{24}{C_{M'}}}\right) \oplus \left(-\sqrt{\frac{24}{C_{M'}}}\right)^{2}$$
$$=E_{8}\left(-\sqrt{\frac{6}{C_{M'}}}\right)\oplus U^{3}\left(2\sqrt{\frac{6}{C_{M'}}}\right) \oplus \left(-2\sqrt{\frac{6}{C_{M'}}}\right)^{2}$$
It follows that $C_{M'}=6$, and we get Theorem \ref{theorem}.
\begin{prop}\label{b}
We have:
\begin{itemize}
\item $b_{3}(M')=0,$
\item $b_{4}(M')=178,$
\item $\chi(M')=212.$
\end{itemize}
\end{prop}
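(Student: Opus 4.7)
The plan is to reduce each quantity to an $\iota_1$-invariant computation on $N_1$ via Proposition \ref{Smithy}, which gives $H^i(M',\mathbb{Q})\cong H^i(N_1,\mathbb{Q})^{\iota_1}$, and then to apply the standard blowup formula for the smooth center $\Sigma\subset S^{[2]}$ of codimension two,
\[
H^k(N_1,\mathbb{Q})\;\cong\;H^k(S^{[2]},\mathbb{Q})\,\oplus\,H^{k-2}(\Sigma,\mathbb{Q}),
\]
which is $\iota_1$-equivariant.

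For $b_3(M')$: one has $H^3(S^{[2]},\mathbb{Q})=0$ (since $S^{[2]}$ is of $K3^{[2]}$-type) and $H^1(\Sigma,\mathbb{Q})=0$ (K3 surface), hence $H^3(N_1,\mathbb{Q})=0$ and a fortiori $b_3(M')=0$.

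For $b_4(M')$: the rank of $H^4(S^{[2]},\Z)^{\iota}$ equals $156$ by Lemma \ref{invariant}. To determine the action of $\iota_1$ on the $H^2(\Sigma,\mathbb{Q})$-summand, note that $\Sigma$ is pointwise fixed by $\iota$, so the differential of $\iota$ at any $p\in\Sigma$ acts as the identity on $T_p\Sigma$. Since $\Sigma$ is the entire connected component of $\mathrm{Fix}(\iota)$ through $p$, the complementary $\iota$-eigenspace of dimension two in $T_pS^{[2]}$ must be the $(-1)$-eigenspace; thus $\iota$ acts as $-\mathrm{id}$ on the normal bundle $\mathscr{N}_{\Sigma/S^{[2]}}$. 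Consequently $\iota_1$ acts trivially on the projectivization $\Sigma_1=\mathbb{P}(\mathscr{N}_{\Sigma/S^{[2]}})$, and in particular trivially on the summand $H^2(\Sigma,\mathbb{Q})$. Hence $\mathrm{rk}\,H^4(N_1,\Z)^{\iota_1}=156+22=178$, giving $b_4(M')=178$.

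For $\chi(M')$: the variety $M'$ is a four-dimensional projective V-manifold, hence a rational homology manifold, so it satisfies Poincar\'e duality with rational coefficients, i.e. $b_i(M')=b_{8-i}(M')$. Combined with $b_0(M')=1$, $b_1(M')=0$ (simply connected by Lemma 1.2 of \cite{Fujiki}), $b_2(M')=16$ (Proposition \ref{b2}), $b_3(M')=0$ and $b_4(M')=178$, this gives
\[
\chi(M')=2(1+0+16)+178=212.
\]
The only subtle step is the verification that $\iota_1$ is trivial on $\Sigma_1$; all other ingredients are either standard (blowup formula, Poincar\'e duality for V-manifolds) or already established earlier in the text.
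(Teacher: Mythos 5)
Your proof is correct and follows essentially the same route as the paper: reduce to $\iota_1$-invariants on $N_1$, apply the blowup formula, use $\operatorname{rk}H^{4}(S^{[2]},\Z)^{\iota}=156$ from Lemma \ref{invariant}, and finish with Poincar\'e duality. The only (harmless) variations are that you justify explicitly the triviality of the $\iota_1$-action on the $H^{2}(\Sigma,\mathbb{Q})$-summand, which the paper leaves implicit, and that you apply Poincar\'e duality directly to $M'$ as a rational homology manifold rather than to the invariant cohomology of the smooth $N_1$ as the paper does.
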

\begin{proof}
By \cite{Voisin} Theorem 7.31, we have:
$$H^{3}(N_{1},\Z)=0,\ \ \ \ H^{4}(N_{1},\Z)\simeq H^{4}(S^{[2]},\Z)+H^{2}(\Sigma,\Z).$$
Then we get $b_{3}(M')=0$. 
Since $H^{4}(N_{1},\Z)^{\iota_{1}}\simeq H^{4}(S^{[2]},\Z)^{\iota}+H^{2}(\Sigma,\Z)$, 
by Lemma \ref{invariant}, we have $\rk H^{4}(N_{1},\Z)^{\iota_{1}}=156+22=178$.

By Poincaré duality, we have $\rk H^{5}(N_{1},\Z)^{\iota_{1}}=\rk H^{7}(N_{1},\Z)^{\iota_{1}}=0$, $\rk H^{6}(N_{1},$ $\Z)^{\iota_{1}}=16$ and $\rk H^{8}(N_{1},\Z)^{\iota_{1}}=1$.
Finally $\chi(M')=1-0+16-0+178-0+16-0+1=212$.
\end{proof}
\section{Application to the V-manifold of Markushevich and Tikhomirov}
\subsection{Construction}
In this section we follow \cite{Markou}.

Let $X$ be a dell Pezzo surface of degree 2 obtained as a double cover of $\mathbb{P}^2$ branched in a generic quartic curve $B_{0}$. Let $\eta: X\rightarrow \mathbb{P}^{2}$. Let $D_{0}$ be a generic curve from the linear system $|-2K_{X}|$, $\rho: S\rightarrow X$ the double cover branched in $D_{0}$. Then $S$ is a K3 surface, and $H=\rho^{*}(-K_{X})$ is a degree-4 ample divisor class on $S$. We will denote by $\tau$ the Galois involution of the double cover $\rho$.

%The plan quartic $B_{0}$ has 28 bitangent lines $m_{1},...,m_{28}$, and $\eta^{-1}(m_{i})$ is the union of two rational curves $l_{i}\cup l_{i}'$ meeting in 2 points. The 56 curves $l_{i}, l_{i}'$ are all the lines on $X$, that is, curves of degree 1 with respect to $-K_{X}$. Further, the curves $C_{i}=\rho^{-1}(l_{i})$, $C_{i}'=\rho^{-1}(l_{i}')$ are plan conics on $S$ with respect to the injection $S\hookrightarrow \mathbb{P}^{3}$ induced by $|H|$.

Let $\mathcal{M}=M_{S}^{H,s}(0,H,-2)$ the moduli space of semistable sheaves $\mathcal{F}$ on $S$ with respect to the ample class $H$ with Mukai vector $v(\mathcal{F})=(0,H,-2)$. We consider the following involution on $\mathcal{M}$
$$\sigma:\mathcal{M}\rightarrow\mathcal{M}, [\mathcal{L}]\mapsto [\mathcal{E}xt_{\mathcal{O}_{S}}^1(\mathcal{L},\mathcal{O}_{S}(-H))].$$
We set $\kappa=\tau\circ\sigma$.
In \cite{Markou} it is proved that $\kappa$ is a regular involution on $\mathcal{M}$ and that its fixed locus has one 4-dimensional irreducible component plus 64 isolated points.
\begin{defi}
We define $\mathcal{P}$ as the 4-dimensional component of $Fix(\kappa)$.
\end{defi}

\begin{thm}
The variety $\mathcal{P}$ is an irreducible symplectic V-manifolds of dimension 4 with only 28 points of singularity analytically equivalent to 
$(\mathbb{C}^4 / \left\{\pm1\right\}$ $,0)$.
\end{thm}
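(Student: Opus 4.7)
The proof essentially recapitulates results from \cite{Markou}; I sketch the key ingredients. The dimension 4 is built into the definition of $\mathcal{P}$. For the symplectic structure, the strategy is to compute that both $\sigma$ and $\tau$ act on the Mukai holomorphic symplectic form $\omega$ on the smooth locus of $\mathcal{M}$ by $-1$: the Galois involution $\tau$ is anti-symplectic since $X$ is rational (the holomorphic 2-form on $S$ descends from an object on $X$ with a sign), and the Mukai dual $\sigma$ is anti-symplectic by a standard sign computation on Serre/Yoneda pairings. Hence $\kappa^{*}\omega=\omega$, so $\omega$ restricts to a non-degenerate holomorphic 2-form on the smooth locus of $\mathcal{P}$, extending regularly to any desingularization since the singularities (see below) are only finite quotient ones.

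For the 28 singular points: since $v=(0,H,-2)$ has $\gcd=2$, the moduli space $\mathcal{M}$ is singular precisely along the locus of strictly polystable sheaves $\mathcal{F}=\mathcal{F}_{1}\oplus\mathcal{F}_{2}$ with $v(\mathcal{F}_{1})+v(\mathcal{F}_{2})=v$; at such a point with $\mathcal{F}_{1}\not\simeq\mathcal{F}_{2}$, the analytic germ of $\mathcal{M}$ is the GIT cone $\Ext^{1}(\mathcal{F},\mathcal{F})/\!\!/\mathbb{C}^{*}$, locally of the form $(\mathbb{C}^{6}/\{\pm 1\},0)$. I would then enumerate the $\kappa$-invariant such polystable sheaves lying in the closure of $\mathcal{P}$, using the explicit description of sheaves with Mukai vector $(0,H,-2)$ as pushforwards of rank-one torsion-free sheaves on curves $C\in|H|$, and tracking the action of $\sigma$ and $\tau$ on the possible splittings. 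The expected count is 28. At each such point, $\kappa$ acts on $\Ext^{1}(\mathcal{F},\mathcal{F})\cong\mathbb{C}^{6}$ with a 4-dimensional invariant subspace and a 2-dimensional anti-invariant one, so the image of $\mathbb{C}^{4}_{+}$ inside $\mathbb{C}^{6}/\{\pm 1\}$ is precisely $(\mathbb{C}^{4}/\{\pm 1\},0)$, the germ of $\mathcal{P}$ at that point.

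Smoothness of $\mathcal{P}$ off these 28 points is immediate: $\mathcal{M}$ is smooth off its polystable locus, and fixed loci of automorphisms of smooth varieties are smooth, so $\mathcal{P}$ can acquire singularities only at polystable points of $\mathcal{M}$. Irreducibility is built into the definition; simple connectivity and uniqueness of the holomorphic 2-form up to a $\mathbb{C}^{*}$-scalar transfer from the partial resolution $M'$ of $S_{0}^{[2]}/\iota_{0}$ through the birational (flop) link furnished by Corollary 5.7 of \cite{Markou}, together with Lemma 1.2 of \cite{Fujiki}. The main obstacle is the local analysis at the 28 polystable points: establishing the $4+2$ decomposition of $\Ext^{1}(\mathcal{F},\mathcal{F})$ under $\kappa$ requires a delicate compatibility check between Mukai duality $\sigma$ and the geometric involution $\tau$ on the relevant extension spaces, and it is this step, carried out in \cite{Markou}, that does the real work.
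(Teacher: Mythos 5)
Your proposal should first be measured against what the paper actually does here: its ``proof'' is the single line ``See Theorem 3.4 and Corollary 5.7 of \cite{Markou}'', so the theorem is imported wholesale and you are in effect reconstructing the argument of that reference. Your skeleton is the right one: $\tau$ and $\sigma$ are each anti-symplectic, so $\kappa=\tau\circ\sigma$ preserves the Mukai form and $\mathcal{P}$ is symplectic; the fixed locus of an involution of a smooth variety is smooth, so $\mathcal{P}$ can only be singular where $\mathcal{M}$ is, i.e.\ at strictly semistable points; and the global properties (irreducibility of the symplectic form, simple connectedness) travel through the birational link of Corollary 5.7 together with Lemma 1.2 of \cite{Fujiki}.

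The local analysis at the singular points --- which you rightly identify as the real content --- is however set up incorrectly. First, $v=(0,H,-2)$ does \emph{not} have $\gcd=2$: since $H^{2}=4$, a class $H/2$ would have square $1$, which is impossible in the even lattice $H^{2}(S,\Z)$, so $H$ is primitive and $v$ is primitive. Strictly semistable sheaves exist not because of non-primitivity of $v$ but because the polarization $H$ lies on walls for $v$; the relevant decompositions are $H=C_{1}+C_{2}$ with $C_{i}=\rho^{-1}(\ell_{i})$ the two $(-2)$-curves over the pair of lines of $X$ lying above a bitangent of $B_{0}$, which is where the number $28$ comes from. Second, at such a polystable $\mathcal{F}=\mathcal{F}_{1}\oplus\mathcal{F}_{2}$ with $\mathcal{F}_{1}\not\cong\mathcal{F}_{2}$ stable one has $\dim\Ext^{1}(\mathcal{F},\mathcal{F})=(v_{1}^{2}+2)+(v_{2}^{2}+2)+2\,v_{1}\cdot v_{2}=v^{2}+4=8$, never $6$; here $\Ext^{1}(\mathcal{F}_{i},\mathcal{F}_{i})=0$ and $\Ext^{1}(\mathcal{F},\mathcal{F})\cong A\oplus A^{*}$ with $A=\Ext^{1}(\mathcal{F}_{1},\mathcal{F}_{2})\cong\mathbb{C}^{4}$, and the germ of $\mathcal{M}$ is the GIT quotient of the Yoneda null-cone $\{\langle a,b\rangle=0\}$ by the stabilizer $\mathbb{C}^{*}$ acting with weights $\pm1$ --- a cone over a hyperplane section of $\mathbb{P}^{3}\times\mathbb{P}^{3}$, which is not $(\mathbb{C}^{6}/\{\pm1\},0)$ (their embedding dimensions are $15$ and $21$ respectively). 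Consequently your concluding step, that the fixed part of a linear $4+2$ eigenspace decomposition of a smooth $\mathbb{C}^{6}$ descends to $(\mathbb{C}^{4}/\{\pm1\},0)$, has no ambient model in which to take place. The quotient singularity actually arises because $\kappa$ interchanges $A$ and $A^{*}$ via a skew isomorphism $\phi$, so that the fixed locus downstairs is the image of $a\mapsto a\otimes\phi(a)$, a map that is generically $2:1$ onto its image; this is the computation done in \cite{Markou}, and it cannot be replaced by the eigenspace argument you propose.
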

\begin{proof}
See Theorem 3.4 and Corollary 5.7 of \cite{Markou}.
\end{proof}
In fact, $\mathcal{P}$ is bimeromorphic to a partial resolution of a quotient of $S^{[2]}$.
Consider Beauville's involution (see Section 6 of \cite{Beau}):$$\iota_{0}: S^{[2]}\rightarrow S^{[2]}, \xi\mapsto\xi'=(\left\langle \xi\right\rangle\cap S)-\xi.$$
We consider $S$ as a quartic surface in $\mathbb{P}^3$ via its embedding given by the linear system $|H|$, $\left\langle \xi\right\rangle$ stands for the line in $\mathbb{P}^3$ spanned by $\xi$, and $\xi'$ is the residual intersection of $\left\langle \xi\right\rangle$ with $S$. By \cite{Beau}, this involution is regular whenever $S$ contains no lines, which is the case for sufficiently generic $S$. Moreover, $\tau$ induces on $S^{[2]}$ an involution which we will denote by the same symbol. The composition $\iota=\iota_{0}\circ\tau$ is also an involution because $\tau$ on $S$ is the restriction of a linear involution on $\mathbb{P}^3$ and $\iota_{0}$ commutes with $\tau$.

By construction $\tau$ is an anti-symplectic involution and by Proposition 4.1 of \cite{Grady3}, also $\iota_{0}$ is.
Then by Theorem 4.1 of \cite{Mongardi}, the fixed locus of $\iota$ is the union of a nonsingular irreducible surface $\Sigma\subset S^{[2]}$ and 28 isolated points.
We denote $M=S^{[2]}/\iota$ and $\overline{\Sigma}$ the image of $\Sigma$ in $M$. We also denote by $M'$ the partial resolution of singularities of $M$ obtained by blowing up $\overline{\Sigma}$ and $\overline{\Sigma}'$ the exceptional divisor of the blowup.
\begin{thm}
The variety $M'$ is an irreducible symplectic V-manifold whose singularities are 28 points of analytic type $(\mathbb{C}^4 / \left\{\pm1\right\},0)$.
Moreover there is a Mukai flop between $M'$ and $\mathcal{P}$, which is an isomorphism between $M'\setminus\Pi'$and $\mathcal{P}\setminus\Pi$, where $\Pi'$and $\Pi$ are subvarieties of codimension 2.
\end{thm}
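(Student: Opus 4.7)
The plan is to prove the two assertions separately, invoking the Markushevich--Tikhomirov results as a black box for the flop.

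For the first, since $\iota=\iota_{0}\circ\tau$ is symplectic on $S^{[2]}$ (both factors being anti-symplectic), Theorem 4.1 of \cite{Mongardi} gives that $\Fix\iota$ is the disjoint union of a smooth K3 surface $\Sigma$ and 28 isolated points $p_{1},\ldots,p_{28}$. At each $p_{j}$, the differential $d\iota_{p_{j}}$ acts on $T_{p_{j}}S^{[2]}\cong\mathbb{C}^{4}$ as a symplectic linear involution with no nonzero $+1$-eigenvector (otherwise the fixed locus would be positive-dimensional at $p_{j}$), so $d\iota_{p_{j}}=-\id$. Consequently the germ of $M=S^{[2]}/\iota$ at the image of $p_{j}$ is analytically $(\mathbb{C}^{4}/\{\pm 1\},0)$. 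Along $\Sigma$, the involution acts as $-\id$ on the two-dimensional normal bundle, so $M$ has transverse $A_{1}$-singularities there; blowing up $\overline{\Sigma}$ resolves exactly those singularities while leaving the 28 isolated quotient singularities untouched, producing $M'$. That $M'$ is an irreducible symplectic V-manifold then follows as in Section 2.3 using Lemma 1.2 of \cite{Fujiki}: the holomorphic symplectic form descends from $S^{[2]}$ to the smooth locus of $M'$ and extends, simple connectedness is preserved under the quotient, and uniqueness up to scalar of the $2$-form is inherited from $S^{[2]}$.

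For the Mukai flop between $M'$ and $\mathcal{P}$, I would invoke Corollary 5.7 of \cite{Markou}. The identification is moduli-theoretic: a generic $\kappa$-fixed sheaf $[\mathcal{L}]\in\mathcal{P}$ is a line bundle on a smooth $\tau$-invariant curve $C\in|H|$, and one associates to it an unordered pair $\{\xi,\iota(\xi)\}$ of length-two subschemes of $S$ via residual intersection of the linear span $\langle\xi\rangle\subset\mathbb{P}^{3}$ with $S$, and then taking the image in $S^{[2]}/\iota$. This gives a birational map from $\mathcal{P}$ to $M'$; the indeterminacy loci $\Pi\subset\mathcal{P}$ and $\Pi'\subset M'$ turn out to be codimension-two subvarieties fibered in projective planes --- on $\mathcal{P}$ where the sheaf fails to be locally free along its support, on $M'$ where $\langle\xi\rangle$ becomes contained in $S$ --- which are exchanged by a standard Mukai flop.

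The main obstacle is the explicit verification that the indeterminacy locus is fibered in $\mathbb{P}^{2}$'s and that the birational map extends across its complement to an isomorphism, because this requires a detailed analysis both of Beauville's involution $\iota_{0}$ on $S^{[2]}$ and of the Abel--Prym interpretation of the moduli space $\mathcal{P}$; this is precisely the content of \cite{Markou}. The V-manifold assertion, by contrast, reduces to the linear algebra of a symplectic involution with isolated fixed points combined with Fujiki's criterion, and should not present any serious difficulty.
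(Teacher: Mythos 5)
Your proposal is correct and follows essentially the same route as the paper, whose entire proof of this theorem is the citation to Corollary 5.7 of Markushevich--Tikhomirov; you likewise defer the flop to that corollary, and your added local analysis of the singularities (that $d\iota=-\id$ at the isolated fixed points, transverse $A_{1}$-singularities along $\overline{\Sigma}$, and Fujiki's Lemma 1.2 for the irreducible symplectic V-manifold property) matches what the paper itself records in Section 2 via Theorem 4.1 of Mongardi and Section 2.3 of Fujiki.
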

\begin{proof}
See Corollary 5.7 of \cite{Markou}.
\end{proof}
\subsection{The results}
As a consequence of Corollary 5.7 of \cite{Markou} and Theorem \ref{theorem}, we have the following corollary.
\begin{cor}
The Beauville--Bogomolov lattice $H^{2}(\mathcal{P},\Z)$ is isomorphic to $E_{8}(-1)\oplus U(2)^3\oplus(-2)^{2}$.
Moreover, the Fujiki constant $C_{\mathcal{P}}=6$.
\end{cor}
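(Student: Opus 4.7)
The plan is to transport the computation of Theorem \ref{theorem} from $M'$ to $\mathcal{P}$ via the Mukai flop provided by Corollary 5.7 of \cite{Markou}. Recall from the preceding theorem that there is an isomorphism $\varphi: M' \setminus \Pi' \xrightarrow{\sim} \mathcal{P} \setminus \Pi$, where $\Pi'$ and $\Pi$ have codimension $\geq 2$ in $M'$ and $\mathcal{P}$ respectively. Both varieties are irreducible symplectic V-manifolds whose singular loci have codimension $\geq 4$, so Theorem \ref{Form} equips each of $H^{2}(M',\Z)$ and $H^{2}(\mathcal{P},\Z)$ with a Beauville--Bogomolov form and a Fujiki constant.

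First I would show that $\varphi$ induces an isomorphism of lattices $\varphi^{*}: H^{2}(\mathcal{P},\Z) \xrightarrow{\sim} H^{2}(M',\Z)$. Since $\Pi$ and $\Pi'$ have (real) codimension $\geq 4$, the restriction maps $H^{2}(\mathcal{P},\Z) \to H^{2}(\mathcal{P}\setminus \Pi,\Z)$ and $H^{2}(M',\Z) \to H^{2}(M'\setminus \Pi',\Z)$ are isomorphisms; this follows from the long exact sequence of the pair together with Thom's isomorphism $H^{i}(\mathcal{P},\mathcal{P}\setminus\Pi,\Z) \simeq H^{i-2c}(\Pi,\Z)$ where $c\geq 2$, vanishing in the relevant range, and similarly for $M'$. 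Composing with $\varphi^{*}$ on the complements yields the desired identification on second cohomology.

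Next I would check that this identification preserves the Beauville--Bogomolov form and the Fujiki constant. By Definition \ref{definition}, $q_{Z}(\alpha)$ is computed on a resolution $\widetilde{Z}\to Z$ in terms of integrals involving $\widetilde{\omega}$, $\widetilde{\alpha}$; equivalently, by the defining equation $\alpha^{2n} = c_{Z}B_{Z}(\alpha,\alpha)^{n}$ of Theorem \ref{Form}, it suffices to compare top self-intersections $\alpha^{4}$. For classes $\alpha$ supported on the complement of $\Pi$ (respectively $\Pi'$) the cup-product top power is a local computation that can be carried out on a common resolution dominating both $M'$ and $\mathcal{P}$, and since $\Pi,\Pi'$ are of codimension $\geq 2$, their contribution to the integral vanishes. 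Hence $(\varphi^{*}\alpha)^{4}=\alpha^{4}$ for all $\alpha \in H^{2}(\mathcal{P},\Z)$. Applying Theorem \ref{Form} to both sides forces $c_{\mathcal{P}}=c_{M'}$ and $B_{\mathcal{P}}(\alpha,\alpha) = B_{M'}(\varphi^{*}\alpha, \varphi^{*}\alpha)$, and by polarization the bilinear forms agree. Combining with Theorem \ref{theorem} gives $H^{2}(\mathcal{P},\Z) \cong E_{8}(-1) \oplus U(2)^{3} \oplus (-2)^{2}$ and $C_{\mathcal{P}}=6$.

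The main obstacle is the second step: justifying that the Beauville--Bogomolov form of a singular irreducible symplectic V-manifold is invariant under such a codimension-$2$ birational modification (a Mukai flop between V-manifolds). For smooth irreducible symplectic manifolds this is classical (Huybrechts), but here one must be careful because the form is defined via Matsushita's construction on a resolution. The cleanest route is to use the fact that $\varphi$ extends to an isomorphism of open subsets whose complement has codimension $\geq 2$ in any resolution, so the defining integrals in Definition \ref{definition} are identical; then the uniqueness of the integral primitive form in Theorem \ref{Form} completes the argument.
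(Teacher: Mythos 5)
Your overall strategy is exactly the one the paper relies on (its proof of this corollary is a single sentence citing Corollary 5.7 of \cite{Markou} and Theorem \ref{theorem}), and you correctly isolate the two things that need checking: that the flop identifies the second integral cohomology groups, and that this identification respects the form and the Fujiki constant via the uniqueness statement in Theorem \ref{Form}. The first step is essentially fine (modulo the caveat that $\mathcal{P}$ and $M'$ are only V-manifolds and $\Pi$, $\Pi'$ need not be smooth, so the Thom isomorphism has to be applied on the smooth locus and the finitely many quotient singularities handled separately --- harmless here since removing points does not affect $H^{2}$ of a fourfold).

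The gap is in the second step. The assertion that $(\varphi^{*}\alpha)^{4}=\alpha^{4}$ because ``the contribution of the codimension $\geq 2$ loci to the integral vanishes'' is not a proof, and the heuristic behind it is false in general: on a common resolution $W$ with projections $p:W\to \mathcal{P}$ and $q:W\to M'$, the preimages of $\Pi$ and $\Pi'$ are \emph{divisors}, and the discrepancy $p^{*}\alpha-q^{*}\varphi^{*}\alpha$ is a nonzero combination of their classes whenever $\alpha\cdot[\text{line in }\Pi]\neq 0$. For a general birational map between $\mathbb{Q}$-factorial varieties that is an isomorphism in codimension one, top self-intersection numbers of divisor classes are \emph{not} preserved --- already for a threefold flop of a $(-1,-1)$-curve $C$ one has $D'^{3}=D^{3}\pm(D\cdot C)^{3}$. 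What saves the statement here is the specific geometry of the Mukai flop: writing $p^{*}\alpha=q^{*}\varphi^{*}\alpha+c\,[E]$ with $E$ the exceptional divisor of the common blow-up (the incidence variety in $\Pi\times\Pi'$), one must actually expand $(q^{*}\varphi^{*}\alpha+c[E])^{4}$ and check that the correction terms cancel, using $\alpha|_{\Pi}=(\alpha\cdot\ell)h$ and the intersection theory of $E$; alternatively one adapts the correspondence argument of Huybrechts/O'Grady (restriction of $p^{*}\alpha$ to a component of the graph closure is pulled back from its image, which has dimension $\leq 2$, killing the high powers) to the V-manifold setting. Either route is a genuine computation that your write-up replaces with a codimension count that does not by itself imply the conclusion; once it is supplied, the rest of your argument (uniqueness and integrality in Theorem \ref{Form}, hence $c_{\mathcal{P}}=c_{M'}=6$ and the isometry of lattices) goes through as you describe.
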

As a consequence of Corollary 5.7 of \cite{Markou}, Proposition \ref{b2} and Proposition \ref{b}, we can state the following proposition.
\begin{prop}
The variety $\mathcal{P}$ has the following numerical invariants: 
$$b_{2}(\mathcal{P})=16,\ 
b_{3}(\mathcal{P})=0,\ 
b_{4}(\mathcal{P})=178,\ 
\chi(\mathcal{P})=212.$$
\end{prop}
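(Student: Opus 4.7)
The plan is to transport the numerical invariants already computed for $M'$ across the Mukai flop constructed in Corollary 5.7 of \cite{Markou}. By that corollary, there exist subvarieties $\Pi'\subset M'$ and $\Pi\subset\mathcal{P}$, each of codimension at least $2$, and an isomorphism $M'\setminus\Pi'\cong \mathcal{P}\setminus\Pi$. The strategy reduces the statement to the corresponding results for $M'$, namely Proposition \ref{b2} (which yields $b_{2}(M')=16$) and Proposition \ref{b} (which yields $b_{3}(M')=0$, $b_{4}(M')=178$, $\chi(M')=212$).

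The first step is to show that the Betti numbers $b_{i}$ for $i\leq 4$ are invariant under the Mukai flop. I would use the standard device of the common resolution: let $\widehat{W}\to M'$ and $\widehat{W}\to\mathcal{P}$ be the blow-up of $M'$ along $\Pi'$ (respectively $\mathcal{P}$ along $\Pi$), which are canonically identified as Mukai's flop proceeds by blowing up the $\mathbb{P}^{2}$-bundle $\Pi'$ to get an exceptional divisor that is a $\mathbb{P}^{2}\times\mathbb{P}^{2}$-bundle, and then contracting along the other ruling. Comparing the cohomology of $M'$ and $\mathcal{P}$ via this common blow-up, together with the projective bundle formula for $\Pi$ and $\Pi'$, shows that the classes added from the exceptional divisor are matched, so $H^{i}(M',\mathbb{Q})\cong H^{i}(\mathcal{P},\mathbb{Q})$ for $i\leq 4$. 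An alternative, lighter justification is to note that $M'\setminus\Pi'\cong\mathcal{P}\setminus\Pi$ and that the complement of a codimension $\geq 2$ closed subvariety in a normal V-manifold has the same $H^{1}$ and $H^{2}$; combining this with a Mayer--Vietoris or excision argument yields the equality of the low-degree Betti numbers directly.

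Having justified the invariance of $b_{2}$, $b_{3}$, $b_{4}$ under the flop, the conclusion is immediate: Proposition \ref{b2} gives $b_{2}(\mathcal{P})=b_{2}(M')=16$, Proposition \ref{b} gives $b_{3}(\mathcal{P})=0$ and $b_{4}(\mathcal{P})=178$, and then Poincar\'e duality (which holds for the V-manifold $\mathcal{P}$ since its singularities are rational, being analytically equivalent to $\mathbb{C}^{4}/\{\pm 1\}$) yields $b_{5}(\mathcal{P})=0$, $b_{6}(\mathcal{P})=16$, $b_{8}(\mathcal{P})=1$, so $\chi(\mathcal{P})=1+16+178+16+1=212$, matching $\chi(M')$.

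The main obstacle is the first step: the invariance of Betti numbers under the Mukai flop in the singular V-manifold setting. The classical arguments are written for smooth irreducible symplectic manifolds, and one has to check that the exceptional loci $\Pi'$ and $\Pi$ avoid the singular points of $M'$ and $\mathcal{P}$ (or more generally, that the standard local model of the Mukai flop applies), so that the common-resolution argument proceeds without complication. Given the explicit description in \cite{Markou} of the flop locus as a smooth projective bundle disjoint from the 28 singular points, this verification is straightforward but is the only non-formal step in the proof.
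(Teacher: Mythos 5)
Your proposal is correct and follows the same route as the paper: the paper states this proposition without proof, as an immediate consequence of Corollary 5.7 of \cite{Markou} together with Propositions \ref{b2} and \ref{b}, i.e.\ it transports $b_{2}$, $b_{3}$, $b_{4}$ and $\chi$ from $M'$ to $\mathcal{P}$ across the Mukai flop exactly as you do. The only difference is that you make explicit the justification (common blow-up of the flop loci, plus Poincar\'e duality for the V-manifold) that the paper leaves implicit; your minor misdescription of the exceptional divisor of the flop as a $\mathbb{P}^{2}\times\mathbb{P}^{2}$-bundle rather than the incidence divisor does not affect the argument.
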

\bibliographystyle{amssort}

Departement of Mathematics, Lille1 University, 59655 Villeneuve d'Ascq

E-mail address: \texttt{gregoire.menet@ed.univ-lille1.fr}

\end{document}